\newtheorem{theorem}{Theorem}[section]
\newtheorem{lemma}[theorem]{Lemma}
\newtheorem{proposition}[theorem]{Proposition}
\newtheorem{assumption}[theorem]{Assumption}
\newtheorem{corollary}[theorem]{Corollary}
\theoremstyle{definition}
\theoremstyle{remark}
\newtheorem{remark}[theorem]{Remark}
\numberwithin{equation}{section}
\newcommand{ \R }{ \mathbb{R} }
\newcommand{ \bA }{ \mathbf{A} }
\newcommand{ \bV }{ \mathbf{V} }
\newcommand{ \bphi }{ \bar{\varphi} }
\newcommand{ \bfw }{ \mathbf{w} }
\newcommand{ \bfu }{ \mathbf{u} }
\newcommand{ \bfzeta }{ \mathbf{\zeta} }
\newcommand{ \bfv }{ \mathbf{v} }
\newcommand{\tphi}{{\tilde\phi}}
\renewcommand{\d}{\mathrm{d}}
\newcommand{\loc}{{\operatorname{loc}}}
\renewcommand{\epsilon}{\varepsilon}
\renewcommand{\phi}{\varphi}
\renewcommand{\le}{\leqslant}
\renewcommand{\ge}{\geqslant}
\renewcommand{\leq}{\leqslant}
\renewcommand{\geq}{\geqslant}
\renewcommand{\div}{\operatorname{div}}
\newcommand{\ainc}[1]{\hyperref[ainc]{{\normalfont(aInc){\ensuremath{_{#1}}}}}}
\newcommand{\adec}[1]{\hyperref[adec]{{\normalfont(aDec){\ensuremath{_{#1}}}}}}
\newcommand{\inc}[1]{\hyperref[inc]{{\normalfont(Inc){\ensuremath{_{#1}}}}}}
\newcommand{\dec}[1]{\hyperref[dec]{{\normalfont(Dec){\ensuremath{_{#1}}}}}}
\newcommand{\wMAe}[1]{\hyperref[wMAe]{{\normalfont(wMA){\ensuremath{_{#1}}}}}}
\begin{document}

\title{
%H\"older continuity for parabolic systems with Orlicz growth 
Regularity theory for parabolic systems with Uhlenbeck structure 
}

% Information for first author
%\author{Jihoon Ok, Giovanni Scilla and Bianca Stroffolini}
 % Address of record for the research reported here

%\address{}
%\email{}

%
\author{Jihoon Ok}
% % Address of record for the research reported here
\address[Jihoon Ok]{Department of Mathematics, Sogang University, 35 Baekbeom-ro, Mapo-gu, Seoul 04107, Republic of Korea.}
\email{jihoonok@sogang.ac.kr}

\author{Giovanni Scilla}
\address[Giovanni Scilla]{Department of Mathematics and Applications ``R. Caccioppoli'', University of Naples Federico II, Via Cintia, Monte S. Angelo, 80126 Naples, Italy}
\email{giovanni.scilla@unina.it}

\author{Bianca Stroffolini}
\address[Bianca Stroffolini]{Department of Mathematics and Applications ``R. Caccioppoli'', University of Naples Federico II, Via Cintia, Monte S. Angelo, 80126 Naples, Italy}
\email{bstroffo@unina.it}

% \thanks will become a 1st page footnote.
\thanks{}

% General info
\subjclass[2020]{Primary: 35K40, 46E35; Secondary: 49N60.}

%\date{\today}

%\dedicatory{This paper is dedicated to our advisors.}

\keywords{full regularity, $\varphi$-Laplace parabolic system, Uhlenbeck-type parabolic systems, general growth}

\begin{abstract}
We establish local regularity theory for parabolic systems of Uhlenbeck type with $\phi$-growth. In particular, we prove local boundedness of weak solutions and their gradient, and then  local  H\"older continuity of the  gradients, providing suitable assumptions on the growth function $\phi$. 
%In \cite{Lie06}, Lieberman proved that if the gradient of a solution is bounded then it is H\"older continuous. Therefore, the H\"older continuity of the gradient, which is a maximal regularity for those parabolic systems, is also proved as a corollary. 
Our approach, being independent of the degeneracy of the system, allows for a unified treatment of both the degenerate and the singular case.

\ \\

\noindent  \textsc{R\'esum\'e.}
On \'etablit la régularit\'e locale pour les solutions de syst\`emes 
paraboliques de type Uhlenbeck avec croissance générale $\phi$. En particulier, on prouve que les solutions telles que leur gradients sont born\'ees localement. De plus, on montre la continuit\'e h\'olderienne des gradients sous des conditions convenables pour la fonction $\phi$.  La m\'ethode est ind\'ependant de la nature du syst\`eme. Comme cons\'equence, on obtient, de mani\`ere unifi\'ee, la r\'egularit\'e dans le cas d\'eg\'en\'er\'ee comme singuli\`ere. 
\end{abstract}

\maketitle

%%%%%%%%%%%%%%%%%%%%%%%%%%%%%%%%%%%%%%%%%%%%%%%%%%%%%%%%%%%%%%%%%%%%%%%%
%%%%%%%%%%%%%%%%%%%%%%%%%%%%%%%%%%%%%%%%%%%%%%%%%%%%%%%%%%%%%%%%%%%%%%%%
%%%%%%%%%%%%%%%%%%%%%%%%%%%%%%%%%%%%%%%%%%%%%%%%%%%%%%%%%%%%%%%%%%%%%%%%

\section{Introduction}

We study local regularity theory for the following parabolic $\varphi$-Laplace system
\begin{equation}
\bfu_t - \div \left(\frac{\phi'(|D\bfu|)}{|D\bfu|}D\bfu\right) =0  \quad \text{in }\ \Omega_T=\Omega\times (0,T)\,,
\label{eq:system}
\end{equation}
where $\Omega\subset \R^n$ ($n \ge 2$) is open, $\phi$ is an Orlicz function verifying suitable growth conditions (see Section 2), $\bfu=(u^1,\dots,u^N)$ is a vector-valued function of $(x,t)\in \Omega\times (0,T)$, $\bfu_t$ is the derivative of $\bfu$ for time variable $t$, and $D\bfu=D_x\bfu$ is the gradient of $\bfu$ for the spatial variable $x$.  In particular, we prove the local boundedness of $\bfu$ and $D\bfu$ and the local H\"older continuity of $D\bfu$.

 A special case of $\phi$ in \eqref{eq:system} is the $p$-power function, i.e., $\phi(t)=\frac{1}{p}t^p$ with $1<p<\infty$. In this case, we have the elliptic and parabolic $p$-Laplace systems
$$
 \div \left(|D\bfu|^{p-2}D\bfu\right) =0  \quad \text{in }\ \Omega 
 \qquad \text{and} \qquad
  \bfu_t - \div \left(|D\bfu|^{p-2}D\bfu\right) =0  \quad \text{in }\ \Omega_T\,.
$$
For the elliptic $p$-Laplace system, Uhlenbeck \cite{Uhl77} proved the local H\"older continuity of $D\bfu$ when $p>2$. In \cite{Uhl77}, Uhlenbeck considered the system   
\begin{equation}\label{eq:Uhlenbeck}
 \div \left(\varrho(|D\bfu|^2)D\bfu\right) =0
\end{equation}
and assumed that $\varrho$ satisfies a $p$-growth condition. Note that by setting $\phi(s):=\int_0^s \tau \varrho(\tau^2)\,\d \tau$ (i.e., $\varrho(s^2)=\phi'(s)/s$) the previous system is changed to 
\begin{equation}\label{eq:ellipticsystem}
\div \left(\frac{\phi'(|D\bfu|)}{|D\bfu|}D\bfu\right) =0,
\end{equation}
which is the elliptic counterpart of \eqref{eq:system} and the Euler-Lagrange system corresponding to the following  autonomous and isotropic energy functional
$$
\int_\Omega \phi(|D\bfu|)\, \d x\,.
$$
From this fact, we sometimes say that the system \eqref{eq:ellipticsystem} or \eqref{eq:system}  has the \textit{Uhlenbeck structure}.  It is worth to point out that the radial structure, meaning the dependence through the modulus of the gradient, is the only one that prevent the formation of singularities (even boundedness of minimizers) and allows to prove  everywhere regularity results in the vectorial case, {see counterexamples in \cite{necas,sverakyan} and also \cite[Section 3]{Min_darkside}.}

 Examples of $\phi(t)$ satisfying the conditions in the paper are $t^p$, $t^p\log (1+t)$, $\max\{t^p,t^q\}$, $\min\{t^p,t^q\}$, and so on.  A more complicate example, which can fit experimental data, can be found in \cite[Section 2.3]{BL}. 
Moreover,  the system \eqref{eq:Uhlenbeck} is strongly concerned with stationary, irrotational flows of  compressible fluids. Precisely, when $N=1$ hence $\bfu = u$,  if  $\varrho$ is the density of an irrotational flow, then  the gradient $Du$ of a solution to \eqref{eq:Uhlenbeck}  represents the velocity of the flow hence the solution $u$ is called the velocity potential of the flow. At this stage, for an ideal flow (e.g. a polytropic flow) the density function $\varrho$ depends on $|Du|^2$. We refer to \cite{Bers_book,FinGil57} for applications of the above system to stationary, irrotational flow of compressible fluids.
 
 After the pioneering work of Uhlenbeck, Tolksdorf \cite{Tol83} obtained $C^{1,\alpha}$-regularity results for more  general elliptic systems with $p$-growth when $1<p<\infty$. We also refer to \cite{GiaMod86,AceFus89,Ham92} for everywhere $C^{1,\alpha}$-regularity results for elliptic systems with $p$-growth.  For the parabolic $p$-Laplace system, DiBenedetto and Friedman \cite{DiBeFried84,DiBeFried85} (see also the monograph \cite{DiB_book}) proved H\"older continuity of $D\bfu$ when $\frac{2n}{n+2} < p< \infty$ and we refer to \cite{Chen86,Wie86,CheDiBe89,Choe91,Choe92,BDLS22} for further related results for parabolic $p$-Laplace systems.

For a general function $\phi$, Lieberman studied regularity theory for elliptic equations (i.e., $N=1$) with $\phi$-growth, and around the same time Marcellini \cite{Mar89,Mar91} had considered elliptic equations with general $(p,q)$-growth.  Full $C^{1,\alpha}$-regularity for the elliptic $\phi$-Laplace system \eqref{eq:ellipticsystem} was established by Marcellini and Papi \cite{MarPapi} and by Diening, Stroffolini and Verde \cite{DieStrVer09}.  Marcellini and Papi proved Lipschitz regularity for local
minimizers of functionals with growth conditions general enough to embrace linear
and exponential ones.  The conclusion then follows using the $C^1$-regularity of the operator, with the help of classical results.
The second result, instead, is reminiscent of the Uhlenbeck proof: a nonlinear quantity $\phi(|D\bfu|)$ is shown to be a subsolution of an elliptic equation.  In addition, the authors were able to  prove an excess decay estimate for $\bV_p(D\bfu)$ (see Section~\ref{Sec2} for the definition of $\bV_p$) which implies the H\"older continuity of $\bV_p(D\bfu)$ and hence of $D\bfu$.

On the other hand, $C^{1,\alpha}$-regularity for the parabolic $\phi$-Laplace system \eqref{eq:system} has remained an open problem. There have been partial developments in this direction. Lieberman \cite{Lie06} proved that if $D\bfu$ is bounded, then $D\bfu$ is H\"older continuous. Hence the local boundedness of $D\bfu$ is missing.  Diening,  Scharle and  Schwarzacher \cite{DieSchSch19} obtained the local boundedness of $D\bfu$ for \eqref{eq:system} under an additional integrability condition on $D\bfu$ which is unnatural in the singular case, that is, $p<2$ in \eqref{characteristic1}.  {Moreover, Isernia \cite{Ise18} obtained the local boundedness of $\bfu$ for \eqref{eq:system}.}
%\comment{Giovanni: Isernia \cite{Ise18} obtained the local boundedness of $\bfu$ for \eqref{eq:system} \textcolor{red}{(, where as far as we have checked the proof works)} \textcolor{blue}{What about to drop the part in red? I'm not sure she noticed this error....} in the degenerate case only, that is, $p\ge 2$ in \eqref{characteristic1}.}

We note that in \cite{Lie06} the approximation of the parabolic system \eqref{eq:system} with nondegenerate systems is omitted and the weak solution is assumed to be twice differentiable with respect to the $x$ variable. In fact, one has to consider approximate nondegenerate parabolic systems (e.g. \eqref{eq:nondegenerate system}), and obtain uniform regularity estimates by differentiating these systems. 
At this stage, the twice differentiability of weak solutions of these systems with respect to the $x$ variable is needed.
However, the proof of the twice differentiability for parabolic system with $\phi$-growth is unclear and not an easy generalization of the one for the parabolic $p$-Laplace system. Even in the elliptic case, a more delicate analysis is required, see \cite[Section 4]{DieEtt08}.  In addition, Baroni and Lindfors \cite{BL} obtained the H\"older and Lipschitz continuity of solutions to Cauchy-Dirichlet problems for parabolic equations ($N=1$) with $\phi$-growth, see also \cite{Lin} for similar results for parabolic obstacle problems with $\phi$-growth. For more regularity results for the parabolic system with $\phi$-growth we refer to \cite{Cho18,DieSchStrVer17,HasOk21,HwaLie15,HwaLie15-1,OhOk22}.
%
%{\color{blue} (Jihoon: We can add partial regularity results here... I would ask you to add a short history of partial regularity.)}
%
%{\color{blue} (Bianca:Since our result is a result of everywhere regularity, it is better not to talk about partial regularity results}

In this paper, we establish full $C^{1,\alpha}$-regularity for the parabolic $\phi$-Laplace system \eqref{eq:system} by filling all the gaps in previous results. Let us state the main result.

\subsection{Setting of the problem and main result}

Suppose the function $\phi:[0,\infty)\to[0,\infty)$ is an N-function satisfying Assumption~\ref{Ass1}.
A function $\bfu=(u^1,u^2,\dots,u^N) \in C_{\loc}(0,T; L^2_{\loc}(\Omega,\R^N)) \cap L^\phi_\loc(0,T;W^{1,\phi}_{\loc}(\Omega,\R^N))$ is said to be a (local) \emph{weak solution} to \eqref{eq:system} if it satisfies the following weak form of \eqref{eq:system}: 
$$%\begin{equation}
-\int_{\Omega_T} \bfu \cdot \bfzeta_t \,\d z + \int_{\Omega_T}  \frac{\phi'(|D\bfu|)}{|D\bfu|}D\bfu : D\bfzeta \, \d z =0 
\quad  \text{for all }\ \bfzeta\in C^\infty_{\mathrm c}(\Omega_T,\R^N)\,,
$$%\label{eq:weakformul}
%\end{equation}
where  ``$\cdot$" and ``$:$"  are the Euclidean inner products in $\R^N$ and $\R^{Nn}$, respectively. 
 By the density of smooth functions in Orlicz-Sobolev spaces and a standard approximation argument (see e.g. the proof of Theorem 1 in \cite{Ise18}), one can see that the weak solution $\bfu$ to \eqref{eq:system} also satisfies for every $0<t_1<t_2<T$,
 \begin{equation}
\left.\int_{\Omega'} \bfu \cdot \zeta(x,t)\,dx\right|_{t=t_1}^{t=t_2} + \int_{\Omega'}\int_{t_1}^{t_2} \left[-\bfu \cdot \zeta_t  +  \frac{\phi'(|D\bfu|)}{|D\bfu|}D\bfu : D\zeta\right] \, \d t\, \d x =0 
\label{eq:weakformul1}
\end{equation}
for all $\zeta\in W^{1,2}(t_1,t_2;L^2(\Omega',\R^N))\cap L^\phi(t_1,t_2;W^{1,\phi}_0(\Omega',\R^N))$ and $\Omega'\Subset\Omega$. We note that weak solution $\bfu$ is not  weakly differentiable with respect to  $t$. Therefore, we cannot take a test function $\zeta$ involving the weak solution  directly.  This technical obstacle can be overcome by using  approximation via Steklov average, see \cite[I. 3-(i) and II. Proposition 3.1]{DiB_book}, which is by now a standard approximation argument. Hence we will {assume that $\mathbf u$ is differentiable, and} consider test functions involving the weak solution without specific comment.

We state $C^{1,\alpha}$-regularity, which is the maximal regularity, for the weak solution $\bfu$ of the system \eqref{eq:system}. {This result  follows  directly by combining Corollary \ref{cor:bddgrad} and Theorem \ref{thm:holdergrad_Lie}.}
%The main result  is the $C^{1,\alpha}$-regularity for the solution $u$ of the system \eqref{eq:system}. 

\begin{theorem}\label{mainthm}
Suppose $\phi\in C^1([0,\infty))\cap C^2((0,\infty))$ satisfies Assumption~\ref{Ass3} with 
\begin{equation}
p>\frac{2n}{n+2}\,,
\label{eq:hypp}
\end{equation}
and let $\bfu$ be a weak solution to the parabolic system \eqref{eq:system}. Then $D\bfu$ is locally H\"older continuous. Moreover, there exist $\alpha\in(0,1)$ and $c>0$ depending on  $n,N,p,q,\gamma_1,c$ such that for every $Q_{2R}(z_0)\Subset\Omega_T$ and every $0<r\le R$,
$$
 \underset{Q_r(z_0)}{\mathrm{osc}} \, D\bfu \le c \lambda \left(\max\left\{\phi''(\lambda)^{\frac{1}{2}},\phi''(\lambda)^{-\frac{1}{2}}\right\} \frac{r}{R}\right)^{\alpha}
$$
where 
$$
\lambda := \left(\fint_{Q_{2R}(z_0)} \phi(|D\bfu|) \, \d z +1\right)^{\frac{2}{(n+2)p-2n}}\,.
$$
\end{theorem}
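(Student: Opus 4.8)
The plan is to read Theorem~\ref{mainthm} as the concatenation of two logically separate statements: the local boundedness of $D\bfu$ (Corollary~\ref{cor:bddgrad}) and the local H\"older continuity of $D\bfu$ \emph{under the a priori assumption that it is bounded} (Theorem~\ref{thm:holdergrad_Lie}, which is Lieberman's argument). Once both are in place, the quantitative conclusion — the oscillation estimate with the intrinsic scale $\lambda$ and the correction factor $\max\{\phi''(\lambda)^{1/2},\phi''(\lambda)^{-1/2}\}\,r/R$ — follows by expressing both results on one common parabolic scale and tracking the intrinsic rescalings; the normalization of $\lambda$ by the power $\tfrac{2}{(n+2)p-2n}$ is precisely the exponent that makes $\phi(|D\bfu|)/\lambda$ of unit average on $Q_{2R}$.

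The real content is the gradient bound, and I would obtain it by the following scheme. Because weak solutions of \eqref{eq:system} are not known to be twice differentiable in $x$, one first regularizes $\phi$ to an N-function $\phi_\varepsilon$ whose Uhlenbeck weight $\phi_\varepsilon'(t)/t$ is bounded and bounded away from zero, giving a uniformly nondegenerate parabolic system (of the type \eqref{eq:nondegenerate system}) whose solutions $\bfu_\varepsilon$ are regular enough to be differentiated — the justification of this second-order $x$-regularity for $\phi$-growth being itself a nontrivial point, treated via a preliminary difference-quotient argument rather than as a transcription of the $p$-Laplace case (cf. the remark on \cite{DieEtt08}). Differentiating the $\varepsilon$-system in each spatial direction $e_s$, testing with a cut-off multiple of $\partial_s\bfu_\varepsilon$, summing in $s$ and using the radial structure to bound the arising bilinear form below by a constant times $\phi_\varepsilon''(|D\bfu_\varepsilon|)\,|D^2\bfu_\varepsilon|^2$, one reaches a Caccioppoli inequality showing that $w_\varepsilon:=1+\phi_\varepsilon(|D\bfu_\varepsilon|)$ (equivalently $\bV(D\bfu_\varepsilon)$) is a subsolution of a parabolic differential inequality. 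A De Giorgi / Moser iteration on the super-level sets of $w_\varepsilon$ — performed on parabolic cylinders rescaled by the $\phi$-intrinsic factor, which is exactly where $\lambda$ and $\phi''(\lambda)^{\pm1/2}$ appear — then yields $\sup_{Q_{R/2}}w_\varepsilon\le c\,\fint_{Q_R}w_\varepsilon$ with $c$ \emph{independent of $\varepsilon$}; letting $\varepsilon\to0$ gives the bound for $D\bfu$. The hypothesis \eqref{eq:hypp}, $p>\tfrac{2n}{n+2}$, is the threshold that keeps the parabolic Sobolev embedding in this iteration effective in the singular range.

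The main obstacle, and the feature that distinguishes this approach from the classical ones, is the requirement that \emph{no constant in the iteration depend on the ellipticity ratio of the $\varepsilon$-system}: otherwise the estimate degenerates as $\varepsilon\to0$. This forces one to run the whole argument in the $\phi$-intrinsic geometry \emph{uniformly} over the degenerate ($p\ge2$) and singular ($p<2$) regimes, instead of invoking the DiBenedetto alternative in the form tailored separately to each case — this is the "independent of the degeneracy" point of the abstract. Controlling the time term in the differentiated Caccioppoli inequality (which, after the Steklov regularization indicated before \eqref{eq:weakformul1}, involves $\partial_t$ of a nonlinear function of $D\bfu_\varepsilon$) and absorbing it uniformly is the delicate step here.

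Finally, granted $D\bfu\in L^\infty_\loc$, the weight $\phi'(|D\bfu|)/|D\bfu|$ is, on each small cylinder, comparable to fixed positive constants, so \eqref{eq:system} behaves like a linear uniformly parabolic system there; comparing $\bfu$ with the solution of the corresponding constant-coefficient linear system and iterating the resulting excess decay for $\bV(D\bfu)$ gives the H\"older continuity of $\bV(D\bfu)$, hence of $D\bfu$, with an exponent $\alpha\in(0,1)$ and constants depending only on the structural data. Re-inserting the intrinsic rescalings used in the two steps converts this into the displayed bound for $\mathrm{osc}_{Q_r(z_0)}D\bfu$.
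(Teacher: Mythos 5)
Your proposal matches the paper's strategy: Theorem~\ref{mainthm} is indeed the concatenation of Corollary~\ref{cor:bddgrad} (gradient boundedness, via approximation by nondegenerate systems \eqref{eq:nondegenerate system}, second-order differentiability by difference quotients, a Caccioppoli inequality for the differentiated system, and Moser iteration) with Theorem~\ref{thm:holdergrad_Lie} (Lieberman's H\"older estimate for bounded gradients), and all the structural points you raise — the Steklov handling of the time derivative, the need for $\varepsilon$-uniform constants, the role of $p>\tfrac{2n}{n+2}$, and the comparison with a constant-coefficient linear system for the excess decay — are exactly what the paper does.

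One small misattribution worth flagging: you place the intrinsic rescaling $\phi''(\lambda)^{\pm 1/2}$ inside the Moser iteration that gives the $L^\infty$ bound, but in the paper Theorem~\ref{thm:bddgrad} is run on \emph{standard} cylinders, and the exponent $\tfrac{2}{(n+2)p-2n}$ in $\lambda$ is simply the outcome of that iteration (from $\chi_1=\tfrac4n+p-2$ and the interpolation step), not a normalization making $\phi(|D\bfu|)/\lambda$ of unit average. The intrinsic geometry $Q^\lambda_r$ with time scale $r^2/\phi''(\lambda)$ enters only in Section~6, where the DiBenedetto--Friedman/Lieberman dichotomy (Propositions~\ref{prop:nondegenerate} and~\ref{prop:degenerate}) is still used; the "unified" aspect is that these two propositions are phrased on the same $\phi$-intrinsic cylinders rather than on cylinders scaled differently in the degenerate and singular regimes. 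These are presentation-level inaccuracies, not gaps in the argument.
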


We remark that the condition \eqref{eq:hypp} is essential in the regularity theory even for the parabolic $p$-Laplace system, without any additional integrability condition on the solution $\bfu$, see \cite{DiBeFried85,Choe91} and also \cite{DiB_book}.

We shall introduce the strategy of our paper.  We  prove sequentially local $L^\infty$-regularity of 
the weak solution $\bfu$ to \eqref{eq:system}, the local $L^\infty$-regularity and $C^\alpha$-regularity of $D\bfu$, by providing essentially sharp conditions on $\phi$.      
As for the local boundedness of $\bfu$ (Theorem~\ref{thm:bound}), we apply the  Moser iteration to  a suitable test function.  Next,  using the parabolic embedding result in Lemma~\ref{Lem:paraembedding}, we reach the conclusion.
Once the $L^\infty$-regularity result is achieved, we prove twice differentiability of weak solutions to approximate nondegenerate systems in Lemma~\ref{Lem:seconddifferential} by using the difference quotients  and a Giaquinta-Modica type covering argument. Note that  the boundedness of $\bfu$ plays an important role in the proof of Lemma~\ref{Lem:seconddifferential}  since the constant $p$ in \eqref{characteristic1} can be less than $2$. Then by differentiating the approximate nondegenerate system and applying Moser iteration again, we obtain $L^\infty$ estimate for $D\bfu$  in Theorem~\ref{thm:bddgrad}  and Corollary~\ref{cor:bddgrad}.  Finally, we revisit the results with the proofs in \cite{Lie06}, and  prove Theorem~\ref{mainthm}.

\section{Preliminaries}\label{Sec2}

\subsection{Notation}
We write  ${\bf u}=(u^\alpha)=(u^1,\dots,u^N)\in \R^N$ and  
${\bf Q}=(Q^\alpha_i)\in\R^{N\times n}=\R^{Nn}$ 
%${\bf Q}=(Q^\alpha_i)\in\R^{Nn}=\R^{Nn}$ 
where 1$\le i\le n$ and  $1\le \alpha \le N$.
For $z=(x,t)\in\R^n\times\R$, we introduce the parabolic cylinder
\begin{equation}
%Q_r(z):=B_r(x)\times (t-(2r)^2,t]\,,
Q_r(z):=B_r(x)\times (t-r^2,t]\,, 
\label{eq:cylinder}
\end{equation}
where $B_r(x)$ denotes the open ball in $\R^n$ with center $x$ and radius $r$. The symbol $\partial_{\rm p} Q_r(z)$ denotes the usual parabolic boundary of $Q_r(z)$.

Let $f : E\to [0,\infty)$ with $E\subset \R$. $f$ is called almost increasing (resp. almost decreasing) if there is $L\ge 1$ such that $f(s)\le L f(t)$ for all $s,t\in E$ with $s\le t$ (resp. $t\le s$). In particular, if we can choose $L=1$, then $f$ is simply called increasing (resp. decreasing).

By $\chi^*$ we denote the Sobolev conjugate exponent of $\chi$; i.e., $\chi^*:=\frac{n\chi}{n-\chi}$ for $\chi<n$, while we agree that $\chi^*:=2\chi$ if $\chi\ge n$.  

{The notation $f\sim g$ means that there exists constant $c\ge 1$ such that $\frac{1}{c}f\le g \le c f$.
We will use the Einstein summation convention, that is, we will omit the summation symbol for indexes that appear twice, see e.g. \eqref{ellipticity} and the next inequality.}
\subsection{Orlicz functions}

In this paper, $\phi:[0,\infty)\to[0,\infty)$ is always an $N$-function, that is, $\phi(0)=0$, there exists a right continuous derivative $\phi'$ of  $\phi$,  $\phi'$ is 
{increasing}
%nondecreasing
with $\phi'(0)=0$ and $\phi'(t)>0$ when $t>0$. 
%Without loss of generality, we can assume that  
For simplicity, we shall assume that
$$
\phi(1)=1\,.$$
{Note that if we do not assume the above condition, then constants $c$ may depend on $\phi(1)$.}
Moreover, we assume that $\phi$ satisfies the following growth conditions: 
\begin{assumption} \label{Ass1}
$\phi:[0,\infty)\to[0,\infty)$ is an $N$-function, and  there are $1<p\le q$ such that $\frac{\phi(t)}{t^p}$ is almost increasing and   $\frac{\phi(t)}{t^q}$ is almost decreasing for $t\in (0,\infty)$ with constant $L\ge 1$. 
\end{assumption}

The almost decreasing and increasing conditions in Assumption~\ref{Ass1} are equivalent to the $\Delta_2$ and $\nabla_2$ conditions for $\phi$, respectively. Compared with the $\Delta_2$ type conditions, the benefit of the almost increasing/decreasing condition is that we can directly see the lower and upper bounds of an exponent factor of $\phi$. In particular, we will prove the boundedness of the weak solution to \eqref{eq:system} under the above assumption where the lower bound $p$ will play a crucial role.  We also remark that Assumption~\ref{Ass1} with $L=1$ is equivalent to the following inequality 
\begin{equation}\label{characteristic}
1< p \le  \frac{t \phi'(t)}{\phi(t)} \le q\,,\quad t>0\,.
\end{equation}
%Without loss of generality, we sometimes assume that 
%\begin{equation}
% 1< p < 2 <q\, .
% \label{p2q}
%\end{equation}
For any $t>0$ and $0<c<1<C$ there holds
\begin{equation}
c^q \varphi(t) \leq \varphi(ct) \leq c^p \varphi(t) \,\, \mbox{ and } \,\, C^p \varphi(t) \leq \varphi(Ct) \leq C^q \varphi(t)\,.
\label{eq:estimpq}
\end{equation}

The conjugate function of $\phi$ is defined as
$$\phi^*(t):= \sup_{s \geq 0}\, (st - \phi(s))\,.$$ 
From the definition, the following Young's inequality  
\begin{equation}  \label{eq:young}
  st \leq \phi(t) +  \phi^\ast(s)\,,\quad s,t\geq0\,,
\end{equation}
holds true. Since the exact value of $\phi^*$ is not always explicitly computable, the estimate
\begin{equation}
\phi^*\left(\frac{\phi(t)}{t}\right)\sim \phi^*(\phi'(t)) \sim \phi(t)
\label{eq:hok2.4}
\end{equation}
will often be useful in computations (see \cite[Theorem~2.4.10]{HH}).
%Also, we will  use that
%\begin{equation}
%t \phi'(t) \approx \phi(t)
%%\phi^*\left(\phi'(t) \right) \approx \phi(t)
%\end{equation}
%In fact, the two equations above 
In fact, the above relation 
holds true since Assumption~\ref{Ass1} guarantees the $\Delta_2$ condition for both $\phi$ and $\phi^*,$ and relevant constants depends on $p$, $q$ and $L$.

For higher order regularity results  we will consider stronger assumptions. 

\begin{assumption} \label{Ass2}
$\phi:[0,\infty)\to[0,\infty)$ is an $N$-function and satisfies
\begin{itemize}
\item[(1)] $\phi \in C^1([0,\infty))\cap C^2((0,\infty))$ 
\item[(2)] There exist $1<p\le q$ such that 
\begin{equation}\label{characteristic1}
0< p -1  \le  \frac{t \phi''(t)}{\phi'(t)} \le q -1 \,, \quad t>0\,.
\end{equation}
\end{itemize}
\end{assumption}

Note that Assumption~\ref{Ass2} implies Assumption~\ref{Ass1} with the same $p$ and $ q$ and with the constant $L=1$, hence we have \eqref{characteristic}.  

We notice that in the above two assumptions we can replace $p$ and $q$ by $\min\{p,2-\epsilon\}$ and $\max\{q,2+\epsilon\}$ for $\epsilon>0$, respectively. Therefore, without loss of generality,  we always assume that $p$ and $q$ satisfy
\begin{equation}
 1< p < 2 <q\, .
 \label{p2q}
\end{equation}

The next assumption is adding an H\"older type continuity on the Hessian of $\phi(|{\bf Q}|)$ for ${\bf Q}\in \R^{Nn}$, denoted by $D^2_{\bf Q}\phi(|{\bf Q}|)$.

\begin{assumption} \label{Ass3}
$\phi:[0,\infty)\to[0,\infty)$ is an $N$-function and satisfies Assumption \eqref{Ass2}. Furthermore, 
 there exist positive constants $\gamma_1$ and $c_h$ such that for every ${\bf Q},{\bf P}\in \R^{Nn}$ with $|{\bf Q}-{\bf P}| \le \frac{1}{2}|{\bf Q}|$,
\begin{equation}\label{ass3_holder}
|D^2_{\bf Q}\phi(|{\bf Q}|)-D^2_{\bf Q} \phi(|{\bf P}|)| \le c_h \left(\frac{|{\bf Q}-{\bf P}|}{|{\bf Q}|}\right)^{\gamma_1} \phi''(|{\bf Q}|).
\end{equation}
\end{assumption}

Note that if $\phi(t)=t^p$ with $1<p<\infty$, then it satisfies Assumption~\ref{Ass3}. Similar assumptions were used  for proving the $C^{1,\alpha}$-regularity for minimizers of functionals with  general growth in \cite{DieStrVer09}.

If $\phi$ satisfies Assumption~\ref{Ass1}, we define the Orlicz space $L^\phi(\Omega,\R^N)$ as the set of all measurable functions $f:\Omega\to\R^N$ such that
$$
\int_\Omega \phi(|f(x)|)\, \d x < \infty, 
$$ 
and 
the Orlicz-Sobolev space $W^{1,\phi}(\Omega,\R^N)$ as the set of all  $f\in L^\phi(\Omega,\R^N)\cap W^{1,1}(\Omega,\R^N)$ such that
$$
\int_\Omega \phi(|Df(x)|)\, \d x < \infty.
$$ 
 $L^\phi(\Omega,\R^N)$ and $W^{1,\phi}(\Omega,\R^N)$ are endowed with the usual Luxembourg type norms. Then they are reflexive Banach spaces. Moreover, the parabolic space
$L^\phi(t_1,t_2;W^{1,\phi}(\Omega,\R^N))$ denotes the set of all functions 
%$f:\Omega\times (t_1,t_2)\to \R^N$
$f\in L^1(t_1,t_2;W^{1,1}(\Omega,\R^N))$
such that $f(\cdot,t)\in W^{1,\phi}(\Omega,\R^N)$ for a.e. $t\in(0,T)$ and 
$$
\int_{t_1}^{t_2} \int_\Omega \phi(|Df(x,t)|)\, \d x\, \d t < \infty.
$$

\subsection{Shifted $N$-functions and related operators}

The following definitions and results about shifted $N$-functions can be found in \cite{DieEtt08, DieStrVer09}.

For an $N$-function  $\phi$ and for $a\ge0 $,  we define the shifted $N$-function $\phi_a$ by
$$
\phi_{a}(t):=\int_{0}^{t} \frac{\phi'(a+s) s}{a+s} \, \d s  \quad \left(\text{i.e., }\ \phi_a'(t) =\frac{\phi'(a+t)}{a+t}t\right).
$$
We note that if $\phi$ satisfies Assumption~\ref{Ass1} or \ref{Ass2} or \ref{Ass3}, then $\phi_a$ also satisfies Assumption~\ref{Ass1} or \ref{Ass2} or \ref{Ass3} uniformly in $a\ge 0$ with the same $p$ and $q$. 
%with $1\le p \le 2 \le q$.  
We then recall useful inequalities for shifted $N$-function $\phi_a$ in \cite{DieEtt08} and \cite{DieStrVer09}.
\begin{lemma}\cite{DieStrVer09}
Let $\varphi$
satisfy Assumption~\ref{Ass2}. Then we have
\begin{align}
&\phi_a(t) \sim \phi'_a(t)\,t\,; \label{(2.6a)} \\
&\phi_a(t) \sim \phi''(a+t)t^2\sim\frac{\varphi(a+t)}{(a+t)^2}t^2\sim \frac{\varphi'(a+t)}{a+t}t^2\,,\label{(2.6b)}\\
& \phi(a+t)\sim [\phi_a(t)+\phi(a)]\,,\label{eq:approx}
\end{align}
which hold uniformly with respect to $a\geq0$.
\end{lemma}

\begin{lemma}\cite[Lemma~32]{DieEtt08}
Let $\varphi$
satisfy Assumption~\ref{Ass1}. 
%be an $N$-function such that $\Delta_2(\varphi,\varphi^*)<+\infty$. 
Then for all $\delta>0$ there exists $c_\delta>0$ depending only on 
$p$, $q$, $L$ and $\delta$,  
%and $\Delta_2(\varphi,\varphi^*)$ 
such that for all $t,u,a\geq0$
\begin{equation}\label{eq:young4}
\left\{\begin{aligned}
& tu\leq \delta \varphi(t) + c_\delta \varphi^*(u)\,,\\
& t\varphi'(u) + u\varphi'(t) \leq \delta \varphi(t) + c_\delta\varphi(u)\,, \\
& tu\leq \delta \varphi_a(t) + c_\delta \varphi_a^*(u)\,,\\
& t\varphi_a'(u) + u\varphi_a'(t) \leq \delta \varphi_a(t) + c_\delta\varphi_a(u)\,. 
\end{aligned}\right.
\end{equation}
\end{lemma}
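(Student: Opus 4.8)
The statement to prove is the four inequalities in \eqref{eq:young4}, uniform in $t,u,a\ge 0$, for $N$-functions $\varphi$ satisfying Assumption~\ref{Ass1}. All four are instances of a single principle: Young's inequality for $\varphi$ (resp.\ $\varphi_a$) with a $\delta$-weight on one side, obtained by rescaling the variables. The plan is to prove the first inequality directly from \eqref{eq:young} and the $\Delta_2$-type scaling in \eqref{eq:estimpq}, then deduce the second from the first, then observe that everything said about $\varphi$ applies verbatim to $\varphi_a$ since (as recorded just above) $\varphi_a$ satisfies Assumption~\ref{Ass1} uniformly in $a\ge 0$ with the same $p,q$ (and some $L$ depending only on $p,q,L$).

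For the first inequality, I would fix $\delta\in(0,1)$ (the case $\delta\ge 1$ being trivial from plain Young's inequality \eqref{eq:young}) and choose $\kappa=\kappa(\delta)\in(0,1)$ to be specified. Apply \eqref{eq:young} to the pair $(\kappa t, u/\kappa)$:
\[
tu = (\kappa t)\cdot\frac{u}{\kappa} \le \varphi(\kappa t) + \varphi^*\!\Big(\frac{u}{\kappa}\Big).
\]
By \eqref{eq:estimpq}, since $0<\kappa<1$ and $p>1$, we have $\varphi(\kappa t)\le \kappa^p\varphi(t)$. For the conjugate term, Assumption~\ref{Ass1} implies $\varphi^*$ also satisfies a $\Delta_2$-type doubling (its associated exponents are $q'$ and $p'$, with constant depending on $p,q,L$); concretely, scaling $\varphi^*$ up by the factor $1/\kappa>1$ costs at most a power $(1/\kappa)^{p'}$, i.e.\ $\varphi^*(u/\kappa)\le \kappa^{-p'}\varphi^*(u)$, where $p'=p/(p-1)$. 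Hence
\[
tu \le \kappa^p\,\varphi(t) + \kappa^{-p'}\,\varphi^*(u).
\]
Now choose $\kappa$ so that $\kappa^p\le \delta$, i.e.\ $\kappa=\delta^{1/p}$, giving $tu\le \delta\,\varphi(t)+c_\delta\,\varphi^*(u)$ with $c_\delta=\delta^{-p'/p}$, which indeed depends only on $p$ and $\delta$ — and, through the doubling constant of $\varphi^*$, only on $p,q,L,\delta$. This is the first claimed inequality.

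The second inequality follows from the first by the standard identifications. Write $t\varphi'(u)\le \varphi(t) + \varphi^*(\varphi'(u))$ via \eqref{eq:young}, and use \eqref{eq:hok2.4} (valid under Assumption~\ref{Ass1}, which as noted gives $\Delta_2$ for both $\varphi$ and $\varphi^*$) to get $\varphi^*(\varphi'(u))\sim \varphi(u)$; more precisely, apply the first inequality to the product $t\cdot\varphi'(u)$ to absorb a $\delta\varphi(t)$, landing on $\delta\varphi(t)+c_\delta\varphi^*(\varphi'(u))\le \delta\varphi(t)+c_\delta'\varphi(u)$ after \eqref{eq:hok2.4}. Symmetrically, $u\varphi'(t)\le \tfrac{\delta}{2}\varphi(t)+c_\delta\varphi(u)$ — here one uses instead $u\varphi'(t)\le \varphi^*(\varphi'(t))\,\text{-type}$ reasoning, or directly the first inequality with the roles arranged so that the $\varphi(t)$-term (not the $\varphi(u)$-term) is the one carrying $\delta$; this uses $\varphi^*(\varphi'(t))\sim\varphi(t)$ once more. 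Adding and relabeling constants gives $t\varphi'(u)+u\varphi'(t)\le \delta\varphi(t)+c_\delta\varphi(u)$.

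Finally, for the third and fourth inequalities I would simply invoke the fact, stated in the excerpt immediately before this lemma, that $\varphi_a$ is again an $N$-function satisfying Assumption~\ref{Ass1} uniformly in $a\ge 0$ with the same $p,q$ and a constant $L'=L'(p,q,L)$; consequently its conjugate $\varphi_a^*$ also doubles with constants controlled by $p,q,L$. Re-running the arguments of the first two inequalities with $\varphi$ replaced by $\varphi_a$ yields the third and fourth, with $c_\delta$ depending only on $p,q,L,\delta$ and \emph{not} on $a$. The main point requiring care — the only genuine obstacle — is tracking that the constant $c_\delta$ depends solely on $p,q,L,\delta$: this hinges on the doubling constants of $\varphi^*$ and $\varphi_a^*$ being uniformly controlled, which is exactly what Assumption~\ref{Ass1} provides (it is equivalent to simultaneous $\Delta_2$ and $\nabla_2$), and on the comparison constants in \eqref{eq:hok2.4} being of the same nature. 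Everything else is elementary rescaling via \eqref{eq:estimpq} and \eqref{eq:young}.
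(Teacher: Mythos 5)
The paper does not prove this lemma at all: it is imported verbatim as \cite[Lemma~32]{DieEtt08}, so there is no in-paper argument to compare against. Your self-contained derivation is correct and is in fact the standard proof of such weighted Young inequalities. The first inequality via $tu=(\kappa t)(u/\kappa)$, the bound $\varphi(\kappa t)\le L\kappa^p\varphi(t)$, and the doubling $\varphi^*(u/\kappa)\le L'\kappa^{-p'}\varphi^*(u)$ (which follows because almost monotonicity of $\varphi(t)/t^p$ is equivalent to almost monotonicity of $\varphi^*(s)/s^{p'}$ in the opposite direction) is exactly right; just note that \eqref{eq:estimpq} as displayed in the paper is the $L=1$ case, so in general your choice should be $\kappa=(\delta/L)^{1/p}$, which still leaves $c_\delta=c_\delta(p,q,L,\delta)$. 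The one step you state loosely is the term $u\varphi'(t)$: to put the small weight on $\varphi(t)$ you need the \emph{symmetric} weighted inequality $su\le\delta\varphi^*(s)+c_\delta\varphi(u)$ (proved by the identical rescaling, now using that $\varphi^*(s)/s^{q'}$ is almost increasing so that $\varphi^*(\kappa s)\le L\kappa^{q'}\varphi^*(s)$ for $\kappa<1$), applied with $s=\varphi'(t)$ and followed by $\varphi^*(\varphi'(t))\le c\,\varphi(t)$ from \eqref{eq:hok2.4}, after which one shrinks $\delta$ to absorb the comparison constant. You gesture at precisely this, so the gap is only expository. The passage to $\varphi_a$ by uniformity of Assumption~\ref{Ass1} in $a$ is exactly how the cited source handles it as well.
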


\begin{lemma} \cite[Lemmas 24 and 29]{DieEtt08}.
Let $\varphi$ satisfy Assumption~\ref{Ass2}.
% and $\Delta_2(\varphi,\varphi^*)<+\infty$. 
\begin{itemize} 
\item[(1)] Uniformly in $s,t\in\R^n$ with $|s|+|t|>0$
\begin{equation}
\varphi''(|s|+|t|)|s-t| \sim \varphi'_{|s|}(|s-t|)\,,
\label{eq:DE(6.7)}
\end{equation}
\item[(2)] There exists $c=c(p,q)>0$ such that for all $s_1,s_2,t\in\R^n$
\begin{equation}
\phi'_{|s_2|}(|s_1-s_2|)\lesssim  \phi'_{|t|}(|s_1-t|)+\phi'_{|t|}(|s_2-t|)\,,
\label{eq:DE(6.19)}
\end{equation}
\end{itemize}
where the hidden constants above depend only on $p$ and $q$.
\end{lemma}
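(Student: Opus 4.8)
\textbf{Proof strategy for \eqref{eq:DE(6.7)} and \eqref{eq:DE(6.19)}.}
The plan is to reduce everything to the two-sided bound $\phi''(t)\sim\phi'(t)/t\sim\phi(t)/t^2$, which follows from \eqref{characteristic} and \eqref{characteristic1}, together with the elementary doubling consequences \eqref{eq:estimpq}, and then treat the shifted functions $\phi_a$ via the representation $\phi_a'(t)=\phi'(a+t)t/(a+t)$ and the comparison \eqref{(2.6b)}, namely $\phi_a(t)\sim\phi''(a+t)t^2$. Throughout I would freely use that $\phi_a$ satisfies Assumption~\ref{Ass2} uniformly in $a\ge 0$ with the same $p,q$, so all constants below are controlled by $p$ and $q$ only.

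For part (1), fix $s,t\in\R^n$ with $|s|+|t|>0$ and set $a:=|s|$. By \eqref{(2.6b)} applied to $\phi_a$ at the point $|s-t|$ we have $\phi_{a}(|s-t|)\sim\phi''(a+|s-t|)\,|s-t|^2$, and by \eqref{(2.6a)} also $\phi_{a}(|s-t|)\sim\phi'_{a}(|s-t|)\,|s-t|$; combining these gives $\phi'_{|s|}(|s-t|)\sim\phi''(|s|+|s-t|)\,|s-t|$. It therefore remains to replace the argument $|s|+|s-t|$ of $\phi''$ by $|s|+|t|$. Since $||s|+|t|-(|s|+|s-t|)|=||t|-|s-t||\le\min\{|s|,|t|\}\le|s|+|s-t|$ and $\le|s|+|t|$ by the triangle inequality, the two quantities $|s|+|t|$ and $|s|+|s-t|$ are comparable with universal constants; then \eqref{eq:estimpq} (equivalently, the fact that $\phi''(\tau)\tau^2\sim\phi(\tau)$ is doubling, with doubling constant depending only on $p,q$) yields $\phi''(|s|+|t|)\sim\phi''(|s|+|s-t|)$. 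Chaining the comparabilities proves \eqref{eq:DE(6.7)}.

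For part (2), fix $s_1,s_2,t\in\R^n$ and put $a:=|s_2|$, $b:=|t|$. Using \eqref{eq:DE(6.7)} twice (once with the pair $(s_2,s_1)$, once with $(t,s_1)$ and $(t,s_2)$) it suffices to show
\[
\phi''(|s_2|+|s_1-s_2|)\,|s_1-s_2|\lesssim \phi''(|t|+|s_1-t|)\,|s_1-t|+\phi''(|t|+|s_2-t|)\,|s_2-t|\,.
\]
The triangle inequality gives $|s_1-s_2|\le|s_1-t|+|s_2-t|$, so it is enough to bound each of $\phi''(|s_2|+|s_1-s_2|)\,|s_1-t|$ and $\phi''(|s_2|+|s_1-s_2|)\,|s_2-t|$ by the right-hand side; by symmetry I describe only the first. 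One distinguishes the case $|s_1-t|\ge|s_2-t|$, in which $|s_2|+|s_1-s_2|\le|s_2|+|s_1-t|+|s_2-t|\le (|t|+|s_2-t|)+|s_1-t|+|s_2-t|\lesssim |t|+|s_1-t|$, and the complementary case. In the first case monotonicity of $\phi''(\tau)\tau$ in $\tau$ (again a consequence of $\phi''(\tau)\tau^2\sim\phi(\tau)$ being increasing, with \eqref{eq:estimpq} controlling the constant) gives $\phi''(|s_2|+|s_1-s_2|)\,|s_1-t|\le\phi''(|s_2|+|s_1-s_2|)(|s_2|+|s_1-s_2|)\lesssim\phi''(|t|+|s_1-t|)(|t|+|s_1-t|)\lesssim \phi''(|t|+|s_1-t|)\,|t| +\phi''(|t|+|s_1-t|)\,|s_1-t|$, and the term involving $|t|$ is absorbed since $\phi''(|t|+|s_1-t|)\,|t|\lesssim\phi''(|t|+|s_1-t|)|s_1-t| + \phi''(|t|+|s_2-t|)|s_2-t|$ whenever $|t|\le|s_1-t|+|s_2-t|$, and when $|t|>|s_1-t|+|s_2-t|$ all the relevant arguments of $\phi''$ are comparable to $|t|$ so $\phi''(|s_2|+|s_1-s_2|)|s_1-s_2|\lesssim\phi''(|t|)(|s_1-t|+|s_2-t|)$ directly. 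The case $|s_1-t|<|s_2-t|$ is handled identically with the roles of the two summands exchanged. Collecting the cases proves \eqref{eq:DE(6.19)}.

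\textbf{Main obstacle.} The only genuine point requiring care is the bookkeeping in part~(2): one must split according to which of $|s_1-t|$, $|s_2-t|$, $|t|$ dominates so as to know which argument of $\phi''$ is the largest, since $\phi''$ is only \emph{almost} monotone through the relation $\phi''(\tau)\tau^2\sim\phi(\tau)$ and all comparisons cost the $p,q$-dependent doubling constants of \eqref{eq:estimpq}. Once the right case distinction is made, each estimate is a one-line application of the triangle inequality plus doubling, and the passage to the shifted functions is automatic via \eqref{(2.6a)}–\eqref{(2.6b)} and the uniform validity of Assumption~\ref{Ass2} for $\phi_a$.
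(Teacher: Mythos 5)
Your argument for part (1) is correct: combining \eqref{(2.6a)} and \eqref{(2.6b)} gives $\phi'_{|s|}(|s-t|)\sim\phi''(|s|+|s-t|)\,|s-t|$, and the two arguments $|s|+|s-t|$ and $|s|+|t|$ are comparable within a factor $2$ by the triangle inequality, so doubling of $\phi''(\tau)\tau^2\sim\phi(\tau)$ finishes the proof. (The paper itself only cites \cite[Lemmas 24, 29]{DieEtt08}, so there is no in-paper proof to compare with; your route for (1) is the standard one.)

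Part (2), however, contains a genuine gap. Your reduction replaces $|s_1-s_2|$ by $|s_1-t|+|s_2-t|$ on the left and then tries to bound $\phi''(|s_2|+|s_1-s_2|)\,|s_1-t|$ and $\phi''(|s_2|+|s_1-s_2|)\,|s_2-t|$ separately by the right-hand side. This reduced claim is false in the singular range $p<2$: take $\phi(\tau)=\tau^p$, $s_1=s_2=e_1$ and $t=Re_1$ with $R$ large. Then the left factor is $\phi''(1)(R-1)\sim R$ while the right-hand side is $\sim\phi''(2R)\,R\sim R^{p-1}\ll R$. The point is that $\phi''(|s_2|+|s_1-s_2|)$ can be much \emph{larger} than $\phi''(|t|+|s_i-t|)$ exactly when $|s_1-s_2|\ll|s_i-t|$, and the only thing saving the original inequality there is the smallness of the factor $|s_1-s_2|$, which your splitting discards. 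Concretely, the step ``$\phi''(|s_2|+|s_1-s_2|)\,|s_1-t|\le\phi''(|s_2|+|s_1-s_2|)(|s_2|+|s_1-s_2|)$'' presupposes $|s_1-t|\le|s_2|+|s_1-s_2|$, which fails in the same example; and $A\le 3B$ does not yield $\phi''(A)\lesssim\phi''(B)$, since $\phi''$ is only almost monotone through $\phi''(\tau)\tau^{2-p}$ (almost increasing), which degrades as $B/A\to\infty$. A correct elementary argument must keep $|s_1-s_2|$ intact and, in the regime $|t|\lesssim|s_1-t|+|s_2-t|$, estimate $\phi''(A)|s_1-s_2|\le\phi''(A)A\sim\phi'(A)\lesssim\phi'(B)+\phi'(C)$ using monotonicity of $\phi'$, before converting back; alternatively (and this is how \cite{DieEtt08} proceeds) one uses $|{\bf A}(s_1)-{\bf A}(s_2)|\sim\phi'_{|s_2|}(|s_1-s_2|)$ as in \eqref{eq:(3.4)} and applies the triangle inequality to ${\bf A}(s_1)-{\bf A}(s_2)$, which handles the cancellation at $s_1\approx s_2$ automatically.
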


The following lemma (see \cite[Corollary~26]{DieKre08}) deals with the \emph{change of shift} for $N$-functions.
\begin{lemma}[change of shift]\label{lem:changeshift}
Let $\varphi$ be an $N$-function with $\Delta_2(\varphi),\Delta_2(\varphi^*)<\infty$. Then for any $\eta>0$ there exists $c_\eta>0$, depending only on $\eta$ and $\Delta_2(\varphi)$, such that for all ${a}, {b}\in\R^n$ and $t\geq0$
\begin{equation}
\varphi_{|{a}|}(t) \leq c_\eta \varphi_{|{b}|}(t) + \eta \varphi_{|{a}|}(|{a}-{b}|)\,.
\label{(5.4diekreu)}
\end{equation}
\end{lemma}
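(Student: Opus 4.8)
The plan is to reduce the vectorial inequality to a scalar one and then run a short case distinction. Set $\alpha:=|a|$ and $\beta:=|b|$. Since $\big||a|-|b|\big|\le|a-b|$ and $s\mapsto\varphi_\alpha(s)$ is increasing, one has $\varphi_\alpha(|\alpha-\beta|)\le\varphi_{|a|}(|a-b|)$, so it suffices to prove
\begin{equation}\label{eq:scalarCS}
\varphi_\alpha(t)\ \le\ c_\eta\,\varphi_\beta(t)\ +\ \eta\,\varphi_\alpha(|\alpha-\beta|)\qquad\text{for all }\alpha,\beta,t\ge0 ;
\end{equation}
we may also assume $\eta\le1$, since for $\eta\ge1$ the bound with $\eta=1$ is stronger. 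The ingredients are: the convexity of $\varphi_\alpha$ together with $\varphi_\alpha(0)=0$, which gives $\varphi_\alpha(\theta t)\le\theta\,\varphi_\alpha(t)$ for $\theta\in[0,1]$; the two-regime behaviour of the shift, $\varphi_a(t)\sim\varphi(a)\,t^2/a^2$ for $t\le a$ and $\varphi_a(t)\sim\varphi(t)$ for $t\ge a$ (equivalently $\varphi_a(t)\sim\varphi(a+t)\,t^2/(a+t)^2$, uniformly in $a\ge0$), which is the content of \eqref{(2.6b)} and \eqref{eq:approx} and rests only on $\Delta_2(\varphi)$; and the resulting fact that $\tfrac12\beta\le\alpha\le2\beta$ forces $\alpha+t\sim\beta+t$ for every $t$, hence $\varphi_\alpha(t)\sim\varphi_\beta(t)$ with a constant depending only on $\Delta_2(\varphi)$. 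In what follows $\lesssim_\eta$ and $\sim_\eta$ denote inequalities and equivalences whose constants depend only on $\eta$ and $\Delta_2(\varphi)$.

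For the case distinction, first suppose $\tfrac12\beta\le\alpha\le2\beta$: then $\varphi_\alpha(t)\sim\varphi_\beta(t)$, and \eqref{eq:scalarCS} holds with $c_\eta$ equal to that $\Delta_2(\varphi)$-constant, the absorption term being nonnegative. Otherwise $D:=|\alpha-\beta|\ge\tfrac12\max\{\alpha,\beta\}$ while also $D\le\alpha+\beta\le2\max\{\alpha,\beta\}$, so $D\sim\max\{\alpha,\beta\}$. If $t\le\eta D$, convexity gives $\varphi_\alpha(t)\le\tfrac tD\,\varphi_\alpha(D)\le\eta\,\varphi_\alpha(|\alpha-\beta|)$, and \eqref{eq:scalarCS} follows from the absorption term alone. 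If $t>\eta D$, then $t>\eta D\ge\tfrac\eta2\max\{\alpha,\beta\}$, so $t\gtrsim_\eta\alpha$ and $t\gtrsim_\eta\beta$; inserting this into the two-regime estimate --- directly when $t\ge\alpha$, and when $t<\alpha$ using that then $t\sim_\eta\alpha$, so that $\varphi_\alpha(t)\sim\varphi(\alpha)\,t^2/\alpha^2\sim_\eta\varphi(\alpha)\sim_\eta\varphi(t)$, and symmetrically for $\beta$ --- one obtains $\varphi_\alpha(t)\sim_\eta\varphi(t)\sim_\eta\varphi_\beta(t)$, and \eqref{eq:scalarCS} follows from the first term alone. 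Keeping track of the number of doublings used (it is controlled by $\log(1/\eta)$) shows that $c_\eta$ depends only on $\eta$ and $\Delta_2(\varphi)$, as claimed.

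I expect the only step needing real care to be the two-regime estimate for $\varphi_a$ with $\Delta_2(\varphi)$-dependent constants: it is obtained by inserting $\varphi'(s)\sim\varphi(s)/s$ (itself a consequence of $\Delta_2(\varphi)$, since $\varphi(s)\le s\varphi'(s)\le\varphi(2s)$) into $\varphi_a(t)=\int_0^t\frac{\varphi'(a+s)\,s}{a+s}\,\d s$ and splitting the integral at $s=a$, and under the paper's standing Assumption~\ref{Ass2} it may simply be quoted from \eqref{(2.6b)} and \eqref{eq:approx}. Granting it, the case analysis above is immediate, and \eqref{eq:scalarCS}, hence the lemma, follows.
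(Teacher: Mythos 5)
Your proof is correct. Note that the paper does not prove this lemma at all — it is quoted verbatim from \cite[Corollary~26]{DieKre08} — so there is no in-paper argument to compare against; your reduction to the scalar case, the two-regime description $\varphi_a(t)\sim \varphi(a+t)\,t^2/(a+t)^2$, and the three-way case distinction (comparable shifts; $t\le \eta|\alpha-\beta|$ absorbed by convexity; $t>\eta|\alpha-\beta|$ forcing $\varphi_\alpha(t)\sim_\eta\varphi(t)\sim_\eta\varphi_\beta(t)$) reproduce, in a self-contained way, essentially the standard argument of Diening--Kreuzer, with the constant dependence on $\eta$ and $\Delta_2(\varphi)$ correctly tracked.
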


We next define vector valued functions $\bA, \bV:\R^{Nn}\to \R^{Nn}$ by
$$
\bA({\bf Q}):= \frac{\phi'(|{\bf Q}|)}{|{\bf Q}|} {\bf Q} = D_{\bf Q} [\phi(|{\bf Q}|)]
\quad\text{and}\quad
\bV({\bf Q}):= \sqrt{\frac{\phi'(|{\bf Q}|)}{|{\bf Q}|}} {\bf Q}.
$$
In particular, for $1<p<\infty$, we denote by  $\bV_p({\bf Q})$ the function $\bV$ associated to $\phi(t)=\frac{1}{p}t^p$; i.e., $\bV_p({\bf Q}) := |{\bf Q}|^{\frac{p-2}{2}}{\bf Q}$. With shifted $N$-function $\phi_a$, we define accordingly
\begin{equation}
\bA^a({\bf Q}):= \frac{\phi'_a(|{\bf Q}|)}{|{\bf Q}|} {\bf Q},\ \ 
\bV^a({\bf Q}):= \sqrt{\frac{\phi_a'(|{\bf Q}|)}{|{\bf Q}|}} {\bf Q}
\ \ \text{and}\ \  
\bV^a_p({\bf Q}):=\left(a+ |{\bf Q}|\right)^{\frac{p-2}{2}} {\bf Q}\,.
\label{eq:shiftedop}
\end{equation}

We further suppose that  $\phi$ satisfies Assumption~\ref{Ass2}. Denote
\begin{equation}\label{defA}
A_{ij}^{\alpha\beta}({\bf Q}):=\frac{\partial\bA({\bf Q})_{j}^{\beta}}{\partial Q_{i}^{\alpha}} = \frac{\phi'(|{\bf Q}|)}{|{\bf Q}|}\left\{\delta_{ij}\delta^{\alpha\beta} + \left(\frac{\phi''(|{\bf Q}|)|{\bf Q}|}{\phi'(|{\bf Q}|)}-1 \right) \frac{Q_i^\alpha Q^{\beta}_{j}}{|{\bf Q}|^2}\right\},
\end{equation}
where $1\le i,j\le n$ and $1\le \alpha,\beta \le N$. Here, $\delta^{\alpha \beta}$ and $\delta_{i j}$ are the Kronecker symbols. Note that $D^2_{\bf Q}\phi(|{\bf Q}|)=(A_{ij}^{\alpha\beta}({\bf Q}))$.
Then we see that
\begin{equation}\label{ellipticity}
\min \{p-1,1\} \frac{\varphi^{\prime}(|{\bf Q}|)}{|{\bf Q}|}|\bm\omega|^{2} \leqslant A^{\alpha \beta}_{i j}({\bf Q}) \omega^{\alpha}_{i} \omega^{\beta}_{j} \leqslant \max \{q-1,1\} \frac{\varphi^{\prime}(|{\bf Q}|)}{|{\bf Q}|}|\bm\omega|^{2}
\end{equation}
for all ${\bf Q}, \bm\omega \in \mathbb{R}^{n N}$.
Moreover, since
\[
[\bA({\bf P}) - \bA({\bf Q})]^{\beta}_{j} =\int_0^1\frac{\partial }{\partial \tau}   [\bA(\tau {\bf P} + (1-\tau) {\bf Q})]^{\beta}_{j} \, \d \tau =  \int_0^1 A_{ij}^{\alpha\beta}(\tau {\bf P} + (1-\tau) {\bf Q})({\bf P}-{\bf Q})_{i}^{\alpha}\,\d\tau ,
\]
using the above results and  \cite[Lemma 20]{DieEtt08}, we have that
$$%\begin{equation}\label{monotonicity}
\begin{aligned}
(\bA({\bf P}) - \bA({\bf Q})):({\bf P}-{\bf Q}) &\ge \frac{1}{c}\left(\int_0^1 \frac{\phi'(|\tau {\bf P} + (1-\tau) {\bf Q}|)}{|\tau {\bf P} + (1-\tau) {\bf Q}|} \, \d \tau\right) |{\bf P}-{\bf Q}|^2\\
&\ge \frac{1}{c}\frac{\phi'(|{\bf P}| + |{\bf Q}|)}{|{\bf P}| + |{\bf Q}|}  |{\bf P}-{\bf Q}|^2,
\end{aligned}
$$
%\end{equation}
and
\begin{equation}\label{PhiPQ}
\begin{aligned}
|\bA({\bf P}) - \bA({\bf Q})| \le c \left(\int_0^1 \frac{\phi'(|\tau {\bf P} + (1-\tau) {\bf Q}|)}{|\tau {\bf P} + (1-\tau) {\bf Q}|} \, \d \tau\right)|{\bf P}-{\bf Q}| \le c \frac{\phi'(|{\bf P}| + |{\bf Q}|)}{|{\bf P}| + |{\bf Q}|}  |{\bf P}-{\bf Q}|.
\end{aligned}
\end{equation}
Moreover, 
we have that 
\begin{equation}\label{monotonicity}
({\bf A}( {\bf P})-{\bf A}({\bf Q})) : ( {\bf P} -  {\bf Q})  \sim \varphi_{|{\bf P}|}(| {\bf P} -  {\bf Q}|) \sim |\bV( {\bf P})-\bV( {\bf Q})|^2
\end{equation}
and
\begin{equation}
|{\bf A}( {\bf P})-{\bf A}({\bf Q})|  \sim \varphi'_{|{\bf P}|}(|{\bf P} - {\bf Q}|)\,.
\label{eq:(3.4)}
\end{equation}
%Uniformly in ${\bf P}, {\bf Q}\in \R^{n\times N}$, we have
%\begin{equation}
%\begin{split}
%({\bf A}({\bf P})-{\bf A}({\bf Q})) : ({\bf P}-{\bf Q}) & \sim \varphi_{|{\bf P}|}(|{\bf P} - {\bf Q}|) \sim |\bV({\bf P})-\bV({\bf Q})|^2\,, \\
%|{\bf A}({\bf P})-{\bf A}({\bf Q})| & \sim \varphi'_{|{\bf P}|}(|{\bf P} - {\bf Q}|)\,.
%\end{split}
%\label{eq:(3.4)}
%\end{equation}
(see \cite[Lemma~3.1]{DieSchSch19}). 
We note that the estimates in above still hold for $\phi_a$ and the related operators $\bA^a$ and $\bV^a$.

From \cite[Lemma~3.3]{DieSchSch19}, it follows that
$$%\begin{equation}
|{\bf A}^a({\bf Q})-{\bf A}({\bf Q})| \leq \varphi'_{|{\bf Q}|}(a)\,.
$$%\label{eq:(3.21)}
%\end{equation}
Applying the same argument to the $N$-function $\bar{\phi}_{|{\bf Q}|}$ defined by $\bar{\phi}'_{|{\bf Q}|}(t):=\sqrt{\varphi'_{|{\bf Q}|}(t) t}$, we obtain
\begin{equation}
|\bV^a({\bf Q})-\bV({\bf Q})|^2 \leq c |\bar{\phi}'_{|{\bf Q}|}(a)|^2\sim \varphi_{|{\bf Q}|}(a)\,.
\label{eq:(3.22)}
\end{equation}

{Note that all constants concerned with the relation $\sim$ and $c$  in above depend only on $p$ and $q$.}

\subsection{Embedding}

We recall a Gagliardo-Nirenberg type inequality for Orlicz functions in \cite[Lemma~2.13]{HasOk21}.
A function $\varphi:[0,\infty)\to[0,\infty)$ is said to be a \emph{weak $\Phi$-function} if it is increasing with $\varphi(0)=0$, $\lim_{t\to0^+}\varphi(t)=0$, $\lim_{t\to +\infty}\varphi(t)=+\infty$ and such that the map $t\to \frac{\varphi(t)}{t}$ is almost increasing. Note that every $N$-function is a weak $\Phi$-function.

\begin{lemma}
\label{Lem:hok}
Assume that $\psi:[0,\infty)\to[0,\infty)$ is a weak $\Phi$-function and such that $t\mapsto \frac{\psi(t)}{t^{q_1}}$ is almost decreasing with constant $L\ge 1$ for some $q_1\geq 1$. For $p\in [1,n)$ and $q_2>0$ we have
\begin{equation*}
\bigg(\fint_{B_{r}}\psi\big(\big|\tfrac{f}{r}\big|\big)^\gamma\, \d x\bigg)^{\frac1\gamma}
\le c \bigg(\fint_{B_r}\left[\psi(|Df|)^p+\psi\big(\big|\tfrac{f}{r}\big|\big)^p\right]\,\d x\bigg)^{\frac{\theta}{p}}
\psi\bigg(\Big(\fint_{B_r}\big|\tfrac{f}{r}\big|^{q_2}\,\d x\Big)^\frac1{q_2}\bigg)^{1-\theta}
\end{equation*}
for some $c=c(n,L,q_1,q_2)>0$, provided that $\theta\in (0,1)$ and $\gamma$ satisfies
\begin{equation*}
\frac{1}{\gamma}\geq \frac\theta{p^*}+\frac{(1-\theta)q_1}{q_2}\,.
\end{equation*}
\end{lemma}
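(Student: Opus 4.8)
The statement is a Gagliardo--Nirenberg type interpolation inequality for the $\Phi$-function $\psi$, and the plan is to reduce it to the classical Gagliardo--Nirenberg inequality for powers by exploiting the almost monotonicity of $\psi$. First I would use the hypothesis that $t\mapsto\psi(t)/t^{q_1}$ is almost decreasing to estimate $\psi$ from above in terms of power functions on the appropriate scales; concretely, after rescaling $x\mapsto rx$ we may assume $r=1$, so the claim becomes an inequality on $B_1$ relating $\|\psi(|f|)\|_{L^\gamma}$ to a power $\theta$ of a Sobolev-type term and a power $1-\theta$ of a weak-$L^{q_2}$ type term. The natural substitution is $g:=\psi(|f|)^{1/q_1}$ (or a truncated variant), so that $\psi(|f|)=g^{q_1}$ and, because $\psi(t)\le L\,t^{q_1}\psi(1)$-type bounds hold on scales where $|f|\le 1$ while the reverse homogeneity is controlled on scales where $|f|\ge1$, one reduces $\int\psi(|f|)^\gamma$ to $\int |g|^{\gamma q_1}$.

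The core of the argument is then the classical multiplicative inequality
\begin{equation*}
\Big(\fint_{B_1}|g|^{\gamma q_1}\Big)^{\frac{1}{\gamma q_1}}
\le c\,\Big(\fint_{B_1}\big[|Dg|^{p}+|g|^{p}\big]\Big)^{\frac{\theta}{pq_1}}
\Big(\fint_{B_1}|g|^{q_2/q_1}\Big)^{\frac{1-\theta}{q_2}},
\end{equation*}
valid precisely under the exponent condition $\frac1\gamma\ge \frac{\theta}{p^*}+\frac{(1-\theta)q_1}{q_2}$, which is exactly the hypothesis in the Lemma. To pass back, I would use the chain rule $Dg = D[\psi(|f|)^{1/q_1}]$ together with the fact that $\psi$ is a weak $\Phi$-function (so $\psi(t)/t$ is almost increasing and $\psi'$-type quantities are comparable to $\psi(t)/t$) to control $|Dg|$ by $\psi(|Df|)$ up to the lower-order term $\psi(|f|)$; this is where the almost-monotonicity constant $L$ and the exponents $q_1,q_2$ enter the final constant $c=c(n,L,q_1,q_2)$. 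The weak-$L^{q_2}$ factor $\psi\big((\fint|f|^{q_2})^{1/q_2}\big)^{1-\theta}$ on the right is recovered by again invoking that $\psi(t)/t^{q_1}$ is almost decreasing to convert $\big(\fint |g|^{q_2/q_1}\big)^{(1-\theta)/q_2} = \big(\fint \psi(|f|)^{q_2/q_1^2}\big)^{\cdots}$ back into $\psi$ evaluated at the $L^{q_2}$ average of $|f|$.

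I expect the main obstacle to be the bookkeeping around the substitution $g=\psi(|f|)^{1/q_1}$ when $\psi$ is only almost monotone rather than a genuine power: one cannot simply differentiate, and one must either work with a regularized/truncated $\psi$ or split $B_r$ into the regions $\{|f|\le 1\}$ and $\{|f|>1\}$ (recall the normalization $\psi(1)$-type scaling) and handle the almost-increasing versus almost-decreasing homogeneity separately, making sure the truncation level does not affect the exponent balance. A secondary technical point is verifying that the lower-order term $\psi(|f/r|)^p$ genuinely appears on the right-hand side (it comes from the $|g|^p$ term in the classical inequality, i.e. from estimating $Dg$ by a gradient piece plus a zero-order piece), and that the scaling in $r$ is consistent — this is routine once the $r=1$ case is done, since each term in the inequality carries the correct power of $r$ by construction of the arguments $|f/r|$, $|Df|$. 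Since the paper attributes this to \cite[Lemma~2.13]{HasOk21}, I would mainly cite that reference and sketch the reduction above rather than reprove the classical Gagliardo--Nirenberg inequality.
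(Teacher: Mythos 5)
The paper does not prove this lemma; it is recalled verbatim from \cite[Lemma~2.13]{HasOk21}, so there is no in-paper argument to compare with, and I can only assess your sketch on its own terms. Your overall strategy---rescale to $r=1$, reduce to a classical Gagliardo--Nirenberg interpolation on $B_1$, and use the almost monotonicity of $\psi$ to translate between $\psi$-norms and power norms---is the right one, but the substitution you build it on is wrong, and this error propagates through every step.

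You set $g:=\psi(|f|)^{1/q_1}$ and then assert the ``classical multiplicative inequality''
\begin{equation*}
\Big(\fint_{B_1}|g|^{\gamma q_1}\Big)^{\frac{1}{\gamma q_1}}
\le c\,\Big(\fint_{B_1}\big[|Dg|^{p}+|g|^{p}\big]\Big)^{\frac{\theta}{pq_1}}
\Big(\fint_{B_1}|g|^{q_2/q_1}\Big)^{\frac{1-\theta}{q_2}}.
\end{equation*}
This cannot hold: replacing $g$ by $\lambda g$ scales the left side like $\lambda$ and the right side like $\lambda^{\theta/q_1}\cdot\lambda^{(1-\theta)/q_1}=\lambda^{1/q_1}$, so the inequality fails for large $\lambda$ whenever $q_1>1$ (equivalently, the exponents on the right add up to $1/q_1$ rather than to $1$). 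The exponent condition you quote, $\tfrac1\gamma\ge \tfrac{\theta}{p^*}+\tfrac{(1-\theta)q_1}{q_2}$, is the Gagliardo--Nirenberg balance for $v:=\psi(|f|)$ with the lower norm $L^{q_2/q_1}$, not for $g$; for $g$ the correct balance would be $\tfrac{1}{\gamma q_1}\ge \tfrac{\theta}{p^*}+\tfrac{(1-\theta)q_1}{q_2}$, which is a different statement. Likewise, in your final conversion step $|g|^{q_2/q_1}=\psi(|f|)^{q_2/q_1^2}$, and the almost-decreasing property then yields $\psi(A)^{(1-\theta)/q_1}$ rather than the required $\psi(A)^{1-\theta}$. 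The fix is to drop the $1/q_1$-th root and apply Gagliardo--Nirenberg directly to $v=\psi(|f|)$ with $m=q_2/q_1$:
\begin{equation*}
\Big(\fint_{B_1}v^{\gamma}\Big)^{\frac1\gamma}
\le c\,\Big(\fint_{B_1}\big[|Dv|^{p}+v^{p}\big]\Big)^{\frac{\theta}{p}}
\Big(\fint_{B_1}v^{q_2/q_1}\Big)^{\frac{(1-\theta)q_1}{q_2}},
\end{equation*}
which is scale-invariant and holds under the stated exponent condition. The last factor becomes $c\,\psi(A)^{1-\theta}$ with $A:=(\fint_{B_1}|f|^{q_2})^{1/q_2}$ by splitting $B_1$ into $\{|f|\le A\}$ (use $\psi$ increasing) and $\{|f|>A\}$ (use $\psi(|f|)\le L\,\psi(A)(|f|/A)^{q_1}$ and $\fint(|f|/A)^{q_2}=1$). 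The first factor requires $|D[\psi(|f|)]|\lesssim \psi(|Df|)+\psi(|f|)$; as you correctly anticipated, this is the delicate point since $\psi$ is only a weak $\Phi$-function, and it is handled by a Young-type inequality for Orlicz functions applied to $\psi'(|f|)|Df|$ (after replacing $\psi$ by an equivalent $C^1$ $\Phi$-function if needed), not by differentiating $\psi^{1/q_1}$.
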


Applying the above lemma we can obtain a parabolic embedding result for an Orlicz function $\phi$. 

\begin{lemma}\label{Lem:paraembedding}
Let $m>0$. Suppose that  $\phi$ satisfies Assumption~\ref{Ass1} with 
$$
\max\{1,\tfrac{mn}{n+m}\}<p\leq q.
$$ 
There exists $\theta=\theta(n,m,p,q)\in (0,1)$ and $c=c(n,m,p,q,L)>0$ such that for every
%{
$$
f\in   L^\infty(t_1,t_2; L^m(B_r)) \cap L^\phi(t_1,t_2; W^{1,\phi}(B_r)) 
%\textcolor{blue}{ f\in  S\colon= \left\{g\in L^\infty(I; L^m(B_r)) \cap L^1(I; W^{1,1}(B_r)): \phi(|Dg|)\in L^1(I;L^1(B_r)) \right\} }
$$
%(Jihoon: I think the above notation is better for understanding.
%}\\
we have 
$$\begin{aligned}
\fint_{B_r \times  [t_1,t_2]} \phi\big(\big|\tfrac{f}{r}\big|\big)^{\frac{n+m}{n}} \,dz
& \le c   \bigg(\fint_{B_r}\left[\phi(|Df|)+\phi\big(\big|\tfrac{f}{r}\big|\big)\right]\,\d x\bigg)^{\frac{\theta(n+m)}{n}}\\
&\qquad \times \phi\bigg(\bigg(\underset{t\in [t_1, t_2]}{\mathrm{ess\, sup\,}} \fint_{B_r}\big|\tfrac{f}{r}\big|^m\,dx\bigg)^{\frac{1}{m}}\bigg)^{\frac{(1-\theta)(n+m)}{n}}.
\end{aligned}$$
\end{lemma}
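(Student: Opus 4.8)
The plan is to derive the parabolic embedding of Lemma~\ref{Lem:paraembedding} by combining the elliptic Gagliardo--Nirenberg inequality of Lemma~\ref{Lem:hok}, applied slicewise in time, with an interpolation of the time variable via H\"older's inequality, exactly in the spirit of the classical parabolic embedding (DiBenedetto's book, \cite{DiB_book}). First I would apply Lemma~\ref{Lem:hok} with $\psi=\phi$, $q_1=q$ (which is legitimate since $\frac{\phi(t)}{t^q}$ is almost decreasing by Assumption~\ref{Ass1}), the exponent $p$ of the lemma equal to $1$, and $q_2=m$. The hypothesis $\max\{1,\tfrac{mn}{n+m}\}<p\le q$ guarantees $m$ may be chosen so that a valid $\theta\in(0,1)$ exists with $\tfrac1\gamma\ge \tfrac{\theta}{1^*}+\tfrac{(1-\theta)q}{m}$; one picks $\gamma=\tfrac{n+m}{n}$ and solves for the admissible $\theta$. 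This yields, for a.e. $t\in[t_1,t_2]$,
\[
\Big(\fint_{B_r}\phi\big(\big|\tfrac{f}{r}\big|\big)^{\frac{n+m}{n}}\,\d x\Big)^{\frac{n}{n+m}}
\le c\,\Big(\fint_{B_r}\big[\phi(|Df|)+\phi\big(\big|\tfrac{f}{r}\big|\big)\big]\,\d x\Big)^{\theta}\,
\phi\Big(\Big(\fint_{B_r}\big|\tfrac{f}{r}\big|^m\,\d x\Big)^{\frac1m}\Big)^{1-\theta}.
\]

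Next I would raise this inequality to the power $\tfrac{n+m}{n}$ and integrate in $t$ over $[t_1,t_2]$. On the right-hand side, the second factor $\phi\big((\fint_{B_r}|f/r|^m\,\d x)^{1/m}\big)^{\frac{(1-\theta)(n+m)}{n}}$ is bounded above, uniformly in $t$, by $\phi\big((\operatorname*{ess\,sup}_{t\in[t_1,t_2]}\fint_{B_r}|f/r|^m\,\d x)^{1/m}\big)^{\frac{(1-\theta)(n+m)}{n}}$, using that $\phi$ is increasing; this factor can therefore be pulled out of the time integral. What remains inside the time integral is $\big(\fint_{B_r}[\phi(|Df|)+\phi(|f/r|)]\,\d x\big)^{\frac{\theta(n+m)}{n}}$, and here one must choose $\theta$ so that $\tfrac{\theta(n+m)}{n}\le 1$, so that by Jensen's (concavity) inequality in the time variable,
\[
\fint_{t_1}^{t_2}\Big(\fint_{B_r}[\phi(|Df|)+\phi(|\tfrac{f}{r}|)]\,\d x\Big)^{\frac{\theta(n+m)}{n}}\d t
\le\Big(\fint_{t_1}^{t_2}\fint_{B_r}[\phi(|Df|)+\phi(|\tfrac{f}{r}|)]\,\d x\,\d t\Big)^{\frac{\theta(n+m)}{n}}
=\Big(\fint_{B_r}[\phi(|Df|)+\phi(|\tfrac{f}{r}|)]\,\d x\Big)^{\frac{\theta(n+m)}{n}},
\]
where the last $\fint_{B_r}$ is understood as the parabolic average over $B_r\times[t_1,t_2]$. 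Assembling the two factors gives exactly the claimed inequality.

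The one genuinely delicate point — and the step I expect to be the main obstacle — is the bookkeeping of exponents: one needs a single $\theta\in(0,1)$ that simultaneously satisfies the admissibility constraint $\tfrac1\gamma\ge\tfrac{\theta}{1^*}+\tfrac{(1-\theta)q}{m}$ of Lemma~\ref{Lem:hok} with $\gamma=\tfrac{n+m}{n}$, and the Jensen constraint $\tfrac{\theta(n+m)}{n}\le 1$, i.e. $\theta\le\tfrac{n}{n+m}$. Checking that the constraint set is nonempty is where the hypothesis $p>\max\{1,\tfrac{mn}{n+m}\}$ is used: writing $1^*=\tfrac{n}{n-1}$, the admissibility inequality at $\gamma=\tfrac{n+m}{n}$ and $\theta=\tfrac{n}{n+m}$ becomes, after rearrangement, a condition comparing $m$, $n$ and $q$ that is compatible with $p>\tfrac{mn}{n+m}$ because $q\ge p$ and one has room to enlarge $m$ slightly without violating $p>\tfrac{mn}{n+m}$; the constant $c$ and the final $\theta$ then depend only on $n,m,p,q$, while the $L$-dependence enters only through Lemma~\ref{Lem:hok}. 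Once a valid $\theta$ is fixed, the rest is the routine slicewise-plus-H\"older argument sketched above, and the constant in the conclusion is $c(n,m,p,q,L)$ as stated.
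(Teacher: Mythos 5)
The choice of parameters in your application of Lemma~\ref{Lem:hok} is where the argument breaks. You take $\psi=\phi$, the inner exponent of the lemma equal to $1$, and $q_1=q$, so the Sobolev exponent entering the constraint is $1^*=\tfrac{n}{n-1}$ and the admissibility condition at $\gamma=\tfrac{n+m}{n}$ reads
\[
\frac{n}{n+m}\;\ge\;\frac{\theta(n-1)}{n}+\frac{(1-\theta)q}{m}.
\]
The right-hand side decreases in $\theta$ (since $q/m>(n-1)/n$ in the relevant range), so the best case is $\theta=1$, where it reduces to $\tfrac{n}{n+m}\ge\tfrac{n-1}{n}$, i.e.\ $m\le\tfrac{n}{n-1}$. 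Already for the case $m=2$, $n=3$ used in Remark~\ref{rmk:embedding} this is false ($\tfrac{3}{5}\not\ge\tfrac{2}{3}$), so there is no admissible $\theta\in(0,1]$ at all. Crucially, the hypothesis $p>\max\{1,\tfrac{mn}{n+m}\}$ never enters your version of the constraint: with inner exponent $1$ the lemma uses only $W^{1,1}\hookrightarrow L^{n/(n-1)}$ and completely discards the lower growth rate $p$ of $\phi$, which is precisely the information that makes the parabolic embedding work. Your attempt to save this by "enlarging $m$ slightly" is not legitimate either: $m$ is a fixed parameter of the statement being proved, not a free variable.

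The paper's proof instead applies Lemma~\ref{Lem:hok} with $\psi=\phi^{1/p}$, inner exponent equal to $p$ from Assumption~\ref{Ass1}, $q_1=q/p$, $q_2=m$, and $\gamma=p\tfrac{n+m}{n}$ (after reducing to $p<n$). Because $\psi(t)^p=\phi(t)$, the first factor on the right still becomes $\fint[\phi(|Df|)+\phi(|f/r|)]$, but now $p^*=\tfrac{np}{n-p}$ enters and the constraint becomes $\tfrac{n}{n+m}\ge\tfrac{\theta(n-p)}{n}+\tfrac{(1-\theta)q}{m}$. Its solvability with $\theta\in(0,1)$ is exactly equivalent to $\tfrac{n-p}{n}<\tfrac{n}{n+m}<\tfrac{q}{m}$, which is where the hypothesis $p>\tfrac{mn}{n+m}$ (and $q\ge p$) is genuinely used. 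If you repeat your slice-plus-time-integration scheme with this parameter choice you recover the paper's argument; with your parameter choice the first step already fails.
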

\begin{proof} Without loss of generality, we can assume that $p<n$. If $p\geq n$, it is enough to consider any $\tilde p\in (\tfrac{mn}{n+m},n)$ instead of $p$. Note that by \eqref{characteristic} the function $\phi^{\frac{1}{p}}$ is a weak $\Phi$-function, and the function $\frac{\phi^{1/p}}{t^{q/p}}$ is decreasing. Therefore, applying Lemma~\ref{Lem:hok} with $\psi=\phi^{\frac{1}{p}}$ and $(\gamma,p,q_1,q_2)=(p\tfrac{n+m}{n},p,\tfrac qp,m)$, we have that for a.e. $t\in [t_1,t_2]$, 
$$\begin{aligned}
\fint_{B_{r}}\phi\big(\big|\tfrac{f(t)}{r}\big|\big)^{\frac{n+_m}{n}}\, dx 
& \le c \bigg(\fint_{B_r}\left[\phi(|Df(t)|)+\phi\big(\big|\tfrac{f(t)}{r}\big|\big)\right]\,dx\bigg)^{\frac{\theta(n+m)}{n}}\\
& \qquad \times \phi\bigg(\bigg(\fint_{B_r}\big|\tfrac{f(t)}{r}\big|^{m}\,dx\bigg)^\frac1{m}\bigg)^{\frac{(1-\theta)(n+m)}{n}},
\end{aligned}$$
where $f(t)=f(x,t)$ and $\theta$ satisfies 
$$%\begin{equation}\label{theta1}
\frac{n}{n+m}=\frac{\theta(n-p)}{n}+\frac{(1-\theta)q}{m}\quad  \Longleftrightarrow\quad \theta=\frac{n(nm-nq-mq)}{(n+m)(nm-nq-mp)}.
$$%\end{equation}
Note that $\theta\in(0,1)$ by the assumption $\tfrac{mn}{n+m}<p\leq q$, which yields $\frac{n-p}{n}<\frac{n}{n+m}<\frac{q}{m}$, and that $\frac{\theta(n+m)}{n} \in(0,1]$.
Finally integrating for $t$ in $[t_1,t_2]$ and using H\"older's inequality when $p<q$ (i.e., $\frac{\theta(n+m)}{n}<1$), we obtain the desired estimate.
\end{proof}

\begin{remark}\label{rmk:embedding}
In the above definition, if $m=2$ , we see that 
$$
\phi(|f|)^{1+\frac{2}{n}}\in L^1(B_r\times [t_1,t_2]).  
$$
Note that this implies
$$
f \in L^{\frac{p (n+2)}{n}} (B_r\times [t_1,t_2])\,, 
$$
where $\frac{p(n+2)}{n} >2 $ if $p>\frac{2n}{n+2}$.
\end{remark}

\vspace{0.5cm}

\section{Local boundedness}

%\subsection{Boundedness for parabolic $\phi$-Laplace systems}

We first prove that any weak solution $\bfu$ to \eqref{eq:system} is locally bounded by using the Moser iteration technique (for similar arguments, cfr. \cite[Theorem 2]{Choe92}, and \cite{Ise18}, where the superquadratic case is addressed). 
%The paper  \cite{Ise18} covers only the superquadratic case. 
{The key points in our approach are the introduction of the function $\psi$ in \eqref{psi} and use of the embedding result Lemma~\ref{Lem:paraembedding} for Orlicz functions in the parabolic setting,} 
%The key point in our approach is the introduction of the function $\psi$, \eqref{psi}, 
that measures the superquadratic or subquadratic character of the function $\phi$.

\begin{theorem}\label{thm:bound}
Let $\phi$ satisfy Assumption~\ref{Ass1} with
\eqref{eq:hypp}
%Assume that $\phi$ complies with \eqref{characteristic}  with $p>\frac{2n}{n+2}$ 
and let $\bfu$ be a weak solution to \eqref{eq:system}. 
Then $\bfu\in L^\infty_{\loc}(\Omega_T,\R^N)$. Moreover, there exists $c\ge 1$  depending on $n$, $N$, $p$, $q$ and $L$ such that for any $Q_{2r}\Subset \Omega_T$,
\begin{equation}\label{estimate:bounded}
\sup_{Q_r} \phi(|\tfrac{\bfu}{r}|) \le c \left(\fint_{Q_{2r}} \psi(|\tfrac{\bfu}{r}|) \phi(|\tfrac{\bfu}{r}|)^{\chi_0}\,\d z \right)^{\frac{1}{\chi_0}} +c,
\end{equation}
where
\begin{equation}\label{psi}
\psi(s):=\max\{s^2,\phi(s)\}, \quad s>0,
\end{equation}
and $\chi_0>0$ is determined in \eqref{chi0} below.
\end{theorem}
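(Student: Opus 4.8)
The plan is to run a Moser iteration directly on the weak formulation \eqref{eq:weakformul1}, testing with a power of $\bfu$ cut off by a suitable spatial-temporal cutoff, and then to upgrade the resulting "one-step" reverse-type inequality into the stated bound via the parabolic embedding of Lemma~\ref{Lem:paraembedding}. First I would fix $Q_{2r} = B_{2r}(x_0) \times (t_0 - (2r)^2, t_0] \Subset \Omega_T$, a smooth cutoff $\eta \in C^\infty_{\mathrm c}(B_{2r})$ with $\eta \equiv 1$ on $B_r$, $|D\eta| \le c/r$, and a Lipschitz temporal cutoff vanishing near the bottom of the cylinder. For $\kappa \ge 0$ to be iterated, I would test with $\zeta = \eta^q \,\Phi_\kappa(|\bfu|)\,\bfu$, where $\Phi_\kappa$ is a truncated version of $s \mapsto \frac{\phi'(s)}{s}(\text{something like }\phi(s)^\kappa)$ — more precisely the natural choice is to produce, after using the ellipticity \eqref{ellipticity} and the structure $\bA(D\bfu):D\bfu = \phi'(|D\bfu|)|D\bfu| \sim \phi(|D\bfu|)$, a gradient term of the form $\int \eta^q \phi(|D\bfu|)\,\phi(|\tfrac{\bfu}{r}|)^{\kappa}\cdot(\text{Uhlenbeck radial factor})$ on the left. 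The truncation is needed because $\bfu$ is only known a priori to lie in the natural energy space; one truncates $|\bfu| \wedge M$, derives $M$-independent bounds, and lets $M \to \infty$ at the end by monotone convergence.

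The key algebraic point — and this is where the function $\psi(s) = \max\{s^2, \phi(s)\}$ from \eqref{psi} enters — is that when one differentiates the time term $-\int \bfu \cdot \zeta_t$ with $\zeta$ of the above form, one obtains (up to constants) a term $\partial_t \Psi_\kappa(|\bfu|)$ where $\Psi_\kappa' \sim s^{-1}\psi(s)\phi(s)^\kappa$ near $s=0$ in the subquadratic regime and $\sim s^{-1}\phi(s)^{1+\kappa}$ in the superquadratic regime; the function $\psi$ is exactly the device that interpolates these two behaviors uniformly, so that one gets a clean estimate $\sup_t \int_{B_r} \Psi_\kappa \lesssim \int_{Q_{2r}} (\text{lower-order terms in } \psi\cdot\phi^\kappa)$ with constants independent of $p \lessgtr 2$. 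Combining the sup-in-time control of $\int \psi(|\tfrac{\bfu}{r}|)\phi(|\tfrac{\bfu}{r}|)^\kappa$ with the space-time control of $\int \phi(|D[\dots]|)$, and feeding this into Lemma~\ref{Lem:paraembedding} applied with $m = 2$ to the function $g = \psi(|\tfrac{\bfu}{r}|)^{1/2}\phi(|\tfrac{\bfu}{r}|)^{\kappa/2}\cdot r$ (or an appropriate power), yields a gain of integrability by the factor $\tfrac{n+2}{n}$, i.e. an inequality schematically of the form
$$
\fint_{Q_r} \big(\psi(|\tfrac{\bfu}{r}|)\phi(|\tfrac{\bfu}{r}|)^{\kappa}\big)^{\frac{n+2}{n}}\,\d z \le \Big(\frac{c}{(2r-r)\text{-type}}\Big)\Big(\fint_{Q_{2r}} \psi(|\tfrac{\bfu}{r}|)\phi(|\tfrac{\bfu}{r}|)^{\kappa}\,\d z\Big)^{\frac{n+2}{n}} + \dots,
$$
valid on a chain of shrinking cylinders.

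Then I would iterate: set $\chi := \tfrac{n+2}{n} > 1$, choose $\chi_0 > 0$ (this is the $\chi_0$ of \eqref{chi0}, fixed so that the initial quantity $\int_{Q_{2r}}\psi\,\phi^{\chi_0}$ is finite — it only needs to be small, and its finiteness follows from Remark~\ref{rmk:embedding}, which already gives $\phi(|\bfu|)^{1+2/n} \in L^1$, hence a genuine reserve of integrability) and define $\kappa_j$ by $1 + \kappa_{j+1} \cdot(\text{normalization}) = \chi^{j}\cdot(\text{stuff})$ along dyadic radii $r_j = r(1 + 2^{-j})$. Using \eqref{eq:estimpq} to control the mismatch between $\phi(|\tfrac{\bfu}{r_j}|)$ at different radii by a fixed power of $2$, the standard Moser iteration lemma (geometric growth of constants summing against the geometric decay $\chi^{-j}$) gives $\sup_{Q_r}\psi(|\tfrac{\bfu}{r}|) \le c(\fint_{Q_{2r}}\psi(|\tfrac{\bfu}{r}|)\phi(|\tfrac{\bfu}{r}|)^{\chi_0}\,\d z)^{1/\chi_0} + c$, and since $\psi \ge \phi$ this is stronger than \eqref{estimate:bounded}; in particular $\bfu \in L^\infty_{\loc}$.

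**Main obstacle.** The delicate point is not the iteration machinery but the first step: controlling the time-derivative term uniformly across the singular ($p<2$) and degenerate ($p>2$) regimes. Writing $\zeta$ in terms of $\bfu$ is only legitimate after a Steklov-averaging regularization (the paper flags this and allows us to pretend $\bfu_t$ exists), but even then one must verify that the primitive $\Psi_\kappa$ produced by integrating $s\,\Phi_\kappa(s)$ is comparable to $\psi(s)\phi(s)^\kappa$ both from above and below, uniformly in $\kappa\ge 0$ and in the position of $p$ relative to $2$ — this is precisely the reason $\psi$ is defined as a maximum rather than simply as $\phi$ or as $s^2$, and checking the two-sided comparison (using $\phi_a$-type inequalities from \eqref{(2.6b)}, Young's inequality \eqref{eq:young4}, and the doubling \eqref{eq:estimpq}) is the technical heart of the argument. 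A secondary nuisance is keeping the cutoff powers $\eta^q$ and the $\tfrac{1}{r}$-losses bookkept correctly so that the iteration constants remain summable; this is routine but must be done with the $(p,q)$-doubling of $\phi$ in mind rather than a single power.
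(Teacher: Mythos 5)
Your overall route matches the paper's: a Moser iteration, the device $\psi(s)=\max\{s^2,\phi(s)\}$ to unify sub- and superquadratic regimes, the hypothesis $p>\tfrac{2n}{n+2}$ entering at the Sobolev step, and the embedding of Lemma~\ref{Lem:paraembedding} (Remark~\ref{rmk:embedding}) used once, to justify finiteness of the starting quantity with exponent $\chi_0$. But you have the \emph{mechanism} by which $\psi$ enters wrong, and you flagged it as the technical heart, so this is a real gap. The paper's test function is simply $\zeta=\phi(|\tfrac{\bfu}{r}|)^{\chi}\eta^2\xi^q\bfu$, not a truncated radial-factor object. Its time primitive $\Phi_\chi(s)=\int_0^{s^2}\phi(\sqrt{\sigma})^{\chi}\,\d\sigma$ is comparable to $s^2\phi(s)^{\chi}$ alone (via \eqref{characteristic}), not to $\psi(s)\phi(s)^{\chi}$. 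If you try to verify your claimed two-sided comparison $\Psi_\kappa\sim\psi\phi^{\kappa}$, it fails: in the degenerate regime $q>2$, a straightforward calculation gives $\Phi_\chi(s)\sim s^2\phi(s)^{\chi}$, which is strictly smaller than $\phi(s)^{1+\chi}$ for large $s$.

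The function $\psi$ enters after the spatial Sobolev inequality, not from the time term. The Caccioppoli step yields $\sup_{\tau}\int|\bfu|^2\phi(|\tfrac{\bfu}{r}|)^{\chi}$ (sup-in-time) plus the gradient term, controlled by $\int(|\tfrac{\bfu}{r}|^2+\phi(|\tfrac{\bfu}{r}|))\phi(|\tfrac{\bfu}{r}|)^{\chi}$. One then sets $G=r\phi(|\tfrac{\bfu}{r}|)^{(\chi+1)/p_0}\eta^{2/p_0}\xi^{q/p_0}$ with $p_0=\tfrac{2n}{n+2}$ and applies the scalar Sobolev embedding $W^{1,p_0}_0\hookrightarrow L^{p_0^*}$, interpolated against the $\sup_t L^2$ control via H\"older. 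The resulting left-hand integrand is $\phi^{1+\chi+\chi p_0/n}|\tfrac{\bfu}{r}|^{2p_0/n}$; it is precisely here that $p>\tfrac{2n}{n+2}$ gives $s^2\le c\,\phi(s)s^{2p_0/n}$ for $s\ge 1$, whence $\psi(s)\phi(s)^{\chi(1+p_0/n)}\lesssim\phi(s)^{1+\chi+\chi p_0/n}s^{2p_0/n}+1$. On the right-hand side, $\psi$ simply bundles $|\tfrac{\bfu}{r}|^2\phi^{\chi}+\phi^{1+\chi}\le 2\psi\phi^{\chi}$. This yields a genuine reverse-H\"older inequality for $J_\chi:=\fint[\psi(|\tfrac{\bfu}{r}|)\phi(|\tfrac{\bfu}{r}|)^{\chi}+1]$ with gain factor $1+\tfrac{p_0}{n}$, which iterates. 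Your suggestion to apply Lemma~\ref{Lem:paraembedding} at each iteration step is also off: that lemma needs a clean $\sup_t\fint|f/r|^m$ on the right, whereas the Caccioppoli estimate delivers a weighted $\sup_t$-control of the form $\sup_t\int|\bfu|^2\phi^{\chi}$; the paper instead uses the bare Sobolev inequality for $G$ together with H\"older, and reserves Lemma~\ref{Lem:paraembedding} for certifying that $J_{\chi_0}<\infty$.
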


%\begin{theorem}\label{thm:bound}
%Assume that $\phi$ complies with \eqref{characteristic} with $p>\frac{2n}{n+2}$ and let $u$ be a weak solution to \eqref{eq:system}, then $u\in L^\infty_{\loc}(\Omega_T,\R^N)$. Moreover, there exists $c\ge 1$ and $\chi_1>0$ depending on $n$, $N$, $p$ and $q$ such that  for any $Q_{2r}\Subset \Omega_T$,
%\begin{equation}\label{estimate:bounded}
%\sup_{Q_r} \phi(|\tfrac{\bfu}{r}|) \le c \left(\fint_{Q_{2r}} \psi(|\tfrac{\bfu}{r}|) \phi(|\tfrac{\bfu}{r}|)^{\chi_1}\,\d z \right)^{\frac{1}{\chi_1}} +c,
%\end{equation}
%where
%\begin{equation}\label{psi}
%\psi(s):=\max\{s^2,\phi(s)\}, \quad s>0.
%\end{equation}
%\end{theorem}

%\comment{Jihoon: I refer to \cite[Theorem 2]{Choe92} and \cite{Ise18}.}

\begin{proof} %We use the Moser iteration.
We divide the proof into three steps.

\textit{Step 1. (Caccioppoli type inequality)} 
Let $0<r<1$, $z_0=(x_0,t_0)\in\R^n\times \R$ and $Q_{2r}(z_0)\Subset\Omega_T$ be defined as in \eqref{eq:cylinder}. Let $\rho_1=s_1r$ and $\rho_2=s_2r$ with $1\le s_1<s_2\le 2$, $\xi \in C^\infty_0(B_{\rho_2}(x_0))$ be such that 
\begin{equation}
0 \le \xi \le 1\,,\quad \xi\equiv 1\,\,\, \mbox{ in } \,\,\, B_{\rho_1}(x_0) \,\,\, \mbox{ and }\,\,\,|D\xi|\le \frac{2}{(s_2-s_1)r}\,, 
\label{eq:cutoff1}
\end{equation}
and let $\eta\in C^\infty(\R)$ be such that 
\begin{equation}
0\le \eta \le 1\,,\quad \eta\equiv 0 \,\,\, \mbox{ in } (-\infty,- {\rho^2_2}]\,,\,\,\, \eta \equiv 1\,\,\, \mbox{ in } \,\,\, [-{\rho^2_1},\infty)\,,\,\,\, 0\le \eta_t \le \frac{2}{(s_2^2-s_1^2)r^2}\,.
\label{eq:cutoff2}
\end{equation}
With fixed $\chi>0$ to be determined later, we take 
\[
\zeta =  \phi(|\tfrac{\bfu}{r}|)^{\chi}\eta^2 \xi^q \bfu
\]
as a test function in \eqref{eq:weakformul1} 
{and integrate by parts.} Then for every $\tau\in (-\rho_1^2,0]$ we have
\begin{equation*}\begin{aligned}
0&=\underbrace{\int_{-\rho^2_2}^\tau \int_{B_{\rho_2}(x_0)} \bfu_t \cdot \bfu\phi(|\tfrac{\bfu}{r}|)^{\chi}\eta^2 \xi^q \, \d x\,\d t}_{=:I_1}
\\ &\qquad+ \underbrace{\int_{-{\rho^2_2}}^\tau \int_{B_{\rho_2}(x_0)}  \frac{\phi'(|D{\bf u}|)}{|D{\bf u}|}D{\bf u} :  D [\bfu\phi(|\tfrac{\bfu}{r}|)^{\chi}\eta^2 \xi^q] \, \d x\,\d t}_{=:I_2} \,.
\end{aligned}\end{equation*}

Now, we estimate both the terms $I_1$ and $I_2$ separately. For what concerns $I_1$, setting 
\begin{equation}\label{Phigamma}
\Phi_\chi(s) := \int_0^{s^2}  \phi(\sqrt{\sigma})^\chi \,\d\sigma \le s^2 \phi(s)^\chi, \quad s\ge 0,
\end{equation}
we obtain
\begin{equation}
\begin{aligned}
I_1 & = r^2\int_{-\rho^2_2}^\tau \int_{B_{\rho_2}(x_0)} \left[\tfrac{1}{2} \partial_t( \Phi_\chi(|\tfrac{\bfu}{r}|)\eta^2) - \Phi_\chi(|\tfrac{\bfu}{r}|)\eta \eta_t \right]\xi^q \, \d x\,\d t \\
& \ge \frac{r^2}{2} \int_{B_{\rho_2}(x_0)}   \Phi_\chi(|\tfrac{\bfu(x,\tau)}{r}|)\eta(\tau)^2 \xi^q \, \d x  -  \frac{2}{s_2^2-s_1^2}\int_{Q_{\rho_2}(z_0)}  \Phi_\chi(|\tfrac{\bfu}{r}|)   \, \d z\,.
%& \ge \frac{r^2}{2} \int_{B_{\rho_2}(x_0)}   \Phi_\chi(|\tfrac{\bfu(x,\tau)}{r}|)\eta(\tau)^2 \xi^q \, \d x  -  r^2\int_{Q_{\rho_2}(z_0)}  \Phi_\chi(|\tfrac{\bfu}{r}|)  |\eta_t| \, \d z\,.
\end{aligned}
\label{eq:estimate1}
\end{equation}
Integrating by parts and taking into account \eqref{characteristic},
$$
\Phi_\chi(s)  = s^2 \phi(s)^\chi - \chi \int_0^{s^2} \sigma  \phi(\sqrt{\sigma})^{\chi-1}\phi'(\sqrt\sigma) \frac{1}{2\sqrt \sigma} \, \d \sigma \ge s^2\phi(s)^\chi -\frac{q\chi}{2} \Phi_\chi(s) 
$$
hence
\begin{equation}\begin{aligned}
 \frac{2}{2+q\chi}  s^2 \phi(s)^\chi \le \Phi_\chi(s) \le s^2\phi(s)^\chi . 
\label{eq:equivalence}
\end{aligned}\end{equation}

As for $I_2$, we have
\begin{equation}
\begin{aligned}
I_2 & = \int_{-{\rho^2_2}}^{\tau} \int_{B_{\rho_2}(x_0)}  \frac{\phi'(|D{\bf u}|)}{|D{\bf u}|}\left(|D{\bf u}|^2\phi(|\tfrac{\bfu}{r}|)^{\chi} + \chi r^2 \phi(|
\tfrac{\bfu}{r}|)^{\chi-1}\frac{\phi'(|\tfrac{\bfu}{r}|)}{|\tfrac{\bfu}{r}|} \frac{|D [|\tfrac{\bfu}{r}|^2]|^2}{4}\right)\eta^2 \xi^q\, \d x\,\d t\\
&\quad +  \int_{-\rho_2^2}^{\tau} \int_{B_{\rho_2}(x_0)}  \frac{\phi'(|D{\bf u}|)}{|D{\bf u}|} \left[D{\bf u} : ( D\xi\otimes \bfu)\right] \phi(|\tfrac{\bfu}{r}|)^{\chi}\eta^2 q \xi^{q-1} \, \d x\,\d t \\
& \geq \frac{1}{c_1}\int_{-\rho_2^2}^{\tau} \int_{B_{\rho_2}(x_0)}  \left(\phi(|D{\bf u}|)\phi(|\tfrac{\bfu}{r}|)^{\chi} +  \chi r^2 \frac{\phi'(|D{\bf u}|)}{|D{\bf u}|}  \frac{\phi(|\tfrac{\bfu}{r}|)^{\chi}}{|\tfrac{\bfu}{r}|^2}|D [|\tfrac{\bfu}{r}|^2]|^2\right)\eta^2 \xi^q\, \d x\,\d t\\
&\quad -  \int_{-\rho_2^2}^{\tau} \int_{B_{\rho_2}(x_0)} \left[  \frac{1}{2c_1} \phi^*(\phi'(|D{\bf u}|)\xi^{q-1}) + c \phi(|D\xi| |\bfu|)\right] \phi(|\tfrac{\bfu}{r}|)^{\chi}\eta^2   \, \d x\,\d t \\
& \geq \frac{1}{2c_1}\int_{-\rho_2^2}^{\tau} \int_{B_{\rho_2}(x_0)}  \left(\phi(|D{\bf u}|)\phi(|\tfrac{\bfu}{r}|)^{\chi} + \chi r^2 \frac{\phi'(|D{\bf u}|)}{|D{\bf u}|}\frac{\phi(|\tfrac{\bfu}{r}|)^{\chi}}{|\tfrac{\bfu}{r}|^2}|D [|\tfrac{\bfu}{r}|^2]|^2\right)\eta^2 \xi^q\, \d x\,\d t\\
&\quad - \frac{c}{(s_2-s_1)^q} \int_{Q_{\rho_2}(z_0)}  \phi(|\tfrac{\bfu}{r}|)^{\chi+1} \, \d z\,,
\end{aligned}
\label{eq:estimate2}
\end{equation}
where in the first inequality we  have applied Young's inequality to the second integral, while in the last one we used the inequality $|D\xi|\le \frac{c}{(s_2-s_1)r}$ and \eqref{eq:estimpq}.

Combining the above estimates \eqref{eq:estimate1} and \eqref{eq:estimate2}, we have that for every $\tau\in (-\rho_1^2,0]$
\[\begin{aligned}
&r^2\int_{B_{\rho_2}(x_0)}   \Phi_\chi(|\tfrac{\bfu(x,\tau)}{r}|) \xi^q \, \d x\\
&  + \int_{-{\rho^2_2}}^{\tau} \int_{B_{\rho_2}(x_0)}  \left(\phi(|D{\bf u}|)\phi(|\tfrac{\bfu}{r}|)^{\chi} +\chi r^2 \frac{\phi'(|D{\bf u}|)}{|D{\bf u}|} \frac{\phi(|\tfrac{\bfu}{r}|)^{\chi}}{|\tfrac{\bfu}{r}|^2}|D [|\tfrac{\bfu}{r}|^2]|^2\right)\eta^2 \xi^q\, \d x\,\d t\\
&\le  \frac{c}{s_2^2-s_1^2} \int_{Q_{\rho_2}(z_0)}  \Phi_\chi(|\tfrac{\bfu}{r}|)  \, \d z +\frac{c}{(s_2-s_1)^q} \int_{Q_{\rho_2}(z_0)}   \phi(|\tfrac{\bfu}{r}|)^{\chi+1} \, \d z\,.
\end{aligned}\]
Therefore, taking into account \eqref{eq:equivalence}, neglecting a non-negative term in the left hand side and recalling \eqref{Phigamma}, we have 
\begin{equation}
\begin{aligned}
&\sup_{-\rho_1^2<\tau<0} \int_{B_{\rho_2}(x_0)}   |\bfu(x,\tau)|^2 \phi(|\tfrac{\bfu(x,\tau)}{r}|)^\chi \xi^q \, \d x  +  \int_{Q_{\rho_2}(z_0)}  \phi(|D{\bf u}|)\phi(|\tfrac{\bfu}{r}|)^{\chi} \eta^2 \xi^q\, \d z\\
&\le  \frac{c(1+\chi)}{(s_2-s_1)^2} \int_{Q_{\rho_2}(z_0)}  |\tfrac{\bfu}{r}|^2\phi(|\tfrac{\bfu}{r}|)^\chi  \, \d z +\frac{c (1+\chi)}{(s_2-s_1)^q} \int_{Q_{\rho_2}(z_0)}   \phi(|\tfrac{\bfu}{r}|)^{\chi+1} \, \d z\,.
\end{aligned}
\label{eq:estimate1bis}
\end{equation}

\smallskip

\textit{Step 2. (Sobolev inequality)} Set
\[
{G(z) := r\tphi(|\tfrac{\bfu}{r}|)^{\chi+1} \eta^{\frac{2}{p_0}} \xi^{\frac{q}{p_0}} , \quad \text{where }\ \tphi(s):= \phi(s)^{\frac{1}{p_0}}
\ \ \text{and}\ \ 
p_0:=\frac{2n}{n+2}.}
\]
Then
\[
DG= (1+\chi)\tphi(|\tfrac{\bfu}{r}|)^{\chi} \tphi'(|\tfrac{\bfu}{r}|)\tfrac{\bfu D{\bf u}}{|\bfu|} \eta^{\frac{2}{p_0}} \xi^{\frac{q}{p_0}}  + r \tfrac{q}{p_0} \tphi(|\tfrac{\bfu}{r}|)^{\chi+1} \eta^{\frac{2}{p_0}} \xi^{\frac{q}{p_0}-1}D\xi\,,
\]
Now we apply Young's inequality \eqref{eq:young} to the $N$-function $\tphi$, with $t=|D{\bf u}|$ and $s=\tphi'(|\tfrac{\bfu}{r}|)$, together with \eqref{eq:hok2.4} to get 
\[\begin{aligned}
|DG| & \leq (1+\chi) \tphi(|\tfrac{\bfu}{r}|)^{\chi} \tphi'(|\tfrac{\bfu}{r}|)|D{\bf u}| \eta^{\frac{2}{p_0}} \xi^{\frac{q}{p_0}}  + c r \tphi(|\tfrac{\bfu}{r}|)^{\chi+1} |D\xi| \\
&\le c (1+\chi) \tphi(|\tfrac{\bfu}{r}|)^{\chi} \tphi(|D{\bf u}|) \eta^{\frac{2}{p_0}} \xi^{\frac{q}{p_0}}  + c \left(1+ \chi+\frac{1}{s_2-s_1}\right) \tphi(|\tfrac{\bfu}{r}|)^{\chi+1}\,.
\end{aligned}\]
Therefore, combining with \eqref{eq:estimate1bis} and recalling the definition of $\tphi$, we have 
\begin{equation}\label{1}\begin{aligned}
&\sup_{-\rho_1^2<\tau<0} \int_{B_{\rho_2}(x_0)}   |\bfu(x,\tau)|^2 \phi(|\tfrac{\bfu(x,\tau)}{r}|)^\chi \xi^q \, \d x +  \int_{Q_{\rho_2}(z_0)}  |DG|^{p_0}\, \d z\\
&\le  \frac{c(1+\chi)^2}{(s_2-s_1)^2} \int_{Q_{\rho_2}(z_0)}  |\tfrac{\bfu}{r}|^2\phi(|\tfrac{\bfu}{r}|)^\chi  \, \d z +\frac{c(1+\chi)^{p_0+1}}{(s_2-s_1)^q} \int_{Q_{\rho_2}(z_0)}   \phi(|\tfrac{\bfu}{r}|)^{\chi+1} \, \d z\\
&\le \frac{c(1+\chi)^{p_0+1}}{(s_2-s_1)^q} \int_{Q_{\rho_2}(z_0)}   \left(|\tfrac{\bfu}{r}|^2+\phi(|\tfrac{\bfu}{r}|)\right) \phi(|\tfrac{\bfu}{r}|)^{\chi} \, \d z .
\end{aligned}\end{equation}
Now, applying H\"older's inequality, {the Sobolev inequality to  the function $G\in W^{1,p_0}_0(B_r)$} and using \eqref{1}, we can write
\[\begin{aligned}
&\int_{Q_{\rho_1}(z_0)} |\tfrac{\bfu}{r}|^{\frac{2p_0}{n}} \phi(|\tfrac{\bfu}{r}|)^{1+\chi+\frac{\chi p_0}{n}} \,\frac{\d z}{r^{n+2}} \\
&\le \frac{1}{r^{n+2+\frac{2p_0}{n}}} \int_{-\rho_1^2}^0 \bigg(\int_{B_{\rho_1}(x_0)} |\bfu|^2 \phi(|\tfrac{\bfu}{r}|)^{\chi} \,\d x\bigg)^{\frac{p_0}{n}} \bigg(\int_{B_{\rho_1}(x_0)} \tphi(|\tfrac{\bfu}{r}|)^{\frac{(1+\chi)np_0}{n-p_0}} \,\d x\bigg)^{\frac{n-p_0}{n}} \,\d t\\
& \le \frac{1}{r^{(n+2)(1+\frac{p_0}{n})}} \bigg(\sup_{-\rho_1^2<\tau<0}\int_{B_{\rho_1}(x_0)} |\bfu(x,\tau)|^2\phi(|\tfrac{\bfu(x,\tau)}{r}|)^{\chi} \,\d x\bigg)^{\frac{p_0}{n}} \int_{-\rho_1^2}^0  \bigg(\int_{B_{\rho_2}(x_0)}|G|^{p_0^*} \,\d x\bigg)^{\frac{p_0}{p_0^*}} \,\d t\\
& \le \frac{c}{r^{(n+2)(1+\frac{p_0}{n})}} \bigg(\sup_{-\rho_1^2<\tau<0}\int_{B_{\rho_1}(x_0)} |\bfu(x,\tau)|^2 \phi(|\tfrac{\bfu(x,\tau)}{r}|)^{\chi} \,\d x\bigg)^{\frac{p_0}{n}} \int_{Q_{\rho_2}(z_0)} |DG|^{p_0} \,\d z\\
&\le c\left\{\frac{(1+\chi)^{p_0+1}}{(s_2-s_1)^q} \int_{Q_{\rho_2}(z_0)}   \left(|\tfrac{\bfu}{r}|^2+\phi(|\tfrac{\bfu}{r}|)\right) \phi(|\tfrac{\bfu}{r}|)^{\chi} \, \frac{\d z}{r^{n+2}} \right\}^{1+\frac{p_0}{n}}.
\end{aligned}\]
Then, since $s^{2}\le c \phi(s)s^{\frac{2p_0}{n}}=c \phi(s)s^{\frac{4}{n+2}}$ for $s\ge 1$ by $p> \frac{2n}{n+2}$, recalling \eqref{psi}, we have 
\begin{equation}
\begin{aligned}
&\int_{Q_{\rho_1}(z_0)} \psi(|\tfrac{\bfu}{r}|)\phi(|\tfrac{\bfu}{r}|)^{\chi(1+\frac{p_0}{n})} \,\frac{\d z}{|Q_{2r}|} \le c \int_{Q_{\rho_1}(z_0)} \phi(|\tfrac{\bfu}{r}|)^{1+\chi+\frac{\chi p_0}{n}}|\tfrac{\bfu}{r}|^{\frac{2p_0}{n}} \,\frac{\d z}{|Q_{2r}|} +c\\
&\le c \left\{\frac{(1+\chi)^{p_0+1}}{(s_2-s_1)^q} \int_{Q_{\rho_2}(z_0)}  \left[\psi(|\tfrac{\bfu}{r}|)\phi(|\tfrac{\bfu}{r}|)^\chi+1\right]  \, \frac{\d z}{|Q_{2r}|}\right\}^{1+\frac{p_0}{n}}.
\end{aligned}
\label{eq:estimate2bis}
\end{equation}

\smallskip

\textit{Step 3. (Iteration)} We first notice that by applying the Gagliardo-Nirenberg type interpolation inequality
to $\phi$ with $p>\frac{2n}{n+2}$
%for Orlicz functions 
provided by Lemma~\ref{Lem:paraembedding} and Remark~\ref{rmk:embedding}, we have 
\[
\int_{Q_{2r}}  \psi(|\bfu|)\phi(|\bfu|)^{\chi_0}  \, \d z<\infty
\]
where
\begin{equation}\label{chi0}
\chi_0:= \min\left\{\left(\frac{p(n+2)}{n}-2\right)\frac{1}{q},\frac{2}{n}\right\}>0\, .
\end{equation}

For $m=0,1,2,\dots$, set $\chi_m=\chi_0\theta^m$ and
\[
J_{m}:= \int_{Q_{r_m}}  \left[\psi(|\tfrac{\bfu}{r}|)\phi(|\tfrac{\bfu}{r}|)^{\chi_m}  +1 \right]\, \frac{\d z}{|Q_{2r}|}, 
\quad \text{where} \ \ \theta:=1+\frac{p_0}{n} \ \ \text{and}\ \ r_m:=r(1+2^{-m})\,.
\]
%\end{proof}
Then, we can iterate \eqref{eq:estimate2bis} and write
\[
J_{m}  \le c 2^{q\theta m}(1+\chi_0\theta^{m-1})^{(p_0+1)\theta}J_{m-1}^{\theta}  \le c_0^m  J_{m-1}^\theta , \quad m=1,2,\dots,
\]
for some $c_0\ge 1$ depending on $n$, $N$,  $p$, $q$ and $L$. Hence, for $m\ge 2$,
\[
J_{m}  \le c_0^m  \left(c_0^{m-1}J_{m-2}^\theta \right)^{\theta}  \le c_0^{m+(m-1)\theta} J_{m-2}^{\theta^2} \le \cdots  \le  c_0^{\sum_{k=1}^m(m-k+1)\theta^{k-1}} J_0^{\theta^m}   \le (c_1 J_0 )^{\theta^m}
\]
for some large $c_1\ge 1$ depending on $n$, $N$, $p$, $q$ and $L$. Consequently, setting 
\[
\d\mu(z) := \psi(|\tfrac{\bfu(z)}{r}|)\frac{\d z}{|Q_{2r}|},
\]
we have 
\[\begin{aligned}
\|\phi(|\tfrac{\bfu}{r}|)\|_{L^\infty(Q_r)} \le \|\phi(|\tfrac{\bfu}{r}|)\|_{L^\infty(Q_r;\d\mu)} 
&=\lim_{m\to \infty} \left(\int_{Q_{r}}  \phi(|\tfrac{\bfu}{r}|)^{\chi_0\theta^m}  \, \d\mu \right)^{\frac{1}{\chi_0\theta^m}} \\
& \le  \limsup_{m\to \infty}\, (J_m)^{\frac{1}{\chi_0\theta^m}} \le (c_1 J_0 )^{\frac{1}{\chi_0}} .
\end{aligned}\]
%\comment{\textcolor{blue}{Bianca: maybe we should put $\le$ after the first norm} }
This implies the estimate \eqref{estimate:bounded} and the proof is concluded.
\end{proof}

\vspace{0.5cm}

\section{Approximating problems and Second order differentiability}

Let $\bfu$ be a weak solution to \eqref{eq:system}. Then for $Q_{R} \Subset \Omega_{T}$ and a sufficiently small $\varepsilon\in(0,1)$ we consider the following non-degenerate parabolic system with Cauchy-Dirichlet boundary condition: 
\begin{equation}\label{eq:nondegenerate system}
\begin{cases}
\displaystyle (\bfu_\epsilon)_{t}-\div\left(\frac{\phi_{\epsilon}'(|D \bfu_\epsilon|)}{|D \bfu_\epsilon|} D \bfu_\epsilon\right)=0 & \quad \text{in }\ Q_{R}\,,\\
 \bfu_\epsilon=\bfu &\quad \text{on }\ \partial_{\rm p}Q_{R}\,.
\end{cases}
\end{equation}
where $\phi_\epsilon$ is the shifted $N$-function with $a=\epsilon$. 
The existence of the weak solution to the above system follows from the theory of monotone operators or by using the Galerkin approximation method, see for instance \cite{Lions}.
We will show that the system \eqref{eq:system} can be approximated by \eqref{eq:nondegenerate system}, in the sense that if $\bf\bfu_\epsilon$ is the weak solution to \eqref{eq:nondegenerate system}, {then $D{\bf u}_\epsilon$ converges to  $D{\bf u}$ in $L^p(Q_R)$ (see Lemma~\ref{Lem:approximation}).} 
%\textcolor{magenta}{then $D{\bf u}_\epsilon$ converges to  $D{\bf u}$ a.e. in $Q_R$ (see Lemma~\ref{Lem:approximation}).
%\comment{Giovanni: do we have any chance to infer $L^p$-convergence from a.e. convergence? 
%
%Jihoon: I changed Lemma~\ref{Lem:approximation} proving $L^\phi$ convergence in .}

%We first establish some regularity results for each weak solution $u_\varepsilon$ to the system
We first prove  second differentiability in the spatial variable $x$ for each weak solution to the following non-degenerated problem without boundary condition:
\begin{equation}
\displaystyle \bfw_{t}-\div\left(\frac{\phi_{\epsilon}'(|D \bfw|)}{|D \bfw|} D \bfw\right)=0 \quad \text{in }\ \Omega_T\,.
\label{eq:nondegenerate systemeq}
\end{equation}
%We start recalling a standard energy inequality for local weak solutions (see \cite[Lemma~3.1]{HasOk21}).
%\begin{lemma}[Caccioppoli inequality]
%% for $Q_{R}(z_0) \Subset \Omega_{T}$ for some $R>0$,
%Assume that $\phi$ complies with \eqref{characteristic} and let $v_\varepsilon$ be a weak solution to \eqref{eq:nondegenerate systemeq}. Then there exists a constant $c=c(n,N,p,q,L)>0$ such that for
%%$r\in[\frac{R}{4},R)$ 
%$r\in (0,R)$ and $a=(a_1,\dots,a_N)\in\R^N$, we have
%\begin{equation}
%\begin{split}
%\sup_{\tau\in(t_0-r^2,t_0)} & \fint_{B_r(x_0)} \left|\frac{v_\varepsilon(\cdot,\tau)-a}{r}\right|^2\,\mathrm{d}x + \fint_{Q_r(z_0)}\varphi_\varepsilon(|Dv_\varepsilon|)\,\mathrm{d}z \\
% & \leq c \fint_{Q_{2r}(z_0)} \left[\left|\frac{v_\varepsilon-a}{R-r}\right|^2 + \varphi_\varepsilon\left(\left|\frac{v_\varepsilon-a}{R-r}\right|\right)\right]\,\mathrm{d}z\,.
%\end{split}
%\label{eq:caccioppineq}
%\end{equation}
%\end{lemma}
%
%Now, we prove second differentiability of $v_{\varepsilon}$ in the spatial variable $x$.  
In order to do that, we fix some notation. For a (vector-valued) function $f$, we introduce the notation
$$
\Delta_{k,s} f(x, t):= f\left(x+s e_{k}, t\right)-f(x, t)\,,
$$
where $s \in \mathbb{R}$ and $e_{k}$ with $k \in\{1,2, \ldots, n\}$ is a standard unit vector in $\R^{n}$. Moreover, we define $T_{k,s}:\R^{n+1}\to \R^{n+1}$ by
\begin{equation}
T_{k,s}(x,t):=(x+se_k,t), \qquad (x,t)\in \R^{n+1}.
\label{eq:translations}
\end{equation}
%for $x\in\R^n$ and $t\in\R$.
% and $s\in\R$.
Then we have the following result (cfr. \cite[Theorem 6]{Choe91} where analogous estimates are devised for parabolic systems with $p$-growth, and \cite[Theorem 11]{DieEtt08}, \cite[Lemma 5.7]{DieStrVer09} for analogous arguments for elliptic systems with $\varphi$-growth).

\begin{lemma} \label{Lem:seconddifferential}
Let $\phi$ satisfy Assumption~\ref{Ass2} with \eqref{eq:hypp} and \eqref{p2q}, and let $\bfu_\epsilon$  be a weak solution to \eqref{eq:nondegenerate systemeq} with $\epsilon>0$. 
Then
\begin{itemize}
\item [$(i)$] $\bV^\epsilon (D \bfu_\epsilon)\in L^2_{\loc}(0,T; W^{1,2}_{\loc}(\Omega;\R^{Nn}))$;
\item [$(ii)$]$D \bfu_\epsilon \in L^p_{\loc}(0,T; W^{1,p}_{\loc}(\Omega;\R^{Nn}))\cap L^\infty_{\loc}(0,T; L^2_{\loc} (\Omega;\R^{Nn}))$;
\item [$(iii)$] if, in addition, $D \bfu_\epsilon \in L_{\loc}^{\infty}(\Omega_T;\R^{Nn})$, then $D \bfu_\epsilon \in L^2_{\loc}(0,T; W^{1,2}_{\loc}(\Omega;\R^{Nn}))$ and $\phi_\epsilon(|D \bfu_\epsilon|) \in L^2_{\loc}(0,T; W^{1,2}_{\loc}(\Omega))$.
\end{itemize}
\end{lemma}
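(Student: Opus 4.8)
The plan is to prove all three items simultaneously via the difference-quotient method applied to the (spatially translated) equation, combined with the Giaquinta--Modica covering trick to upgrade an initial weak estimate into the full interior estimate. Throughout I abbreviate $\phi_\epsilon$ by $\phi$ (writing $\phi_\epsilon$ only where the role of $\epsilon$ matters, e.g. for the non-degeneracy lower bound $\phi''_\epsilon(t)\gtrsim \phi'(\epsilon)/\epsilon$ near $t=0$), and write $\bfw=\bfu_\epsilon$.

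\textbf{Step 1: the difference-quotient Caccioppoli inequality.} Fix coordinate direction $k$, a small $|s|$, and cutoffs $\xi\in C_{\mathrm c}^\infty(B_{\rho_2})$, $\eta$ in time, as in the proof of Theorem~\ref{thm:bound}. Subtracting the weak form of \eqref{eq:nondegenerate systemeq} from its translate $T_{k,s}$ and testing with $\zeta = \Delta_{k,s}\bfw\,\xi^2\eta^2$ yields, after the usual integration by parts in the parabolic term (producing $\sup_t \int |\Delta_{k,s}\bfw|^2\xi^2$) and using the monotonicity estimate \eqref{monotonicity},
\[
\sup_{t}\int_{B_{\rho_2}} \frac{|\Delta_{k,s}\bfw(\cdot,t)|^2}{s^2}\xi^2\,\d x
+ \int_{Q_{\rho_2}} \frac{|\bV^\epsilon(D\bfw)\circ T_{k,s}-\bV^\epsilon(D\bfw)|^2}{s^2}\,\xi^2\eta^2\,\d z
\le c\int_{Q_{\rho_2}} \big(\phi_{|D\bfw|}\big)\!\big(\tfrac{|\Delta_{k,s}\bfw|}{s}\big)\,\frac{|D\xi|^2+|\eta_t|}{1}\,\d z + \dots,
\]
where the right side is controlled, via the change of shift Lemma~\ref{lem:changeshift} and \eqref{eq:young4}, by $\int_{Q_{\rho_2}}\phi_\epsilon(|D\bfw|)$ plus lower-order terms. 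Sending $s\to0$ and summing over $k$ gives $\bV^\epsilon(D\bfw)\in L^2_{\loc}(0,T;W^{1,2}_{\loc})$, which is item $(i)$, together with the bound $\sup_t\int_{B}|D\bfw|^2\xi^2 \lesssim \int_{Q}\phi_\epsilon(|D\bfw|)+\dots$; the latter uses $|\bfw\circ T_{k,s}-\bfw|/|s| \to |D\bfw|$ but only after we know $D\bfw\in L^2_{\loc}$, so this part is initially only formal.

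\textbf{Step 2: bootstrapping via Giaquinta--Modica covering (the main obstacle).} The genuine difficulty --- and the reason the twice-differentiability is not a routine generalization of the $p$-Laplace case --- is that when $p<2$ (which \eqref{p2q} forces us to allow), the natural energy $\int\phi_\epsilon(|D\bfw|)\sim\int\phi''_\epsilon(|D\bfw|)|D\bfw|^2$ does not a priori dominate $\int|D\bfw|^2$, so the Caccioppoli inequality of Step~1 does not immediately close. The remedy is the Giaquinta--Modica trick: one does not test with $\Delta_{k,s}\bfw\,\xi^2\eta^2$ directly but with a truncated/reweighted version (or, equivalently, runs the argument on the level sets $\{|D\bfw|>\lambda\}$), splitting the domain according to whether $|D\bfw|$ is large or small. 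On the set where $|D\bfw|\gtrsim 1$ one has $\phi_\epsilon(|D\bfw|)\gtrsim |D\bfw|^2$ and the estimate is fine; on the complementary set $|D\bfw|$ is bounded, so $\int|D\bfw|^2$ there is trivially finite. Crucially, the already-established interior bound on $\bfw$ from Theorem~\ref{thm:bound} enters here: it lets one absorb the terms $\int\phi(|D\xi||\bfw|)$ and feeds the Poincar\'e-type control needed to run the covering iteration over a shrinking family of cylinders $Q_{\rho_m}$. The outcome is $D\bfw\in L^p_{\loc}(0,T;W^{1,p}_{\loc})$ and, rigorously this time, $D\bfw\in L^\infty_{\loc}(0,T;L^2_{\loc})$, which is item $(ii)$.

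\textbf{Step 3: the a priori case $(iii)$.} If in addition $D\bfw\in L^\infty_{\loc}(\Omega_T)$, then on any subcylinder $\phi''_\epsilon(|D\bfw|)$ is comparable to constants bounded above and below (depending on $\|D\bfw\|_\infty$ and $\epsilon$), so \eqref{monotonicity} upgrades to $|\bV^\epsilon(D\bfw)\circ T_{k,s}-\bV^\epsilon(D\bfw)|^2 \sim |\Delta_{k,s}D\bfw|^2$ up to such constants, and item $(i)$ immediately gives $D\bfw\in L^2_{\loc}(0,T;W^{1,2}_{\loc})$. For $\phi_\epsilon(|D\bfw|)\in L^2_{\loc}(0,T;W^{1,2}_{\loc})$ one computes $D[\phi_\epsilon(|D\bfw|)] = \phi_\epsilon'(|D\bfw|)\,\frac{D\bfw}{|D\bfw|}:D[D\bfw]$ pointwise a.e. (legitimate once $D\bfw$ is weakly differentiable) and bounds it by $c(\|D\bfw\|_\infty)\,|D^2\bfw|\in L^2_{\loc}$, which is item $(iii)$. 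Finally one checks that all cutoff and translation manipulations are justified by the Steklov-average regularization already invoked in the introduction, so the formal steps above are legitimate; this is routine and I would relegate it to a remark.
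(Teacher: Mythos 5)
Your skeleton (difference quotients, monotonicity to produce the $\bV^\epsilon$ increment, then a Giaquinta--Modica re-absorption) is the same as the paper's, and Step~3 is essentially correct. But Step~2 --- which you yourself flag as the crux --- goes in the wrong direction and would not close.

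First, the inequality you rely on is false in the relevant range: for $p<2$ one has $\phi_\epsilon(t)\lesssim t^2$ on $\{t\ge 1\}$ (since $\phi(t)\gtrsim t^p$ but $t^p<t^2$), not $\phi_\epsilon(t)\gtrsim t^2$. So splitting on $\{|D\bfw|\gtrsim 1\}$ does not let you dominate $\int |D\bfw|^2$ by $\int\phi_\epsilon(|D\bfw|)$; that is precisely the failure mode when $p<2$. Second, the problematic term in the Caccioppoli inequality is not of the form $\int\phi(|D\xi|\,|\bfw|)$ (those arise in Theorem~\ref{thm:bound}, not here) but the term $\int |\Delta_{k,s}\bfw|^2\,\zeta^{q-1}\zeta_t$ produced by the time-cutoff. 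Your level-set splitting does not touch this term. Third, the Giaquinta--Modica covering in this proof is not a level-set decomposition: it is the iteration lemma (\cite[Lemma~13]{DieEtt08}) used to re-absorb, across a family of shrinking cylinders, small-coefficient terms that live on larger cylinders than the left-hand side. The paper applies it twice, once for each of two absorption parameters.

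The ingredient you are missing is the integration by parts in the spatial difference quotient. Writing
\[
\int |\Delta_{k,s}\bfw|^2\,\zeta^{q-1}\,\d z
= -\int_0^s\!\!\int \bfw(x+\tau e_k,t)\cdot\bigl(\Delta_{k,s}\bfw_{x_k}\,\zeta^{q-1}+(q-1)\Delta_{k,s}\bfw\,\zeta^{q-2}\zeta_{x_k}\bigr)\,\d z\,\d\tau
\]
converts the dangerous $L^2$-norm of $\Delta_{k,s}\bfw$ into a linear term in $\Delta_{k,s}D\bfw$ multiplied by $\bfw$ itself, so the $L^\infty$ bound $\sup|\bfw|\le\lambda$ from Theorem~\ref{thm:bound} enters directly. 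One then applies Young's inequality with the weight $(\epsilon+|D\bfw\circ T_{k,s}|+|D\bfw|)^{(p-2)/2+(2-p)/2}$ (here the non-degeneracy $\epsilon>0$ supplies the lower bound $\phi'(\epsilon)/\epsilon$); the quadratic part produces a small multiple of $\int|\Delta_{k,s}\bV^\epsilon(D\bfw)|^2$, re-absorbable via the covering argument, while the complementary part is bounded by $\lambda^2 s^2 \epsilon^{p-1}/\phi'(\epsilon)$ times $\int[\phi(|D\bfw|)+1]$. That is the step that makes the scheme close when $p<2$, and your proposal replaces it with a decomposition that cannot.
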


%\comment{Jihoon: I refer to \cite[Theorem 11]{DieEtt08}, \cite[Theorem 6]{Choe91} and \cite[Lemma 5.7]{DieStrVer09}.}

\begin{proof} 
In order to enlighten the notation, we will denote $\bfu_{\varepsilon}$ by $\bfw $. 
%Note that $\bfw  \in L^{\infty}_{\loc}\left(\Omega_T\right)$ by Theorem~\ref{thm:bound}.
Note that in view of Theorem~\ref{thm:bound}, $\bfw  \in L^{\infty}_{\loc}\left(\Omega_T\right)$. 
Fix any $Q_{25 r}=Q_{25 r}\left(x_{0}, t_{0}\right) \Subset \Omega_T$ with $r\in(0,1)$,  and set $\lambda:=\sup _{Q_{2r}} \bfw  $.
Let $s,h\in \R$ be such that $0<s\le h<r/2$ or $-r/2<h\le s<0$. Then $\bfw $ satisfies
\begin{equation}
\left(\Delta_{k,s} \bfw \right)_{t}-\operatorname{div}\left(\Delta_{k,s} (\bA^{\epsilon}(D \bfw ))\right)=0 \text { in } Q_{3r/2}%, \quad \text { where } \quad \bA^{\epsilon}(P):=\frac{\varphi_{\varepsilon}^{\prime}(|P|)}{|P|} P,
\label{eq:weakeqneps}
\end{equation}
in the weak sense, where $\bA^{\epsilon}$ is defined as in \eqref{eq:shiftedop} with $a=\varepsilon$.

Let $\zeta \in C^{\infty}\left(Q_{3 r/2}\right)$ be such that $0 \le \zeta \le 1$, $\zeta=0$ in $Q_{2r}\setminus Q_{3 r/2}$, $\zeta \equiv 1$ in $Q_{r}$, and $r^{-1}|D \zeta|+$ $\left|D^{2} \zeta\right|+\left|\zeta_{t}\right| \le \frac{c(n)}{r^{2}}$. Then, testing \eqref{eq:weakeqneps} with the function $(\Delta_{k,s} \bfw ) \zeta^q$, 
%$q>1$, 
for every $t_0\in (-r^2,0]$ we obtain
$$\begin{aligned}
0=&\int^{t_0}_{-4r^2}\int_{B_{2r}}\left(\Delta_{k,s} \bfw \right)_{t} \cdot (\Delta_{k,s} \bfw ) \zeta^q \, \d x\, \d t\\
&+\int^{t_0}_{-4r^2}\int_{B_{2r}} \Delta_{k,s}(\bA^{\epsilon}(D \bfw )): (\zeta^q D [\Delta_{k,s} \bfw ] + q\zeta^{q-1} \Delta_{k,s} \bfw  \otimes D \zeta  )\, \d x\, \d t .
\end{aligned}$$
Note that an integration by parts with respect to the time variable gives
$$\begin{aligned}
\int^{t_0}_{-4r^2} \int_{B_{2r}} (\Delta_{k,s} \bfw )_{t} \cdot (\Delta_{k,s} \bfw )\zeta^q \,\d x\, \d t 
& =\frac{1}{2} \int_{B_{2r}}|\Delta_{k,s} \bfw (x,t_0)|^{2} \zeta^q \, \d x\\
&\qquad -\frac{q}{2} \int^{t_0}_{-4r^2} \int_{B_{2r}} |\Delta_{k,s} \bfw |^2  \zeta^{q-1} \zeta_{t} \, \d x\, \d t.
\end{aligned}$$
Hence, for every $t_0\in[-r^2,0]$ we have that
$$\begin{aligned}
& \int_{B_{r}} |\Delta_{k,s} \bfw  (x, t_{0})|^{2}  \, \d x  \\
&\qquad+2 \int^{t_0}_{-4r^2} \int_{B_{2 r}} \Delta_{k,s}(\bA^{\epsilon}(D\bfw )):\left( \Delta_{k,s} (D\bfw ) \zeta^q+q\zeta^{q-1}\Delta_{k,s} \bfw  \otimes D \zeta \right) \, \d x\, \d t \\
&\le q \int^{t_0}_{-4r^2} \int_{B_{2r}}|\Delta_{k,s} \bfw |^{2} \zeta^{q-1} \zeta_{t} \, \d x\, \d t.
\end{aligned}$$
We first observe that by \eqref{monotonicity}, 
$$
\begin{aligned}
&\Delta_{k,s}  (\bA^{\epsilon}(D \bfw )): \Delta_{k,s} D  \bfw   \\
&  = \big[\bA^{\epsilon}(D\bfw (x+se_k,t))-\bA^{\epsilon}(D\bfw (x,t))\big]: \big[D\bfw (x+se_k,t)-D\bfw (x,t)\big]\\
%&\ge \frac{1}{c} \frac{\phi_\epsilon'(|D\bfw (x+se_k,t)|+|D\bfw (x,t)|)}{|D\bfw (x+se_k,t)|+|D\bfw (x,t)|} | \Delta_{k,s} D\bfw (x,t)|^2\\
&\ge \frac{1}{c} |\Delta_{k,s} \bV^\epsilon (D \bfw )|^2.
\end{aligned}
$$
Now, by \eqref{PhiPQ} and \eqref{eq:DE(6.7)},
$$\begin{aligned}
|\Delta_{k,s} \bA^\epsilon(D \bfw )| & =  |\bA^\epsilon(D\bfw (x+se_k,t))-\bA^\epsilon(D\bfw (x,t))| \\
&\le c \frac{\phi_\epsilon'(|D\bfw (x+se_k,t)|+|D\bfw (x,t)|)}{|D\bfw (x+se_k,t)|+|D\bfw (x,t)|}  | \Delta_{k,s} D\bfw (x,t)| \\
&\le c\phi_{\epsilon+|D\bfw |}'(| \Delta_{k,s} D\bfw (x,t)| )\,,
\end{aligned}$$
%$$
%\begin{aligned}
%& |\bA(D \bfw )_{d, k}: \zeta  \Delta_{k,s} \bfw  \otimes D \zeta | 
%\le  c \delta J  | \Delta_{k,s} D\bfw (x,t)|^2\zeta^2 + c \delta^{-1}  J  |\Delta_{k,s} \bfw |^2 |D\zeta|^2 \\
%&\le c \delta |\Delta_{k,s} \bV^\epsilon (D \bfw )|^2 + c \delta^{-1}|d|^2 |D\zeta|^2 \int_0^1 J  |D\bfw (x+\tau d e_k,t)|^2 \, \d \tau.
%\end{aligned}
%$$
%Note  that, letting $|D\bfw (x+\tau d e_k,t)| \le |D\bfw (x+de_k,t)|+|D\bfw (x,t)|$ be Case 1 and the other case be Case 2, 
%$$
%J  |D\bfw (x+\tau d e_k,t)|^2 \le 
%\begin{cases}
%c \phi(\epsilon+ |D\bfw (x+de_k,t)|+|D\bfw (x,t)|) \quad \text{if Case 1 holds},\\
%c \epsilon^{-1} \phi'(\epsilon+ |D\bfw (x+\tau d e_k,t)|) |D\bfw (x+\tau d e_k,t)| \quad \text{if Case 1 holds}, 
%\end{cases}
%$$
whence
\[\begin{aligned}
&|\Delta_{k,s}(\bA^{\epsilon}(D \bfw )):\Delta_{k,s} \bfw  \otimes D \zeta \zeta^{q-1}|\\
&\le c \fint_0^s\underbrace{\phi_{\epsilon+|D\bfw |}'(| \Delta_{k,s} D\bfw | )  |s| |D\bfw (x+\tau e_k,t)| | D \zeta | }_{=:J}\, \d \tau\,.
\end{aligned}\]
Note that we shall write $\fint^s_0 = \frac{1}{s} \int_0^s$ even if $s\in \R$ is negative.
Recalling \eqref{eq:translations}, we have
\begin{equation*}
\bfw (x+\tau e_k,t) = T_{k,\tau}\circ \bfw (x,t)\,.
\end{equation*}

Now, from \eqref{eq:DE(6.19)}, %Set $T\circ w(x,t) :=w(x+\tau  e_k,t)$. Then by the fact that $\phi'_{|P_2|}(|P_1-P_2|)\le c (\phi'_{|Q|}(|P_1-Q|)+\phi'_{|Q|}(|P_2-Q|))$, see \cite[Lemma 29]{DieEtt08}, 
Young's inequality in \eqref{eq:young4} and \eqref{eq:(3.4)}, recalling $h\in \R$ fixed above, for any sufficiently small $\delta>0$ we have that
\[\begin{aligned}
J& \le  c \left(\phi_{\epsilon+|D[T_{k,\tau}\circ \bfw ]|}'(|\Delta_{k,s-\tau}D[T_{k,\tau}\circ \bfw ]|)+ \phi_{\epsilon+|D[T_{k,\tau}\circ \bfw ]|}'(|\Delta_{k,\tau}D \bfw |) \right) |s| |D[T_{k,\tau}\circ \bfw ]| |D\zeta|  \\
&\le  \delta \left(\frac{|s|}{|h|}\right)^{\frac{q}{q-1}}\Big\{\phi_{\epsilon+|D[T_{k,\tau}\circ \bfw ]|}(|\Delta_{k,s-\tau}D[T_{k,\tau}\circ \bfw ]|)+ \phi_{\epsilon+|D[T_{k,\tau}\circ \bfw ]|} (|\Delta_{k,\tau} D \bfw |) \big\} \\
&\qquad + c_\delta\phi_{\epsilon+|D[T_{k,\tau}\circ \bfw ]|}\left( |h| |D[T_{k,\tau}\circ \bfw ]| |D\zeta|   \right)\\
&\le  \delta  \frac{|s|}{|h|} \Big( |\Delta_{k,s-\tau} \bV^{\epsilon} (D[T_{k,\tau}\circ \bfw ])|^2 + |\Delta_{k,\tau} \bV^{\epsilon}(D \bfw )|^2 \big) + c_\delta \frac{h^2}{r^2} \phi_{\epsilon}\left(  |D[T_{k,\tau}\circ \bfw ]|    \right)\,,
\end{aligned}\]
where we have used the facts that $0<|s|\le |h| < r/2$, 
$|D\zeta|\le \frac{c}{r}\le \frac{c}{|h|}$ and for $\epsilon>0$ and $s\in(0,1]$ 
$$
\phi^*_a(s t) \le  s^{\frac{q}{q-1}} \phi^*_a(t)
$$
and
$$
\phi_{\epsilon+t}(st)\sim \frac{\phi'(\epsilon+t+st)}{\epsilon+t+st}(st)^2 \sim s^2 \frac{\phi'(\epsilon+t)}{\epsilon+t}t^2 \sim s^2\phi_\epsilon(t), \quad s\in [0,1].
$$
Note that applying Fubini's Theorem and the change of variables $y=x+\tau e_k$ and $\tilde \tau = s-\tau$, we get
$$\begin{aligned}
& \int_{Q_{3r/2}}\fint_0^s |\Delta_{k,s-\tau} \bV^{\epsilon} (D[T_{k,\tau}\circ \bfw ])|^2 \, \d\tau \, \d x\\
& =\fint_0^s  \int_{Q_{3r/2}} |\bV^\epsilon(D\bfw (x+\tau e_k+ (s-\tau)e_k,t))-\bV^\epsilon(D\bfw (x+\tau  e_k,t))|^2 \, \d x \,  \d\tau\\
& \leq \fint_0^s  \int_{Q_{2r}} |\bV^\epsilon(D\bfw (y+ \tilde \tau e_k,t))-\bV^\epsilon(D\bfw (y,t))|^2 \, \d y \,  \d\tilde \tau \,.
\end{aligned}$$
Therefore, we have
\begin{equation}\label{supint}
\begin{aligned}
&\sup_{t_0\in[-r^2,0]}\int_{B_{r}} [\Delta_{k,s} \bfw \left(x, t_{0}\right)]^{2}  \, \d x +\int_{Q_{r}} |\Delta_{k,s} \bV^\epsilon (D \bfw )|^2\, \d z  \\
& \le c \delta \frac{|s|}{|h|}\fint_0^s  \int_{Q_{2r}} |\Delta_{k,\tau} \bV^\epsilon (D \bfw )|^2\, \d x\, \d \tau  + c(\delta) \frac{h^2}{r^2} \int_{Q_{2r}} \phi_\epsilon (|D\bfw |)\, \d z  +  \frac{c}{r^2} \int_{Q_{2r}}|\Delta_{k,s} \bfw |^{2}\zeta^{q-1}\, \d z.
\end{aligned}\end{equation}
Further, we estimate the final term in the right hand side of the inequality above using the integration by parts:
$$\begin{aligned}
&\int_{Q_{2r}}|\Delta_{k,s} \bfw |^{2}\zeta^{q-1}\, \d z 
= \int_0^s \int_{Q_{2r}}  \bfw _{x_k}(x+\tau e_k,t) \cdot \Delta_{k,s} \bfw  \zeta^{q-1}  \, \d z \, \d \tau \\
&= - \int_0^s \int_{Q_{2r}}  \bfw (x+\tau e_k,t) \cdot (\Delta_{k,s}  \bfw _{x_k} \zeta^{q-1}+ (q-1)\Delta_{k,s} \bfw  \zeta^{q-2}\zeta_{x_k}) \,  \d z \, \d \tau \\
&\le |s|\lambda \int_{Q_{3r/2}}  |\Delta_{k,s}  D\bfw | + |\Delta_{k,s} \bfw | |D\zeta| \,  \d z \\ 
&\le |s|\lambda \int_{Q_{3r/2}} (\epsilon+|D\bfw (x+s e_k,t)|+|D\bfw (x,t)|)^{\frac{p-2}{2}+\frac{2-p}{2}}|\Delta_{k,s}  D\bfw |  \,  \d z \\
&\qquad  +  \frac{c\lambda s^2}{r} \int_{Q_{3r/2}}  |s^{-1}\Delta_{k,s} \bfw |  \,  \d z \,.
\end{aligned}$$
Now applying Young's inequality and using facts that $\frac{\phi(t)}{t^{p-1}}$ is increasing and $t^p\le c (\phi(t)+1)$, we have that for any $\tilde \delta\in(0,1)$, in order to reabsorb some terms to the left-hand side,
%Now we apply Young's inequality
%%and a fundamental fact for the difference quotient, 
%for any $\tilde \delta\in(0,1)$, in order to reabsorb some terms to the left-hand side:
$$\begin{aligned}
\int_{Q_{2r}}|\Delta_{k,s} \bfw |^{2}\zeta^{q-1}\, \d z  & \le \tilde \delta   \frac{\phi'(\epsilon)r^2}{\epsilon^{p-1}}  \int_{Q_{3r/2}} (\epsilon+|D\bfw (x+s e_k,t)|+|D\bfw (x,t)|)^{p-2}|\Delta_{k,s}  D\bfw |^2  \,  \d z \\
&\ \  + c \tilde \delta^{-1}  \frac{\epsilon^{p-1}}{r^2\phi'(\epsilon)} \lambda^2s^2 \int_{Q_{3r/2}} (\epsilon+|D\bfw (x+s e_k,t)|+|D\bfw (x,t)|)^{2-p} \,  \d z \\
&\ \ +  \frac{c\lambda s^2}{r} \int_{Q_{2r}}  |D \bfw |  \,  \d z \\
& \le \tilde \delta  r^2 \int_{Q_{3r/2}} \frac{\phi'(\epsilon+|D\bfw (x+s e_k,t)|+|D\bfw (x,t)|)}{\epsilon+|D\bfw (x+s e_k,t)|+|D\bfw (x,t)|}|\Delta_{k,s}  D\bfw |^2  \,  \d z \\
&\qquad +  c \frac{\lambda s^2}{\tilde \delta r^2} \left( \frac{\epsilon^{p-1}}{\phi'(\epsilon)}\lambda +\frac{1}{r}\right) \int_{Q_{2r}} [\phi (|D \bfw |)+1]  \,  \d z \\
& \le\tilde  \delta  r^2\int_{Q_{2r}}  |\Delta_{k,s}\bV^\epsilon (D\bfw )|^2  \,\d z\\
&\quad+ c \frac{s^2 \lambda}{\tilde \delta r^2 }  \left( \frac{\epsilon^{p-1}}{\phi'(\epsilon)}\lambda +1\right) \int_{Q_{2r}} [\phi (|D \bfw |)+1]  \,  \d z.
\end{aligned}$$
Finally we have that for every $Q_{2r}\Subset Q_R$, $\delta,\tilde \delta \in(0,1)$ and $s,h\in \R $ with $0<s \le h<r/2$ or $-r/2 <h\le s<0$,
\begin{equation}\label{quotient_pf1}\begin{aligned}
&\int_{Q_{r}} |\Delta_{k,s} \bV^\epsilon (D \bfw )|^2\, \d z
 \le  c \delta \frac{|s|}{|h|}\fint_0^s  \int_{Q_{2r}} |\Delta_{k,\tau} \bV^\epsilon (D \bfw )|^2\, \d x\, \d \tau \\
& \qquad +c \tilde\delta \int_{Q_{2r}}  |\Delta_{k,s}\bV^\epsilon (D\bfw )|^2  \,\d z  
  +  \frac{h^2}{r^4} C(\delta,\tilde\delta,\epsilon,\lambda) \int_{Q_{2r}} [\phi (|D \bfw |)+1]  \,  \d z \,.
\end{aligned}\end{equation}

Now, we re-absorb the first two terms on the right hand side. To do this, we first integrate both the sides of \eqref{quotient_pf1} with respect to $s$ from $0$ to $h$ and apply Fubini's Theorem, so that
$$\begin{aligned}
&\fint_0^h\int_{Q_{r}} |\Delta_{k,s} \bV^\epsilon (D \bfw )|^2\, \d z\, \d s \\
& \le c \delta \fint_0^h \frac{|s|}{|h|}\fint_0^s  \int_{Q_{2r}} |\Delta_{k,\tau} \bV^\epsilon (D \bfw )|^2\, \d x\, \d \tau \, \d s  + c  \tilde \delta \fint_0^h\int_{Q_{2r}}  |\Delta_{k,s}\bV^\epsilon (D\bfw )|^2  \,\d z\, \d s \\
&\qquad +  \frac{h^2}{r^4} C(\delta,\tilde\delta,\epsilon,\lambda)\int_{Q_{2r}} [\phi (|D \bfw |)+1]  \,  \d z\\
& \le c (\delta +\tilde\delta) \fint_0^h  \int_{Q_{2r}} |\Delta_{k,s} \bV^\epsilon (D \bfw )|^2\, \d x\, \d s  +  \frac{h^2}{r^4} C(\delta,\tilde\delta,\epsilon,\lambda) \int_{Q_{2r}} [\phi (|D \bfw |)+1]  \,  \d z.
\end{aligned}$$
Therefore, applying the Giaquinta-Modica type covering argument in \cite[Lemma 13]{DieEtt08} we have that 
\[
\fint_0^h\int_{Q_{r}} |\Delta_{k,s} \bV^\epsilon (D \bfw )|^2\, \d z\, \d s
\le   c \frac{h^2}{r^4} C(\epsilon,\lambda) \int_{Q_{5r}} [\phi (|D \bfw |)+1]  \,  \d z
\]
for every $Q_{5r}\Subset Q_R$ and $h\in \R $ with $0<h<\frac{r}{10}$ or $-\frac{r}{10} <h<0$. Inserting this into \eqref{quotient_pf1} with $s=h$, we have 
$$
\int_{Q_{r}} |\Delta_{k,h} \bV^\epsilon (D \bfw )|^2\, \d z
 \le  c \tilde \delta  \int_{Q_{2r}}  |\Delta_{k,h}\bV^\epsilon (D\bfw )|^2  \,\d z  
+ c \frac{h^2}{r^4} C(\tilde\delta,\epsilon,\lambda) \int_{Q_{5r}} [\phi (|D \bfw |)+1]  \,  \d z.
$$
Again applying the same covering argument, we have that 
for every $Q_{25r}\Subset Q_R$ and $h\in \R $ with $0<|h|<\frac{r}{50}$, 
\begin{equation}\label{quotient_pf2}
\frac{1}{h^2}\int_{Q_{r}} |\Delta_{k,h} \bV^\epsilon (D \bfw )|^2\, \d z
 \le \frac{c}{r^4} C(\epsilon,\lambda) \int_{Q_{25r}} [\phi (|D \bfw |)+1]  \,  \d z.
\end{equation}
Letting $h\to0 $ in \eqref{quotient_pf2}, we then obtain $(i)$.

The results in $(ii)$ and $(iii)$ are  direct consequences of \eqref{quotient_pf2}. Let $Q_{25r}\Subset Q_R$ and $h\in \R $ with $0<|h|<\frac{r}{50}$. By Young's inequality we have that
$$\begin{aligned}
\int_{Q_{r}} h^{-p}|\Delta_{k,h} D \bfw |^p\, \d z & \le c h^{-2} \int_{Q_{r}} (\epsilon+|D\bfw (x+he_k)|+|D\bfw (x)|)^{p-2}|\Delta_{k,h} D \bfw |^2\, \d z \\
&\qquad + c  \int_{Q_{r}} (\epsilon+|D\bfw (x+he_k)|+|D\bfw (x)|)^{p}\, \d z\\
& \le c h^{-2} \frac{\epsilon}{\phi'(\epsilon)} \int_{Q_{r}} \frac{\phi'(\epsilon+|D\bfw (x+he_k)|+|D\bfw (x)|)}{\epsilon+|D\bfw (x+he_k)|+|D\bfw (x)|}|\Delta_{k,h} D \bfw |^2\, \d z \\
&\qquad + c  \int_{Q_{25r}}[ \phi(|D\bfw |)+1]\, \d z\\
&\le   \frac{c}{r^4} C(\epsilon,\lambda) \int_{Q_{25r}} [\phi (|D \bfw |)+1]  \,  \d z,
\end{aligned}
$$
and, passing to the limit as $h\to0$, this implies  the first half of $(ii)$. Moreover, by estimate \eqref{supint}, passing to the limit as $h\to0$, we get the second half of $(ii).$

Finally we prove $(iii)$. Set  $M:= \|D\bfw \|=\|D{\bf u}_\epsilon\|_{L^\infty(Q_{25r},\R^{Nn})}$. Then, from \eqref{quotient_pf2}, we have 
$$\begin{aligned}
\int_{Q_{r}}|\Delta_{k,h} D\bfw |^2\, \d z&\le \frac{\epsilon+2M}{\phi'(\epsilon)} \int_{Q_{r}} \frac{\phi'(\epsilon+|D \bfw (x+he_k,t)|+|D \bfw (x,t)| )}{\epsilon+|D \bfw (x+he_k,t)|+|D \bfw (x,t)| }|\Delta_{k,h} D\bfw |^2\, \d z\\
 &\le c\frac{\epsilon+2M}{\phi'(\epsilon)}\int_{Q_{r}} |\Delta_{k,h} \bV^\epsilon (D \bfw )|^2\, \d z \\
 & \le c  \frac{\epsilon+2M}{\phi'(\epsilon)} \frac{h^2}{r^4} C(\epsilon,\lambda) \int_{Q_{25r}} [\phi (|D \bfw |)+1]  \,  \d z.
\end{aligned}$$
This implies that $D\bfw \in L^2_{\loc}(-R^2,0; W^{1,2}_{\loc}(B_R;\R^{Nn}))$. Moreover, since $(\phi_\epsilon(|D\bfw |))_{x_k}=\phi'_\epsilon(|D\bfw |)\frac{D\bfw  :D\bfw _{x_k}}{|D\bfw |}$,
$$
\int_{Q_{r}}|D[\phi_\epsilon(|D\bfw |)]|^2\, \d z \le c [\phi_\epsilon'(M)]^2\int_{Q_{r}}| D^2\bfw |^2\, \d z.
$$
From this we get $\phi_\epsilon(|D\bfw |) \in  L^2_{\loc}(-R^2,0; W^{1,2}_{\loc}(B_R))$, and the proof concludes.
\end{proof}

%Combining the above lemma with 
%the Sobolev inequality 
Combining (ii) of the above lemma with the parabolic Sobolev inequality, see \cite[I, Proposition 3.1]{DiB_book},
we obtain the following result:
\begin{lemma}\label{Lem:seconddifferentiability2}
%Let $\bfu_\epsilon$ be a weak solution to \eqref{eq:nondegenerate systemeq}. 
Let $\phi$ satisfy Assumption~\ref{Ass2} with \eqref{eq:hypp}, and  let $\bfu_\epsilon$ be a weak solution to \eqref{eq:nondegenerate systemeq}. 
Then 
$|D\bfu_\epsilon| \in L^{\frac{p(n+2)}{n}}_{\loc}(\Omega_T)$. 
%$\phi_{\epsilon}(|D\bfu_\epsilon|) \in L^{2^*/2}_{\loc}(\Omega_T)$.
 %, where $2^*=\frac{2n}{n-2}$ if $n\ge 3$ and $2^*$ is any positive number, in particular, larger than $4$ if $n=2$.
\end{lemma}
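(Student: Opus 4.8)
The plan is to deduce the stated higher integrability of $|D\bfu_\epsilon|$ from part $(ii)$ of Lemma~\ref{Lem:seconddifferential}, namely from the fact that $D\bfu_\epsilon\in L^p_{\loc}(0,T;W^{1,p}_{\loc}(\Omega;\R^{Nn}))\cap L^\infty_{\loc}(0,T;L^2_{\loc}(\Omega;\R^{Nn}))$, via the parabolic Sobolev embedding. First I would fix a subcylinder $Q_{2\rho}(z_0)\Subset\Omega_T$ and a cutoff $\eta\in C^\infty_{\mathrm c}(Q_{2\rho}(z_0))$ with $\eta\equiv1$ on $Q_\rho(z_0)$, and apply the parabolic Sobolev inequality of \cite[I, Proposition~3.1]{DiB_book} to the function $v:=|D\bfu_\epsilon|\,\eta$ (or, component by component, to $\eta D\bfu_\epsilon$, which is legitimate since on any fixed $\epsilon$-problem $D\bfu_\epsilon$ has the Sobolev regularity just quoted). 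That inequality gives, with $m=2$ playing the role of the $L^\infty$-in-time exponent,
$$
\fint_{Q_\rho(z_0)}|D\bfu_\epsilon|^{p\frac{n+2}{n}}\,\d z
\;\le\; c\left(\underset{t}{\mathrm{ess\,sup}}\ \fint_{B_{2\rho}(x_0)}|D\bfu_\epsilon|^2\eta^2\,\d x\right)^{\frac{p}{n}}
\fint_{Q_{2\rho}(z_0)}\bigl(|D(\eta D\bfu_\epsilon)|^p\bigr)\,\d z ,
$$
where the exponent $p(n+2)/n$ is precisely the combination $p\cdot\frac{n}{n+p\cdot(2/n)}$ — i.e. the parabolic Sobolev conjugate of $p$ relative to the interpolation between $L^\infty_tL^2_x$ and $L^p_tW^{1,p}_x$.

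Next I would bound the two factors on the right. The spatial-gradient factor is controlled by $\int_{Q_{2\rho}}\eta^p|D^2\bfu_\epsilon|^p + |D\eta|^p|D\bfu_\epsilon|^p\,\d z$, both terms finite by the first half of $(ii)$ (together with $D\bfu_\epsilon\in L^p_{\loc}$, which holds since $\bfu_\epsilon$ is a weak solution). The essential-supremum factor is finite by the second half of $(ii)$, i.e. $D\bfu_\epsilon\in L^\infty_{\loc}(0,T;L^2_{\loc})$. Hence the right-hand side is finite, which gives $|D\bfu_\epsilon|\in L^{p(n+2)/n}(Q_\rho(z_0))$; since $z_0$ and $\rho$ were arbitrary subject to $Q_{2\rho}(z_0)\Subset\Omega_T$, this yields $|D\bfu_\epsilon|\in L^{p(n+2)/n}_{\loc}(\Omega_T)$. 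One small care point is to verify that $p\frac{n+2}{n}$ is admissible as the target exponent in \cite[I, Proposition~3.1]{DiB_book}: the condition there is exactly $p\ge1$ together with the scaling identity above, so the hypothesis $p>\frac{2n}{n+2}$ from \eqref{eq:hypp} is in fact only needed to ensure $p\frac{n+2}{n}>2$ (cf. Remark~\ref{rmk:embedding}), not for the embedding itself.

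The main obstacle I anticipate is bookkeeping rather than anything deep: one must make sure the cutoff truncation is compatible with the precise statement of the parabolic Sobolev inequality (which is usually stated for functions vanishing on the lateral boundary of the cylinder and with a specified time slice), and one must honestly invoke $(ii)$ for both the $W^{1,p}$-in-space and the $L^\infty$-in-time-$L^2$-in-space pieces simultaneously on the same cylinder — which is fine because Lemma~\ref{Lem:seconddifferential}$(ii)$ provides both on every compactly contained subcylinder. No $\epsilon$-uniformity is claimed here (the constants may blow up as $\epsilon\to0$), so one does not need to track the dependence of constants on $\epsilon$; that refinement is deferred to the subsequent $L^\infty$-gradient estimate. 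Thus the proof is essentially a one-line application of the parabolic Sobolev embedding to the output of Lemma~\ref{Lem:seconddifferential}$(ii)$.
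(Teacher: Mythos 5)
Your proposal is correct and takes the same route as the paper: the paper gives no proof beyond the remark that the lemma follows by combining Lemma~\ref{Lem:seconddifferential}$(ii)$ with the parabolic Sobolev inequality of \cite[I, Proposition~3.1]{DiB_book}, and your argument simply fills in those details (cutoff, embedding with $q=2$ giving the exponent $p(n+2)/n$, and control of the two factors by the $L^p_tW^{1,p}_x$ and $L^\infty_tL^2_x$ halves of $(ii)$). Your observation that \eqref{eq:hypp} is not needed for the embedding itself but only to guarantee $p(n+2)/n>2$ is also accurate.
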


%Finally, we show  that the system \eqref{eq:system} can be approximated by the following non-degenerate system:
%\begin{equation}\label{eq:approximating system}
%(\bfu_\epsilon)_{t}-\div\left(\frac{\phi_{\epsilon}'(|D \bfu_\epsilon|)}{|D \bfu_\epsilon|} D \bfu_\epsilon\right)=0 \quad \text{in }\ Q_{R}, \qquad \bfu_\epsilon=u \quad \text {on } \ \partial_{\mathrm{p}} Q_{R},
%\end{equation}

We end this section with the convergence results of $D{\bf u}_\epsilon$ to $D{\bf u}$.

%{\color{magenta} (Jihoon: I changed $L^\phi$ convergence.)}

\begin{lemma}\label{Lem:approximation}
Let $u$ be a weak solution to \eqref{eq:system} and $\bfu_\epsilon$ be the weak solution to \eqref{eq:nondegenerate system} with $Q_R\Subset \Omega_T$. Then 
$D{\bf u}_\epsilon$ converges to  $D{\bf u}$ in $L^\phi(Q_R)$. %in $L^p(Q_R)$. 
\end{lemma}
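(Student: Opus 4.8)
The plan is to use the minimizing/monotonicity structure of the problem together with a uniform energy estimate and lower semicontinuity. First I would establish a uniform bound: testing the weak formulation of \eqref{eq:nondegenerate system} with $\bfu_\epsilon - \bfu$ (extended suitably in time, using the Steklov average to make this rigorous since $\bfu$ is not differentiable in $t$) and using the Cauchy--Dirichlet condition $\bfu_\epsilon = \bfu$ on $\partial_{\rm p}Q_R$, one obtains after absorbing the time term
$$
\int_{Q_R} \bA^\epsilon(D\bfu_\epsilon):(D\bfu_\epsilon - D\bfu)\,\d z \le 0\,,
$$
which combined with Young's inequality \eqref{eq:young4} (applied to $\phi_\epsilon$) and \eqref{eq:approx} gives a bound on $\int_{Q_R}\phi(|D\bfu_\epsilon|)\,\d z$ uniform in $\epsilon\in(0,1)$, say by $c\int_{Q_R}[\phi(|D\bfu|)+1]\,\d z$. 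Hence $(D\bfu_\epsilon)$ is bounded in $L^\phi(Q_R)$, which is reflexive, so up to a subsequence $D\bfu_\epsilon \rightharpoonup \mathbf{G}$ weakly in $L^\phi$ and $\bfu_\epsilon \rightharpoonup \bfv$ in an appropriate parabolic space; one checks $\bfv = \bfu$ by identifying the weak limit (the boundary datum is fixed and $\bA^\epsilon(D\bfu_\epsilon)$ is bounded in the conjugate Orlicz space by \eqref{eq:hok2.4}, so the limit solves \eqref{eq:system}, which by uniqueness of the Cauchy--Dirichlet problem forces $\bfv=\bfu$ and $\mathbf{G}=D\bfu$), giving weak convergence along the full sequence.

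Next I would upgrade weak to strong (modular) convergence using monotonicity. Subtracting the weak forms of \eqref{eq:system} and \eqref{eq:nondegenerate system} and testing with $\bfu_\epsilon - \bfu$ gives, after handling the parabolic term by a Steklov-average argument as in \cite[II, Proposition 3.1]{DiB_book},
$$
\int_{Q_R}\big(\bA^\epsilon(D\bfu_\epsilon) - \bA(D\bfu)\big):(D\bfu_\epsilon - D\bfu)\,\d z \to 0\,.
$$
Writing $\bA^\epsilon(D\bfu_\epsilon) = \bA(D\bfu_\epsilon) + \big(\bA^\epsilon(D\bfu_\epsilon)-\bA(D\bfu_\epsilon)\big)$ and using $|\bA^\epsilon(\mathbf Q)-\bA(\mathbf Q)|\le \phi'_{|\mathbf Q|}(\epsilon)\to 0$ (quoted just before \eqref{eq:(3.22)}) together with the uniform $L^\phi$ bound to dispose of the error term, one arrives at
$$
\int_{Q_R}\big(\bA(D\bfu_\epsilon) - \bA(D\bfu)\big):(D\bfu_\epsilon - D\bfu)\,\d z \to 0\,.
$$
By the monotonicity equivalence \eqref{monotonicity} this integrand is $\sim \phi_{|D\bfu|}(|D\bfu_\epsilon - D\bfu|)$, so $\int_{Q_R}\phi_{|D\bfu|}(|D\bfu_\epsilon - D\bfu|)\,\d z \to 0$; finally \eqref{eq:approx} (change of shift / comparison between $\phi_{|D\bfu|}$ and $\phi$) and a Vitali-type argument upgrade this to $\int_{Q_R}\phi(|D\bfu_\epsilon - D\bfu|)\,\d z\to 0$, i.e. $D\bfu_\epsilon \to D\bfu$ in $L^\phi(Q_R)$.

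The main obstacle I expect is the rigorous treatment of the time derivative terms: neither $\bfu$ nor $\bfu_\epsilon$ is weakly differentiable in $t$, so testing the difference of the two equations with $\bfu_\epsilon - \bfu$ is only formal. This must be justified via Steklov averaging (as the authors already flag in the introduction), taking care that the boundary term $\frac12\int_{B_R}|\bfu_\epsilon - \bfu|^2\,\d x$ at the initial and terminal times vanishes because $\bfu_\epsilon = \bfu$ on $\partial_{\rm p}Q_R$, and then passing to the limit in the Steklov parameter before passing to the limit in $\epsilon$. A secondary technical point is that uniqueness for the Cauchy--Dirichlet problem \eqref{eq:system} (needed to identify the full-sequence limit) itself follows from the monotonicity estimate \eqref{monotonicity}, so the identification and the strong-convergence arguments are really two facets of the same computation.
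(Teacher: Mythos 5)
Your proposal is mathematically sound and arrives at the same conclusion, but it takes a noticeably different route from the paper's own proof, which is very short because it delegates the key convergence step to the literature. Specifically, the paper first reduces (via \eqref{monotonicity}) to showing $\bV(D\bfu_\epsilon)\to\bV(D\bfu)$ in $L^2(Q_R)$, then invokes the proof of \cite[Theorem~3.5]{DieSchSch19} as a black box to get $\bV^\epsilon(D\bfu_\epsilon)\to\bV(D\bfu)$ in $L^2(Q_R)$; from that it extracts a uniform modular bound on $\phi(|D\bfu_\epsilon|)$, and then a short computation using \eqref{eq:(3.22)}, the change of shift \eqref{(5.4diekreu)}, and $\phi_a(a)\lesssim\phi(a)$ shows $\bV^\epsilon(D\bfu_\epsilon)-\bV(D\bfu_\epsilon)\to 0$ in $L^2$, completing the proof. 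What you do instead is essentially unpack the content of that citation: you re-derive the uniform energy bound and the monotonicity estimate from scratch by Steklov-averaged testing of the difference of the two systems with $\bfu_\epsilon-\bfu$. That is perfectly legitimate, and in fact your decomposition $\bA^\epsilon(D\bfu_\epsilon)=\bA(D\bfu_\epsilon)+(\bA^\epsilon-\bA)(D\bfu_\epsilon)$ plays exactly the role that the paper's $\bV^\epsilon$-vs-$\bV$ shift-removal does, so the two proofs share the same core mechanism. Two remarks for tightening: (i) the whole of your second paragraph (weak $L^\phi$-compactness and identification of the weak limit via uniqueness of the Cauchy--Dirichlet problem) is superfluous --- the monotonicity inequality in paragraphs three and four already gives you $\int_{Q_R}\phi_{|D\bfu|}(|D\bfu_\epsilon-D\bfu|)\,\d z\to 0$ directly, with no need to pass to subsequences or invoke Minty's trick, and removing it would simplify the argument considerably; (ii) the quantity $\int_{Q_R}(\bA^\epsilon(D\bfu_\epsilon)-\bA(D\bfu)):(D\bfu_\epsilon-D\bfu)\,\d z$ that comes out of the Steklov-averaged test is $\le 0$ (not merely $\to 0$), since the boundary term $\tfrac12\int_{B_R}|(\bfu_\epsilon-\bfu)(\cdot,0)|^2\,\d x$ is nonnegative and the initial-slice term vanishes by the Cauchy data, and starting from this sharper inequality makes the absorption argument in paragraph four cleaner.
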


\begin{proof}
By virtue of \eqref{monotonicity}, it suffices to show that 
\begin{equation}
\lim_{\epsilon \to 0+}\int_{Q_R} |\bV(D{\bfu}_\varepsilon)-\bV(D{\bf u})|^2\,\mathrm{d}z = 0.
\label{eq:L2conv0}
\end{equation}
By following  the proof of \cite[Theorem~3.5]{DieSchSch19}, one has 
\begin{equation}
\lim_{\epsilon \to 0+}\int_{Q_R} |\bV^\varepsilon(D{\bfu}_\varepsilon)-\bV(D{\bf u})|^2\,\mathrm{d}z = 0.
\label{eq:L2conv}
\end{equation}
 Moreover, together with \eqref{eq:approx}, this implies that 
$$\begin{aligned}
\int_{Q_R} \phi(|D\bfu_\epsilon|)\, \d z 
&\le c \int_{Q_R} \Big[\phi_\epsilon(|D\bfu_\epsilon|) + \phi(\epsilon)\Big]\, \d z \\
& \le c \int_{Q_R} \Big[|\bV^\epsilon(D\bfu_\epsilon)|^2 + \phi(\epsilon)\Big]\, \d z \\
& \le c \int_{Q_R} \Big[|\bV(D\bfu)|^2 + \phi(\epsilon)+1\Big]\, \d z \\
& \le  c \int_{Q_R} \left[\phi(|D\bfu|) +1\right]\, \d z
\end{aligned}$$
for any sufficiently small $\epsilon>0$. Applying  \eqref{eq:(3.22)}, %Young's inequality for the $N$-function $\phi$ 
the change of shift formula \eqref{(5.4diekreu)}, $\varphi_a(a)\leq c \varphi(a)$ and the preceding inequality we have that for any $\delta\in(0,1)$,
$$\begin{aligned}
\int_{Q_R}  |\bV^\varepsilon(D{\bfu}_\varepsilon)-\bV(D\bfu_\epsilon)|^2 \, \d z 
&\le c \int_{Q_R}  \phi_{|D\bfu_\epsilon|} (\epsilon)  \, \d z \\  %\le c \int_{Q_R}   \big[\phi'(|D\bfu_\epsilon|) \epsilon +\phi (\epsilon) \Big]\, \d z\\
& \le c\delta \int_{Q_R}   \phi_{|D\bfu_\epsilon|}(|D\bfu_\epsilon|) \, \d z  + c_\delta \phi(\epsilon) |Q_R|\\
& \le c\delta \int_{Q_R}   \phi(|D\bfu_\epsilon|) \, \d z  + c_\delta \phi(\epsilon) |Q_R|\\
& \le c \delta \int_{Q_R}   \Big[ \phi(|D\bfu|)+1\Big] \, \d z  + c_\delta \phi(\epsilon) |Q_R| \,,
\end{aligned}$$
hence
$$
\limsup_{\epsilon \to 0^+ }\int_{Q_R}  |\bV^\varepsilon(D{\bfu}_\varepsilon)-\bV(D\bfu_\epsilon)|^2 \, \d z  \le 
  c \delta \int_{Q_R}   \Big[ \phi(|D\bfu|)+1\Big] \, \d z \,.
$$
Therefore, since $\delta\in (0,1)$ is arbitrary, we have 
$$
\lim_{\epsilon \to 0+ }\int_{Q_R}  |\bV^\varepsilon(D{\bfu}_\varepsilon)-\bV(D\bfu_\epsilon)|^2 \, \d z =0.
$$
This and \eqref{eq:L2conv} yield \eqref{eq:L2conv0}.
\end{proof}

\vspace{0.5cm}

\section{Local boundedness of the gradient} \label{sec:locboundgradient}

Now, we address the problem of obtaining an $L^{\infty}$-bound for $D{\bf u}$, by deriving uniform estimates in $\epsilon$ for weak solutions to the non-degenerate systems \eqref{eq:nondegenerate system}. We follow some ideas underlying the Moser iteration for the Lipschitz regularity for parabolic $p$-Laplace systems, which can be found in \cite[Theorem 4]{Choe91}, \cite[Theorem 4]{Choe92} and \cite[Proposition 3.1]{CheDiBe89}.

 Note that $D{\bf u}_\epsilon$ is weakly differentiable with respect to the spatial variable $x$ by Lemma~\ref{Lem:seconddifferential}$(ii)$. Hence differentiating \eqref{eq:nondegenerate systemeq} with respect to $x_{k}$ we find
\begin{equation}\label{diffsystem}
\begin{aligned}
\partial_{t} (\bfu_\epsilon)_{x_{k}}^{\alpha} &=\left(\frac{\phi_\epsilon^{\prime}(|D \bfu_\epsilon|)}{|D \bfu_\epsilon|} (\bfu_\epsilon)_{x_{i} x_{k}}^{\alpha}+\left(\frac{\phi_\epsilon^{\prime \prime}(|D \bfu_\epsilon|)}{|D \bfu_\epsilon|}-\frac{\phi_\epsilon^{\prime}(|D \bfu_\epsilon|)}{|D \bfu_\epsilon|^{2}}\right) \frac{(\bfu_\epsilon)_{x_{j}}^{\beta} (\bfu_\epsilon)_{x_{j} x_{k}}^{\beta}}{|D \bfu_\epsilon|} (\bfu_\epsilon)_{x_{i}}^{\alpha}\right)_{x_{i}} \\
&=:\left(\bar{A}^{\alpha \beta}_{i j} (\bfu_\epsilon)_{x_{j} x_{k}}^{\beta}\right)_{x_{i}}, \quad \alpha =1,2, \ldots, N,
\end{aligned}
\end{equation}
where 
$$
\bar{A}^{\alpha \beta}_{i j}:= ({\bf A}^\epsilon)^{\alpha \beta}_{i j}(D{\bf u}_\epsilon)= \left.\frac{\partial \bA^\epsilon({\bf Q})^{\beta}_{j}}{\partial Q^{\beta}_{i}}\right|_{{\bf Q}=D{\bf u}_\epsilon}.
$$
We start with obtaining a Caccioppoli type inequality for the system \eqref{diffsystem}.

\begin{lemma}\label{Lem:Caccio1}
Let $\phi$ satisfy Assumption~\ref{Ass2} with \eqref{eq:hypp} and \eqref{p2q}, and let $\bfu_\epsilon$ be a weak solution to  \eqref{eq:nondegenerate systemeq}. Suppose $f\in C^{0,1}([0,\infty))$ is positive, increasing and satisfying $f'(s)>0$ for a.e. $s$ with $f(s)>0$, and set $F(s) := \int_0^s \tau f(\tau)\, \d \tau$.  For every $Q:=B_\rho \times [t_1,t_2] \Subset \Omega_T$,  $\xi \in C^\infty_0(B_\rho)$ with $0\le \xi \le1$ and  $\eta\in C^\infty(\R)$ with $0\le \eta \le 1$ and $\eta_t\ge 0$, we have   
\begin{equation}
\begin{aligned}
&\int_{B_{\rho}}   F(|D{\bf u}_\epsilon(x,\tau_2)|) \xi^2\eta  \, \d x  -\int_{B_{\rho}}   F(|D{\bf u}_\epsilon(x,\tau_1)|) \xi^2\eta  \, \d x  \\
&\quad + \frac{1}{c} \int_{Q}  \frac{\phi_\epsilon'(|D{\bf u}_\epsilon|)}{|D{\bf u}_\epsilon|}\bigg[f(|D{\bf u}_\epsilon|)|D^2\bfu_\epsilon|^2 +    \frac{f'(|D{\bf u}_\epsilon|)}{|D{\bf u}_\epsilon|}|D (|D{\bf u}_\epsilon|^2)|^2\bigg] \xi^2\eta\, \d z\\
&\le    c \int_{Q} \phi_\epsilon'(|D{\bf u}_\epsilon|)  \frac{f(|D{\bf u}_\epsilon|)^2}{f'(|D{\bf u}_\epsilon|)} |D\xi|^2\eta \, \d z + \int_{Q} F(|D{\bf u}_\epsilon|) \xi^2\eta_t\, \d z \,.
\end{aligned}
\label{Caccio estimate}
\end{equation}
Moreover, the term $\phi_\epsilon'(|D{\bf u}_\epsilon|)  \frac{f(|D{\bf u}_\epsilon|)^2}{f'(|D{\bf u}_\epsilon|)}$ in the above estimate can be replaced by $\phi_\epsilon(|D{\bf u}_\epsilon|)  f(|D{\bf u}_\epsilon|) $.
\end{lemma}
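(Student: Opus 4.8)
The plan is to test the differentiated system \eqref{diffsystem} with a suitable gradient-type test function and to integrate by parts, exploiting the ellipticity bounds \eqref{ellipticity} for the matrix $\bar A^{\alpha\beta}_{ij}$. Concretely, fix $k\in\{1,\dots,n\}$, multiply the $\alpha$-th equation in \eqref{diffsystem} by $(\bfu_\epsilon)^\alpha_{x_k} f(|D\bfu_\epsilon|)\,\xi^2\eta$, sum over $k$ and $\alpha$, and integrate over $B_\rho\times[\tau_1,\tau_2]$. The regularity provided by Lemma~\ref{Lem:seconddifferential}$(ii)$ together with the local $L^\infty$-bound for $D\bfu_\epsilon$ (which we may assume in the a priori estimate, approximating $f$ if necessary) justifies all the manipulations; in fact one works with the Steklov average first or, equivalently, uses that $D\bfu_\epsilon$ is differentiable in $x$ and the time derivative appears only through the parabolic term. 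I would first treat the parabolic term: using $F(s)=\int_0^s\tau f(\tau)\,\d\tau$ one has $\sum_{k,\alpha}\partial_t(\bfu_\epsilon)^\alpha_{x_k}\,(\bfu_\epsilon)^\alpha_{x_k}\,f(|D\bfu_\epsilon|)=\partial_t\big(F(|D\bfu_\epsilon|)\big)$, so after multiplying by $\xi^2\eta$ and integrating by parts in $t$ this produces exactly the two boundary terms $\int_{B_\rho}F(|D\bfu_\epsilon(x,\tau_2)|)\xi^2\eta-\int_{B_\rho}F(|D\bfu_\epsilon(x,\tau_1)|)\xi^2\eta$ minus $\int_Q F(|D\bfu_\epsilon|)\xi^2\eta_t\,\d z$ (the sign is favourable since $\eta_t\ge0$, so this last term can be kept on the right-hand side).

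For the elliptic term, integrating the right-hand side of \eqref{diffsystem} by parts moves the outer $x_i$-derivative onto $(\bfu_\epsilon)^\alpha_{x_k}f(|D\bfu_\epsilon|)\xi^2\eta$, producing three groups of terms: (a) $-\int_Q \bar A^{\alpha\beta}_{ij}(\bfu_\epsilon)^\beta_{x_jx_k}(\bfu_\epsilon)^\alpha_{x_ix_k}f(|D\bfu_\epsilon|)\xi^2\eta$, (b) a term with the derivative of $f(|D\bfu_\epsilon|)$, namely $-\int_Q \bar A^{\alpha\beta}_{ij}(\bfu_\epsilon)^\beta_{x_jx_k}(\bfu_\epsilon)^\alpha_{x_k}f'(|D\bfu_\epsilon|)\frac{D\bfu_\epsilon:\partial_{x_i}D\bfu_\epsilon}{|D\bfu_\epsilon|}\xi^2\eta$, and (c) a term where $x_i$ hits $\xi^2$, i.e. $-2\int_Q \bar A^{\alpha\beta}_{ij}(\bfu_\epsilon)^\beta_{x_jx_k}(\bfu_\epsilon)^\alpha_{x_k}f(|D\bfu_\epsilon|)\xi\,\xi_{x_i}\eta$. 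Using the lower ellipticity bound in \eqref{ellipticity} applied with $\bm\omega=\partial_{x_k}D\bfu_\epsilon$ for each $k$, group (a) dominates $\frac1c\int_Q\frac{\phi_\epsilon'(|D\bfu_\epsilon|)}{|D\bfu_\epsilon|}f(|D\bfu_\epsilon|)|D^2\bfu_\epsilon|^2\xi^2\eta$. Group (b) has a definite sign once one sums over $k$: writing $\sum_k(\bfu_\epsilon)^\beta_{x_jx_k}(\bfu_\epsilon)^\alpha_{x_k}=\tfrac12\partial_{x_j}(\,\cdot\,)$-type structure, it reduces, via the radial structure of $\bar A$, to $\tfrac14\int_Q \bar A^{\alpha\beta}_{ij}\,\partial_{x_i}(|D\bfu_\epsilon|^2)\,\partial_{x_j}(|D\bfu_\epsilon|^2)\,\frac{f'(|D\bfu_\epsilon|)}{|D\bfu_\epsilon|}\xi^2\eta$ up to constants, which by \eqref{ellipticity} (now with $\bm\omega\sim D(|D\bfu_\epsilon|^2)$, using that $D(|D\bfu_\epsilon|^2)$ is, componentwise in the appropriate index, an admissible test vector) controls $\frac1c\int_Q\frac{\phi_\epsilon'(|D\bfu_\epsilon|)}{|D\bfu_\epsilon|}\frac{f'(|D\bfu_\epsilon|)}{|D\bfu_\epsilon|}|D(|D\bfu_\epsilon|^2)|^2\xi^2\eta$; since $f'\ge0$, this is a good term and can be kept on the left. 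Finally group (c) is estimated by the upper bound in \eqref{ellipticity} and Young's inequality: $|\bar A^{\alpha\beta}_{ij}|\lesssim\frac{\phi_\epsilon'(|D\bfu_\epsilon|)}{|D\bfu_\epsilon|}$, so $|(c)|\le \tfrac1{c}\int_Q\frac{\phi_\epsilon'(|D\bfu_\epsilon|)}{|D\bfu_\epsilon|}f(|D\bfu_\epsilon|)|D^2\bfu_\epsilon|^2\xi^2\eta+c\int_Q\frac{\phi_\epsilon'(|D\bfu_\epsilon|)}{|D\bfu_\epsilon|}f(|D\bfu_\epsilon|)|D\bfu_\epsilon|^2|D\xi|^2\eta$; the first piece is reabsorbed into the left-hand side and the second, since $f(|D\bfu_\epsilon|)|D\bfu_\epsilon|^2\le \frac{f(|D\bfu_\epsilon|)^2}{f'(|D\bfu_\epsilon|)}|D\bfu_\epsilon|^2\cdot\frac{f'(|D\bfu_\epsilon|)}{f(|D\bfu_\epsilon|)}$ — more simply, using $f$ increasing and a crude bound — is absorbed into $c\int_Q\phi_\epsilon'(|D\bfu_\epsilon|)\frac{f(|D\bfu_\epsilon|)^2}{f'(|D\bfu_\epsilon|)}|D\xi|^2\eta$, giving the stated right-hand side. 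Collecting all terms yields \eqref{Caccio estimate}.

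The main obstacle I anticipate is the careful bookkeeping in group (b): one must verify that the seemingly indefinite cross term $\bar A^{\alpha\beta}_{ij}(\bfu_\epsilon)^\beta_{x_jx_k}(\bfu_\epsilon)^\alpha_{x_k}f'(|D\bfu_\epsilon|)\frac{D\bfu_\epsilon:\partial_{x_i}D\bfu_\epsilon}{|D\bfu_\epsilon|}$, after summation over $k$, genuinely assembles into the quadratic form $\bar A^{\alpha\beta}_{ij}\,\omega_i\,\omega_j$ with $\omega$ proportional to $\nabla(|D\bfu_\epsilon|^2)$ so that \eqref{ellipticity} applies with the correct sign; this uses the explicit form \eqref{defA} of $\bar A$ and the identity $\partial_{x_i}(|D\bfu_\epsilon|^2)=2(\bfu_\epsilon)^\gamma_{x_\ell}(\bfu_\epsilon)^\gamma_{x_\ell x_i}$. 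A secondary subtlety is the degenerate set $\{|D\bfu_\epsilon|=0\}$, which is harmless because $\phi_\epsilon'(0)=0$ and the hypothesis $f'(s)>0$ is assumed only a.e.\ where $f>0$; on $\{f=0\}$ the corresponding integrands vanish. For the final sentence of the statement, that $\phi_\epsilon'(|D\bfu_\epsilon|)\frac{f(|D\bfu_\epsilon|)^2}{f'(|D\bfu_\epsilon|)}$ may be replaced by $\phi_\epsilon(|D\bfu_\epsilon|)f(|D\bfu_\epsilon|)$ on the right-hand side, one argues directly in the estimate of group (c): instead of using $f'$ in Young's inequality one pairs $|\bar A|\,|D^2\bfu_\epsilon|$ with $|D\bfu_\epsilon|\,|D\xi|$ and uses $\frac{\phi_\epsilon'(|D\bfu_\epsilon|)}{|D\bfu_\epsilon|}|D\bfu_\epsilon|^2=\phi_\epsilon'(|D\bfu_\epsilon|)|D\bfu_\epsilon|\sim\phi_\epsilon(|D\bfu_\epsilon|)$ by \eqref{(2.6a)}, multiplying through by $f(|D\bfu_\epsilon|)$; this gives the alternative bound without any $f'$ in the denominator.
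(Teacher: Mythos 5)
Your overall strategy is the same as the paper's: test the differentiated system \eqref{diffsystem} with $(\bfu_\epsilon)^\alpha_{x_k}f(|D\bfu_\epsilon|)\xi^2\eta$, integrate by parts in $t$ to produce the $F$-boundary terms via $\partial_t F(|D\bfu_\epsilon|)=\sum_{k,\alpha}\partial_t(\bfu_\epsilon)^\alpha_{x_k}(\bfu_\epsilon)^\alpha_{x_k}f(|D\bfu_\epsilon|)$, and split the elliptic term into the three groups (a), (b), (c). The treatment of (a) and the identification that (b) is a good term are what the paper does; your description of (b) as ``$\bar A^{\alpha\beta}_{ij}\omega_i\omega_j$ with $\bm\omega\sim D(|D\bfu_\epsilon|^2)$'' is, however, not literally correct, since the expression $\bar A^{\alpha\beta}_{ij}(\bfu_\epsilon)^\beta_{x_jx_k}(\bfu_\epsilon)^\alpha_{x_k}\,(\bfu_\epsilon)^\kappa_{x_l}(\bfu_\epsilon)^\kappa_{x_lx_i}$ is not a quadratic form in a single $\omega\in\R^{Nn}$; one must expand $\bar A$ explicitly as in \eqref{defA}, use $\sum_{\beta,j}(\bfu_\epsilon)^\beta_{x_j}(\bfu_\epsilon)^\beta_{x_jx_k}=\tfrac12\partial_{x_k}(|D\bfu_\epsilon|^2)$ and Cauchy--Schwarz on $\sum_\alpha[D(\bfu_\epsilon)^\alpha\cdot D(|D\bfu_\epsilon|^2)]^2\le|D\bfu_\epsilon|^2|D(|D\bfu_\epsilon|^2)|^2$ to obtain the lower bound with constant $p-1$. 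You do flag this as the subtle point and cite the needed ingredients, so this is more a matter of imprecise phrasing than a wrong idea.

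The concrete gap is in your handling of group (c) when deriving the \emph{primary} form of the estimate (with $\phi_\epsilon'\tfrac{f^2}{f'}|D\xi|^2$). You bound $|(c)|\le c\int_Q\tfrac{\phi_\epsilon'(|D\bfu_\epsilon|)}{|D\bfu_\epsilon|}|D^2\bfu_\epsilon|\,|D\bfu_\epsilon|f\,\xi|D\xi|\eta$, apply Young to pair with the $f|D^2\bfu_\epsilon|^2$ term on the left, and end up with $c\int_Q\phi_\epsilon'(|D\bfu_\epsilon|)|D\bfu_\epsilon|f(|D\bfu_\epsilon|)|D\xi|^2\eta\,\d z$; this is the \emph{alternative} form $\sim\phi_\epsilon f$ (via \eqref{(2.6a)}), not the primary one. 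Your subsequent claim that $\phi_\epsilon'|D\bfu_\epsilon|f\lesssim\phi_\epsilon'\tfrac{f^2}{f'}$ fails in general: it would require $|D\bfu_\epsilon|f'\le f$, whereas for $f(t)=t^\chi$ (which is exactly the $f$ used in Theorem~\ref{thm:bddgrad}) one has $tf'(t)=\chi f(t)$, so the inequality is reversed by a factor $\chi$ growing with the iteration. The paper's two conclusions are obtained by two \emph{different} Young pairings: for the primary form, one uses the sharper pointwise bound $|\bar A^{\alpha\beta}_{ij}(\bfu_\epsilon)^\beta_{x_jx_k}(\bfu_\epsilon)^\alpha_{x_k}|\le c\tfrac{\phi_\epsilon'(|D\bfu_\epsilon|)}{|D\bfu_\epsilon|}|D(|D\bfu_\epsilon|^2)|$ and absorbs into the $\tfrac{f'}{|D\bfu_\epsilon|}|D(|D\bfu_\epsilon|^2)|^2$ term on the left, which produces $\phi_\epsilon'\tfrac{f^2}{f'}|D\xi|^2$; for the alternative form one absorbs into the $f|D^2\bfu_\epsilon|^2$ term, as you did. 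Neither form follows from the other, so the primary estimate still needs its own Young pairing.
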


\begin{proof}
For simplicity, we shall write $\bphi=\phi_\epsilon$ and $\bfw =(w^\alpha) = \bfu_\epsilon$.

 We test  \eqref{diffsystem} with $w_{x_k}^\alpha f(|D\bfw |)\xi^2\eta$ to obtain that
\begin{equation*}
\underbrace{\int_{\tau_1}^{\tau_2} \int_{B_{\rho}} (w^\alpha_{x_k})_t w_{x_k}^\alpha f(|D\bfw |)\xi^2 \eta \, \d x\d t}_{=:I_1} + \underbrace{\int_{Q}  \bar{A}^{\alpha\beta}_{ij} w^\beta_{x_j x_k}  [w^\alpha_{x_k} f(|D\bfw |)\xi^2\eta]_{x_i} \, \d z}_{=:I_2} =0.
\end{equation*}
We estimate $I_1$ and $I_2$ separately. We have
\begin{equation*}
\begin{aligned}
I_1 & = \int_{\tau_1}^{\tau_2} \int_{B_{\rho}}  \left[\partial_t( F(|D\bfw |)\eta) -  F(|D\bfw |)\eta_t \right]\xi^2 \, \d x\,\d t \\
& =  \int_{B_{\rho}} F(|D\bfw (x,\tau_2)|) \xi^2\eta \, \d x -\int_{B_{\rho}} F(|D\bfw (x,\tau_1)|) \xi^2\eta \, \d x  -   \int_{Q}  F (|D\bfw |) \xi^2 \eta_t  \, \d z\,.
\end{aligned}
\end{equation*}
As for $I_2$, we first observe that, with \eqref{defA}, \eqref{ellipticity},
$$
\bar{A}_{ij}^{\alpha\beta} w^\beta_{x_j x_k}  w^\alpha_{x_k x_i} \ge (p-1) \frac{\bphi'(|D\bfw |)}{|D\bfw |} |D^2\bfw |^2\,,
$$
$$\begin{aligned}
&\bar{A}_{ij}^{\alpha\beta} w^\beta_{x_j x_k}  w^\alpha_{x_k}   w^\kappa_{x_l} w^\kappa_{x_l x_i} \\
&=\frac{\bphi'(|D\bfw |)}{|D\bfw |}\left\{\delta_{ij}\delta^{\alpha\beta} + \left(\frac{\bphi''(|D\bfw |)|D\bfw |}{\bphi'(|D\bfw |)}-1 \right) \frac{w_{x_i}^\alpha w^{\beta}_{x_j}}{|D\bfw |^2}\right\} w^\beta_{x_j x_k}  w^\alpha_{x_k}   w^\kappa_{x_l} w^\kappa_{x_l x_i}\\
&=\frac{\bphi'(|D\bfw |)}{|D\bfw |}\left\{\frac{|D(|D\bfw |^2)|^2}{4} + \left(\frac{\bphi''(|D\bfw |)|D\bfw |}{\bphi'(|D\bfw |)}-1 \right) \frac{\sum_{\alpha=1}^N [D w ^\alpha \cdot D(|D\bfw |^2)]^2}{4|D\bfw |^2}\right\} \\
&\ge (p-1)\frac{\bphi'(|D\bfw |)}{|D\bfw |}\frac{|D(|D\bfw |^2)|^2}{4}\,,
\end{aligned}$$
and
$$\begin{aligned}
|\bar{A}_{ij}^{\alpha\beta} w^\beta_{x_j x_k}  w^\alpha_{x_k}  |  &=\frac{\bphi'(|D\bfw |)}{|D\bfw |}\left|\left\{\delta_{ij}\delta^{\alpha\beta} + \left(\frac{\bphi''(|D\bfw |)|D\bfw |}{\bphi'(|D\bfw |)}-1 \right) \frac{w_{x_i}^\alpha w^{\beta}_{x_j}}{|D\bfw |^2}\right\} w^\beta_{x_j x_k}  w^\alpha_{x_k}\right|   \\
&\le c \frac{\bphi'(|D\bfw |)}{|D\bfw |}\big|D(|D\bfw |^2)\big| \,.
\end{aligned}$$
Inserting these inequalities into $I_2$, we have
\begin{equation}
\begin{aligned}
I_2 & = \int_{Q}   \bar{A}_{ij}^{\alpha\beta} w^\beta_{x_j x_k} \left[w^\alpha_{x_k x_i}f(|D\bfw |) + w^\alpha_{x_k}   w^\kappa_{x_l} w^\kappa_{x_l x_i}\frac{f'(|D\bfw |)}{|D\bfw |} \right] \xi^2\eta \, \d z\\
&\qquad +  \int_{Q} 2\bar A_{ij}^{\alpha\beta}   w^\beta_{x_j x_k}   w_{x_k}^\alpha f(|D\bfw |)\xi \xi_{x_i} \eta \, \d z \\
& \geq \frac{1}{c_1}\int_{Q}  \frac{\bphi'(|D\bfw |)}{|D\bfw |}\left[f(|D\bfw |)|D^2\bfw |^2 +    \frac{f'(|D\bfw |)}{|D\bfw |}|D (|D\bfw |^2)|^2\right] \xi^2 \eta\, \d z\\
&\qquad - c \int_{Q} \frac{\bphi'(|D\bfw |)}{|D\bfw |}\big|D(|D\bfw |^2)\big| f(|D\bfw |)  \xi |D\xi| \eta   \, \d z 
\end{aligned}\label{I2estimate}
\end{equation}
for some constant $c_1>0$. Applying Young's inequality to the last integrand, we obtain
$$
\begin{aligned}
I_2 &  \geq \frac{1}{2c_1}\int_{Q}  \frac{\bphi'(|D\bfw |)}{|D\bfw |}\left[ f(|D\bfw |)|D^2\bfw |^2 +    \frac{f'(|D\bfw |)}{|D\bfw |}|D (|D\bfw |^2)|^2\right] \xi^2 \eta\, \d z\\
&\qquad - c \int_{Q} \bphi'(|D\bfw |)  \frac{f(|D\bfw |)^2}{f'(|D\bfw |)} |D\xi|^2\eta   \, \d z \,,
\end{aligned}
$$
or
$$
\begin{aligned}
I_2 & \geq \frac{1}{c_1}\int_{Q}  \frac{\bphi'(|D\bfw |)}{|D\bfw |}\left[f(|D\bfw |)|D^2\bfw |^2 +  \frac{f'(|D\bfw |)}{|D\bfw |}|D (|D\bfw |^2)|^2\right] \xi^2 \eta\, \d z\\
&\qquad - c \int_{Q} \frac{\bphi'(|D\bfw |)}{|D\bfw |}|D\bfw | |D^2\bfw | f(|D\bfw |)  \xi |D\xi| \eta  \, \d z \\
&  \geq \frac{1}{2c_1}\int_{Q} \frac{\bphi'(|D\bfw |)}{|D\bfw |}\left[f(|D\bfw |)|D^2\bfw |^2 +    \frac{f'(|D\bfw |)}{|D\bfw |}|D (|D\bfw |^2)|^2\right] \xi^2 \eta\, \d z\\
&\qquad - c \int_{Q} \bphi(|D\bfw |)  f(|D\bfw |) |D\xi|^2\eta   \, \d z \,.
\end{aligned}
$$
Therefore, combining the above estimates, we get \eqref{Caccio estimate}.
\end{proof}

\begin{theorem}\label{thm:bddgrad} 
Let $\phi$ satisfy Assumption~\ref{Ass2} with \eqref{eq:hypp} and \eqref{p2q},
%Let $\phi\in C^1([0,\infty))\cap C^{2}((0,\infty))$ satisfy \eqref{characteristic} with 
%\begin{equation}
%p>\frac{2n}{n+2}\,,
%\label{eq:hypp}
%\end{equation} 
and  $\bfu_\epsilon$ with $\epsilon\in (0,1)$ be a weak solution to \eqref{eq:nondegenerate systemeq}. 
Then $D{\bf u}_\epsilon \in L^\infty_{\loc} (\Omega_T,\R^{Nn})$. Moreover, we have that for every $Q_{2r}(z_0)\Subset \Omega_T$,
\begin{equation}\label{supestimate0}
\|D{\bf u}_\epsilon\|_{L^\infty(Q_r(z_0),\R^{Nn})} \le  c \left(\fint_{Q_{2r}(z_0)} \phi_\varepsilon(|D{\bf u}_\epsilon|) \, \d z+1 \right)^{\frac{2}{(n+2)p-2n}}
\end{equation}
for some $c\ge 1$ depending on $n$, $N$, $p$ and $q$, and independent of $\epsilon$.
\end{theorem}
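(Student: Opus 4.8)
The plan is to run a Moser-type iteration on the differentiated system \eqref{diffsystem}, using the Caccioppoli inequality of Lemma~\ref{Lem:Caccio1} with a family of increasing functions $f$ that will produce successively higher powers of $\phi_\epsilon(|D\bfu_\epsilon|)$ (or of $|D\bfu_\epsilon|$). The natural choice is $f(s) = \phi_\epsilon'(s)s^{-1}\,\Phi_\epsilon(s)^{\kappa}$ — or a truncated/regularised version thereof to keep $f$ Lipschitz and guarantee $f'>0$ — where $\Phi_\epsilon(s):=\phi_\epsilon(s)$ and $\kappa\ge 0$ is the iteration parameter. With this $f$, the left-hand side of \eqref{Caccio estimate} controls both the time-supremum of (roughly) $\phi_\epsilon(|D\bfu_\epsilon|)^{1+\kappa}$ and a spatial gradient term of the form $\int |D(\phi_\epsilon(|D\bfu_\epsilon|)^{(1+\kappa)/2})|^2\xi^2\eta$, while the right-hand side is a lower-order integral of $\phi_\epsilon(|D\bfu_\epsilon|)^{1+\kappa}$ weighted by $|D\xi|^2+\xi^2\eta_t\sim r^{-2}$; this is exactly the energy estimate \eqref{1}-\eqref{eq:estimate1bis} pattern from Theorem~\ref{thm:bound}, with $\bfu$ replaced by $\phi_\epsilon(|D\bfu_\epsilon|)^{1/2}$-type quantities.

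First I would fix $Q_{2r}(z_0)\Subset\Omega_T$, choose standard cutoffs $\xi,\eta$ as in \eqref{eq:cutoff1}-\eqref{eq:cutoff2} interpolating between nested cylinders $Q_{\rho_1}\subset Q_{\rho_2}$, and apply Lemma~\ref{Lem:Caccio1} (second form, with $\phi_\epsilon(|D\bfu_\epsilon|)f(|D\bfu_\epsilon|)$ on the right) to obtain, for each $\kappa\ge 0$,
\begin{equation*}
\sup_{\tau}\int_{B_{\rho_1}}\phi_\epsilon(|D\bfu_\epsilon(\cdot,\tau)|)^{1+\kappa}\,\d x
+\int_{Q_{\rho_1}}\big|D\big[\phi_\epsilon(|D\bfu_\epsilon|)^{\frac{1+\kappa}{2}}\big]\big|^2\,\d z
\le \frac{c(1+\kappa)^{c}}{(s_2-s_1)^2 r^2}\int_{Q_{\rho_2}}\phi_\epsilon(|D\bfu_\epsilon|)^{1+\kappa}\,\d z,
\end{equation*}
using $\phi''_\epsilon\sim\phi_\epsilon'/(\epsilon+\cdot)$, \eqref{(2.6a)}-\eqref{(2.6b)} to pass between $\phi_\epsilon'(|D\bfu_\epsilon|)f(|D\bfu_\epsilon|)|D^2\bfu_\epsilon|^2$ and $|D\phi_\epsilon(|D\bfu_\epsilon|)^{(1+\kappa)/2}|^2$, and Lemma~\ref{Lem:seconddifferential}(ii)-(iii) to justify all manipulations of second derivatives. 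Next I apply the parabolic embedding, Lemma~\ref{Lem:paraembedding} with $m=2$ (legitimate since $p>\tfrac{2n}{n+2}$), to the function $g:=\phi_\epsilon(|D\bfu_\epsilon|)^{(1+\kappa)/2}$, which upgrades the energy estimate to a reverse-H\"older/self-improving inequality: $\big(\fint_{Q_{\rho_1}}\phi_\epsilon(|D\bfu_\epsilon|)^{(1+\kappa)\theta_0}\big)^{1/\theta_0}\le \frac{c(1+\kappa)^c}{(s_2-s_1)^\gamma}\fint_{Q_{\rho_2}}\phi_\epsilon(|D\bfu_\epsilon|)^{1+\kappa}+\ldots$, with gain factor $\theta_0=1+\tfrac2n>1$ and an additive $+1$ absorbing constants. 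Iterating over $\kappa_m=(1+2/n)^m-1$ on shrinking radii $r_m=r(1+2^{-m})$, as in Step~3 of Theorem~\ref{thm:bound}, yields
\begin{equation*}
\sup_{Q_r(z_0)}\phi_\epsilon(|D\bfu_\epsilon|)\le c\Big(\fint_{Q_{2r}(z_0)}\phi_\epsilon(|D\bfu_\epsilon|)\,\d z+1\Big),
\end{equation*}
and finally, since $\phi_\epsilon(t)\ge c^{-1}t^p - c$ for $t$ large by \eqref{characteristic1}/\eqref{eq:estimpq}, one converts the $L^\infty$ bound on $\phi_\epsilon(|D\bfu_\epsilon|)$ into the claimed bound on $|D\bfu_\epsilon|$; tracking the scaling between the cylinder size and the exponent carefully produces the exponent $\tfrac{2}{(n+2)p-2n}$ in \eqref{supestimate0}.

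\textbf{The main obstacle} I expect is two-fold and intertwined. First, because $p<2$ is allowed, the quantities $\phi_\epsilon'(|D\bfu_\epsilon|)/|D\bfu_\epsilon|$ and $\phi''_\epsilon$ are \emph{singular} where $|D\bfu_\epsilon|$ is small (they behave like $(\epsilon+|D\bfu_\epsilon|)^{p-2}$), so the passage from the Caccioppoli term $\frac{\phi_\epsilon'(|D\bfu_\epsilon|)}{|D\bfu_\epsilon|}f(|D\bfu_\epsilon|)|D^2\bfu_\epsilon|^2$ to a clean gradient term $|Dg|^2$ must be done so that the constants do not blow up as $\epsilon\to 0$ — this is where one genuinely uses $g=\phi_\epsilon(|D\bfu_\epsilon|)^{(1+\kappa)/2}$ rather than a power of $|D\bfu_\epsilon|$ directly, and the non-degeneracy of $\phi_\epsilon$ (hence Lemma~\ref{Lem:seconddifferential}) only to \emph{justify differentiation}, not to get estimates. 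Second, the bookkeeping of the polynomial-in-$\kappa$ constants $(1+\kappa)^c$ through the iteration, combined with the geometric exponent gain, must be shown to sum to a finite constant independent of $\epsilon$; this is routine but is where the explicit exponent $\tfrac{2}{(n+2)p-2n}$ — equivalently the "intrinsic scaling" balance $2n<(n+2)p$ — is forced, and it is essential that the embedding Lemma~\ref{Lem:paraembedding} is applicable precisely under \eqref{eq:hypp}. A subsidiary technical point is that $f$ as written may fail $f'>0$ or Lipschitz regularity at isolated points; I would handle this by first proving the estimate for a regularised $f_\delta$ and letting $\delta\to0$ using monotone convergence, exactly as the hypotheses of Lemma~\ref{Lem:Caccio1} are designed to permit.
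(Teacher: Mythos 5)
Your high-level plan (Moser iteration driven by the Caccioppoli Lemma~\ref{Lem:Caccio1}, followed by parabolic Sobolev) is the same as the paper's, but there are three concrete problems in the execution.

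\textbf{1.~The choice of $f$ does not match the test function you want to feed to Sobolev.}
You propose $f(s)=\phi_\epsilon'(s)s^{-1}\phi_\epsilon(s)^{\kappa}$, so that $F(s)=\tfrac{1}{1+\kappa}\phi_\epsilon(s)^{1+\kappa}$ and the Caccioppoli left-hand side controls
$\int \tfrac{\phi_\epsilon'(|D\bfw|)}{|D\bfw|}f(|D\bfw|)|D^2\bfw|^2 = \int \big(\tfrac{\phi_\epsilon'(|D\bfw|)}{|D\bfw|}\big)^2\phi_\epsilon(|D\bfw|)^{\kappa}|D^2\bfw|^2$.
But if you set $g=\phi_\epsilon(|D\bfw|)^{(1+\kappa)/2}$, then by \eqref{(2.6b)} and $\phi_\epsilon'(t)t\sim\phi_\epsilon(t)$ one finds $|Dg|^2\sim \phi_\epsilon(|D\bfw|)^{\kappa}\tfrac{\phi_\epsilon'(|D\bfw|)}{|D\bfw|}|D^2\bfw|^2$, which is \emph{larger} than the Caccioppoli term by a factor $\sim\phi_\epsilon''(|D\bfw|)$ — singular exactly where $|D\bfw|$ is small, i.e.\ the dangerous regime when $p<2$. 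The paper instead takes $f(t)=t^{\chi}$ (so $F(s)=s^{2+\chi}/(2+\chi)$) and the mixed quantity $F(z)=[\phi_\epsilon(|D\bfw|)|D\bfw|^{\chi}]^{1/2}\eta\xi$, which is tuned so that $|DF|^2\sim\tfrac{\phi_\epsilon'(|D\bfw|)}{|D\bfw|}|D\bfw|^{\chi}|D^2\bfw|^2$ matches the Caccioppoli term exactly; see \eqref{eq:F} and the computation following it. Your $f$ needs to be replaced by this.

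\textbf{2.~The intermediate estimate you state is too strong and cannot hold.}
You claim the iteration yields $\sup_{Q_r}\phi_\epsilon(|D\bfu_\epsilon|)\le c\big(\fint_{Q_{2r}}\phi_\epsilon(|D\bfu_\epsilon|)+1\big)$, i.e.\ exponent $1$, and that the power $\tfrac{2}{(n+2)p-2n}$ then comes from ``scaling''. But for $p<2$ we have $\tfrac{2}{(n+2)p-2n}>\tfrac{1}{p}$, so your putative bound, converted via $\phi_\epsilon(t)\gtrsim t^p-1$, would give a \emph{stronger} estimate than \eqref{supestimate0}, which is false. The correct iteration (Step~3 of the paper's proof) runs on $\chi_m=\chi_1+\theta\chi_{m-1}$ with $\chi_1=\tfrac{4}{n}+p-2$ and $\theta=1+\tfrac{2}{n}$, not on $\kappa_m=\theta^m-1$, and it directly produces the exponent $\tfrac{2}{n\chi_1}$ at the end of the iteration — there is no separate ``scaling'' step that converts exponent $1$ into something else.

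\textbf{3.~You have no analogue of the paper's interpolation step, which is indispensable when $p<2$.}
Because $f(t)=t^{\chi}$, the Caccioppoli right-hand side naturally carries the weight $|D\bfw|^2+\phi_\epsilon(|D\bfw|)$ (the $|D\bfw|^2$ coming from the parabolic term, which is \emph{not} dominated by $\phi_\epsilon$ when $p<2$). After Moser iteration one gets $\|D\bfw\|_{\infty}\lesssim(\fint[|D\bfw|^2+\phi_\epsilon(|D\bfw|)]+1)^{2/(n\chi_1)}$. To pass from this to \eqref{supestimate0}, the paper's Step~4 uses Young's inequality and the already-established (qualitative) $L^\infty_{\loc}$ bound to absorb the $|D\bfw|^2$ piece, shifting the exponent from $\tfrac{2}{n\chi_1}$ to $\tfrac{2}{n\chi_1-2(2-p)}=\tfrac{2}{(n+2)p-2n}$, and then a standard iteration lemma removes the $\tfrac12\|D\bfw\|_{\infty}$ term. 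Without this step you cannot obtain \eqref{supestimate0}; this is also the place where the particular form of the final exponent is actually produced.
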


%\comment{Jihoon: Moser iteration for the Lipschitz regularity for parabolic $p$-Laplace system can be found in \cite[Theorem 4]{Choe91}, \cite[Theorem 4]{Choe92} and \cite[Proposition 3.1]{CheDiBe89}.}

\begin{proof}
\textit{Step 1. (Setting and Caccioppoli type estimate)}
To enlighten the notation, we will write $\bphi:=\phi_\epsilon$ and $\bfw :={\bf u}_\epsilon$. Let $Q_{2r}=Q_{2r}(z_0)\Subset Q_T$. Without loss of generality, we assume that $z_0=(x_0,t_0)=(0,0)$. Let $\rho_1=s_1r$ and $\rho_2=s_2r$ with $1\le s_1<s_2\le 2$, $\xi \in C^\infty_0(B_{\rho_2})$ and $\eta\in C^\infty(\R)$ be as in \eqref{eq:cutoff1} and \eqref{eq:cutoff2}, respectively. %such that 
%$$
%0 \le \xi \le 1\,,\quad \xi\equiv 1\,\,\, \mbox{ in } \,\,\, B_{\rho_1}(x_0) \,\,\, \mbox{ and }\,\,\,|D\xi|\le \frac{2}{(s_2-s_1)r}\,, 
%$$
%and let $\eta\in C^\infty(\R)$ be such that 
%$$
%0\le \eta \le 1\,,\quad \eta\equiv 0 \,\,\, \mbox{ in } (-\infty,- {\rho^2_2}]\,,\,\,\, \eta \equiv 1\,\,\, \mbox{ in } \,\,\, [-{\rho^2_1},\infty)\,,\,\,\, |\eta_t| \le \frac{2}{(s_2^2-s_1^2)r^2}\,.
%$$

Then applying Lemma~\ref{Lem:Caccio1} with $f(t)=t^\chi$ where $\chi\ge 0$, $\rho=\rho_2$, $\tau_1=-\rho_2^2$ and $\tau_2\in(-\rho_1^2,0)$, we have 
\begin{equation}
\begin{aligned}
&\sup_{-\rho_1^2<\tau<0} \int_{B_{\rho_2}}   |D\bfw (x,\tau)|^{2+\chi} \xi^2 \, \d x +  \int_{Q_{\rho_2}}  \frac{\bphi'(|D\bfw |)}{|D\bfw |}|D\bfw |^\chi |D^2\bfw |^2\eta^2 \xi^2\, \d z\\
&\le  \frac{c(2+\chi)}{(\rho_2-\rho_1)^2} \int_{Q_{\rho_2}} \left[ |D\bfw |^2  +  \bphi(|D\bfw |)\right] |D\bfw |^{\chi} \, \d z\,.
\end{aligned}
\label{Lip_pf1}
\end{equation}

\textit{Step 2. (Improving inequality)} We set
\begin{equation}
F(z) := [\bphi(|D\bfw (x,t)|)|D\bfw (x,t)|^{\chi}]^{\frac{1}{2}} \eta(t) \xi(x)\,.
\label{eq:F}
\end{equation}
Note that, in order to enlighten the notation, we will often omit the dependence of $F$, $u$, $\eta$ and $\xi$ on the respective arguments. Differentiating \eqref{eq:F} with respect to $x_i$ we then have
$$\begin{aligned}
F_{x_i}&= \tfrac{1}{2}\big[\bphi(|D\bfw |)|D\bfw |^{\chi}\big]^{-\frac{1}{2}}\big[\bphi'(|D\bfw |)|D\bfw |^{\chi}+ \chi |D\bfw |^{\chi-1}\bphi(|D\bfw |)\big]\frac{w^\alpha_{x_j} w^\alpha_{x_j x_i}}{|D\bfw |} \eta \xi\\
&\qquad  + \big[\bphi(|D\bfw |)|D\bfw |^{\chi}\big]^{\frac{1}{2}} \eta \xi_{x_i}\,,
\end{aligned}$$
whence, recalling the upper bound \eqref{eq:cutoff1} for $|D\xi|$, we obtain
$$
|DF|^2  \leq c (\chi+1)^2\frac{\bphi'(|D\bfw |)}{|D\bfw |} |D\bfw |^{\chi} |D^2\bfw |^2\eta \xi  + \frac{c}{(\rho_2-\rho_1)^2} \bphi(|D\bfw |) |D\bfw |^{\chi}\,.
$$
Therefore, combining with \eqref{Lip_pf1}  we have 
\begin{equation}\label{Lip_pf2}\begin{aligned}
&\sup_{-\rho_1^2<\tau<0} \int_{B_{\rho_2}}   |D\bfw (x,\tau)|^{2+\chi} \xi^2 \, \d x +  \int_{Q_{\rho_2}}  |DF|^2\, \d z\\
&\le \frac{c(1+\chi)^{3}}{(\rho_2-\rho_1)^2} \int_{Q_{\rho_2}}   \left[|D\bfw |^2+\bphi(|D\bfw |)\right] |D\bfw |^{\chi} \, \d z \, .
\end{aligned}\end{equation}
Now, applying H\"older's inequality, the Sobolev inequality to function $F\in W^{1,2}_0(B_r)$ and using \eqref{Lip_pf2}, we can write

\begin{equation}\label{Lip_pf3}\begin{aligned}
&\int_{Q_{\rho_1}} |D\bfw |^{\frac{4}{n}+\chi(\frac{2}{n}+1)}\bphi(|D\bfw |) \,\d z \\
&\le  \int_{-\rho_1^2}^0 \left(\int_{B_{\rho_1}} |D\bfw |^{2+\chi}  \,\d x\right)^{\frac{2}{n}} \left(\int_{B_{\rho_1}}[ \bphi(|D\bfw |) |D\bfw |^{\chi}]^{\frac{n}{n-2}} \,\d x\right)^{\frac{n-2}{n}} \,\d t\\
& \le  \left(\sup_{-\rho_1^2<\tau<0}\int_{B_{\rho_1}} |D\bfw (x,\tau)|^{2+\chi} \,\d x\right)^{\frac{2}{n}} \int_{-\rho_1^2}^0  \left(\int_{B_{\rho_2}}|F|^{2^*} \,\d x\right)^{\frac{2}{2^*}} \,\d t\\
& \le  \left(\sup_{-\rho_1^2<\tau<0}\int_{B_{\rho_1}} |D\bfw (x,\tau)|^{2+\chi} \,\d x\right)^{\frac{2}{n}} \int_{Q_{\rho_2}(z_0)}|DF|^{2} \,\d z\\
&\le c \left(\frac{(1+\chi)^{3}r^2}{(\rho_2-\rho_1)^2} \int_{Q_{\rho_2}}   \left(|D\bfw |^2+\bphi(|D\bfw |)\right) |D\bfw |^{\chi} \, \d z\right)^{1+\frac{2}{n}} .
\end{aligned}\end{equation}
By Lemma~\ref{Lem:seconddifferentiability2}, 
we have that $|D\bfw |\in L^{p(n+2)/n}(Q_{2r})$. Note that  by under the assumption \eqref{eq:hypp} on $p$, $p(n+2)/n>2$.
%Note that, under the assumption \eqref{eq:hypp} on $p$, 
%by \eqref{characteristic}
%we have $\frac{pn}{n-2} >\frac{2n^2}{n^2-4}>2$. 
Therefore, it holds that
\begin{equation}
\int_{Q_{2r}}\big[ |D\bfw |^2+ \bphi(|D\bfw |) \big] \, \d z<\infty\,.
\label{eq:finitebound}
\end{equation}
Since $2<\frac{4}{n}+p$ again by \eqref{eq:hypp}, setting
\begin{equation}\label{chi1}
\chi_1:=\frac{4}{n}+p-2\, ,
\end{equation}
we may improve estimate \eqref{Lip_pf3} as
\begin{equation}
\begin{aligned}
\int_{Q_{\rho_1}}& (|D\bfw |^2+\bphi(|D\bfw |))|D\bfw |^{\chi_1+\chi(1+\frac{2}{n})} \, \frac{\d z}{|Q_{2r}|}\\
%& \le \int_{Q_{\rho_1}(z_0)} \left[|D{\bf u}|^{\frac{4}{n}+\chi(1+\frac{2}{n})}\phi(|D{\bf u}|)+1\right] \, \frac{\d z}{|Q_{2r}|} \\
&\le c \left(\frac{(1+\chi)^{3}r^2}{(\rho_2-\rho_1)^2} \int_{Q_{\rho_2}} \left[(|D\bfw |^2+\bphi(|D\bfw |)) |D\bfw |^{\chi} +1\right]\, \frac{\d z}{|Q_{2r}|}\right)^{1+\frac{2}{n}}\, .
\end{aligned}
\label{Lip_pf4}
\end{equation}

\textit{Step 3. (Iteration)} 
Let $s_1,s_2$ such that $1\le s_1< s_2 \le 2$ be fixed. For $m=0,1,2,\dots$, we set
\[
\chi_0:=0\quad \text{and}  \quad \chi_{m} := \chi_1  +\theta \chi_{m-1} \ \ (m\ge 1) ,
\quad \text{where} \ \ \theta:=1+\frac{2}{n},
\]
and
\[
J_{m}:= \int_{Q_{r_m}}  \left[(|D\bfw |^2+\bphi(|D\bfw |)) |D\bfw |^{\chi_m}  +1\right]\, \frac{\d z}{|Q_r|}, 
\quad \text{where}\ \ r_m:=(s_1+2^{-m}(s_2-s_1)) r\,.
\]
Note that $\chi_m=(\theta^m-1)\frac{\chi_1n}{2}$.
Then we have from \eqref{Lip_pf4} that 
\[
J_{m}  \le \frac{c 4^{\theta m}\theta^{3\theta m}}{(s_2-s_1)^{2\theta}} J_{m-1}^{\theta}  \le \frac{c_0^m}{(s_2-s_1)^{2\theta}} J_{m-1}^\theta , \quad m=1,2,\dots,
\]
where $c_0\ge 1$ depends on $n$, $N$, $p$ and $q$. Hence, for $m\ge 2$,
\[\begin{aligned}
J_{m} & \le \frac{c_0^m}{(s_2-s_1)^{2\theta}}  \left(\frac{c_0^{m-1}}{(s_2-s_1)^{2\theta}} J_{m-2}^\theta  \right)^{\theta}  \\ 
&\le \frac{c_0^{m+(m-1)\theta}}{(s_2-s_1)^{2(\theta+\theta^2)}}J_{m-2}^{\theta^2} \\
& \le \cdots \cdots \cdots \cdots \cdots \cdots \\ %\le \cdots \\
& \le \frac{ c_0^{\sum_{k=1}^m(m-k+1)\theta^{k-1}}}{(s_2-s_1)^{2\sum_{k=1}^{m} \theta^k} } J_0^{\theta^m}  \le \left(\frac{c_1}{(s_2-s_1)^{\beta_0} }J_0\right)^{\theta^m}
\end{aligned}\]
for some large $c_1,c_2,\beta_0>1$ depending on $n$, $N$, $p$ and $q$. Consequently, setting 
$$
\d\mu(z) := \left[|D\bfw (z)|^2+\bphi(|D\bfw (z)|)\right]\frac{\d z}{|Q_{2r}|},
$$
we have 
$$\begin{aligned}
\|D\bfw \|_{L^\infty(Q_{s_1r},\R^{Nn})}& \le \||D\bfw |\|_{L^\infty(Q_{s_1r};\d\mu)} 
=\lim_{m\to \infty} \left(\int_{Q_{r}} |D\bfw |^{\chi_m}  \, \d\mu \right)^{\frac{1}{\chi_m}} \le \limsup_{m\to \infty}\, J_m^{\frac{1}{\chi_m}} \\
 & \le \limsup_{m\to \infty}\, \left(\frac{c_1}{(s_2-s_1)^{\theta_0}} J_0 \right)^{\frac{\theta^i}{\chi_m}}  \le \frac{c}{(s_2-s_1)^{\theta_1}}  J_0^{\frac{2}{n\chi_1}},
\end{aligned}$$
where we used also the fact that $\chi_m=(\theta^m-1)\frac{\chi_1n}{2}$. Therefore, we have
\begin{equation}\label{Lip_pf6}
\|D\bfw \|_{L^\infty(Q_{s_1r},\R^{Nn})} \le \frac{c}{(s_2-s_1)^{\theta_1}} \left(\fint_{Q_{s_2r}} \left[|D\bfw |^2+ \bphi(|D\bfw |)\right]  \, \d z +1\right)^{\frac{2}{n\chi_1}}.
\end{equation}
By virtue of \eqref{eq:finitebound}, this shows that $D\bfw \in L^\infty_{\loc}(Q_R;\R^{Nn})$.

\textit{Step 4. (Interpolation)} 
Now we get rid of the term $|D\bfw |^2$ in the integrand in \eqref{Lip_pf6} by using an interpolation argument. Since  $D\bfw \in L^\infty_{\loc}(Q_R;\R^{Nn})$ and $\frac{2(2-p)}{n\chi_1} <1$ by \eqref{eq:hypp}, using Young's inequality we have that for every $1\le s_1< s_2 \le 2$,
$$\begin{aligned}
\|D\bfw \|_{L^\infty(Q_{s_1r},\R^{Nn})} &\le \frac{c\|D\bfw \|_{L^\infty(Q_{s_2r},,\R^{Nn})}^{\frac{2(2-p)}{n\chi_1}} }{(s_2-s_1)^{\theta_1}} \left(\fint_{Q_{s_2r}} |D\bfw |^p  \, \d z\right)^{\frac{2}{n\chi_1}}\\
&\qquad + \frac{c}{(s_2-s_1)^{\theta_1}}  \left( \fint_{Q_{s_2r}} \bphi(|D\bfw |) \, \d z+1 \right)^{\frac{2}{n\chi_1}}\\
&\le \tfrac12 \|D\bfw \|_{L^\infty(Q_{s_2r},\R^{Nn})}+   \frac{c}{(s_2-s_1)^{\frac{\theta_1n\chi_1}{n\chi_1-2(2-p)}}}  \left(\fint_{Q_{s_2r}} |D\bfw |^p  \, \d z\right)^{\frac{2}{n\chi_1-2(2-p)}}\\
&\qquad +  \frac{c}{(s_2-s_1)^{\theta_1}}  \left( \fint_{Q_{s_2r}} \bphi(|D\bfw |) \, \d z+1 \right)^{\frac{2}{n\chi_1}}\\
&\le \tfrac12 \|D\bfw \|_{L^\infty(Q_{s_2r},\R^{Nn})}+   \frac{c}{(s_2-s_1)^{\theta_2}}  \left(\fint_{Q_{2r}} \bphi(|D\bfw |)+1  \, \d z\right)^{\frac{2}{n\chi_1-2(2-p)}}
\end{aligned}$$
Therefore, we can remove the first term on the right hand side (cfr. \cite[Lemma 6.1]{Giusti_book}). Finally, recalling  \eqref{chi1}, %and using the fact that $\bphi (t)\le \phi(\epsilon+t)$, 
we obtain \eqref{supestimate0}.
\end{proof}

%From the previous lemma and Theorem~\ref{thm:bddgrad}, 
From the previous theorem and Lemma~\ref{Lem:approximation}, 
we obtain the boundedness of the gradient of a weak solution to \eqref{eq:system}.

\begin{corollary}\label{cor:bddgrad}
%Let $\phi\in C^1([0,\infty))\cap C^{2}((0,\infty))$ satisfy \eqref{characteristic} with $p>\frac{2n}{n+2}$, 
Let $\phi$ satisfy Assumption~\ref{Ass2} with \eqref{eq:hypp}, 
and $\bfu$  be a weak solution to \eqref{eq:system}.
Then $D{\bf u} \in L^\infty_{\loc} (\Omega_T,\R^{Nn})$. Moreover, we have that for every $Q_{2r}\Subset \Omega_T$,
\begin{equation}\label{supestimate}
\|D{\bf u}\|_{L^\infty(Q_r(z_0),\R^{Nn})} \le c \left(\fint_{Q_{2r}(z_0)} \phi(|D{\bf u}|) \, \d z +1\right)^{\frac{2}{(n+2)p-2n}}
\end{equation}
for some $c=c(n,N,p,q)\ge 1$.
\end{corollary}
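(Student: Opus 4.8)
The plan is to deduce \eqref{supestimate} for a weak solution $\bfu$ of \eqref{eq:system} by letting $\epsilon\to0^+$ in the $\epsilon$-uniform bound \eqref{supestimate0} for the approximating solutions, using the convergence $D\bfu_\epsilon\to D\bfu$ supplied by Lemma~\ref{Lem:approximation}. As observed right after \eqref{p2q}, we may assume in addition that \eqref{p2q} holds, so that Theorem~\ref{thm:bddgrad} is applicable.

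First I would fix $Q_{2r}(z_0)\Subset\Omega_T$ and choose a parabolic cylinder $Q_R$ with $Q_{2r}(z_0)\Subset Q_R\Subset\Omega_T$. For each small $\epsilon\in(0,1)$ let $\bfu_\epsilon$ be the weak solution of the Cauchy--Dirichlet problem \eqref{eq:nondegenerate system} on $Q_R$. Although Theorem~\ref{thm:bddgrad} is formulated for solutions of the boundaryless system \eqref{eq:nondegenerate systemeq}, the function $\bfu_\epsilon$ is in particular an interior weak solution of \eqref{eq:nondegenerate systemeq} in $Q_R$, so \eqref{supestimate0} may be used on any parabolic subcylinder of $Q_R$; applied on $Q_{2r}(z_0)$ it gives
\[
\|D\bfu_\epsilon\|_{L^\infty(Q_r(z_0),\R^{Nn})}\le c\Big(\fint_{Q_{2r}(z_0)}\phi_\epsilon(|D\bfu_\epsilon|)\,\d z+1\Big)^{\frac{2}{(n+2)p-2n}},
\]
with $c=c(n,N,p,q)$ independent of $\epsilon$.

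Then I would pass to the limit. By Lemma~\ref{Lem:approximation}, $D\bfu_\epsilon\to D\bfu$ in $L^\phi(Q_R)$; hence, along a subsequence, $D\bfu_\epsilon\to D\bfu$ a.e. in $Q_R$, and the chain of estimates in the proof of Lemma~\ref{Lem:approximation} also yields $\int_{Q_R}\phi(|D\bfu_\epsilon|)\,\d z\le c\int_{Q_R}[\phi(|D\bfu|)+1]\,\d z$ for all small $\epsilon$, so that $\{\phi(|D\bfu_\epsilon|)\}$ is bounded in $L^1(Q_R)$ and equi-integrable on $Q_{2r}(z_0)$ (the latter because $L^\phi$-convergence together with the $\Delta_2$-condition forces modular convergence on $Q_{2r}(z_0)$). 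Since $\phi_\epsilon\to\phi$ locally uniformly on $[0,\infty)$ and $\phi_\epsilon(t)\le\phi(\epsilon+t)\le c(\phi(t)+1)$ by $\Delta_2$ and $\phi(1)=1$, Vitali's theorem gives $\phi_\epsilon(|D\bfu_\epsilon|)\to\phi(|D\bfu|)$ in $L^1(Q_{2r}(z_0))$, whence $\fint_{Q_{2r}(z_0)}\phi_\epsilon(|D\bfu_\epsilon|)\,\d z\to\fint_{Q_{2r}(z_0)}\phi(|D\bfu|)\,\d z$. On the other hand, the uniform $L^\infty$-bound above together with $D\bfu_\epsilon\to D\bfu$ a.e. yields $\|D\bfu\|_{L^\infty(Q_r(z_0),\R^{Nn})}\le\liminf_{\epsilon\to0^+}\|D\bfu_\epsilon\|_{L^\infty(Q_r(z_0),\R^{Nn})}$. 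Combining the two limits gives $D\bfu\in L^\infty_{\loc}(\Omega_T,\R^{Nn})$ and the estimate \eqref{supestimate}.

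As for difficulty: there is no essential obstacle at this stage — all the analytic content is in Theorem~\ref{thm:bddgrad} and in Lemmas~\ref{Lem:seconddifferential}--\ref{Lem:approximation}. The only points requiring attention are the routine measure-theoretic steps in the limit procedure: the equi-integrability upgrading the $L^\phi$-convergence of $D\bfu_\epsilon$ to the $L^1$-convergence of the modulars $\phi_\epsilon(|D\bfu_\epsilon|)$, the lower semicontinuity of the $L^\infty$-norm under a.e. convergence, and the bookkeeping ensuring that the constant in \eqref{supestimate0} is genuinely $\epsilon$-independent and that the theorem is being applied to interior solutions of \eqref{eq:nondegenerate systemeq} rather than to the Cauchy--Dirichlet solutions directly.
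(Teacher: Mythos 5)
Your proposal is correct and is precisely what the paper intends by the one-line remark preceding Corollary~\ref{cor:bddgrad} (``From the previous theorem and Lemma~\ref{Lem:approximation}, we obtain...''). You have filled in the routine limit-passage details — $\epsilon$-uniform application of \eqref{supestimate0} to the interior solutions $\bfu_\epsilon$ on a fixed larger cylinder, a.e.\ and modular convergence from Lemma~\ref{Lem:approximation}, equi-integrability plus $\phi_\epsilon\le c(\phi+1)$ to pass the right-hand side to the limit, and lower semicontinuity of the $L^\infty$-norm — and these are exactly the steps the authors leave implicit.
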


\begin{remark}
When $\phi (t)=t^p$ with $\frac{2n}{n+2}<p<2$, the estimate \eqref{supestimate}  is exactly the same as \cite[eq. (5.10)]{DiB_book}. 
\end{remark}

%\appendix

\section{H\"older continuity of $D{\bf u}$ revisited}

We prove local H\"older continuity for the gradient of weak solution to \eqref{eq:system},
 where $\phi$ satisfies Assumption~\ref{Ass3}.  We remark that the result was already obtained by Lieberman in \cite{Lie06} by assuming the local boundedness of $D\bfu$. In this section, we take advantage of the results of Section~\ref{sec:locboundgradient} and %for sake of completeness, 
revisit his $C^{1,\alpha}$-regularity's proof, according to the setting of our paper. We also note that Lieberman's proof parallels the one given by Di Benedetto and Friedman \cite{DiBeFried84,DiBeFried85}, using a measure theoretic approach. In addition, we are adapting the geometry of the cylinders accordingly, due to the growth conditions of the operator.

 We define the intrinsic parabolic cylinder associated with an $N$-function $\phi$ as 
$$
Q^\lambda_r(x_0,t_0) := B_r(x_0) \times (t_0-r^2/\phi''(\lambda), t_0].
$$
where $\lambda,r >0$, {and oscillation of a function $f:U\to \R^m$ by
$$
\underset{U}{\mathrm{osc}} \, f := \sup_{x,y\in U} |f(x)-f(y)|\,.
$$}

Now, we state the main result of  this section.
 %{\color{magenta}(Jihoon: I slightly changed the statement of next theorem)}
 \begin{theorem}\label{thm:holdergrad_Lie}
 Let $\phi:[0,\infty)\to[0,\infty)$ satisfy Assumption~\ref{Ass3}, and let $\bfu$ be a weak solution to the parabolic system \eqref{eq:system}.
  and $Q_R(z_0)\Subset \Omega_T$.  If $D{\bf u} \in L^\infty_\loc(Q_R(z_0);\R^{Nn})$, then $D{\bf u}\in C^{0,\alpha}_{loc}(Q_R(z_0);\R^{Nn})$ for some $\alpha\in(0,1)$  depending on $n,N,p,q,\gamma_1$ and $c_h$. Moreover, any $Q_R(z_0)\Subset \Omega_T$, $r\in(0,R)$ and $\lambda \ge \|D{\bf u}\|_{ L^\infty(Q_R(z_0);\R^{Nn})}$, we have 
% and $Q_R(z_0)\Subset \Omega_T$.
%  If $D{\bf u} \in L^\infty(Q_R(z_0);\R^{Nn})$, then $D{\bf u}$ is H\"older continuous in $Q_r(z_0)$.  Moreover, there exists $\alpha\in(0,1)$ and $c>0$ depending on $n,N,p,q,\gamma_1$ and $c_h$ such that
% for every $r<(0,R]$,
 $$
 \underset{Q_r(z_0)}{\mathrm{osc}} \, D{\bf u} \le c \lambda \left(\max\left\{\phi''(\lambda)^{\frac{1}{2}},\phi''(\lambda)^{-\frac{1}{2}}\right\} \frac{r}{R}\right)^{\alpha}
  $$
for some $c=c(n,N,p,q,\gamma_1,c_h)>0$.
 \end{theorem}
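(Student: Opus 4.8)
The plan is to follow the classical DiBenedetto--Friedman scheme, as adapted by Lieberman to $\phi$-growth, but working with the approximating solutions $\bfu_\epsilon$ of \eqref{eq:nondegenerate systemeq} so that all the second-order regularity from Lemma~\ref{Lem:seconddifferential} is available and all estimates are uniform in $\epsilon$; the final statement then follows by the convergence $D\bfu_\epsilon\to D\bfu$ in $L^\phi(Q_R)$ (Lemma~\ref{Lem:approximation}) together with the uniform bound of Theorem~\ref{thm:bddgrad}. Fix $Q_R(z_0)\Subset\Omega_T$ and set $\lambda\ge\|D\bfu\|_{L^\infty(Q_R(z_0))}$; the natural geometry is the intrinsic cylinder $Q_r^\lambda(z_0)$, on which the system behaves like a uniformly parabolic one after rescaling time by $\phi''(\lambda)$. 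The whole argument is an oscillation-decay statement: one shows there is $\sigma\in(0,1)$ such that $\operatorname{osc}_{Q_{\sigma r}^\lambda}D\bfu_\epsilon\le(1-\tau)\operatorname{osc}_{Q_r^\lambda}D\bfu_\epsilon$ up to an additive error controlled by $\lambda$, and then iterates.

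\medskip

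First I would record the Caccioppoli inequality of Lemma~\ref{Lem:Caccio1} applied to $f$ chosen as (truncated) powers of $(|D\bfu_\epsilon|-k)_+$ or $(\ell-|D\bfu_\epsilon|)_+$, to obtain energy estimates for the super-level and sub-level sets of $|D\bfu_\epsilon|$ on intrinsic cylinders; since $|D\bfu_\epsilon|^2$ is (up to the ellipticity constants) a subsolution of a uniformly parabolic equation on the intrinsic cylinder, a De Giorgi iteration gives a one-sided bound on $\sup|D\bfu_\epsilon|$ in a smaller cylinder whenever the measure of the ``bad set'' is small. The dichotomy is the usual one. \emph{Case 1:} $|D\bfu_\epsilon|$ is close to its supremum on a large-measure portion of the cylinder; then one uses the fact that each component $v^\alpha:=\partial_{x_k}u^\alpha_\epsilon$ solves a linear parabolic system with measurable coefficients that are \emph{uniformly elliptic} on the set $\{|D\bfu_\epsilon|\sim\lambda\}$, and a logarithmic / De Giorgi-type lemma propagates the largeness of the set $\{|D\bfu_\epsilon|>(1-\nu)\lambda\}$ forward in time and then shrinks the oscillation of $|D\bfu_\epsilon|$, hence (after an iteration in the level) of $D\bfu_\epsilon$ itself via the vectorial structure. \emph{Case 2:} $|D\bfu_\epsilon|$ stays bounded away from its supremum by a definite fraction on a large-measure portion; then $|D\bfu_\epsilon|\le\tfrac12\sup|D\bfu_\epsilon|$ is ``good'' and we are in the region where the system is \emph{nondegenerate and $C^1$ in the coefficients}, so the Hölder-continuity hypothesis \eqref{ass3_holder} on $D^2_{\bf Q}\phi$ lets us freeze coefficients and compare $D\bfu_\epsilon$ with the solution $\mathbf h$ of the constant-coefficient linear system $\partial_t \mathbf h=\bar A_{ij}^{\alpha\beta}(z_0)\,\partial_{x_ix_j}\mathbf h$, for which the interior gradient oscillation estimate is classical; the error term is $\big(\tfrac{\operatorname{osc}D\bfu_\epsilon}{\lambda}\big)^{\gamma_1}$ times the excess, which can be absorbed for $\sigma$ small, yielding excess decay and thus Hölder continuity of $\bV_p(D\bfu_\epsilon)$, hence of $D\bfu_\epsilon$. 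In both cases the decay factor and the additive $\lambda$-term are uniform in $\epsilon$.

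\medskip

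Then I would combine the two cases into a single alternative: there exist $\sigma,\eta_0\in(0,1)$, depending only on $n,N,p,q,\gamma_1,c_h$, such that
$$
\operatorname*{osc}_{Q_{\sigma r}^\lambda(z_0)}D\bfu_\epsilon\le (1-\eta_0)\operatorname*{osc}_{Q_r^\lambda(z_0)}D\bfu_\epsilon + c\,\sigma^{\gamma_2}\lambda
$$
for all intrinsic subcylinders, and iterate this (a standard lemma on geometric sequences, cf.\ \cite[Lemma 6.1]{Giusti_book} in the parabolic version) to get
$$
\operatorname*{osc}_{Q_\varrho^\lambda(z_0)}D\bfu_\epsilon\le c\,\lambda\Big(\tfrac{\varrho}{R}\Big)^{\alpha}
$$
for some $\alpha\in(0,1)$, uniformly in $\epsilon$. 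Passing from intrinsic cylinders $Q_\varrho^\lambda$ to standard ones $Q_r$ costs exactly the factor $\max\{\phi''(\lambda)^{1/2},\phi''(\lambda)^{-1/2}\}$ in the radius, because $Q_r\subset Q_{r}^\lambda$ when $\phi''(\lambda)\ge1$ and $Q_r^\lambda\subset Q_{r}$ when $\phi''(\lambda)\le1$ after matching $\varrho$ with $r\max\{\phi''(\lambda)^{1/2},\phi''(\lambda)^{-1/2}\}$; this produces the stated bound $\operatorname{osc}_{Q_r(z_0)}D\bfu_\epsilon\le c\lambda\big(\max\{\phi''(\lambda)^{1/2},\phi''(\lambda)^{-1/2}\}\,r/R\big)^\alpha$. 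Finally, letting $\epsilon\to0$ and using that $D\bfu_\epsilon\to D\bfu$ in $L^\phi(Q_R)$ (a subsequence converging a.e., together with the uniform $L^\infty$ bound) upgrades the uniform oscillation estimate to $D\bfu$ itself, giving $D\bfu\in C^{0,\alpha}_{\mathrm{loc}}$.

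\medskip

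The main obstacle is the \emph{uniformity in $\epsilon$} throughout the degenerate--singular dichotomy: all constants in the De Giorgi / logarithmic estimates must be controlled purely through the structure constants $p,q$ (via the shifted-function estimates \eqref{(2.6a)}--\eqref{eq:approx}, \eqref{ellipticity}, \eqref{monotonicity}) and not through $\phi''(\epsilon)$ or $\phi'(\epsilon)/\epsilon$, since the latter blow up; this is exactly where the intrinsic scaling by $\phi''(\lambda)$ and the comparison estimates \eqref{eq:(3.22)} for $\bV^\epsilon$ versus $\bV$ do the work, but checking that the ``expansion of positivity'' step and the coefficient-freezing step both go through with $\epsilon$-independent constants, while $p<2$ forces the error terms in the Caccioppoli inequality to be handled with the $\lambda$-dependent (rather than $\epsilon$-dependent) bound on $D\bfu_\epsilon$, is the delicate part — this is precisely the gap that Lieberman's argument left, and it is what Corollary~\ref{cor:bddgrad} and Theorem~\ref{thm:bddgrad} are designed to close.
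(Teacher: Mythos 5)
Your framework is the right one (approximation via $\bfu_\epsilon$, intrinsic cylinders $Q^\lambda_r$, a DiBenedetto--Friedman dichotomy, iteration, then pass to the limit in $\epsilon$), but the two halves of the dichotomy are assigned backwards, and the swapped version does not work. In the paper (and in the standard Lieberman / DiBenedetto--Friedman scheme it follows), the constant-coefficient comparison enters precisely in the \emph{nondegenerate} alternative, i.e.\ when $|\{|D\bfu|\le(1-\sigma)\lambda\}\cap Q^\lambda_R|\le\sigma|Q^\lambda_R|$: only then is $|D\bfu|\sim\lambda$ on essentially all of the cylinder, so that $\phi''(|D\bfu|)\sim\phi''(\lambda)$ and the frozen linear system $\partial_t v^\alpha=A^{\alpha\beta}_{ij}({\bf P})v^\beta_{x_jx_i}$ is a genuine approximation; the H\"older hypothesis \eqref{ass3_holder} is exactly what turns a small excess $|D\bfu-{\bf P}|$ into a small coefficient deviation (Lemmas~\ref{Lem:high}--\ref{Lem:prop1lem2}). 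Conversely, the logarithmic / De~Giorgi / expansion-of-positivity machinery (the function $\Psi$, the time-slice lemma, the measure-theoretic shrinking via the Poincar\'e-type inequality) is what handles the \emph{degenerate} alternative, when $\{|D\bfu|\le(1-\sigma)\lambda\}$ has a definite fraction of the measure, and its output is a $\sup$-bound $|D\bfu|\le\nu\lambda$ in $Q^\lambda_{\sigma R/2}$ (Proposition~\ref{prop:degenerate}). Your Case~2 puts coefficient-freezing in the regime $|D\bfu_\epsilon|\le\tfrac12\lambda$ and calls that region ``nondegenerate'', but there \eqref{ass3_holder} gives no smallness at all: if ${\bf P}$ has modulus $\sim\lambda$ and $|D\bfu|\le\tfrac12\lambda$, then $|D\bfu-{\bf P}|/|{\bf P}|\gtrsim\tfrac12$, which is outside the range $|{\bf Q}-{\bf P}|\le\tfrac12|{\bf Q}|$ of \eqref{ass3_holder}, and the coefficients $\phi''(|D\bfu|)$ are not comparable to $\phi''(\lambda)$. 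Similarly, your Case~1 replaces the comparison estimate by a De~Giorgi/log argument, which controls the scalar $|D\bfu|$ but not the direction of $D\bfu$; the excess-decay needed for H\"older continuity of the full gradient genuinely requires the linearization step.

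A second, smaller gap is in the iteration. Your proposed single recursion
$\operatorname{osc}_{Q^\lambda_{\sigma r}}D\bfu_\epsilon\le(1-\eta_0)\operatorname{osc}_{Q^\lambda_r}D\bfu_\epsilon+c\,\sigma^{\gamma_2}\lambda$
keeps the time-scaling $\phi''(\lambda)$ fixed throughout, but the degenerate alternative drops $\sup|D\bfu|$ to $\nu\lambda$, which changes the correct intrinsic scaling to $\phi''(\nu\lambda)$ at the next step. One must instead set $\lambda_m=\nu^m\lambda$, $R_m=\theta^m R$ and choose $\theta\le\tfrac{\sigma}{2}\nu^{q/2-1}$ so that $Q^{\lambda_{m+1}}_{R_{m+1}}\subset Q^{\lambda_m}_{\sigma R_m/2}$; without this nesting condition the iteration does not close. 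Your closing observation about passing from $Q^\lambda_\varrho$ to $Q_r$ at cost $\max\{\phi''(\lambda)^{1/2},\phi''(\lambda)^{-1/2}\}$ is essentially correct (note that the inclusions are $Q^\lambda_r\subset Q_r$ when $\phi''(\lambda)\ge1$ and $Q_r\subset Q^\lambda_r$ when $\phi''(\lambda)\le1$, i.e.\ reversed from what you wrote), and the concerns you raise about uniformity in $\epsilon$ and the role of Theorems~\ref{thm:bound} and \ref{thm:bddgrad} are the right ones.
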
  
 
  This result can be obtained by approximation via Lemma~\ref{Lem:approximation}, once we obtain the analog of Theorem~\ref{thm:holdergrad_Lie} for the gradients $D\bfu_\epsilon$ of weak solutions $\bfu_\epsilon$ to the approximating nondegenerate parabolic system  \eqref{eq:nondegenerate systemeq}, where $\epsilon\in(0,1]$. This will be a consequence of the following two propositions (cfr. \cite[Propositions 1.3 and 1.4]{Lie06}) for $\bfu_\epsilon$. Note that all estimates in this section are independent of $\epsilon\in(0,1]$. Thus, for simplicity, we shall write $\bfu = \bfu_\epsilon$ and $\phi=\phi_\epsilon$. 
  
  The first proposition provides an estimate on the oscillation of $D{\bf u}$ on subcylinders when $|D{\bf u}|$ is small on a small portion of the main cylinder.
 %We  prove the following two propositions (cfr. \cite[Propositions 1.3 and 1.4]{Lie06}) for weak solutions $\bfu_\epsilon$ to the approximated nondegenerate parabolic system  \eqref{eq:nondegenerate systemeq}, where $\epsilon\in(0,1]$.  However, for simplicity, we shall write $\bfu = \bfu_\epsilon$. Note that all estimates in this section are independent of $\epsilon\in(0,1]$. 

\begin{proposition} \label{prop:nondegenerate}
Let $\phi:[0,\infty)\to[0,\infty)$ satisfy Assumption~\ref{Ass3}, and let $\bfu$ be a weak solution to  \eqref{eq:nondegenerate systemeq}. Suppose that for some $\lambda,R>0$, $Q^{\lambda}_{R}(z_0) \Subset \Omega_T$ and 
\begin{equation}\label{Dubound}
|D{\bf u}| \le \lambda \quad \text{in }\ Q^{\lambda}_{R}(z_0).
\end{equation}
There exist $\sigma\in (0,2^{-(n+1)})$ and $C \ge 1$ depending on $n,N,p,q,\gamma_1$ and $c_h$  such that if
\begin{equation}\label{prop:nondegenerate_ass}
\big|\{|D{\bf u}| \le (1-\sigma) \lambda\} \cap Q^{\lambda}_{R}(z_0)\big| \le \sigma |Q^{\lambda}_{R}(z_0)|\,,
\end{equation}
then 
%\begin{equation}
%\fint_{Q^\lambda_r(z_0)} |  D{\bf u} - (D{\bf u})_{Q^\lambda_r(z_0)}|^2\,dz  \le  c \left(\frac{r}{R}\right)^{\frac{1}{2}} \fint_{Q^\lambda_r(z_0)} |  D{\bf u} - (D{\bf u})_{Q^\lambda_r(z_0)}|^2\,dz  
%\end{equation}
\begin{equation}
\underset{Q^{\lambda}_r(z_0)}{\mathrm{osc}} \, D{\bf u} \le C \left(\frac{r}{R}\right)^{\frac{3}{4}}  \underset{Q^{\lambda}_R(z_0)}{\mathrm{osc}} \, D{\bf u}
\label{nondegenerate_oscillation}
\end{equation}
for all $r\in (0,R)$.
\end{proposition}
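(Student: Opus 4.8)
The plan is to prove Proposition~\ref{prop:nondegenerate} by a perturbation (freezing) argument combined with a measure-theoretic iteration, following the scheme of DiBenedetto--Friedman and Lieberman. Since $|D\bfu|\le\lambda$ on $Q_R^\lambda(z_0)$ and a large fraction of this cylinder satisfies $|D\bfu|\ge(1-\sigma)\lambda$, the operator is \emph{quantitatively nondegenerate} there: on the set where $|D\bfu|$ is close to $\lambda$ the coefficients $A_{ij}^{\alpha\beta}(D\bfu)$ are comparable to $\phi''(\lambda)\,\mathrm{Id}$ in the sense of \eqref{ellipticity}, and by Assumption~\ref{Ass3} (inequality \eqref{ass3_holder}) they are H\"older close to the constant matrix $D^2_{\mathbf Q}\phi(\lambda e)$ for a fixed direction $e$. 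Thus I would first rescale so that the intrinsic cylinder $Q_R^\lambda$ becomes a standard parabolic cylinder (absorbing the factor $\phi''(\lambda)$ into the time variable) and normalize $|D\bfu|$ to be of order one.

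First I would freeze the coefficients: let $\bA_0$ be the constant-coefficient linear parabolic system obtained by evaluating the coefficients at a point (or average) where $|D\bfu|\sim\lambda$, and let $\mathbf h$ solve $\mathbf h_t-\div(\bA_0 D\mathbf h)=0$ in a slightly smaller cylinder with $\mathbf h=\bfu$ on the parabolic boundary. Constant-coefficient linear parabolic systems enjoy full interior regularity, so $D\mathbf h$ has the excess-decay estimate $\mathrm{osc}_{Q_r}D\mathbf h\lesssim (r/R)\,\mathrm{osc}_{Q_R}D\mathbf h$ (and indeed $C^\infty$ bounds). Next I would control the error $\bfu-\mathbf h$: testing the difference of the two systems with $\bfu-\mathbf h$ and using the monotonicity/Lipschitz estimates \eqref{monotonicity}, \eqref{eq:(3.4)} for $\bA$ together with the H\"older continuity \eqref{ass3_holder} of the coefficients, the deviation is estimated by the oscillation of the coefficients over $Q_R^\lambda$. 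Here the smallness hypothesis \eqref{prop:nondegenerate_ass} enters decisively: on the (large) set $\{|D\bfu|>(1-\sigma)\lambda\}$ the coefficients are within $O(\sigma^{\gamma_1})$ of $\bA_0$, and the bad set where $|D\bfu|\le(1-\sigma)\lambda$ is measure-controlled by $\sigma$, so the total error is $O(\sigma^{\gamma_0})$ for some $\gamma_0>0$. Combining the linear decay for $D\mathbf h$ with this error bound gives an excess-improvement of the form
$$
\underset{Q_{\theta R}^\lambda}{\mathrm{osc}}\,D\bfu \le \big(C_0\theta + C_1\sigma^{\gamma_0}\big)\underset{Q_R^\lambda}{\mathrm{osc}}\,D\bfu
$$
for any $\theta\in(0,1)$.

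The iteration step is then standard but must be set up carefully so that it is self-improving: I would choose $\theta$ small so that $C_0\theta\le\frac14\theta^{3/4}$, then choose $\sigma$ small (depending on $\theta$, hence on $n,N,p,q,\gamma_1,c_h$) so that $C_1\sigma^{\gamma_0}\le\frac14\theta^{3/4}$; the key point is that shrinking to $Q_{\theta R}^\lambda$ \emph{preserves} both the bound $|D\bfu|\le\lambda$ (trivially) and the smallness condition \eqref{prop:nondegenerate_ass} with the same $\sigma$, because the oscillation of $D\bfu$ on $Q_{\theta R}^\lambda$ is now at most $\frac12\theta^{3/4}\lambda\le\frac12\lambda<\sigma_0\lambda$ for an appropriate $\sigma_0$, so $\{|D\bfu|\le(1-\sigma)\lambda\}$ is either empty or a controlled fraction. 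Iterating over the geometric sequence of radii $\theta^k R$ and interpolating to intermediate radii yields \eqref{nondegenerate_oscillation} with exponent $\frac34$ (any exponent $<1$ would do; $\frac34$ is chosen for later convenience). The main obstacle is the comparison estimate between $\bfu$ and the frozen solution $\mathbf h$: one must carefully track the intrinsic scaling $t\mapsto \phi''(\lambda)t$ through the energy estimates and verify that the Caccioppoli inequality, the Sobolev embedding, and the coefficient-oscillation bound all combine to produce a genuinely small error with a power of $\sigma$ — this is where Assumption~\ref{Ass3}, and in particular the exact form \eqref{ass3_holder} with the scale-invariant ratio $|{\mathbf Q}-{\mathbf P}|/|{\mathbf Q}|$, is essential, since it makes the coefficient oscillation depend only on $\sigma$ and not on $\lambda$ or $\phi''(\lambda)$.
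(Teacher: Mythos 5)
Your high-level picture — compare $\bfu$ to a frozen constant-coefficient linear caloric $\mathbf h$, use linear decay for $D\mathbf h$, and control the error by the coefficient oscillation — is the right philosophy, and it is indeed roughly what the paper does at the level of Lemmas 5.3 and 5.4. But there are two serious gaps.

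The first is the central one. You write that on the large set $\{|D\bfu|>(1-\sigma)\lambda\}$ the coefficients $A_{ij}^{\alpha\beta}(D\bfu)$ are within $O(\sigma^{\gamma_1})$ of a constant matrix $\bA_0$. This does not follow. The measure hypothesis \eqref{prop:nondegenerate_ass} constrains only the \emph{modulus} $|D\bfu|$, whereas the coefficients in \eqref{defA} depend on the full matrix $D\bfu$ through the anisotropic term $\frac{Q_i^\alpha Q_j^\beta}{|{\bf Q}|^2}$, and Assumption~\ref{Ass3} \eqref{ass3_holder} gives H\"older continuity only when $|{\bf Q}-{\bf P}|$ — the full matrix difference — is small relative to $|{\bf Q}|$. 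Two matrices of nearly equal norm can point in orthogonal directions, making $A(D\bfu)$ far from any fixed $\bA_0$. So the density hypothesis on $|D\bfu|$ alone does \emph{not} put you in the perturbative regime. The paper's Lemma~\ref{Lem:prop1lem2} (corresponding to Lieberman's Lemma 4.6) is precisely the missing bridge: it proves, via a truncated-gradient function ${\bf h}=h_0(|D\bfu|)D\bfu$, the Caccioppoli-type estimate of Lemma~\ref{Lem:Caccio1}, a weighted Poincar\'e--Sobolev inequality, and a careful time-slice comparison, that the density assumption implies $|(D\bfu)_{Q^\lambda_{R/2}}|\ge\frac78\lambda$ together with smallness of the $L^2$ excess $\fint|D\bfu-(D\bfu)_{Q^\lambda_{R/2}}|^2$. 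Only after this step does the freezing/comparison argument (with ${\bf P}=(D\bfu)_{Q^\lambda_R}$) become available. Without it, your excess-improvement inequality is unjustified.

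The second gap is smaller but still essential: your comparison estimate on the "bad set'' $\{|D\bfu-{\bf P}|>\epsilon_0|{\bf P}|\}$ requires more than the measure bound. The coefficient oscillation is not bounded by a power of $\sigma$ there (it is only bounded by a constant times $\phi''(\lambda)$), so controlling its $L^2$ contribution needs a higher integrability (reverse H\"older) estimate for $D\bfu-{\bf P}$. The paper establishes this via Gehring's lemma (Lemma~\ref{Lem:high}) and exploits the extra exponent $\gamma$ in Lemma~\ref{Lem:comparison} through the term $\epsilon_0^{-2\gamma}\lambda^{-2\gamma}\big(\fint|D\bfu-{\bf P}|^2\big)^\gamma$. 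Your sketch omits this ingredient, and without it the error from the bad set cannot be made small. Finally, the paper iterates the $L^2$ excess (Lemma~\ref{Lem:prop1lem1}) and passes to pointwise oscillation at the end via Campanato embedding, rather than iterating $\mathrm{osc}\,D\bfu$ directly and repropagating the measure hypothesis as you propose; the latter is harder to close because, as noted above, the density condition by itself is not strong enough to restart the comparison step.
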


If \eqref{prop:nondegenerate_ass} fails, the following proposition gives an estimate of how $| D{\bf u} |$ decreases.

\begin{proposition} \label{prop:degenerate}
Let $\phi:[0,\infty)\to[0,\infty)$ satisfy Assumption~\ref{Ass3}, and let $\bfu$ be a weak solution to \eqref{eq:nondegenerate systemeq}. Suppose that for some $\lambda,R>0$, $Q^{\lambda}_{R}(z_0) \Subset \Omega_T$ and \eqref{Dubound} holds.
For any $\sigma\in(0,\frac{1}{2})$, there exists $\nu\in (0,1)$ depending on $n,N,p,q$ and $\sigma$ such that if
\begin{equation}\label{prop:degenerate_ass}
|\{|D{\bf u}| \le (1-\sigma) \lambda\} \cap Q^{\lambda}_{R}(z_0)| > \sigma |Q^{\lambda}_{R}(z_0)|\, ,
\end{equation}
then
\begin{equation}
| D{\bf u} | \le \nu \lambda  \quad \text{in }\ Q^{\lambda}_{\sigma R/2}(z_0)\,.
\label{degenerate_decay}\end{equation}
\end{proposition}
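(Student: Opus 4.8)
\textbf{Proof proposal for Proposition~\ref{prop:degenerate}.}
The plan is to argue by contradiction using a De Giorgi--type iteration on the superlevel sets of $|D\bfu|$, exploiting the quantitative largeness of the ``small-gradient'' set hypothesized in \eqref{prop:degenerate_ass}. Set for brevity $Q^{\lambda}_{R}=Q^{\lambda}_{R}(z_0)$ and, for $k>0$, consider the truncated functions $(|D\bfu|^2-k^2)_+$; since $D\bfu$ is weakly differentiable in $x$ (Lemma~\ref{Lem:seconddifferential}) and $|D\bfu|\le\lambda$, the quantity $w:=|D\bfu|^2$ is an admissible test-building block. First I would apply the Caccioppoli inequality of Lemma~\ref{Lem:Caccio1}, specialized to $f$ a regularized version of $s\mapsto (s^2-k^2)_+/s$ (or directly working with the differentiated system \eqref{diffsystem} tested against $(w-k^2)_+\xi^2$), to obtain, on the intrinsic cylinders, an energy estimate of the form
\begin{equation*}
\sup_{t}\int_{B_{\rho_1}}(w-k^2)_+^2\,\xi^2\,\d x + \phi''(\lambda)\int_{Q^{\lambda}_{\rho_1}}|D(w-k^2)_+|^2\,\d z
\le \frac{c}{(\rho_2-\rho_1)^2}\int_{Q^{\lambda}_{\rho_2}}(w-k^2)_+\,\d z\,,
\end{equation*}
where the factor $\phi''(\lambda)$ comes from the ellipticity ratio $\phi'(|D\bfu|)/|D\bfu|\sim\phi''(\lambda)$ on the set where $|D\bfu|$ is comparable to $\lambda$ (using \eqref{(2.6b)} and $|D\bfu|\le\lambda$); the intrinsic time-scaling $r^2/\phi''(\lambda)$ is chosen precisely to make this estimate homogeneous, i.e.\ to cancel $\phi''(\lambda)$ against the $\partial_t$ term. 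The key structural point, as in the $p$-Laplacian case of \cite{DiB_book}, is that on the intrinsic geometry the equation behaves like the heat equation for $w$.

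Next I would run the standard De Giorgi iteration over the levels $k_j^2=(1-\sigma/2-2^{-j-1}\sigma)^2\lambda^2$ shrinking $\lambda$ down towards $(1-\sigma/2)\lambda$, with radii $\rho_j = (1+2^{-j})\sigma R/2$, combining the above energy estimate with the parabolic Sobolev embedding (Lemma~\ref{Lem:paraembedding}/\cite[I, Prop.~3.1]{DiB_book}) to produce the recursive inequality $Y_{j+1}\le C\,b^{j}\,Y_j^{1+\kappa}$ for the normalized measures $Y_j:=|\{w>k_j^2\}\cap Q^{\lambda}_{\rho_j}|/|Q^{\lambda}_{\rho_j}|$, with $\kappa=2/(n+2)$. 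Hypothesis \eqref{prop:degenerate_ass} provides a lower bound on the \emph{complementary} set, namely $|\{|D\bfu|\le(1-\sigma)\lambda\}\cap Q^{\lambda}_R|>\sigma|Q^{\lambda}_R|$, which — after controlling the passage from the large cylinder $Q^{\lambda}_R$ to the smaller $Q^{\lambda}_{\sigma R/2}$ by a covering/measure argument — forces $Y_0$ below the critical threshold $C^{-1/\kappa}b^{-1/\kappa^2}$ of the fast-geometric-convergence lemma \cite[Lemma~6.1]{DiB_book}. Then $Y_j\to0$, which yields $|D\bfu|\le(1-\sigma/2)\lambda$ a.e.\ in $Q^{\lambda}_{\sigma R/2}$, i.e.\ \eqref{degenerate_decay} with $\nu:=1-\sigma/2$; the dependence of $\nu$ on $n,N,p,q,\sigma$ only (not on $\epsilon$ or $\lambda$) is transparent from the scaling-invariant form of all estimates.

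The main obstacle I expect is \emph{making the intrinsic geometry consistent}: the Caccioppoli estimate is naturally performed on cylinders scaled by $r^2/\phi''(\lambda)$, but the hypothesis \eqref{prop:degenerate_ass} is posed on $Q^{\lambda}_R$ and the conclusion on $Q^{\lambda}_{\sigma R/2}$, so one must ensure all the subcylinders used in the iteration sit inside $Q^{\lambda}_R$ and that the measure density information is not lost under localization — this is where the choice of starting radius $\sigma R/2$ and a preliminary sub-cylinder covering enters. A secondary technical point is that $w=|D\bfu|^2$ solves only a differential \emph{inequality} (it is a subsolution of a linear parabolic equation with bounded measurable, uniformly elliptic-on-the-relevant-set coefficients), obtained by summing the $k$-equations in \eqref{diffsystem} against $w_{x_k}$; one must check the Uhlenbeck term $(\phi''|D\bfu|/\phi'-1)$ has a sign or is controlled, which is exactly \eqref{ellipticity} and is why the radial structure is essential. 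Apart from these geometric bookkeeping issues, the argument is the by-now classical DiBenedetto--Friedman scheme and the $\phi$-growth enters only through the uniform two-sided bound $\phi'(|D\bfu|)/|D\bfu|\sim\phi''(\lambda)$ on $\{|D\bfu|\sim\lambda\}$.
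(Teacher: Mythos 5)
Your proposal has a genuine gap at its starting point. You claim that hypothesis \eqref{prop:degenerate_ass}, ``after controlling the passage from the large cylinder $Q^{\lambda}_R$ to the smaller $Q^{\lambda}_{\sigma R/2}$ by a covering/measure argument,'' forces the initial quantity $Y_0$ below the critical De~Giorgi threshold $C^{-1/\kappa}b^{-1/\kappa^2}$. This is not true: \eqref{prop:degenerate_ass} only tells you that $|\{|D\bfu|>(1-\sigma)\lambda\}\cap Q^{\lambda}_R|\le (1-\sigma)|Q^{\lambda}_R|$, i.e.\ the bad set occupies at most a fraction $1-\sigma$ of the cylinder. When $\sigma$ is small, $1-\sigma$ is close to $1$, which is far from the smallness required to trigger fast geometric convergence, and no passage to a subcylinder can improve this (the density could in principle be even worse on $Q^{\lambda}_{\sigma R/2}$). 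The entire difficulty of the DiBenedetto--Friedman scheme lies precisely in upgrading this rough measure-density information into an arbitrarily small initial measure for a slightly raised level, and your proposal skips that upgrade entirely.

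Concretely, the paper bridges the gap in two stages before any iteration can begin. First (Lemma~\ref{Lem:density_timeslice2}): by a mean-value argument there is a single time slice $t_1$ on which the bad set is at most $(1-\sigma/2)|B_R|$, and a \emph{logarithmic} Caccioppoli estimate (taking $f=2\Psi\Psi'/s$ with $\Psi$ the truncated logarithm in the Caccioppoli inequality of Lemma~\ref{Lem:Caccio1}) propagates this density \emph{forward in time}, producing a time-slice-uniform bound on $|\{|D\bfu(\cdot,t)|>(1-2^{-m_1}\sigma)\lambda\}\cap B_{\tilde\sigma R}|$. Second (Step~2 of the paper's proof, using the De~Giorgi isoperimetric inequality \eqref{new1} and a telescoping sum over levels $(1-\sigma 2^{-j})\lambda$): this time-slice information is converted into a space-time measure estimate $|\{|D\bfu|>(1-\nu_1)\lambda\}\cap\tilde Q|/|\tilde Q|\le c/\sqrt{m_2-m_1}$, which can be made as small as desired by raising $m_2$. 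Only at that point does a sup-bound iteration (Moser in the paper, though De~Giorgi would also work here) produce $|D\bfu|\le(1-\nu_1/2)\lambda$. Your ``main obstacle'' paragraph about making the intrinsic geometry consistent is a real but secondary bookkeeping issue; the primary missing ideas are the logarithmic test function and the isoperimetric lemma, without which your iteration simply has no valid $Y_0$ to start from.
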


Proposition ~\ref{prop:nondegenerate} and Proposition~\ref{prop:degenerate} will be proved in Subsection~\ref{sec:proofnondeg} and Subsection~\ref{sec:proofdeg}, respectively.  In the remaining subsections, we always assume that $\phi:[0,\infty)\to[0,\infty)$ satisfies  Assumption~\ref{Ass3}, $\bfu$ is a weak solution to \eqref{eq:nondegenerate systemeq}, and  \eqref{Dubound}  holds for some $Q^\lambda_R(z_0)\Subset \Omega_T$ with $\lambda,R>0$. In addition, without loss of generality, we further assume that assume \eqref{p2q} and $z_0=(x_0,t_0)=(0,0)=0$, and write $Q^\lambda_r=Q^\lambda_r(0)$ for all $r\in(0,R]$.

\subsection{Proof of Proposition~\ref{prop:nondegenerate}} \label{sec:proofnondeg}

Before starting the proof, we recall the following weighted version of Poincar\'e's inequality, which is quite elementary and can be deduced, for instance, from \cite[Theorem 7]{DieEtt08}.
\begin{lemma}\label{lem:wpoincare}
Suppose $f\in W^{1,p}(B_R; \R^m)$ and $\xi \in L^1(B_R)$ is nonnegative and satisfies $\|\xi\|_{L^1(B_R)}=1$.  Then we have
$$
\int_{B_R} \left|\frac{f-\langle f \rangle_\xi}{R}\right|^p \, \d x \le c \int_{B_R} |Df|^p \, \d x 
$$
for some $c=c(n,m,p)>0$, where   $\langle f \rangle_\xi= \int f \xi \, \d x$.
\end{lemma}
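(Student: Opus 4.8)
The plan is to compare the weighted mean $\langle f\rangle_\xi$ with the ordinary integral mean $\bar f:=\fint_{B_R}f\,\d x$ of $f$ over the ball and reduce everything to the classical (unweighted) Poincar\'e--Wirtinger inequality. First I would observe that the claimed estimate is invariant under the dilation $x\mapsto Rx$: writing $g:=f(R\,\cdot\,)$ and $\tilde\xi:=R^{n}\xi(R\,\cdot\,)$, one checks $\tilde\xi\ge0$, $\|\tilde\xi\|_{L^1(B_1)}=1$, $\langle g\rangle_{\tilde\xi}=\langle f\rangle_\xi$, and that both sides of the inequality carry the same power $R^{\,n-p}$; hence it suffices to treat the case $R=1$.

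Next I would fix an arbitrary constant vector $c_0\in\R^m$. Since $\xi\,\d x$ is a probability measure on $B_1$, using $\int_{B_1}\xi\,\d x=1$ and Jensen's inequality for $t\mapsto|t|^{p}$ one gets $|c_0-\langle f\rangle_\xi|^{p}=\bigl|\int_{B_1}(c_0-f)\,\xi\,\d x\bigr|^{p}\le\int_{B_1}|f-c_0|^{p}\,\xi\,\d x$, so that by convexity of $t\mapsto t^{p}$
\[
\int_{B_1}|f-\langle f\rangle_\xi|^{p}\,\d x\ \le\ 2^{p-1}\int_{B_1}|f-c_0|^{p}\,\d x+2^{p-1}|B_1|\int_{B_1}|f-c_0|^{p}\,\xi\,\d x .
\]
Then I would choose $c_0=\bar f$. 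The first term on the right is bounded by $c(n,m,p)\int_{B_1}|Df|^{p}\,\d x$ by the classical Poincar\'e inequality on the ball. For the second term one invokes the weighted Poincar\'e estimate, which in the power-growth case $\varphi(t)=t^{p}$ can be read off from \cite[Theorem~7]{DieEtt08} and gives $\int_{B_1}|f-\bar f|^{p}\,\xi\,\d x\lesssim\int_{B_1}|Df|^{p}\,\d x$; alternatively, when $\xi$ is essentially bounded — as is the case in all our applications, where $\xi$ is a normalised characteristic function or cut-off — this term is simply dominated by $\|\xi\|_{L^{\infty}(B_1)}\int_{B_1}|f-\bar f|^{p}\,\d x$ and one closes again with the unweighted Poincar\'e inequality. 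Summing the two contributions and rescaling back to $B_R$ yields the asserted inequality with $c=c(n,m,p)$.

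The main obstacle is precisely this second term: one has to convert the purely weighted quantity $\bar f-\langle f\rangle_\xi$ back into something controlled by $\|Df\|_{L^{p}(B_R)}$, and this is where the regularity (or at least the integrability) of the weight $\xi$ genuinely enters — through the Sobolev--Poincar\'e embedding of $W^{1,p}(B_R)$ underlying \cite[Theorem~7]{DieEtt08}, or directly through $\|\xi\|_{L^{\infty}}$. Everything else in the argument is entirely elementary.
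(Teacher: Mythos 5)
Your decomposition is the natural one and, since the paper offers no proof of this lemma beyond the citation of \cite[Theorem~7]{DieEtt08}, it is presumably the intended deduction: rescale to $B_1$, split $f-\langle f\rangle_\xi$ into $(f-\bar f)+(\bar f-\langle f\rangle_\xi)$ with $\bar f=\fint_{B_1}f\,\d x$, control $|\bar f-\langle f\rangle_\xi|^p$ by $\int_{B_1}|f-\bar f|^p\xi\,\d x$ via Jensen, and close the unweighted term with the classical Poincar\'e--Wirtinger inequality. All of these steps are correct, including the scaling bookkeeping.

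The gap is exactly where you locate it, and it cannot be closed under the hypotheses as stated: for $p\le n$ the term $\int_{B_1}|f-\bar f|^p\xi\,\d x$ is \emph{not} bounded by $c(n,m,p)\int_{B_1}|Df|^p\,\d x$ for a general nonnegative $\xi\in L^1(B_1)$ of unit mass, and in fact the lemma fails in that generality. Take $1\le p<n$, $f(x)=|x|^{-\alpha}$ with $0<\alpha<\frac{n-p}{p}$, so that $f\in W^{1,p}(B_1)\setminus L^\infty(B_1)$, and let $\xi_\epsilon=|B_\epsilon|^{-1}\chi_{B_\epsilon}$; then $\langle f\rangle_{\xi_\epsilon}\to\infty$ as $\epsilon\to0$, so the left-hand side blows up while $\int_{B_1}|Df|^p\,\d x$ stays fixed. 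Consequently neither of your two escape routes works verbatim: a weighted Poincar\'e inequality of the type in \cite[Theorem~7]{DieEtt08} also needs a bounded (suitably normalized) weight, and invoking $\|\xi\|_{L^\infty}$ silently strengthens the hypotheses. The correct statement carries a constant depending in addition on $\|\xi\|_{L^\infty(B_R)}|B_R|$ (or on a normalized $L^{p'}$-norm of $\xi$), which is harmless in this paper because every weight used is a normalized cutoff with $\|\xi\|_{L^\infty(B_R)}\le c(n)|B_R|^{-1}$. So your argument is the right one, but it proves this corrected statement rather than the one printed; the dependence on $\|\xi\|_{L^\infty(B_R)}|B_R|$ should be made explicit instead of being relegated to an aside about ``all our applications''.
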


We first derive a higher integrability result for $D{\bf u}$ (cfr. \cite[Lemma~4.2]{Lie06}).

\begin{lemma}\label{Lem:high}
Let ${\bf P}\in\R^{Nn}$ satisfying $\frac{\lambda}{2}\le |{\bf P}| \le \lambda$. There exist $\gamma,c>0$ depending on $n,N,p$ and $q$ such that
$$
\left(\fint_{Q^\lambda_{R/2}} |D{\bf u} -{\bf P}|^{2(1+\gamma)} \, \d z \right)^{\frac{1}{1+\gamma}} \le c \fint_{Q^\lambda_{R}} |D{\bf u} -{\bf P}|^{2} \, \d z \,.
$$
\end{lemma}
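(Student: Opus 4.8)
The strategy is to reduce the statement, via the intrinsic time rescaling, to the classical local higher integrability for a quadratic parabolic system. Since ${\bf A}({\bf P})$ is constant, $\bfu$ solves $\partial_t\bfu-\div({\bf A}(D\bfu)-{\bf A}({\bf P}))=0$ on $Q^\lambda_R$. Setting $\tilde\bfu(x,s):=\bfu(x,s/\phi''(\lambda))$, one checks that $\tilde\bfu$ solves
\[
\partial_s\tilde\bfu-\div\tilde{\bf A}(D\tilde\bfu)=0\qquad\text{on }\ \tilde Q_R:=B_R\times(-R^2,0],\qquad \tilde{\bf A}({\bf Q}):=\frac{{\bf A}({\bf Q})-{\bf A}({\bf P})}{\phi''(\lambda)}\,,
\]
and that the standard cylinders $\tilde Q_r=B_r\times(-r^2,0]$ are precisely the images of the intrinsic cylinders $Q^\lambda_r$; moreover $\fint_{\tilde Q_r}|D\tilde\bfu-{\bf P}|^s\,\d z=\fint_{Q^\lambda_r}|D\bfu-{\bf P}|^s\,\d z$ for every $s>0$, so it suffices to prove the asserted inequality for $\tilde\bfu$ on $\tilde Q_{R/2}\subset\tilde Q_R$.

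The key point is that, on $\tilde Q_R$, the system for $\tilde\bfu$ has genuinely quadratic structure with constants depending only on $p,q$. Indeed, since $\tfrac\lambda2\le|{\bf P}|\le\lambda$ and $|D\bfu|\le\lambda$ on $Q^\lambda_R$, we have $|D\bfu-{\bf P}|\le2\lambda\le4|{\bf P}|$; hence, by \eqref{(2.6b)} together with the two-sided power bound \eqref{characteristic1} (which forces $\phi''(s)\sim\phi''(\lambda)$ for $s\in[\tfrac\lambda2,5\lambda]$ with constants depending only on $p,q$),
\[
\phi_{|{\bf P}|}(t)\sim\phi''(\lambda)\,t^2,\qquad \phi'_{|{\bf P}|}(t)\sim\phi''(\lambda)\,t\qquad(0\le t\le2\lambda)\,.
\]
Combined with \eqref{monotonicity} and \eqref{eq:(3.4)} (valid for the shifted $N$-functions as well), this yields, for all ${\bf Q}\in\R^{Nn}$ with $|{\bf Q}-{\bf P}|\le2\lambda$,
\[
(\tilde{\bf A}({\bf Q})-\tilde{\bf A}({\bf P})):({\bf Q}-{\bf P})\sim|{\bf Q}-{\bf P}|^2,\qquad |\tilde{\bf A}({\bf Q})-\tilde{\bf A}({\bf P})|\lesssim|{\bf Q}-{\bf P}|\,,
\]
and the restriction $|{\bf Q}-{\bf P}|\le2\lambda$ is harmless because $|D\tilde\bfu-{\bf P}|\le2\lambda$ throughout $\tilde Q_R$.

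It then remains to run the classical local higher-integrability machinery for such a (homogeneous) parabolic system: one derives a Caccioppoli inequality for $\tilde\bfu-\ell$, where $\ell(x)={\bf P}x+c$ is an affine function with gradient ${\bf P}$ and $c$ a suitable ($s$-independent) mean — testing the equation for $\tilde\bfu$ with $(\tilde\bfu-\ell)\xi^2\eta$ and using the displays above together with Young's inequality \eqref{eq:young4} — and then upgrades it via the parabolic Poincar\'e--Sobolev inequality (spatial Poincar\'e step with some exponent $<2$, cf.\ Lemma~\ref{lem:wpoincare}, together with the $L^\infty$-in-time $L^2$ energy bound) to a reverse H\"older inequality with gap
\[
\fint_{\tilde Q_{\rho/2}}|D\tilde\bfu-{\bf P}|^2\,\d z\le c\Big(\fint_{\tilde Q_\rho}|D\tilde\bfu-{\bf P}|^{2\kappa}\,\d z\Big)^{1/\kappa}
\]
valid on all sub-cylinders $\tilde Q_\rho\subseteq\tilde Q_R$, for some $\kappa=\kappa(n)\in(0,1)$ and $c=c(n,N,p,q)$. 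A Gehring-type lemma on parabolic cylinders then produces $\gamma=\gamma(n,N,p,q)>0$ and $c=c(n,N,p,q)$ with the stated estimate for $\tilde\bfu$ on $\tilde Q_{R/2}$, and undoing the time rescaling concludes the proof (cf.\ \cite[Lemma~4.2]{Lie06}).

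The one genuinely delicate point is the bookkeeping in the second paragraph: verifying that, thanks to $\tfrac\lambda2\le|{\bf P}|\le\lambda$, $|D\bfu|\le\lambda$ and the intrinsic time scale $\rho^2/\phi''(\lambda)$, every constant appearing in the Caccioppoli and reverse H\"older inequalities — and hence in the applied higher-integrability theorem — depends only on $n,N,p,q$, in particular not on $\lambda$, $\epsilon$, or the radii of the cylinders involved. Once this uniformity is established, the higher-integrability argument applies verbatim.
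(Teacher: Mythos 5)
Your proposal is correct and takes essentially the same approach as the paper: both exploit that, under $|D\bfu|\le\lambda$ and $\tfrac{\lambda}{2}\le|{\bf P}|\le\lambda$, the operator has uniformly quadratic structure with ellipticity $\sim\phi''(\lambda)$, and then derive a reverse H\"older inequality via a Caccioppoli estimate together with the weighted Poincar\'e inequality (Lemma~\ref{lem:wpoincare}) and control of the time oscillation of the weighted spatial mean, before concluding with Gehring's lemma on parabolic cylinders. The only real difference is presentational: you rescale time by $\phi''(\lambda)$ up front so that the intrinsic cylinders $Q^\lambda_r$ become standard ones, whereas the paper carries out the same Caccioppoli/Poincar\'e/Gehring chain directly in the intrinsic geometry — the ``classical higher-integrability machinery'' you invoke is exactly what the paper writes out in detail.
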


\begin{proof}
Fix any  $Q^\lambda_{2r}(z_1)\subset Q^\lambda_R$ with $z_1=(x_1,t_1)\in  Q^\lambda_R$ and $r\le r_1<r_2\le 2r$. We further set
$r_3=\frac{r_1+r_2}{2}$, $r_4=\frac{r_1+3r_2}{4}$ and $t_2=t_1-\frac{r_2^2}{\phi''(\lambda)}$.  Note that $r_1 < r_3 < r_4 < r_2$.
We consider two cut-off functions. Let $\xi_0\in C^\infty_0 (B_{r_2}(x_1))$ satisfying $0\le \xi_0\le 1$,  $\xi_0 =1$ in $B_{r_4}(x_1)$ and $|D\xi_0|\le 8/(r_2-r_1)$, and set $\xi = \|\xi_0\|_{L^1(B_{r_2})}^{-1} \xi_0$.  Note that $|B_r|\le |B_{r_4}| \le \|\xi_0\|_{L^1(B_{r_2})} \le |B_{r_2}|\le 2^n|B_r|$ and $\|\xi\|_{L^1(B_{r_2})}=1$. Next, let $\zeta \in C^\infty(Q^\lambda_{r_2}(z_1))$ such that $\zeta =0$ on $\partial_{\mathrm{p}}Q^\lambda_{r_2}(z_1)$, $\zeta =1$ in $Q^\lambda_{r_1}(z_1)$,     
$$
|D\zeta|^2 +|D^2 \zeta| \le \frac{c}{(r_2-r_1)^{2}} \quad \text{and}\quad  0\le \zeta_t  \le \frac{c\phi''(\lambda)}{(r_2-r_1)^{2}}.  
$$
Finally, define
$$
\bfw (z):=\bfu(z) - {\bf P} (x-x_1),
\quad {\bf W}(t):=  \int_{B_{r_2}(x_1)} \bfw (x,t) \xi(x) \, \d x\,,
$$
$$
\text{and}\quad  \tilde \bfw  := \bfw  -\bfw _0 \ \ \text{with }\ \bfw _0 := \frac{\phi''(\lambda)}{r^2} \int_{Q^\lambda_{r_2}(z_1)} \bfw  \xi \, \d z = \fint_{t_2}^{t_1}  {\bf W}(t) \, \d t\,.
$$

We take $ \zeta^\chi \tilde \bfw $ with $\chi \ge 2 $ as a test function in the weak form of  \eqref{eq:system} to get,
for every $\tau\in I^\lambda_{r_2}(t_1)$,
\[
\int_{t_2}^\tau \int_{B_{r_2}(z_1)} (\bfw _t \cdot \tilde \bfw ) \zeta^\chi  \, \d x\, \d t +  \int_{t_2}^\tau \int_{B_{r_2}(z_1)} \left(\bA(D{\bf u}) - \bA({\bf P})\right)  :  D( \zeta^\chi \tilde \bfw ) \, \d x\, \d t=0 ,
\]
which yields
\[\begin{aligned}
&\sup_{\tau \in I_{r_2}^\lambda(t_1)} \int_{B_{r_2}(x_1)} |\tilde \bfw |^2 \zeta ^\chi \, \d x + \int_{Q^\lambda_{r_2}(z_1)} \frac{\phi'(|D{\bf u}|+|{\bf P}|)}{|D{\bf u}|+|{\bf P}|} |D\bfw |^2 \zeta^\chi \, \d z \\
& \le  \frac{c}{r_2-r_1}  \int_{Q^\lambda_{r_2}(z_1)}  \frac{\phi'(|D{\bf u}|+|{\bf P}|)}{|D{\bf u}|+|{\bf P}|} |D\bfw | |\tilde \bfw | \zeta^{\chi-1} \, \d z  + c  \frac{\phi''(\lambda)}{(r_2-r_1)^2}\int_{Q^\lambda_{r_2}(z_1)} |\tilde \bfw |^2 \zeta^{\chi-1} \, \d z.
\end{aligned}\]
Here we used \eqref{PhiPQ}. Set
\[
S_\chi:= \sup_{\tau \in I_{r_2}^\lambda(t_1)} \int_{B_{r_2}(x_1)} |\tilde \bfw |^2 \zeta ^\chi \, \d x \,.
\]
Applying Young's inequality to the integrand of the first integral on the right hand side and using the facts that $\chi\ge 2$, $|D{\bf u}|\le \lambda$ and $\frac{\lambda}{2}\le |{\bf P}|\le \lambda$, we have 
$$
S_\chi + \phi''(\lambda)\int_{Q^\lambda_{r_2}(z_1)}  |D\bfw |^2 \zeta^\chi \, \d z  \le   c  \frac{\phi''(\lambda)}{(r_2-r_1)^2}\int_{Q^\lambda_{r_2}(z_1)} |\tilde \bfw |^2 \zeta^{\chi-2} \, \d z
$$
Then, when $\chi=4$ and $\chi=2$, we have
\begin{equation}\label{Lem:reverse_pf1}
\int_{Q^\lambda_{r_1}(z_1)}  |D\bfw |^2 \, \d z  \le   c  \frac{S_2^{\frac{2}{n+2}}}{(r_2-r_1)^2}\int_{t_2}^{t_1}\left(\int_{B_{r_2}(x_1)} |\tilde \bfw (x,t)|^2 \, \d x\right)^{\frac{n}{n+2}}\, \d t\,,
\end{equation}
and 
$$
S_2  \le   c  \frac{\phi''(\lambda)}{(r_2-r_1)^2}\int_{Q^\lambda_{r_2}(z_1)} |\tilde \bfw |^2  \, \d z \le   c  \frac{\phi''(\lambda)}{(r_2-r_1)^2}\int_{Q^\lambda_{r_2}(z_1)}\Big[ |\bfw  - {\bf W}(t)|^2 + |\bfw _0-{\bf W}(t)|^2\Big]  \, \d z\,.
$$
By Poincar\'e's inequality with the weight $\xi$ (Lemma~\ref{lem:wpoincare}) we see that  for every $t\in I^\lambda_{r_2}(t_1)$
$$
\int_{B_{r_2}} |\bfw (x,t) -{\bf W}(t)|^2\, \d x \le cr^2 \int_{B_{r_2}} |D\bfw (x,t)|^2\, \d x.
$$
Moreover, by testing \eqref{eq:nondegenerate systemeq} with $\zeta=(\xi,\cdots,\xi)$ and using $|D\xi| \le \frac{c}{r^n(r_2-r_1)}$ and \eqref{PhiPQ} with $|D{\bf u}|\le \lambda \le 2|{\bf P}|\le 2\lambda$, we see that for  every $t_2<\tau<\tau'<t_1$,
$$\begin{aligned}
|{\bf W}(\tau)- {\bf W}(\tau')| & =  \left|\int^{\tau'}_{\tau}  \int_{B_{r_2}(x_1)}  \left(\bA(D{\bf u}) - \bA({\bf P})\right) :  D\zeta \, \d x\, \d t \right|  \\
&\le c\frac{r^{2}}{(r_2-r_1)}\fint_{Q_{r_2}^\lambda(z_1)}  |D\bfw |    \, \d z \,,
\end{aligned}$$
hence for every $t \in (t_2,t_1)$,
\begin{equation}\label{Lem:reverse_pf4}\begin{aligned}
|\bfw _0-{\bf W}(t)| \le \sup_{t_2<\tau<\tau'<t_1} |{\bf W}(\tau)-{\bf W}(\tau')|  \le  c\frac{r^2}{(r_2-r_1)}\fint_{Q_{r_2}^\lambda(z_1)}  |D\bfw |  \, \d z \,.
\end{aligned}\end{equation}
Therefore, combining the above estimates together with H\"older's inequality, we have 
\begin{equation}\label{Lem:reverse_pf2}
S_2   \le   c  \frac{\phi''(\lambda)}{(r_2-r_1)^2}\left(\frac{r^4}{(r_2-r_1)^2}+r^2\right)\int_{Q^\lambda_{r_2}(z_1)}|D\bfw |^2   \, \d z \le c   \frac{\phi''(\lambda)r^4}{(r_2-r_1)^4}\int_{Q^\lambda_{r_2}(z_1)}|D\bfw |^2   \, \d z \,.
\end{equation}
Moreover,  by   \eqref{Lem:reverse_pf4}, a weighted Sobolev-Poincar\'e type inequality and H\"older's inequality, we also see that for every $t\in (t_2,t_1)$
\begin{equation}\label{Lem:reverse_pf3}
\begin{aligned}
&\int_{B_{r_2}(x_1)} |\tilde \bfw (x,t)|^2 \, \d x   \le  c  \int_{B_{r_2}(x_1)} |\bfw (x,t)- {\bf W}(t)|^2 \, \d x  + c r^n |{\bf W}(t)-\bfw _0|^2  \\
& \le  c  \left(\int_{B_{r_2}(x_1)} |D\bfw (x,t)|^\frac{2n}{n+2} \, \d x\right)^{\frac{n+2}{n}} +  c\frac{r^{n+4}}{(r_2-r_1)^2}\left(\fint_{Q_{r_2}^\lambda(z_1)}  |D\bfw |^{\frac{2n}{n+2}}  \, \d z \right)^\frac{n+2}{n}.
\end{aligned}\end{equation}
Therefore, inserting \eqref{Lem:reverse_pf2} and  \eqref{Lem:reverse_pf3} into \eqref{Lem:reverse_pf1} and using Young's inequality, we have that for every $0<r_1<r_2 \le r$ ,
$$\begin{aligned}
&\int_{Q^\lambda_{r_1}(z_1)}  |D\bfw |^2 \, \d z  
\le   \frac{c}{(r_2-r_1)^2} \left(  \frac{\phi''(\lambda)r^4}{(r_2-r_1)^4}\int_{Q^\lambda_{r_2}(z_1)}|D\bfw |^2   \, \d z \right)^{\frac{2}{n+2}}\\
&\qquad \qquad\qquad \qquad \times \left(\frac{r}{r_2-r_1}\right)^{\frac{2n}{n+2}} \int_{Q_{r_2}^\lambda(z_1)}  |D\bfw |^{\frac{2n}{n+2}}  \, \d z  \\
&\le   c \left(\frac{r}{r_2-r_1}\right)^{\frac{4(n+3)}{n+2}} \frac{\phi''(\lambda)^{\frac{2}{n+2}}}{r^2} \left( \int_{Q^\lambda_{r_2}(z_1)}|D\bfw |^2   \, \d z \right)^{\frac{2}{n+2}}  \int_{Q_{2r}^\lambda(z_1)}  |D\bfw |^{\frac{2n}{n+2}}  \, \d z \\
&\le   \frac{1}{2} \left(\frac{r}{r_2-r_1}\right)^{2(n+3)} \int_{Q_{r_2}^\lambda(z_1)}  |D\bfw |^2   \, \d z + c \left(\frac{\phi''(\lambda)}{r^{n+2}}\right)^{\frac{2}{n}}  \left( \int_{Q^{\lambda}_{2r}(z_1)} |D \bfw |^{\frac{2n}{n+2}} \, \d z \right)^{\frac{n+2}{n}}\,.
\end{aligned}$$
Then we can remove the first term on the right hand side, and have
$$
\fint_{Q^\lambda_{r}(z_1)}  |D\bfw |^2 \, \d z   
\le  c  \left( \fint_{Q^{\lambda}_{2r}(z_1)} |D \bfw |^{\frac{2n}{n+2}} \, \d z \right)^{\frac{n+2}{n}},
$$
for every $Q^\lambda_r(z_1)\subset Q^\lambda_R$.
Finally, applying Gehring's Lemma (cfr. \cite[Theorem 6.6]{Giusti_book}), we obtain the conclusion.   
\end{proof}

Next we obtain an $L^2$-comparison estimate between $D{\bf u}$ and the gradient of a weak solution to a corresponding linear system with constant coefficients (cfr. \cite[Lemma~4.3]{Lie06}). We recall \eqref{ass3_holder} and the definition of $A_{ij}^{\alpha\beta}$ in \eqref{defA}, so that 
\begin{equation}\label{ass:holder}
\sum_{i,j,\alpha,\beta}|A_{ij}^{\alpha\beta}({\bf Q})-A_{ij}^{\alpha\beta}({\bf P}) | \le c_h \left(\frac{|{\bf Q}-{\bf P}|}{|{\bf Q}|}\right)^{\gamma_1} \phi''(|{\bf Q}|) \quad \text{for }\ |{\bf Q}-{\bf P}| \le \frac{1}{2} |{\bf Q}|.
\end{equation}

\begin{lemma}\label{Lem:comparison}
Let ${\bf P} = (P^\alpha_i)\in\R^{Nn}$ satisfying $\frac{\lambda}{2}\le |{\bf P}| \le \lambda$, and  ${\bf v}=(v^\alpha_i)$ be the weak solution to 
\begin{equation}\label{system_constant}
\begin{cases}
& (v^\alpha)_t - ( A_{ij}^{\alpha\beta}({\bf P}) v^\beta_{x_j} )_{x_i} =0 \quad \text{in}\ \ Q^\lambda_{R/2}, \quad \alpha=1,2,\dots,N, \\
& {\bf v} = {\bf u} \ \ \text{on} \ \ \partial_p Q^\lambda_{R/2} \,.
\end{cases}
\end{equation}
 Then for every $\epsilon_0 \in (0,\frac{1}{2})$,
$$
\fint_{Q^{\lambda}_{R/2}} |D{\bf u}-D{\bf v}|^2\, \d z \le  c \left[\epsilon_0^{2\gamma_1} +  \epsilon_0^{-2\gamma}\lambda^{-2\gamma}  \left(\fint_{Q^{\lambda}_{R}} |D{\bf u}-{\bf P}|^{2} \, \d z\right)^{\gamma}\right]  \fint_{Q^{\lambda}_{R}} |D{\bf u}-{\bf P}|^2 \, \d z
$$
for some $c=c(n,N,p,q,\gamma_1,c_h)>0$, where $\gamma>0$ is from Lemma~\ref{Lem:high}.
\end{lemma}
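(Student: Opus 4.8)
The plan is to compare $\bfu$ with the solution $\bfv$ of the frozen linear system \eqref{system_constant} via a Caccioppoli-type energy estimate for the difference $\bfw := \bfu - \bfv$, exploiting that $\bfw$ vanishes on the parabolic boundary $\partial_p Q^\lambda_{R/2}$. Subtracting the two systems, $\bfw$ satisfies weakly
$$
\bfw_t - \bigl(A_{ij}^{\alpha\beta}({\bf P})\, w^\beta_{x_j}\bigr)_{x_i} = \bigl([\bA(D\bfu)]^\alpha_i - A_{ij}^{\alpha\beta}({\bf P})\,u^\beta_{x_j} - [\bA({\bf P})]^\alpha_i\bigr)_{x_i}
$$
(using that $[\bA({\bf P})]^\alpha_i$ is constant, so its $x_i$-derivative vanishes and can be freely inserted). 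First I would test with $\bfw$ itself: the time term produces $\sup_t \int |\bfw|^2\,\d x$ up to a term that vanishes since $\bfw = 0$ at the initial time, the principal part gives $\phi''(\lambda)\int_{Q^\lambda_{R/2}} |D\bfw|^2\,\d z$ from below (by \eqref{ellipticity} and $\frac\lambda2 \le |{\bf P}| \le \lambda$, which makes $\varphi'(|{\bf P}|)/|{\bf P}| \sim \phi''(\lambda)$), and the right-hand side gives
$$
\int_{Q^\lambda_{R/2}} \bigl|\bA(D\bfu) - \bA({\bf P}) - D^2_{\bf Q}\phi(|{\bf P}|)(D\bfu - {\bf P})\bigr|\,|D\bfw|\,\d z.
$$
After Young's inequality to absorb $\phi''(\lambda)|D\bfw|^2$, it remains to estimate the "linearization defect" integrand.

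**Controlling the linearization defect.** The key point is that
$$
\bigl[\bA(D\bfu) - \bA({\bf P})\bigr]^\alpha_i - A_{ij}^{\alpha\beta}({\bf P})(D\bfu - {\bf P})^\beta_j = \int_0^1 \bigl(A_{ij}^{\alpha\beta}(\tau D\bfu + (1-\tau){\bf P}) - A_{ij}^{\alpha\beta}({\bf P})\bigr)(D\bfu - {\bf P})^\beta_j\,\d\tau,
$$
so I would split the domain $Q^\lambda_{R/2}$ according to whether $|D\bfu - {\bf P}| \le \epsilon_0 |{\bf P}|$ or not. On the "good set" $\{|D\bfu - {\bf P}| \le \epsilon_0|{\bf P}|\}$ one has $|\tau D\bfu + (1-\tau){\bf P} - {\bf P}| \le \epsilon_0|{\bf P}| \le \frac12|{\bf P}|$ for $\epsilon_0 < \frac12$, so \eqref{ass:holder} (the Hölder continuity of the Hessian, which is where Assumption~\ref{Ass3} enters) applies along the whole segment and yields a bound
$$
\lesssim c_h\,\epsilon_0^{\gamma_1}\,\phi''(\lambda)\,|D\bfu - {\bf P}|,
$$
using also that $|{\bf P}| \sim \lambda$ makes $\phi''$ on the segment comparable to $\phi''(\lambda)$. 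This contributes $\epsilon_0^{\gamma_1}\phi''(\lambda)\int |D\bfu - {\bf P}|\,|D\bfw|\,\d z$, which after Young's inequality produces the $\epsilon_0^{2\gamma_1}\fint_{Q^\lambda_R}|D\bfu - {\bf P}|^2$ term (extending the domain of integration from $Q^\lambda_{R/2}$ to $Q^\lambda_R$ is harmless). On the "bad set" $E := \{|D\bfu - {\bf P}| > \epsilon_0|{\bf P}|\} \cap Q^\lambda_{R/2}$, I would instead use the crude bound $|A_{ij}^{\alpha\beta}(\cdot) - A_{ij}^{\alpha\beta}({\bf P})| \lesssim \phi''(\lambda)$ directly from \eqref{ellipticity} (both arguments have modulus $\lesssim\lambda$, so $\varphi'/|\cdot| \lesssim \phi''(\lambda)$ there), giving a contribution $\phi''(\lambda)\int_E |D\bfu - {\bf P}|\,|D\bfw|\,\d z$.

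**The bad-set term via higher integrability.** The bad-set contribution, after Young, is $\lesssim \phi''(\lambda)\int_E |D\bfu - {\bf P}|^2\,\d z$, and the job is to show $\fint_E |D\bfu - {\bf P}|^2 \lesssim \epsilon_0^{-2\gamma}\lambda^{-2\gamma}\bigl(\fint_{Q^\lambda_R}|D\bfu-{\bf P}|^2\bigr)^{\gamma}\fint_{Q^\lambda_R}|D\bfu - {\bf P}|^2$. This is exactly where Lemma~\ref{Lem:high} is used: by Hölder's inequality with exponents $1+\gamma$ and $\frac{1+\gamma}{\gamma}$,
$$
\fint_{Q^\lambda_{R/2}} |D\bfu - {\bf P}|^2 \mathbf{1}_E\,\d z \le \Bigl(\fint_{Q^\lambda_{R/2}}|D\bfu - {\bf P}|^{2(1+\gamma)}\,\d z\Bigr)^{\frac{1}{1+\gamma}}\Bigl(\frac{|E|}{|Q^\lambda_{R/2}|}\Bigr)^{\frac{\gamma}{1+\gamma}},
$$
the first factor is controlled by $\fint_{Q^\lambda_R}|D\bfu - {\bf P}|^2$ via Lemma~\ref{Lem:high}, and on $E$ Chebyshev gives $|E| \le (\epsilon_0|{\bf P}|)^{-2}\int_{Q^\lambda_{R/2}}|D\bfu - {\bf P}|^2 \lesssim \epsilon_0^{-2}\lambda^{-2}|Q^\lambda_R|\fint_{Q^\lambda_R}|D\bfu - {\bf P}|^2$. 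Raising to the power $\frac{\gamma}{1+\gamma}$ and combining, and reorganizing powers, yields precisely the claimed $\epsilon_0^{-2\gamma}\lambda^{-2\gamma}(\fint_{Q^\lambda_R}|D\bfu - {\bf P}|^2)^\gamma$ prefactor (the exponent bookkeeping here, reconciling $\frac{\gamma}{1+\gamma}$ against the $(1+\gamma)$-th root, is the fiddly-but-routine part). The main obstacle I anticipate is the careful handling of the intrinsic geometry — one must verify that the frozen-coefficient ellipticity constants, the shifted-function comparisons, and the time-scaling $t \mapsto t/\phi''(\lambda)$ in $Q^\lambda_r$ are all mutually consistent so that no hidden $\phi''(\lambda)$ factor survives in the final ratio — together with ensuring the Caccioppoli testing is legitimate given the limited time-regularity of $\bfw$ (handled, as elsewhere in the paper, by Steklov averaging). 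Adding the good- and bad-set contributions and dividing through by $\phi''(\lambda)$ gives the stated estimate.
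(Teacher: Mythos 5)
Your overall structure matches the paper's proof: you rewrite the system for $\bfu$ with frozen coefficients $A_{ij}^{\alpha\beta}({\bf P})$ and a source $H_i^\alpha = A_{ij}^{\alpha\beta}({\bf P})(u^\beta_{x_j} - P^\beta_j) + [\bA({\bf P})]^\alpha_i - [\bA(D\bfu)]^\alpha_i$, test with $\bfu - \bfv$ to get $\fint_{Q^\lambda_{R/2}}|D\bfu - D\bfv|^2 \lesssim \phi''(\lambda)^{-2}\fint_{Q^\lambda_{R/2}}|{\bf H}|^2$, and then split by $|D\bfu - {\bf P}| \lessgtr \epsilon_0|{\bf P}|$. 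The good-set bound $|{\bf H}| \lesssim \epsilon_0^{\gamma_1}\phi''(|{\bf P}|)|D\bfu - {\bf P}|$ via \eqref{ass:holder}, the crude bad-set bound $|{\bf H}| \lesssim \phi''(|{\bf P}|)|D\bfu - {\bf P}|$, and the role of Lemma~\ref{Lem:high} are all exactly as in the paper.

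Where your argument does not close is the ``exponent bookkeeping'' you flag as routine. With Hölder at exponents $(1+\gamma,\tfrac{1+\gamma}{\gamma})$ on $|D\bfu - {\bf P}|^2\mathbf{1}_E$ followed by Chebyshev at power $2$ for $|E|$, the powers come out as
$$
\fint_E |D\bfu - {\bf P}|^2 \, \d z \lesssim \epsilon_0^{-\frac{2\gamma}{1+\gamma}}\lambda^{-\frac{2\gamma}{1+\gamma}}\left(\fint_{Q^\lambda_R}|D\bfu - {\bf P}|^2\, \d z\right)^{\frac{\gamma}{1+\gamma}}\fint_{Q^\lambda_R}|D\bfu - {\bf P}|^2\, \d z ,
$$
with exponent $\tfrac{\gamma}{1+\gamma}$, not $\gamma$. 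Since $\tfrac{\gamma}{1+\gamma}<\gamma$, this is strictly weaker than the stated estimate precisely in the regime that matters, namely $\fint_{Q^\lambda_R}|D\bfu-{\bf P}|^2 < \epsilon_0^2\lambda^2$, so it does not ``yield precisely'' the claimed prefactor. The paper avoids this by not using a Hölder/Chebyshev split at all: on the bad set, $|D\bfu - {\bf P}| > \epsilon_0|{\bf P}|$ gives the pointwise inequality $|D\bfu - {\bf P}| \le \epsilon_0^{-\gamma}|{\bf P}|^{-\gamma}|D\bfu - {\bf P}|^{1+\gamma}$, hence $|{\bf H}| \lesssim \epsilon_0^{-\gamma}\phi''(|{\bf P}|)|{\bf P}|^{-\gamma}|D\bfu - {\bf P}|^{1+\gamma}$; squaring, integrating, and applying Lemma~\ref{Lem:high} directly to $\fint|D\bfu - {\bf P}|^{2(1+\gamma)}$ produces the stated $\epsilon_0^{-2\gamma}\lambda^{-2\gamma}\left(\fint|D\bfu - {\bf P}|^2\right)^{1+\gamma}$. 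Equivalently, if you insist on the Hölder split you must use Chebyshev at power $2(1+\gamma)$ rather than power $2$, which feeds $\int|D\bfu - {\bf P}|^{2(1+\gamma)}$ into the $|E|$ bound as well and restores the correct exponents. Your weaker bound would still suffice for the downstream iteration (only some positive power of $\fint|D\bfu - {\bf P}|^2/\lambda^2$ is needed there), but as written it does not prove the lemma with the $\gamma$ from Lemma~\ref{Lem:high}.
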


\begin{proof} 
Observe the $\bfu$ satisfies 
$$
(u^\alpha)_t - ( A_{ij}^{\alpha\beta}({\bf P}) u^\beta_{x_j} )_{x_i} =   - \Big( A_{ij}^{\alpha\beta}({\bf P}) (u^\beta_{x_j}- P^\beta_{j})+[\bA({\bf P})]^{\alpha}_{i} - [\bA(D{\bf u})]^{\alpha}_{i} \Big)_{x_i}  =: -(H_i^\alpha)_{x_i}
$$
 for every $\alpha=1,2,\dots,N$. By taking $u^\alpha-v^\alpha$ as a test function in the weak form of the above two equations, we have 
$$\begin{aligned}
\frac{1}{2} \int_{B_{R/2}}|{\bf u}(x,0)-{\bf v}(x,0)|^2 \, \d x  + 
\int_{Q^\lambda_{R/2}} A_{ij}^{\alpha\beta}({\bf P}) (u^\beta_{x_j} - v^\beta_{x_j} ) & (u^\alpha_{x_i} - v^\alpha_{x_i} ) \,\d z\\ 
&= \int_{Q^\lambda_{R/2}} H_i^\alpha (u^\alpha_{x_i} - v^\alpha_{x_i} ) \, \d z\,,
\end{aligned}$$
and
$$
\fint_{Q^{\lambda}_{R/2}} \phi''(|{\bf P}|)|D{\bf u}-D{\bf v}|^2\, \d z \le \frac{c}{\phi''(|{\bf P}|)} \fint_{Q^{\lambda}_{R/2}} |{\bf H}|^2 \, \d z \,,
$$
where ${\bf H}=( H^\alpha_i)$.
We note that 
$$\begin{aligned}
H_i^\alpha  & =   A_{ij}^{\alpha\beta}({\bf P}) (u^\beta_{x_j}- P^\beta_{j})+[\bA({\bf P})]^{\alpha}_{i} - [\bA(D{\bf u})]_{i}^{\alpha} \\
& =   A_{ij}^{\alpha\beta}({\bf P}) (u^\beta_{x_j}- P^\beta_{j}) - \left(\int_0^1  A_{ij}^{\alpha\beta}(\tau D{\bf u} + (1-\tau){\bf P}) \, \d \tau\right)    (u^\beta_{x_j}- P_j^{\beta}) \,.
\end{aligned}$$
If $|D{\bf u}-{\bf P}|\le  \epsilon_0 |{\bf P}|$, by \eqref{ass:holder}
$$
|{\bf H}| \le c \left(\int_0^1 \left[\frac{|\tau (D{\bf u}-{\bf P})|}{|{\bf P}|}\right]^{\gamma_1}\, \d \tau\right) \phi''(|{\bf P}|) |D{\bf u}-{\bf P}| \le c \epsilon_0^{\gamma_1} \phi''(|{\bf P}|) |D{\bf u}-{\bf P}| \, .
$$  
If $|D{\bf u}-{\bf P}| >   \epsilon_0 |{\bf P}|$, then by \eqref{ellipticity} and \eqref{Dubound}
$$
|{\bf H}| \le c \phi''(|{\bf P}|) |D{\bf u}-{\bf P}|  \le c  \epsilon_0^{-\gamma} \phi''(|{\bf P}|)  |{\bf P}|^{-\gamma} |D{\bf u} -{\bf P}|^{1+\gamma}\,,
$$
where $\gamma$ is from Lemma~\ref{Lem:high}. Combining the above results we have 
$$
\fint_{Q^{\lambda}_{R/2}} |D{\bf u}-D{\bf v}|^2\, \d z \le c \left(\epsilon_0^{2\gamma_1} \fint_{Q^{\lambda}_{R/2}} |D{\bf u}-{\bf P}|^2 \, \d z + c \epsilon_0^{-2\gamma}\lambda^{-2\gamma}  \fint_{Q^{\lambda}_{R/2}} |D{\bf u}-{\bf P}|^{2(1+\gamma)} \, \d z\right)\, .
$$
Therefore, applying Lemma~\ref{Lem:high}, we obtain the desired estimate.
\end{proof}

Set $\bar {\bf v}(x,t) := {\bf v}\left(Rx, \frac{R^2}{\phi''(\lambda) }t\right)$, where ${\bf v}$ is a weak solution to \eqref{system_constant} with ${\bf P}\in\R^{Nn}$ satisfying $\frac{\lambda}{2}\le |{\bf P}| \le \lambda$. Then $\bar {\bf v}$ is a weak solution to
$$
\bar v^\alpha_t - \left( \frac{A_{ij}^{\alpha\beta}({\bf P})}{\phi''(\lambda)} \bar v^\beta_{x_j}\right)_{x_i} =0 \quad \text{in }\ Q_{1/2}, \quad \alpha=1,2,\dots,N.
$$
Since $ \bar L^{-1}|\bm\omega|^{2}\le \frac{A_{ij}^{\alpha\beta}({\bf P})}{\phi''(\lambda)}\omega_i^\alpha \omega_j^\beta\le \bar L |\bm\omega|^{2}$ for all $\bm\omega=(\omega^\alpha_i)\in \R^{Nn}$ and some $ \bar L \ge 1$, by regularity theory for linear  parabolic systems with constant coefficients (see, for instance, \cite[XI. Theorem 6.6]{Giusti_book} with its proof),  we have that for every $\rho\in(0,\frac12)$,
$$
\int_{Q_\rho} |D\bar {\bf v} - (D\bar {\bf v})_{Q_\rho} |^2 \, \d z  \le c \rho^2 \int_{Q_{1/2}} |D\bar {\bf v} - (D\bar {\bf v})_{Q_{1/2}} |^2 \, \d z \, ,
$$
which implies the following estimate for ${\bf v}$: for every $\rho\in (0,\frac{R}{2})$, 
\begin{equation}\label{decayestimate_constant}
\int_{Q^\lambda_\rho} |D {\bf v} - (D {\bf v})_{Q^\lambda_\rho} |^2 \, \d z  \le c \left(\frac{\rho}{R}\right)^{n+4} \int_{Q^\lambda_{R/2}} |D {\bf v} - (D {\bf v})_{Q^\lambda_{R/2}} |^2 \, \d z \, .
\end{equation}
The estimate \eqref{decayestimate_constant} is a key ingredient to obtain the following result, which provides an estimate for the decay of the mean oscillation of $D{\bf u}$ on each scale (cfr. \cite[Lemma~4.4]{Lie06}).

\begin{lemma}
Suppose
$$
|(D{\bf u})_{Q^\lambda_R}| \ge \frac{1}{2} \lambda
\quad\text{and}\quad 
\fint_{Q^\lambda_R} |D{\bf u} - (D{\bf u})_{Q^\lambda_R}|^2 \, \d z \le \epsilon \lambda^2
$$
for some $\epsilon\in(0,1)$.
Then for every $\theta, \epsilon_0\in (0,\frac{1}{2})$, we have 
$$
\int_{Q^\lambda_{\theta R}} |D{\bf u} - (D{\bf u})_{Q^\lambda_{\theta R}}|^2 \, \d z \le c_1 (\epsilon_0^{2\gamma_1} +  \epsilon_0^{-2\gamma}  \epsilon^{\gamma}+ \theta^{n+4})\int_{Q^\lambda_{R}} |D{\bf u} - (D{\bf u})_{Q^\lambda_R}|^2 \, \d z
$$
for some $c_1\ge 1$, where $\gamma>0$ is from Lemma~\ref{Lem:high}.
\end{lemma}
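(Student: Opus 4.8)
The strategy is the classical "freeze the coefficients, compare, iterate" scheme à la Campanato/Lieberman, using the ingredients already assembled: the higher integrability of Lemma~\ref{Lem:high}, the comparison estimate of Lemma~\ref{Lem:comparison}, and the decay estimate \eqref{decayestimate_constant} for linear systems with constant coefficients. The point is simply to chain these three together with the triangle inequality on an intermediate ball.

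\medskip

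First I would choose the frozen coefficient matrix. Set ${\bf P} := (D{\bf u})_{Q^\lambda_R}$; by hypothesis $\tfrac12\lambda \le |{\bf P}| \le \lambda$, where the upper bound follows from \eqref{Dubound} (which holds by the standing assumption of the subsection) and Jensen's inequality, so ${\bf P}$ is admissible for Lemmas~\ref{Lem:high} and \ref{Lem:comparison}. Let ${\bf v}$ be the weak solution to the linear constant-coefficient system \eqref{system_constant} with this ${\bf P}$. For $\theta\in(0,\tfrac12)$ I split
\begin{equation*}
\fint_{Q^\lambda_{\theta R}} |D{\bf u} - (D{\bf u})_{Q^\lambda_{\theta R}}|^2\,\d z \le c\,\fint_{Q^\lambda_{\theta R}} |D{\bf u} - (D{\bf v})_{Q^\lambda_{\theta R}}|^2\,\d z \le c\,\fint_{Q^\lambda_{\theta R}}|D{\bf u}-D{\bf v}|^2\,\d z + c\,\fint_{Q^\lambda_{\theta R}}|D{\bf v}-(D{\bf v})_{Q^\lambda_{\theta R}}|^2\,\d z,
\end{equation*}
using that the mean is the best $L^2$-approximating constant. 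The second term is controlled by \eqref{decayestimate_constant} with $\rho = \theta R$: it is bounded by $c\,\theta^{n+4}\fint_{Q^\lambda_{R/2}}|D{\bf v}-(D{\bf v})_{Q^\lambda_{R/2}}|^2\,\d z$, and since $D{\bf v}$ is the $L^2$-projection onto its mean, this is at most $c\,\theta^{n+4}\fint_{Q^\lambda_{R/2}}|D{\bf v}-{\bf P}|^2\,\d z \le c\,\theta^{n+4}\bigl(\fint_{Q^\lambda_{R/2}}|D{\bf u}-{\bf P}|^2\,\d z + \fint_{Q^\lambda_{R/2}}|D{\bf u}-D{\bf v}|^2\,\d z\bigr)$.

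\medskip

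Next I absorb every occurrence of the comparison error $\fint|D{\bf u}-D{\bf v}|^2$ (on $Q^\lambda_{R/2}$, and a fortiori on $Q^\lambda_{\theta R}$ after multiplying by a harmless volume factor $\theta^{-(n+2)}$ — here one must be slightly careful and instead bound $\fint_{Q^\lambda_{\theta R}}|D{\bf u}-D{\bf v}|^2 \le \theta^{-(n+2)}\fint_{Q^\lambda_{R/2}}|D{\bf u}-D{\bf v}|^2$, which is fine for fixed $\theta$ with the constant $c_1$ allowed to depend on $\theta$ — or, cleaner, keep $\theta$ free and only later fix it) using Lemma~\ref{Lem:comparison}:
\begin{equation*}
\fint_{Q^{\lambda}_{R/2}} |D{\bf u}-D{\bf v}|^2\, \d z \le  c \Bigl[\epsilon_0^{2\gamma_1} +  \epsilon_0^{-2\gamma}\lambda^{-2\gamma}\Bigl(\fint_{Q^{\lambda}_{R}} |D{\bf u}-{\bf P}|^{2}\,\d z\Bigr)^{\gamma}\Bigr]\fint_{Q^{\lambda}_{R}}|D{\bf u}-{\bf P}|^2\,\d z.
\end{equation*}
Finally I use the hypothesis $\fint_{Q^\lambda_R}|D{\bf u}-{\bf P}|^2\,\d z \le \epsilon\lambda^2$ to replace $\lambda^{-2\gamma}\bigl(\fint_{Q^\lambda_R}|D{\bf u}-{\bf P}|^2\bigr)^\gamma$ by $\epsilon^\gamma$, and note ${\bf P}=(D{\bf u})_{Q^\lambda_R}$ so $\fint_{Q^\lambda_R}|D{\bf u}-{\bf P}|^2 = \fint_{Q^\lambda_R}|D{\bf u}-(D{\bf u})_{Q^\lambda_R}|^2$. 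Collecting all terms gives the bound $c_1(\epsilon_0^{2\gamma_1}+\epsilon_0^{-2\gamma}\epsilon^\gamma+\theta^{n+4})$ times $\fint_{Q^\lambda_R}|D{\bf u}-(D{\bf u})_{Q^\lambda_R}|^2\,\d z$, as claimed.

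\medskip

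The only genuine subtlety — and the place I would be most careful — is the bookkeeping of volume factors when passing from averages on $Q^\lambda_{R/2}$ to averages on $Q^\lambda_{\theta R}$, since the constant $c_1$ in the statement is permitted to be independent of $\theta$ only up to the explicit $\theta^{n+4}$ term; in fact the comparison term naturally carries a $\theta^{-(n+2)}$ which is then beaten by choosing $\epsilon_0$ and $\epsilon$ small at a later stage (this is exactly how the iteration in the subsequent lemmas will be set up). As long as one keeps $\theta$, $\epsilon_0$ as free small parameters and does not try to optimize prematurely, the three cited estimates slot together mechanically and nothing deeper than the triangle inequality and Jensen is needed.
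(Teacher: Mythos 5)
Your approach is the paper's: take ${\bf P}=(D{\bf u})_{Q^\lambda_R}$, split off the constant-coefficient comparison map ${\bf v}$, feed the comparison error into Lemma~\ref{Lem:comparison} and the oscillation of $D{\bf v}$ into \eqref{decayestimate_constant}, and glue with the triangle inequality. The chain is correct in spirit, but there is a real slip in the bookkeeping, and it stems from working with averages $\fint$ while the statement and the paper's proof are phrased in plain integrals $\int$; moreover, your proposed fix for the resulting $\theta^{-(n+2)}$ does not work.

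Concretely: in average form \eqref{decayestimate_constant} reads $\fint_{Q^\lambda_{\theta R}}|D{\bf v}-(D{\bf v})_{Q^\lambda_{\theta R}}|^2 \le c\,\theta^{2}\,\fint_{Q^\lambda_{R/2}}|D{\bf v}-(D{\bf v})_{Q^\lambda_{R/2}}|^2$, \emph{not} $c\,\theta^{n+4}$, since dividing the integral estimate by $|Q^\lambda_{\theta R}|$ introduces a factor $|Q^\lambda_{R/2}|/|Q^\lambda_{\theta R}|\sim\theta^{-(n+2)}$. More importantly, the $\theta^{-(n+2)}$ you generate when enlarging $\fint_{Q^\lambda_{\theta R}}|D{\bf u}-D{\bf v}|^2$ to $\fint_{Q^\lambda_{R/2}}|D{\bf u}-D{\bf v}|^2$ cannot be absorbed by letting $c_1$ depend on $\theta$, nor is it ``beaten later'' by choosing $\epsilon_0,\epsilon$ small: in the iteration of Lemma~\ref{Lem:prop1lem1}, $\theta$ is selected as a function of $c_1$ (one takes $\theta\le(4c_1)^{-2}$), so $c_1$ must be $\theta$-independent for that argument to be well posed.

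None of this arises if you stay in integral form, which is what the paper does. Since $Q^\lambda_{\theta R}\subset Q^\lambda_{R/2}$ for $\theta<\tfrac12$, the comparison error satisfies $\int_{Q^\lambda_{\theta R}}|D{\bf u}-D{\bf v}|^2\le\int_{Q^\lambda_{R/2}}|D{\bf u}-D{\bf v}|^2$ with no loss at all; \eqref{decayestimate_constant} supplies the $\theta^{n+4}$ directly; and the $\fint$ in Lemma~\ref{Lem:comparison} converts into an $\int$ over $Q^\lambda_R$ at a $\theta$-free cost because $|Q^\lambda_{R/2}|\sim|Q^\lambda_R|$. Rewritten this way, your chain of inequalities becomes the paper's proof verbatim, with $c_1$ visibly free of $\theta$.
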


\begin{proof}
By   Lemma~\ref{Lem:comparison} with ${\bf P}=(D{\bf u})_{Q^\lambda_R}$ and \eqref{decayestimate_constant}, we have that for every $\theta\in(0,\frac12)$,
$$\begin{aligned}
\int_{Q^\lambda_{\theta R}} & |D{\bf u} - (D{\bf u})_{Q^\lambda_{\theta R}}|^2 \, \d z
 \le 2 \int_{Q^\lambda_{R}} |D{\bf u} -  D\bfv|^2 \, \d z+ 2\int_{Q^\lambda_{\theta R}} |D\bfv - (D\bfv)_{Q^\lambda_{\theta R}}|^2 \, \d z \\
& \le c \left[\epsilon_0^{2\gamma_1} +  \epsilon_0^{-2\gamma}  \epsilon^{\gamma}\right]  \int_{Q^{\lambda}_{R}} |D{\bf u}-(D{\bf u})_{Q^\lambda_{R}}|^2 \, \d z  + c \theta^{n+4}\int_{Q^\lambda_{R/2}} |D\bfv - (D{\bf u})_{Q^\lambda_{R}}|^2 \, \d z\,.  %\qedhere
\end{aligned}$$
This concludes the proof.
\end{proof}

The counterpart of \cite[Lemma~4.5]{Lie06} is the following result.

\begin{lemma}\label{Lem:prop1lem1}
There exist small constants $\theta,\epsilon\in(0,1)$ such if
$$
|(D{\bf u})_{Q^\lambda_R}| \ge \frac{3}{4} \lambda
\quad\text{and}\quad
\fint_{Q^\lambda_R} |D{\bf u} - (D{\bf u})_{Q^\lambda_R}|^2 \, \d z \le \epsilon \lambda^2,
$$
then for every $m\in \mathbb N$,
$$
|(D{\bf u})_{Q^\lambda_{R_m}}| \ge \left(\frac12 +\frac{1}{2^{2+m}}\right) \lambda, \quad \text{where }\ R_m:= \theta^m R,
$$ 
and
$$\begin{aligned}
\fint_{Q^\lambda_{R_{m}}} |D{\bf u} - (D{\bf u})_{Q^\lambda_{R_{m}}}|^2 \, \d z
& \le  \theta^{\frac32}\fint_{Q^\lambda_{R_{m-1}}} |D{\bf u} - (D{\bf u})_{Q^\lambda_{R_{m-1}}}|^2 \, \d z  \\
& \le \cdots \le \theta^{\frac 32 m}\fint_{Q^\lambda_{R}} |D{\bf u} - (D{\bf u})_{Q^\lambda_{R}}|^2 \, \d z.
\end{aligned}$$
In particular, we have that for every $r\in(0,R]$,
$$
\fint_{Q^\lambda_{r}} |D{\bf u} - (D{\bf u})_{Q^\lambda_{r}}|^2 \, \d z
 \le  \theta^{-n-\frac{7}{2}}\left(\frac{r}{R}\right)^{\frac32}\fint_{Q^\lambda_{R}} |D{\bf u} - (D{\bf u})_{Q^\lambda_{R}}|^2 \, \d z\,.$$
\end{lemma}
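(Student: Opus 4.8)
The plan is to run a standard iteration-on-dyadic-scales argument, of Campanato/De~Giorgi type, bootstrapping the single-step decay estimate from the previous lemma. The two hypotheses to be propagated down the scales $R_m=\theta^m R$ are: $(\mathrm{a}_m)$ the "nondegeneracy" $|(D\bfu)_{Q^\lambda_{R_m}}|\ge(\tfrac12+2^{-2-m})\lambda$, and $(\mathrm{b}_m)$ the excess bound $E_m:=\fint_{Q^\lambda_{R_m}}|D\bfu-(D\bfu)_{Q^\lambda_{R_m}}|^2\,\d z\le\theta^{3m/2}E_0$ with $E_0\le\epsilon\lambda^2$. I would first fix the parameters in the right order: given the constant $c_1$ from the preceding lemma, choose $\epsilon_0\in(0,\tfrac12)$ so small that $c_1\epsilon_0^{2\gamma_1}\le\tfrac13\theta^{3/2}$, then choose $\theta\in(0,\tfrac12)$ so small that $c_1\theta^{n+4}\le\tfrac13\theta^{3/2}$ (possible since $n+4>\tfrac32$), and finally choose $\epsilon\in(0,1)$ so small that $c_1\epsilon_0^{-2\gamma}\epsilon^\gamma\le\tfrac13\theta^{3/2}$; this forces the one-step constant $c_1(\epsilon_0^{2\gamma_1}+\epsilon_0^{-2\gamma}\epsilon^\gamma+\theta^{n+4})$ to be $\le\theta^{3/2}$.

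The induction itself: assume $(\mathrm{a}_{m-1})$ and $(\mathrm{b}_{m-1})$. Since $|(D\bfu)_{Q^\lambda_{R_{m-1}}}|\ge(\tfrac12+2^{-1-m})\lambda\ge\tfrac12\lambda$ and $E_{m-1}\le\theta^{3(m-1)/2}E_0\le\epsilon\lambda^2$, the hypotheses of the preceding lemma are met on the cylinder $Q^\lambda_{R_{m-1}}$, and with the chosen parameters it yields $E_m\le\theta^{3/2}E_{m-1}$, i.e. $(\mathrm{b}_m)$. To get $(\mathrm{a}_m)$ I would estimate the change of averages across consecutive scales: by Jensen/Cauchy--Schwarz and the nesting $Q^\lambda_{R_m}\subset Q^\lambda_{R_{m-1}}$ with $|Q^\lambda_{R_{m-1}}|=\theta^{-(n+2)}|Q^\lambda_{R_m}|$,
$$
|(D\bfu)_{Q^\lambda_{R_m}}-(D\bfu)_{Q^\lambda_{R_{m-1}}}|\le\Big(\fint_{Q^\lambda_{R_m}}|D\bfu-(D\bfu)_{Q^\lambda_{R_{m-1}}}|^2\,\d z\Big)^{1/2}\le\theta^{-\frac{n+2}{2}}E_{m-1}^{1/2}\le\theta^{-\frac{n+2}{2}}\theta^{\frac34(m-1)}\epsilon^{1/2}\lambda.
$$
Summing this geometric-type series over the scales (the ratio $\theta^{3/4}$ is $<1$), the total displacement of the average from that at scale $R$ is bounded by $C(\theta,n)\epsilon^{1/2}\lambda$; choosing $\epsilon$ even smaller if necessary so that $C(\theta,n)\epsilon^{1/2}\le\tfrac14$, and starting from $|(D\bfu)_{Q^\lambda_R}|\ge\tfrac34\lambda$, one keeps $|(D\bfu)_{Q^\lambda_{R_m}}|\ge\tfrac34\lambda-\tfrac14\lambda=\tfrac12\lambda$, and in fact the sharper telescoping with the $2^{-2-m}$ margin (bound the tail from step $m$ onward by $\sum_{k\ge m}\theta^{-(n+2)/2}\theta^{3(k-1)/4}\epsilon^{1/2}\lambda\le 2^{-2-m}\lambda$ for $\epsilon$ small) delivers exactly $(\mathrm{a}_m)$.

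For the final displayed inequality, given $r\in(0,R]$ pick $m\ge0$ with $R_{m+1}<r\le R_m$, so $\theta^{m+1}<r/R$, hence $\theta^{3m/2}\le\theta^{-3/2}(r/R)^{3/2}$. Using $Q^\lambda_r\subset Q^\lambda_{R_m}$ with volume ratio $(R_m/r)^{n+2}\le\theta^{-(n+2)}$ and the elementary fact $\fint_{Q^\lambda_r}|D\bfu-(D\bfu)_{Q^\lambda_r}|^2\le\fint_{Q^\lambda_r}|D\bfu-(D\bfu)_{Q^\lambda_{R_m}}|^2\le\theta^{-(n+2)}E_m$, combined with $(\mathrm{b}_m)$, gives
$$
\fint_{Q^\lambda_r}|D\bfu-(D\bfu)_{Q^\lambda_r}|^2\,\d z\le\theta^{-(n+2)}\theta^{3m/2}E_0\le\theta^{-n-\frac72}\Big(\frac rR\Big)^{\frac32}\fint_{Q^\lambda_R}|D\bfu-(D\bfu)_{Q^\lambda_R}|^2\,\d z,
$$
as claimed. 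The main obstacle I anticipate is the bookkeeping in propagating $(\mathrm{a}_m)$ with the precise margin $2^{-2-m}$ rather than a crude lower bound $\tfrac12\lambda$: one must make sure the geometric series of average-increments from scale $m$ onward is controlled by exactly $2^{-2-m}\lambda$, which pins down how small $\epsilon$ must be in terms of $\theta$ and $n$; everything else is routine once the parameters $\epsilon_0,\theta,\epsilon$ are frozen in the correct dependence order.
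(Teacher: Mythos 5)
Your overall strategy --- iterate the one-step excess-decay estimate from the preceding lemma on the dyadic scales $R_m=\theta^m R$, and track the averages by a Cauchy--Schwarz estimate on the change of mean --- is exactly the paper's approach. Your final conversion from the $R_m$-scales to an arbitrary radius $r$ is also correct and matches the paper. Two small points deserve correction.

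First, the order in which you fix the parameters is circular: you choose $\epsilon_0$ ``so small that $c_1\epsilon_0^{2\gamma_1}\le\tfrac13\theta^{3/2}$'' \emph{before} you have fixed $\theta$, and then choose $\theta$ afterwards. The correct order is: fix $\theta$ first (this is possible since $n+4>\tfrac32$, so $c_1\theta^{n+4}\le\tfrac14\theta^{3/2}$ for $\theta$ small; the paper also imposes $\theta\le 2^{-4/3}$ so that $\theta^{3/4}\le\tfrac12$, which is needed later), then choose $\epsilon_0$ in terms of $\theta$, then $\epsilon$ in terms of $\theta$ and $\epsilon_0$.

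Second, the way you propose to propagate $(\mathrm{a}_m)$ is not quite right. You suggest bounding the \emph{tail} $\sum_{k\ge m}\theta^{-(n+2)/2}\theta^{3(k-1)/4}\epsilon^{1/2}\lambda\le 2^{-2-m}\lambda$. But the tail from scale $m$ onward is not what controls $|(D\bfu)_{Q^\lambda_{R_m}}|$; what you need to control is the \emph{partial sum up to $m$}, namely $\sum_{k=1}^m |(D\bfu)_{Q^\lambda_{R_k}}-(D\bfu)_{Q^\lambda_{R_{k-1}}}|\le(\tfrac14-\tfrac{1}{2^{2+m}})\lambda$, and an upper bound on the tail gives no information about the partial sum. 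The clean fix --- which is what the paper does --- is to bound each single increment separately: using $(\mathrm{b}_{m-1})$, Cauchy--Schwarz, and $\theta^{3/4}\cdot 2\le 1$, one gets $|(D\bfu)_{Q^\lambda_{R_m}}-(D\bfu)_{Q^\lambda_{R_{m-1}}}|\le\theta^{-(n+2)/2}\theta^{3(m-1)/4}\epsilon^{1/2}\lambda\le 2^{-2-m}\lambda$ provided $\epsilon\le c\,\theta^{n+2}$; then the inductive hypothesis $(\mathrm{a}_{m-1})$ gives $|(D\bfu)_{Q^\lambda_{R_m}}|\ge(\tfrac12+\tfrac{1}{2^{1+m}})\lambda-\tfrac{1}{2^{2+m}}\lambda=(\tfrac12+\tfrac{1}{2^{2+m}})\lambda$. (The paper uses the slightly cruder $L^1$-form of the change-of-mean estimate, yielding $\theta^{-(n+2)}$ in place of your $\theta^{-(n+2)/2}$, which only changes the required smallness of $\epsilon$; both versions are fine.) With these two adjustments your proof is complete and is essentially the same as the paper's.
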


\begin{proof} Choose $\theta$, $\epsilon$ and $\epsilon_0$ small so that
$$
\theta \le \min\left\{(4c_1)^{-2},2^{-4/3}\right\},\ \ \epsilon_0\le  \left(\frac{\theta^{n+7/2}}{4c_1}\right)^{\frac{1}{2\gamma_1}},\ \ 
 \epsilon \le \min\{\epsilon_0^{2} [\theta^{n+7/2}/ (4c_1)]^{1/\eta}, 2^{-6}\theta^{2n+4}\} ,
$$
where the constant $c_1\ge 1$ is from the preceding lemma.
We prove the lemma by induction. We first obtain the desired estimates when $m=1$. By the preceding lemma, we have 
$$\begin{aligned}
\fint_{Q^\lambda_{\theta R}} |D{\bf u} - (D{\bf u})_{Q^\lambda_{\theta R}}|^2 \, \d z 
& \le  \theta^{3/2} \fint_{Q^\lambda_{R}} |D{\bf u} - (D{\bf u})_{Q^\lambda_R}|^2 \, \d z \,,
%\le \theta \epsilon \lambda^2.
\end{aligned}$$
which is the desired second estimate when $m=1$.  Moreover,
$$
|(D{\bf u})_{Q^\lambda_{\theta R}} - (D{\bf u})_{Q^\lambda_{R}}|  \le \theta^{-n-2} \fint_{Q^\lambda_{R}}|D{\bf u} - (D{\bf u})_{Q^\lambda_{R}}| \, \d z  \le \theta^{-n-2}\epsilon^{1/2}\lambda \le \frac{1}{8}\lambda \,,
$$
hence
$$
|(D{\bf u})_{Q^\lambda_{\theta R}}| \ge | (D{\bf u})_{Q^\lambda_{R}}| -  \frac{1}{8}  \lambda  \ge  \left(\frac{1}{2}+\frac{1}{2^3}  \right)\lambda. 
$$
This is the first estimate when $m=1$.

Next, we suppose the estimates hold for all $m\le  m_0$. Then from the first and the second inequalities when $m=m_0$ we see that 
$$
|(D{\bf u})_{Q^\lambda_{R_{m_0}}}| \ge  \left(\frac{1}{2}+\frac{1}{2^{2+m_0}}  \right)\lambda > \frac{1}{2} \lambda
\ \ \text{and} \ \ 
\fint_{Q^\lambda_{R_{m_0}}} |D{\bf u} - (D{\bf u})_{Q^\lambda_{R_{m_0}}}|^2 \, \d z \le \epsilon \theta^{\frac{3}{2}m_0} \lambda^2 <  \epsilon  \lambda^2\, .
$$
Therefore, applying the preceding lemma with $R$ replaced by $R_{m_0}$ and the second inequality when $m=m_0$, we see that 
$$\begin{aligned}
\fint_{Q^\lambda_{R_{m_0+1}}} |D{\bf u} - (D{\bf u})_{Q^\lambda_{R_{m_0+1}}}|^2 \, \d z 
&\le  \theta^{3/2} \fint_{Q^\lambda_{R_{m_0}}} |D{\bf u} - (D{\bf u})_{Q^\lambda_{R_{m_0}}}|^2 \, \d z  \le \theta^{\frac{3}{2}(m_0+1)} \epsilon \lambda^2.
\end{aligned}$$
This is the second estimate when $m=m_0+1$. Moreover, by the first estimate when $m=m_0$ and the upper bounds of $\theta$ and $\epsilon$, 
$$\begin{aligned}
|(D{\bf u})_{Q^\lambda_{R_{m_0+1}}} - (D{\bf u})_{Q^\lambda_{R_{m_0}}}| \le \frac{1}{\theta^{n+2}} \fint_{Q^\lambda_{R_{m_0}}}|D{\bf u} - (D{\bf u})_{Q^\lambda_{R_{m_0}}}| \, \d z  \le \frac{\theta^{\frac{3}{4}m_0}\epsilon^{\frac 12}}{\theta^{n+2}}\lambda \le \frac{1}{2^{3+m_0}}\lambda\,.
\end{aligned}$$
  Therefore we have
$$
|(D{\bf u})_{Q^\lambda_{R_{m_0+1}}}| \ge | (D{\bf u})_{Q^\lambda_{R_{m_0}}}| -  \frac{1}{2^{3+m_0}}\lambda  \ge \left(\frac{1}{2}+\frac{1}{2^{3+m_0}}  \right)\lambda . 
$$
This is the first estimate when $m=m_0+1$, and the proof is concluded. 
\end{proof}

We conclude the list of the auxiliary results needed to prove Proposition~\ref{prop:nondegenerate} with the following one, which corresponds to \cite[Lemma~4.6]{Lie06}. For a $\sigma>0$ small enough such that \eqref{prop:nondegenerate_ass} holds, we have that the average of $D{\bf u}$ is comparable with $\lambda$ and that $D{\bf u}$ remains close to its average.

\begin{lemma}\label{Lem:prop1lem2}
For $\epsilon >0$, there exists $\sigma=\sigma(\epsilon)\in (0,2^{-(n+2)})$ such that if $\sigma$ satisfies   \eqref{prop:nondegenerate_ass}, then 
$$
\frac{7}{8} \lambda \le |(D{\bf u})_{Q^\lambda_{R/2}}| \le \lambda
$$
and
$$
\fint_{Q^\lambda_{R/2}} |D{\bf u}-(D{\bf u})_{Q^\lambda_{R/2}}|^2 \le \epsilon \lambda^2.
$$
\end{lemma}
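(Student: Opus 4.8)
The plan is to prove Lemma~\ref{Lem:prop1lem2} by a contradiction/measure-theoretic argument that exploits the pointwise bound \eqref{Dubound}, namely $|D{\bf u}|\le\lambda$ on $Q^\lambda_R$. The point is that \eqref{prop:nondegenerate_ass} forces the superlevel set $\{|D{\bf u}|>(1-\sigma)\lambda\}$ to fill all but a $\sigma$-fraction of $Q^\lambda_{R}$, and hence also of $Q^\lambda_{R/2}$ up to a harmless factor $2^{n+1}$ coming from $|Q^\lambda_{R}|/|Q^\lambda_{R/2}|=2^{n+1}$. So on at least a $(1-2^{n+1}\sigma)$-fraction of $Q^\lambda_{R/2}$ one has $(1-\sigma)\lambda<|D{\bf u}|\le\lambda$.

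First I would estimate the average directly: since $0\le|D{\bf u}|\le\lambda$ everywhere on $Q^\lambda_{R/2}$,
\begin{equation*}
\fint_{Q^\lambda_{R/2}} |D{\bf u}|\,\d z \ge (1-\sigma)\lambda\cdot\frac{|\{|D{\bf u}|>(1-\sigma)\lambda\}\cap Q^\lambda_{R/2}|}{|Q^\lambda_{R/2}|} \ge (1-\sigma)(1-2^{n+1}\sigma)\lambda,
\end{equation*}
and by Jensen $|(D{\bf u})_{Q^\lambda_{R/2}}|\le\fint|D{\bf u}|\le\lambda$; choosing $\sigma=\sigma(\epsilon)$ small enough makes $(1-\sigma)(1-2^{n+1}\sigma)\ge 7/8$, giving the first claim. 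For the oscillation estimate I would split $Q^\lambda_{R/2}$ into the ``good'' set $G:=\{(1-\sigma)\lambda<|D{\bf u}|\le\lambda\}$ and its complement $B$, with $|B|\le 2^{n+1}\sigma|Q^\lambda_{R/2}|$. On $G$ we have $|D{\bf u}-(D{\bf u})_{Q^\lambda_{R/2}}|\le |D{\bf u}|+|(D{\bf u})_{Q^\lambda_{R/2}}|\le 2\lambda$, but more usefully, writing $\mathbf m:=(D{\bf u})_{Q^\lambda_{R/2}}$ with $|\mathbf m|\ge \tfrac78\lambda$ from the first part, one estimates $\fint_{Q^\lambda_{R/2}}|D{\bf u}-\mathbf m|^2$ by $\fint |D{\bf u}|^2 - |\mathbf m|^2$ (the variance identity), and $\fint|D{\bf u}|^2\le\lambda^2$ while $|\mathbf m|^2\ge (1-\sigma)^2(1-2^{n+1}\sigma)^2\lambda^2$, so
\begin{equation*}
\fint_{Q^\lambda_{R/2}}|D{\bf u}-\mathbf m|^2\,\d z \le \bigl(1-(1-\sigma)^2(1-2^{n+1}\sigma)^2\bigr)\lambda^2,
\end{equation*}
and the prefactor tends to $0$ as $\sigma\to0$, so choosing $\sigma$ small (depending on $\epsilon$) makes it $\le\epsilon$, which is exactly the second claim. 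One also checks $\sigma<2^{-(n+2)}$ by shrinking further if necessary.

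The main obstacle is essentially bookkeeping rather than analytic depth: one must be careful that the measure bound \eqref{prop:nondegenerate_ass} on the larger cylinder $Q^\lambda_R$ transfers to $Q^\lambda_{R/2}$ with the correct constant $2^{n+1}$ (since $Q^\lambda_{R/2}\subset Q^\lambda_R$ and the time-scaling factor $1/\phi''(\lambda)$ is common to both, the ratio of measures is indeed $2^n\cdot 2=2^{n+1}$), and that the choice $\sigma=\sigma(\epsilon)$ can be made uniformly, i.e.\ depending only on $\epsilon$ and $n$. Note that no PDE information beyond \eqref{Dubound} is needed here — this lemma is purely elementary — whereas the regularity content sits in the preceding lemmas (Lemma~\ref{Lem:high}, Lemma~\ref{Lem:comparison}, \eqref{decayestimate_constant}, Lemma~\ref{Lem:prop1lem1}), which combine with this lemma to yield Proposition~\ref{prop:nondegenerate}.
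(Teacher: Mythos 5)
Your proposal has a genuine gap, and the source of it is the claim that the lemma ``is purely elementary'' and ``no PDE information beyond \eqref{Dubound} is needed.'' That is false, and the error is already visible in the derivation of the first inequality. You correctly show that
\[
\fint_{Q^\lambda_{R/2}} |D\bfu|\, \d z \ge (1-\sigma)(1-2^{n+1}\sigma)\lambda ,
\]
but then treat this as a lower bound for $|(D\bfu)_{Q^\lambda_{R/2}}|$. Jensen's inequality only gives $|(D\bfu)_{Q^\lambda_{R/2}}| \le \fint |D\bfu|$, i.e.\ the \emph{wrong} direction; a lower bound on $\fint |D\bfu|$ says nothing about $\bigl|\fint D\bfu\bigr|$. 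Concretely, a tensor field with $|D\bfu|=\lambda$ a.e.\ but oscillating direction (say $D\bfu = \pm\lambda\, {\bf e}$ on two sets of equal measure) satisfies \eqref{Dubound} and \eqref{prop:nondegenerate_ass} trivially for every $\sigma$, yet has $(D\bfu)_{Q^\lambda_{R/2}}=0$. Ruling out such directional oscillation is exactly the non-trivial content of the lemma, and it requires the equation.

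The circularity then propagates into the second inequality. Your use of the variance identity
\[
\fint_{Q^\lambda_{R/2}}|D\bfu-\mathbf m|^2 = \fint |D\bfu|^2 - |\mathbf m|^2
\]
is correct, but to make the right-hand side small you need $|\mathbf m|$ close to $\lambda$ from below — which is precisely the first claim you have not established. So the two claims cannot be proved in the order (and by the means) you propose.

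The paper's argument goes the opposite way and uses the PDE essentially. It first proves the \emph{oscillation} bound $\fint_{Q^\lambda_{R/2}}|D\bfu-(D\bfu)_{Q^\lambda_{R/2}}|^2 \le c\,\sigma^{(p-1)/(n+1)}\lambda^2$ by: (1) applying the Caccioppoli estimate of Lemma~\ref{Lem:Caccio1} with $f(t)=(t-(1-2\theta)\lambda)_+$ to control $\int_{\{|D\bfu|>(1-\theta)\lambda\}}|D^2\bfu|^2$; (2) introducing the truncated field $\mathbf h=h_0(|D\bfu|)D\bfu$ and bounding $\iint|\mathbf h-\mathbf W_h(t)|^2$ via a weighted Poincar\'e inequality, splitting the gradient integral between the sets $\Sigma_0$ and $\Sigma$ with the measure condition \eqref{prop:nondegenerate_ass}; (3) controlling the time-oscillation of the spatial averages $\mathbf W(t)$ by testing the system with second derivatives of a cutoff (cf.\ \eqref{estimateW}), which is where \eqref{PhiPQ} and the equation enter. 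Only after the oscillation bound is proved does the paper deduce the lower bound
\[
|(D\bfu)_{Q^\lambda_{R/2}}| \ge \fint_{Q^\lambda_{R/2}}|D\bfu|\,\d z - \left(\fint_{Q^\lambda_{R/2}}|D\bfu-(D\bfu)_{Q^\lambda_{R/2}}|^2\,\d z\right)^{1/2} \ge \Bigl[(1-\sigma)(1-2^{-(n+2)}) - c\,\sigma^{\frac{p-1}{2(n+1)}}\Bigr]\lambda ,
\]
which yields $\frac{7}{8}\lambda$ upon choosing $\sigma$ small. Your lower bound on $\fint|D\bfu|$ is correct and is indeed the first ingredient the paper uses here, but by itself it cannot close the argument.
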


\begin{proof}

Let $f(s):=(s- (1-2\theta)\lambda)_+$ with $\theta\in (0,1/4)$ to be determined and $F(s):=\int_0^s \tau f(\tau)\,\d \tau$. Note that  we have that
\begin{itemize}
\item[(i)] when $f(|D{\bf u}|)>0$, $\frac{1}{2}\lambda< (1-2\theta)\lambda \le |D{\bf u}| \le \lambda$ (hence $2^{-q}\phi(\lambda) \le \phi(|D{\bf u}|)\le \phi(\lambda)$) and $f'(|D{\bf u}|)=1$;
\item[(ii)] $0\le f(|D{\bf u}|)\le 2\theta\lambda$ and $0\le F(|D{\bf u}|) \le  4\theta^2\lambda^3$. 
\end{itemize}
 Let  $\xi\in C^\infty_0(B_R)$ and $\eta\in C^\infty(\R)$  be cut-off functions such that $\xi\equiv 1$ in $B_{R/2}$, $|D\xi|\le \frac{4}{R}$, $\eta\equiv  0$ in $(-\infty,-\frac{R^2}{\phi''(\lambda)})$, $\eta\equiv 1$ in $(-\frac{R^2}{4\phi''(\lambda)},\infty)$ and $0\le \eta_t \le \frac{8\phi''(\lambda)}{R^2}$.
Then, the Caccioppoli  estimate \eqref{Caccio estimate} with $\rho=R$, $\tau_1=-\frac{R^2}{\phi''(\lambda)}$ and $\tau_2=-\frac{R^2}{4\phi''(\lambda)}$ yields
$$\begin{aligned}
 \int_{A((1-\theta)\lambda,R/2)}  \phi''(\lambda)\theta \lambda |D^2\bfu|^2\, \d z & \le  c \int_{Q^\lambda_{R/2}}  \frac{\phi'(|D{\bf u}|)}{|D{\bf u}|}f(|D{\bf u}|)|D^2\bfu|^2 \, \d z\\
 & \le  cR^{-2} \int_{Q^\lambda_{R}}  \left[\phi'(|D{\bf u}|)\frac{f(|D{\bf u}|)^2}{f'(|D{\bf u}|)} +\phi''(|D{\bf u}|) F(|D{\bf u}|)\right] \, \d z\\
 &  \le  cR^{-2} \big|\{|D{\bf u}| >(1-2\theta)\lambda\}\cap Q^\lambda_{R}\big|  \theta^2 \lambda \phi(\lambda)\,,
\end{aligned}$$
where $A(k,r):= \{z\in Q^\lambda_r: |D{\bf u}(z)|>k\}$. Note that in the second inequality, we used the fact that $\phi''(\lambda)\le 2^{q+2}\phi''(|D\bfu|)$ when $F(|D\bfu|)>0$.  Hence we have
\begin{equation}\label{lem5.6:pf1}
 \int_{A((1-\theta)\lambda,R/2)}   |D^2\bfu|^2\, \d z   \le  c \, \theta R^{-2} \lambda^2\big|\{|D{\bf u}| > (1-2\theta)\lambda\}\cap Q^\lambda_{R}\big|   \,.
\end{equation}

Let $h_0\in C^1(\R)$ be increasing such that $h_0(t)=0$ for $t\le 3\lambda/4$,  $h_0(t)=1$ for $t>7\lambda/8$, and $h_0'\le 16 \lambda^{-1}$, and set ${\bf h}(z)=h_0(|D{\bf u}(z)|)D{\bf u}(z)$.  Then we have $|D{\bf h}|^2\le c |D^2\bfu|^2$. 
Let $\xi_0 \in C^\infty_0(B_{R/2})$ be a cut-off function such that $0\le \xi_0  \le 1$ with $\xi_0\equiv 1$ in $B_{R/4}$ and $|D^2\xi_0|+|D\xi_0|^2 \le c /R^2$, and $\xi = \|\xi_0\|_{L^1(B_{R/2})}^{-1}\xi_0\approx \frac{\xi_0}{R^n}$. Set
$$
{\bf W}(t) := \int_{B_R} D{\bf u} (x,t) \xi(x)\, \d x  \, ,
\quad
{\bf W}_h(t) := \int_{B_R} {\bf h} (x,t)\xi(x)\, \d x 
$$
 and
 $$
{\bf W}_0 = \frac{\phi''(\lambda)}{R^2} \int^0_{-\frac{R^2}{\phi''(\lambda)}}\int_{B_R} D{\bf u} (x,t) \xi(x) \, \d x \, \d t = \fint^{0}_{-\frac{R^2}{\phi''(\lambda)}} {\bf W}(t) \, \d t \, .
$$
Note that by Lemma~\ref{lem:wpoincare} with $f={\bf h}(\cdot,t)$ and $p=\frac{2n}{n+1}$, we have 
\begin{equation} \begin{aligned}
\iint_{Q^\lambda_{R/2}} |{\bf h}(x,t)-{\bf W}_h(t)|^2 \, \d x  \, \d t  &\le c \lambda^{\frac{2}{n+1}} \iint_{Q^\lambda_{R/2}} |{\bf h}(x,t)-{\bf W}_h(t)|^{\frac{2n}{n+1}} \, \d x \, \d t \\
&\le c (|B_R|\lambda)^{\frac{2}{n+1}} \int_{Q^\lambda_{R/2}} |D{\bf h}|^{\frac{2n}{n+1}} \, \d z \,.
\label{hestimate}
\end{aligned}\end{equation}
Set 
$$
\Sigma_0:= A(3\lambda/4, R/2)\setminus A((1-\sigma)\lambda, R/2)  
\quad\text{and}\quad
\Sigma:= A((1-\sigma)\lambda, R/2)   \,.
$$
Since $|D{\bf h}|=0$ on $Q^\lambda_{R/2}\setminus   A(3\lambda/4, R/2)$, we have
$$
\int_{Q^\lambda_{R/2}} |D{\bf h}|^{\frac{2n}{n+1}} \, \d z = \int_{\Sigma_0} |D{\bf h}|^{\frac{2n}{n+1}} \, \d z + \int_{\Sigma} |D{\bf h}|^{\frac{2n}{n+1}} \, \d z \,.
$$
By H\"older's inequality, \eqref{prop:nondegenerate_ass} and \eqref{lem5.6:pf1} with $\theta=\frac{1}{4}$, the first term on the right hand side can be estimated as
$$
\int_{\Sigma_0} |D{\bf h}|^{\frac{2n}{n+1}} \, \d z  \le |\Sigma_0|^{\frac{1}{n+1}}\left( \int_{A(3\lambda/4, R/2) } |D{\bf h}|^{2}\, \d z\right)^{\frac{n}{n+1}} \le c \sigma^{\frac{1}{n+1}} \lambda^{\frac{2n}{n+1}}R^{-\frac{2n}{n+1}}| Q^{\lambda}_{R}|\,.
$$
Moreover, by \eqref{lem5.6:pf1} with $\theta=\sigma$
$$\begin{aligned}
\int_{\Sigma} |D{\bf h}|^{\frac{2n}{n+1}} \, \d z & \le |Q^{\lambda}_{R}|^{\frac{1}{n+1}}\left( \int_{A((1-\sigma)\lambda, R/2) } |D{\bf h}|^{2}\, \d z\right)^{\frac{n}{n+1}}\\
& \le c \sigma^{\frac{n}{n+1}} \lambda^{\frac{2n}{n+1}}R^{-\frac{2n}{n+1}}| Q^{\lambda}_{R}|\,.
\end{aligned}$$
Therefore, we have 
$$
\int_{Q^\lambda_{R/2}} |D{\bf h}|^{\frac{2n}{n+1}} \, \d z  \le c \sigma^{\frac{1}{n+1}} \lambda^{\frac{2n}{n+1}}R^{-\frac{2n}{n+1}}| Q^{\lambda}_{R}| \,,
$$
hence, inserting this into \eqref{hestimate},
\begin{equation}\label{Sobolev_h}
\iint_{Q^\lambda_{R/2}} |{\bf h}(x,t)-{\bf W}_h(t)|^2 \, \d x  \, \d t  \le c   \sigma^{\frac{1}{n+1}}\lambda^{2}|Q^\lambda_R|\, .
\end{equation}

Note that 
$$
|D{\bf u} - \bfw _0|^2 \le c \left(|D{\bf u} - {\bf h} |^2 + |{\bf h} - {\bf W}_h |^2 + |{\bf W}_h - {\bf W} |^2 + |{\bf W} - \bfw _0 |^2\right).
$$
First, by \eqref{prop:nondegenerate_ass} with the definition of ${\bf h}$, we have 
\begin{equation}\label{comparison:Duh}
\int_{Q^\lambda_{R/2}} |D{\bf u} - {\bf h} |^2 \, \d z = \int_{Q^\lambda_{R/2}\setminus \Sigma} (1-h_0(|D{\bf u}|))^2|D{\bf u}|^2 \, \d z \le  \sigma \lambda^2  |Q^\lambda_{R}| \,.
\end{equation}
Moreover, by H\"older's inequality and the definition of $\zeta$, we also have  
\begin{equation}\label{comparison:WhW}
\begin{aligned}
&\int_{Q^\lambda_{R/2}} |{\bf W}_h - {\bf W} |^2\, \d z   = |B_{R/2}| \int^0_{-R^2/(4\phi''(\lambda))} \left|\int_{B_{R/2}} (D{\bf u} (x,t) -{\bf h} (x,t)  )\xi(x)\, \d x \right|^2 \, \d t \\
& \le |B_{R/2}| \int^0_{-R^2/(4\phi''(\lambda))} \left[\int_{B_{R/2}} (D{\bf u} (x,t) -{\bf h} (x,t)  )^2\, \d x\right]\left[ \int_{B_{R/2}} \xi^2\, \d x \right] \, \d t \\
& \le c  \int_{Q^\lambda_{R/2}} |D{\bf u}  -{\bf h}   |^2 \, \d z \le c\sigma \lambda^2 |Q^\lambda_R|.
\end{aligned}
\end{equation}
Second, since $\bfu=\bfu_\epsilon$ is a weak solution to  \eqref{eq:nondegenerate systemeq}, by testing \eqref{eq:nondegenerate systemeq} with $\zeta= (\xi_{x_i},\dots,\xi_{x_i})\in C^\infty_0(B_R,\R^N)$, $i=1,2,\dots, n$, we have  that for every $-\frac{R^2}{\phi''(\lambda)} <\tau <\tau' <0$,
$$
\int_{B_r} (u^\alpha_{x_i}(x,\tau')  - u^\alpha_{x_i}(x,\tau) )  \xi(x) \,dx  = \int^{\tau'}_{\tau} \int_{B_R} \left([\bA(D{\bf u})]^\alpha -[\bA({\bf W}(t)]^{\alpha})\right) \cdot D\xi_{x_i} \, \d z\, , 
$$
$\alpha=1,2,\dots, N$, hence, using \eqref{PhiPQ} and the facts that $|D{\bf u}| \le \lambda$ in $Q^\lambda_R$ and $\phi(t)/t^{p-1}$ is increasing,
\begin{equation}\label{estimateW}\begin{aligned}
|{\bf W}(\tau')&-{\bf W}(\tau)|  \le c \int_{Q^\lambda_{R/2}} |\bA(D{\bf u})-\bA({\bf W}(t))| |D^2\xi| \, \d z \\
& \le \frac{c}{R^{n+2}} \int_{Q^\lambda_{R/2}} \frac{\phi'(|D{\bf u}|+|{\bf W}(t)|)}{|D{\bf u}|+|{\bf W}(t)|} (|D{\bf u}|+|{\bf W}(t)|)^{2-p} |D{\bf u}-{\bf W}(t)|^{p-1} \, \d z \\
& \le c\frac{\phi'(\lambda)}{\lambda^{p-1}R^{n+2}}  \int_{Q^\lambda_{R/2}} |D{\bf u}-{\bf W}|^{p-1} \, \d z  \\   
& \le c \lambda^{2-p} \left(\fint_{Q^\lambda_{R/2}} |D{\bf u}-{\bf W}|^2 \, \d z\right)^{\frac{p-1}{2}}   .    
\end{aligned}\end{equation}
Applying the estimates \eqref{Sobolev_h}, \eqref{comparison:Duh} and \eqref{comparison:WhW}, we obtain for every $-\frac{R^2}{\phi''(\lambda)} <\tau <\tau' <0$, 
$$
|{\bf W}(\tau')-{\bf W}(\tau)| \le c  \lambda^{2-p}  \left( \sigma^{\frac{1}{n+1}}\lambda^{2} \right)^{\frac{p-1}{2}}   \le c \sigma^{\frac{p-1}{2(n+1)}} \lambda\, ,
$$
which also implies 
\begin{equation}\label{comparaion:Ww0}
\int_{Q^\lambda_{R/2}} |{\bf W}-\bfw _0|^2\,\d z \le c|Q^\lambda_{R}|\sup_{-\frac{R^2}{\phi''(\lambda)}<\tau<\tau'<0} |{\bf W}(\tau')-{\bf W}(\tau) |^2 
\le  c \sigma^{\frac{p-1}{n+1}}  \lambda^2 |Q^\lambda_{R}|\,.
\end{equation}
Therefore, combining the results in \eqref{Sobolev_h}--\eqref{comparaion:Ww0}, we have 
$$
\fint_{Q^\lambda_{R/2}} |D{\bf u} - (D{\bf u})_{Q^\lambda_{R/2}}|^2 \, \d z  \le \fint_{Q^\lambda_{R/2}} |D{\bf u} - \bfw _0|^2 \, \d z  \le c \sigma^{\frac{p-1}{n+1}} \lambda^2.
$$
This implies the second desired estimate by choosing $\sigma$ sufficiently small. Moreover, since $|D{\bf u}|\le |D{\bf u}-(D{\bf u})_{Q^\lambda_{R/2}}|+|(D{\bf u})_{Q^\lambda_{R/2}}|$ and by the assumption of the lemma
$$
\big|\{|D{\bf u}|>(1-\sigma)\lambda \}\cap Q^\lambda_{R/2}\big| > (1-\sigma)|Q^{\lambda}_{R/2}|>(1-2^{-(n+2)})|Q^{\lambda}_{R/2}|\,,
$$
we have 
$$\begin{aligned}
| (D{\bf u})_{Q^\lambda_{R/2}}| &\ge  \fint_{Q^\lambda_{R/2}} |D{\bf u}| \, \d z -  \left(\fint_{Q^\lambda_{R/2}} |D{\bf u} - (D{\bf u})_{Q^\lambda_{R/2}}|^2 \, \d z \right)^{\frac{1}{2}} \\
& \ge \left[(1-\sigma) (1-2^{-(n+2)}) - c \sigma^{\frac{p-1}{2(n+1)}}\right] \lambda. 
\end{aligned}$$
Finally, by choosing $\sigma$ sufficiently small we obtain the first desired estimate.
\end{proof}

We are now in position to prove Proposition~\ref{prop:nondegenerate}.
 %Now we are ready for  proving 

\begin{proof}[Proof of Proposition~\ref{prop:nondegenerate}]
As we mentioned above, we assume $z_0=0$. Let $\epsilon>0$ be given from Proposition~\ref{Lem:prop1lem1}. With this $\epsilon$, we determine $\sigma$ as in Lemma~\ref{Lem:prop1lem2} with $\frac{\epsilon}{2}$ in place of $\epsilon$, and suppose that $Q^\lambda_R=Q^\lambda_R(0)$ satisfies \eqref{prop:nondegenerate_ass}. Therefore, we have 
$$
\frac{7}{8}\lambda \le |(D{\bf u})_{Q^\lambda_{R/2}}| \le \lambda
$$
and
$$
\fint_{Q^\lambda_{R/2}} |D{\bf u}- (D{\bf u})_{Q^\lambda_{R/2}}|^2\, \d z \le \frac{\epsilon}{2} \lambda\,.
$$
Choose $z_1=(x_1,t_1)\in Q^\lambda_R$ such that $\max\{|x_1|, \sqrt{\phi''(\lambda)|t_1|}\}  < \epsilon' R$, where $\epsilon'\in (0,1/2)$ is a small constant to be determined. Then $Q_{R/2}^\lambda(z_1) \subset Q_R^\lambda$ and $|Q_{R/2}^\lambda(z_1)\setminus Q_{R/2}^\lambda|, |Q_{R/2}^\lambda\setminus Q_{R/2}^\lambda(z_1)| \le c \epsilon' |Q_R^\lambda|$ for some $c>0$ depending $n$ if $\epsilon'$ is sufficiently small.
Therefore,  by  \eqref{Dubound} and the preceding two inequalities, we have  
$$\begin{aligned}
&\fint_{Q^\lambda_{R/2}(z_1)} |D{\bf u}- (D{\bf u})_{Q^\lambda_{R/2}(z_1)}|^2\, \d z  \\
&\le \frac{1}{|Q^\lambda_{R/2}|}\int_{Q_{R/2}^\lambda(z_1)\setminus Q_{R/2}^\lambda} |D{\bf u}- (D{\bf u})_{Q^\lambda_{R/2}}|^2\, \d z + \fint_{Q^\lambda_{R/2}} |D{\bf u}- (D{\bf u})_{Q^\lambda_{R/2}}|^2\, \d z\\
&\le  \left(c\epsilon'+\frac{\epsilon}{2}\right)  \lambda^2 \le \epsilon \lambda^2
\end{aligned}$$
and
$$\begin{aligned}
|(D{\bf u})_{Q^\lambda_{R/2}(z_1)}| 
&\ge  |(D{\bf u})_{Q^\lambda_{R/2}}| - |(D{\bf u})_{Q^\lambda_{R/2}}  -(D{\bf u})_{Q^\lambda_{R/2}(z_1)}|\\ 
&\ge \frac{7}{8}\lambda  -  \frac{1}{|Q^\lambda_{R/2}|}\int_{(Q_{R/2}^\lambda(z_1)\setminus Q_{R/2}^\lambda )\cup ( Q_{R/2}^\lambda \setminus Q_{R/2}^\lambda(z_1))}  |D{\bf u}|\, \d z\\
& \ge \frac{7}{8}\lambda - c \epsilon' \lambda \ge \frac{3}{4}\lambda\, .  
\end{aligned}$$
provided that $\epsilon'$ is sufficiently small.
Therefore, by Lemma~\ref{Lem:prop1lem1} with $Q^\lambda_{R/2}(z_1)$ in place of $Q^\lambda_R$, we obtain that for every $z_1\in Q^\lambda_{R/2}$ and every $0< r < \frac{R}{2}$ we have 
\begin{equation}\label{meandecay}
\fint_{Q^\lambda_r(z_1)}  |D{\bf u} - (D{\bf u})_{Q^\lambda_r(z_1)}|^2 \,dz  \le c \left(\frac{r}{R}\right)^{3/2}\fint_{Q^\lambda_{R/2}(z_1)}  |D{\bf u} - (D{\bf u})_{Q^\lambda_{R/2}(z_1)}|^2 \, \d z \,. %\qedhere
\end{equation}
Consequently, the desired oscillation decay estimate \eqref{nondegenerate_oscillation} follows from  \eqref{meandecay}  by the standard embedding argument by Campanato in the parabolic setting (see for instance \cite[Lemma 4.3]{Lie_book}).
\end{proof}

\subsection{Proof of Proposition~\ref{prop:degenerate}} \label{sec:proofdeg}

We start with a density result from \eqref{prop:degenerate_ass} (cfr. \cite[Lemma 6.2]{Lie06}).

%\begin{lemma}\label{Lem:density_timeslice} Suppose \eqref{prop:degenerate_ass} holds for some $\sigma\in(0,1)$. There exists $t_1\in (-\frac{R^2}{\phi''(\lambda)}, -\frac{\sigma R^2}{2\phi''(\lambda)})$ such that
%$$
%|\{x\in B_R: |D{\bf u}(x,t_1)|>(1-\sigma)\lambda\}| \le \left(1-\frac{\sigma}{2}\right) |B_R| \, .
%$$
%\end{lemma}
%
%
%\begin{proof}
%Set
%$$
%I := \int_{-\frac{R^2}{\phi''(\lambda)}}^{-\frac{\sigma R^2}{2\phi''(\lambda)}} |\{x\in B_R: |D{\bf u}(x,t)|>(1-\sigma)\lambda\}| \, \d t \, .
%$$
%Then by \eqref{prop:degenerate_ass}, 
%$$
%I \le   |\{|D{\bf u}| > (1-\sigma) \lambda\} \cap Q^{\lambda}_{R}| \le (1-\sigma)  \frac{R^2}{\phi''(\lambda)} |B_{R}| \, .
%$$
%On the other hand, by the mean value theorem for integrals, there exists $t_1\in (-\frac{R^2}{\phi''(\lambda)}, -\frac{\sigma R^2}{2\phi''(\lambda)})$ such that
%$$
%I = |\{x\in B_R: |D{\bf u}(x,t_1)|>(1-\sigma)\lambda\}| \left( 1-\frac{\sigma}{2} \right) \frac{R^2}{\phi''(\lambda)} \, .
%$$
%This inequality with $\frac{1-\sigma}{1-\sigma/2} < 1-\frac{\sigma}{2}$ yields the conclusion.
%
%\end{proof}

\begin{lemma}\label{Lem:density_timeslice2}
Suppose \eqref{prop:degenerate_ass} holds for some $\sigma\in(0,\frac{1}{2})$.  There exists $m_1 \in \mathbb N$ depending on $\sigma$ such that
$$
\sup_{-\frac{\sigma R^2}{2\phi''(\lambda)}\le t \le 0}  |\{ |D{\bf u}(x,t)|>(1-2^{-m_1}\sigma)\lambda\}\cap B_{\tilde{\sigma} R}| \le \tilde{\sigma} |B_{\tilde{\sigma} R}| \, , \quad \text{where }\ \tilde{\sigma} := \left(1-\frac \sigma 2 \right)^{\frac{1}{n+2}}.
$$ 

\end{lemma}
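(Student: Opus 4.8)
\textbf{Setup.} The plan is to pass from the \emph{space–time} density hypothesis \eqref{prop:degenerate_ass} to a \emph{time-slice} density bound by an energy (Caccioppoli) argument. The key point is that a fixed positive proportion of the full cylinder $Q^\lambda_R$ lies in the ``good set'' $\{|D\bfu|\le (1-\sigma)\lambda\}$; I want to show that this forces, for \emph{every} time slice in a slightly smaller sub-cylinder, a corresponding good-set bound on a slightly smaller ball (at the possibly much smaller level $(1-2^{-m_1}\sigma)\lambda$). First I would fix the truncation function $f(s) := (s - (1-2^{-j}\sigma)\lambda)_+$ with its primitive $F(s) = \int_0^s \tau f(\tau)\,\d\tau$, for an integer parameter $j$ to be iterated, and apply the Caccioppoli inequality \eqref{Caccio estimate} (with the replacement of $\phi'_\epsilon f^2/f'$ by $\phi_\epsilon f$) on nested cylinders. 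On the support of $f(|D\bfu|)$ one has $\tfrac12\lambda \le |D\bfu|\le\lambda$, so all quantities $\phi'(|D\bfu|)/|D\bfu|$, $\phi''(|D\bfu|)$, etc.\ are comparable to $\phi''(\lambda)$ up to constants depending only on $p,q$; this makes the RHS of \eqref{Caccio estimate} controllable by $R^{-2}\phi''(\lambda)$ times the measure of the super-level set one level up.

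\textbf{Key steps.} (1) Observe that $\big|\{|D\bfu|\le(1-\sigma)\lambda\}\cap Q^\lambda_R\big|>\sigma|Q^\lambda_R|$ implies, after a Fubini/averaging argument over $t\in I^\lambda_{\sigma R/2}$ (or a slightly smaller interval), that there is a ``rich'' set of times $t$ for which the slice good-set is already bounded below at level $(1-\sigma)\lambda$ on $B_R$ (or on $B_{\tilde\sigma R}$). (2) On such starting time-slices, run a De~Giorgi–type iteration \emph{in the spatial variable alone}, coupled through time by the Caccioppoli estimate: define $A_j(t) := \{x\in B_{r_j} : |D\bfu(x,t)|>(1-2^{-j}\sigma)\lambda\}$ with $r_j\downarrow \tilde\sigma R$, and use \eqref{lem5.6:pf1}-type consequences of \eqref{Caccio estimate} (with $\theta = 2^{-j}\sigma$) together with the Sobolev inequality applied to a truncation ${\bf h}_j$ of $D\bfu$ (as in the proof of Lemma~\ref{Lem:prop1lem2}, where $|D{\bf h}_j|^2\le c|D^2\bfu|^2$) to obtain a recursive inequality $Y_{j+1}\le C\,b^j\,Y_j^{1+\kappa}$ for the quantity $Y_j := \sup_{t}\tfrac{|A_j(t)|}{|B_{r_j}|} + (\text{space–time excess at level }j)$, with $\kappa>0$ from the gain in the Sobolev exponent. (3) Choosing the initial smallness (which is guaranteed because $\sigma$ is a \emph{fixed} fraction and the cylinder measures of $Q^\lambda_{\sigma R/2}$ versus $Q^\lambda_R$ are comparable up to $\sigma$), the fast-geometric-convergence lemma (e.g.\ \cite[Lemma 6.1]{Giusti_book} or the classical De~Giorgi iteration lemma) yields $Y_j\to 0$, and terminating the iteration at a finite index $m_1 = m_1(\sigma,n,p,q)$ gives precisely $\sup_{t}|A_{m_1}(t)| \le \tilde\sigma|B_{\tilde\sigma R}|$, which is the claim with $\tilde\sigma = (1-\sigma/2)^{1/(n+2)}$.

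\textbf{Main obstacle.} The delicate point is the \emph{coupling between the supremum-in-time norm and the space–time gradient norm}: the Caccioppoli estimate \eqref{Caccio estimate} controls $\sup_t \int_{B_\rho} F(|D\bfu|)\xi^2\,\d x$ and $\int_Q \phi''(\lambda)f(|D\bfu|)|D^2\bfu|^2\xi^2\,\d z$ simultaneously, but the Sobolev embedding that produces the exponent gain acts on spatial slices, so one must interpolate carefully (parabolic embedding, as in \cite[I, Proposition 3.1]{DiB_book}, or a slicewise Sobolev inequality followed by integration in $t$) to close the iteration without losing the uniformity in $t$. A second, more bookkeeping-type difficulty is tracking how the radii $r_j$, the levels $2^{-j}\sigma$, and the cut-off constants degrade through the iteration so that the geometric series of ``bad constants'' $C b^j$ is absorbed by the super-linear power $Y_j^{1+\kappa}$; this is routine but must be done with care since everything must stay uniform in $\epsilon\in(0,1]$ (which it does, because all bounds go through $\phi_\epsilon$ and the comparability constants depend only on $p,q$). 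Finally, one checks that the resulting $m_1$ depends only on $\sigma$ (and the structural constants $n,N,p,q$), as claimed, since the number of iteration steps needed to reach the target smallness $\tilde\sigma$ is determined entirely by $\sigma$ and the fixed exponent $\kappa$.
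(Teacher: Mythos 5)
Your Step 1 (Fubini/mean-value argument to extract a single time slice $t_1$ with slice density at most $(1-\sigma/2)|B_R|$) is exactly what the paper does. However, your Step 2 is the wrong mechanism, and would not close. You propose a power truncation $f(s) = (s-(1-2^{-j}\sigma)\lambda)_+^{\chi-1}/s$ and a De~Giorgi shrinking iteration $Y_{j+1}\le Cb^j Y_j^{1+\kappa}$. There are two problems. First, the superlinear iteration requires the initial quantity $Y_0 := \sup_t |A_0(t)|/|B_{r_0}|$ to be below an explicit smallness threshold; but the hypothesis \eqref{prop:degenerate_ass} is a \emph{space--time} measure bound of order $1-\sigma$ on $Q^\lambda_R$, so $Y_0$ is not even a priori bounded (controlling $\sup_t$ of a slice measure is precisely the conclusion, not a starting datum), let alone small. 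Your parenthetical that ``initial smallness is guaranteed because $\sigma$ is a fixed fraction'' is not correct: the good set has measure $\ge\sigma|Q^\lambda_R|$, so the bad set is only bounded by $(1-\sigma)|Q^\lambda_R|$, which is close to $1$ for small $\sigma$. Second, the lemma's conclusion is a density bound $\le\tilde\sigma = (1-\sigma/2)^{1/(n+2)}$, which is \emph{close to $1$}, not close to $0$; the lemma is a ``propagate-in-time'' statement transferring the slice density from $t_1$ to all later $t$, not a ``shrink-the-measure'' statement, so a De~Giorgi iteration is the wrong tool for it.

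What the paper actually does in Step 2 is the \emph{logarithmic} De~Giorgi estimate: it takes $\Psi(s) = \ln_+\big(\sigma/(1-s/\lambda + 2^{1-m}\sigma)\big)$, $f(s) = 2\Psi(s)\Psi'(s)/s$, so that $F(s) = \Psi(s)^2$, and plugs this single test function into the Caccioppoli inequality \eqref{Caccio estimate} on $B_R\times(t_1,\tilde t)$ with $\eta\equiv 1$. The time-boundary term at $t_1$ is controlled by Step 1 because $\Psi$ vanishes below level $(1-\sigma+2^{1-m}\sigma)\lambda$, and on the super-level set at level $(1-2^{-m}\sigma)\lambda$ at time $\tilde t$ one has $\Psi\ge (m-2)\ln 2$, while $\Psi\le(m-1)\ln 2$ everywhere. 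As $m\to\infty$ the ratio $((m-1)/(m-2))^2\to 1$, so the density loss in the transfer vanishes, and the lower-order Caccioppoli error is $O(m)$, hence absorbed after dividing by $(m-2)^2$. This is a one-shot estimate in which $m$ is chosen large at the end; there is no iteration in $j$ and no initial smallness is required. With a power truncation as you propose, the analogous inequality would incur a loss proportional to the ratio of the power $F$ evaluated at the two levels, which stays bounded away from $1$, and the estimate does not close. You may want to compare this with the classical parabolic expansion-of-positivity lemma (e.g.\ \cite[Chapter~II]{DiB_book}), which is exactly the logarithmic device being invoked here.
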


\begin{proof}
\textit{Step 1.} We first prove that there exists $t_1\in (-\frac{R^2}{\phi''(\lambda)}, -\frac{\sigma R^2}{2\phi''(\lambda)})$ such that
\begin{equation}\label{density_timeslice}
|\{x\in B_R: |D{\bf u}(x,t_1)|>(1-\sigma)\lambda\}| \le \left(1-\frac{\sigma}{2}\right) |B_R| \, .
\end{equation}

Set
$$
I := \int_{-\frac{R^2}{\phi''(\lambda)}}^{-\frac{\sigma R^2}{2\phi''(\lambda)}} |\{x\in B_R: |D{\bf u}(x,t)|>(1-\sigma)\lambda\}| \, \d t \, .
$$
Then by \eqref{prop:degenerate_ass}, 
$$
I \le   |\{|D{\bf u}| > (1-\sigma) \lambda\} \cap Q^{\lambda}_{R}| \le (1-\sigma)  \frac{R^2}{\phi''(\lambda)} |B_{R}| \, .
$$
On the other hand, by the mean value theorem for integrals, there exists $t_1\in (-\frac{R^2}{\phi''(\lambda)}, -\frac{\sigma R^2}{2\phi''(\lambda)})$ such that
$$
I = |\{x\in B_R: |D{\bf u}(x,t_1)|>(1-\sigma)\lambda\}| \left( 1-\frac{\sigma}{2} \right) \frac{R^2}{\phi''(\lambda)} \, .
$$
This inequality with $\frac{1-\sigma}{1-\sigma/2} < 1-\frac{\sigma}{2}$ yields \eqref{density_timeslice}.

\textit{Step 2.} Let 
$$
\Psi(s) := \ln_+ \left( \frac{\sigma}{1-s/\lambda +2^{1-m}\sigma}\right)  ,  \qquad 0 \le s \le \lambda \,,
$$
where $g_+(s):=\max\{g(s),0\}$ and $m\in \mathbb N$ with $m\ge 2$, and set
$$
 f (s) := \frac{2\Psi(s)\Psi'(s)}{s}\,,  \qquad 0 \le s \le \lambda \,.
$$
Then we have the following straightforward properties for $\Psi$ and $f$:
\begin{itemize}
 \item[(i)] $0\le \Psi(s)\le (m-1)\ln 2$ and $\Psi(s) =0$ if and only if $0\le s \le (1-\sigma  +2^{1-m}\sigma)\lambda$;
 \item[(ii)] for $(1-\sigma  +2^{1-m}\sigma)\lambda < s < \lambda$, $\Psi'(s) = \frac{1}{(1+2^{1-m}\sigma)\lambda-s}$ and $\Psi''(s)=\Psi'(s)^2$;
\item[(iii)]   for $(1-\sigma  +2^{1-m}\sigma)\lambda < s < \lambda$, $f'(s)=\frac{2(1+\Psi(s))\Psi'(s)^2}{s}-\frac{f(s)}{s}>0$;
\item[(iv)] $F(s):= \int^s_0 \tau f(\tau)\, \d \tau = \Psi(s)^2$.

\end{itemize}
 Choose any  $\tilde t\in \left[-\frac{\sigma R^2}{2\phi''(\lambda)}, 0\right]$ and let $t_1$ be that given from \eqref{density_timeslice}. Then from the proof of Lemma~\ref{Lem:Caccio1} (applying \eqref{I2estimate} to $I_2$) with $\rho =R$, $\tau_1=t_1$, $\tau_2=\tilde t$ and $\eta\equiv 1$, we deduce that  
\begin{equation} 
\begin{aligned}
&\int_{B_{R}}   F(|D{\bf u}(x,\tilde t)|)\xi^2  \, \d x \\
&\qquad + \int_{t_1}^{\tilde t} \int_{B_R}  \frac{\phi'(|D{\bf u}|)}{|D{\bf u}|}\left(f(|D{\bf u}|)|D^2\bfu|^2 +   \frac{f'(|D{\bf u}|)}{|D{\bf u}|}\frac{|D (|D{\bf u}|^2)|^2}{4}\right) \xi^2\, \d x \d t\\
&  \le \int_{B_{R}}   F(|D{\bf u}(x,t_1)|)\xi^2 \, \d x   + c  \int^{\tilde t}_{t_1} \int_{B_R}  \frac{\phi'(|D{\bf u}|)}{|D{\bf u}|}\big|D(|D{\bf u}|^2)\big| f(|D{\bf u}|)  \xi  |D\xi|  \, \d x \d t  \,, \\
\end{aligned}\label{CaccioPsi1}
\end{equation}
where $\xi\in C^\infty_0(B_R)$ is a cut-off function satisfying that $\xi\equiv 1$ on $B_{\tilde{\sigma} R}$ and $|D\xi|\le \frac{2}{(1-\tilde{\sigma})R}$.
For what concerns the left hand side, applying (iii) and (iv), we have
$$\begin{aligned}
\text{(LHS of \eqref{CaccioPsi1})} & \ge \int_{B_{R}}   \Psi(|D{\bf u}(x,\tilde t)|)^2\xi^2  \, \d x  \\
&\qquad + \int^{\tilde t}_{t_1} \int_{B_R}    \frac{\phi'(|D{\bf u}|)}{|D{\bf u}|}  \frac{(\Psi(|D{\bf u}|)+1)\Psi'(|D{\bf u}|)^2}{|D{\bf u}|^2}\frac{|D (|D{\bf u}|^2)|^2}{2} \xi^2\, \d x \d t\, .
\end{aligned}$$ 
On the other hand, as for the right hand side, Young's inequality with the definition of $f$ and (iv) yields.
$$\begin{aligned}
\text{(RHS of \eqref{CaccioPsi1})} & \le \int_{B_{R}}   \Psi(|D{\bf u}(x,t_1)|)^2\xi^2  \, \d x  + c \int^{\tilde t}_{t_1} \int_{B_R}  \frac{\phi'(|D{\bf u}|)}{|D{\bf u}|}\Psi(|D{\bf u}|)  |D\xi|^2  \, \d x \d t  \\
&\qquad + \frac{1}{4}\int^{\tilde t}_{t_1} \int_{B_R}  \frac{\phi'(|D{\bf u}|)}{|D{\bf u}|}\frac{\Psi(|D{\bf u}|)\Psi'(|D{\bf u}|)^2}{|D{\bf u}|^2}\big|D(|D{\bf u}|^2)\big|^2   \xi^2   \, \d x \d t \, .
\end{aligned}$$ 
Therefore, we have 
\begin{equation}
\begin{aligned}
\int_{B_{\tilde{\sigma} R}}  & \Psi(|D{\bf u}(x,\tilde t)|)^2  \, \d x \\
 & \le \int_{B_{R}}   \Psi(|D{\bf u}(x,t_1)|)^2 \, \d x + \frac{c}{(1-\tilde{\sigma})^2R^2} \int_{\tilde t}^{t_2} \int_{B_R}  \frac{\phi'(|D{\bf u}|)}{|D{\bf u}|}\Psi(|D{\bf u}|)  \, \d x \d t \,.
\end{aligned}\label{CaccioPsi2}
\end{equation}
For the left hand side, we see that 
$$
\begin{aligned}
\text{(LHS of \eqref{CaccioPsi2})}   &  \ge \Psi((1-2^{-m}\sigma)\lambda)^2  |\{ |D{\bf u}(x,\tilde t)|>(1-2^{-m}\sigma)\lambda\}\cap B_{\tilde{\sigma} R}|\\
& \ge ((m-2)\ln 2)^2   |\{ |D{\bf u}(x,\tilde t)|>(1-2^{-m}\sigma)\lambda\}\cap B_{\tilde{\sigma} R}| \, .
\end{aligned}
$$ 

On the other hand, to estimate the right hand side, we apply Step 1 %Lemma~\ref{Lem:density_timeslice2} 
and (i) to get
$$
\begin{aligned}
\text{(RHS of \eqref{CaccioPsi2})}  & \le ((m-1)\ln 2)^2    |\{x\in B_R: |D{\bf u}(x,t_1)|>(1-\sigma+2^{1-m}\sigma)\lambda\}|  \\
&\qquad  +  \frac{ c (m-1)\ln 2}{(1-\sigma) (1-\tilde{\sigma})^2} |B_R|  \,\\
& \le \left\{((m-1)\ln 2)^2  \frac{\left(1-\frac{\sigma}{2}\right)}{\tilde{\sigma}^n}   +   \frac{c (m-1)  }{\tilde{\sigma}^n} \right\}|B_{\tilde{\sigma} R}|\\
& \le \left\{((m-1)\ln 2)^2 \tilde{\sigma}^2   +   \frac{c (m-1)  }{\tilde{\sigma}^n} \right\}|B_{\tilde{\sigma} R}|\, .
\end{aligned}
$$
Therefore, combining the above results, we have 
$$
\begin{aligned}
 |\{ |D{\bf u}(x,\tilde t)|>(1-2^{-m}\sigma)\lambda\}\cap B_{\tilde{\sigma} R}|  \le \left\{\left(\frac{m-1}{m-2}\right)^2 \tilde{\sigma}^2   +   \frac{c_* (m-1)  }{\tilde{\sigma}^n (m-2)^2} \right\}|B_{\tilde{\sigma} R}|.
\end{aligned}
$$
Finally, by choosing $m\in \mathbb N$ sufficiently large so that
$$
\left(\frac{m-1}{m-2}\right)^2 \le \frac{1+\tilde{\sigma}}{2\tilde{\sigma}}  
\quad \text{and}  \quad  \frac{m-1}{(m-2)^2} \le \frac{\tilde{\sigma}^{n+1}(1-\tilde{\sigma})}{2c_*}\,,
$$
we obtain the conclusion.
\end{proof}
We note that $\tilde{\sigma}>\frac{\sigma}{2}$ since $\sigma\in(0,\frac12)$. Now, we are ready for proving Proposition~\ref{prop:degenerate}.

\begin{proof}[Proof of Proposition~\ref{prop:degenerate}]
We remark that constants $c$ in the proof depend also on $\sigma$.

 \textit{Step 1.} Let 
$$
f(t)  := \frac{(t-(1-\nu_1)\lambda)^{\chi-1}_+}{t}\,, \quad \chi\ge 2,
$$
where $\nu_1\in(0,\frac{1}{2})$ is a sufficiently small constant  to be determined, and set $F(t) :=\int_0^tsf(s)\,ds=\frac{1}{\chi} (t-(1-\nu_1)\lambda)_+^{\chi}$. Note that, since
$f'(t)=\frac{(t-(1-\nu_1)\lambda)^{\chi-2} ((\chi-2)t+(1-\nu_1)\lambda) }{t^2}$ if $t>(1-\nu_1)\lambda$,
we have
$$
f'(t)\ge \frac{(t-(1-\nu_1)\lambda)_+^{\chi-2} (1-\nu_1)}{\lambda} \ge \frac{(t-(1-\nu_1)\lambda)_+^{\chi-2} }{2 \lambda}  \quad \text{for }\ 0< t \le\lambda,
$$
and
$$
\frac{f(t)^2}{f'(t)} =\frac{(t-(1-\nu_1)\lambda)_+^{\chi}}{(\chi-2)t+(1-\nu_1)\lambda} \le \frac{(t-(1-\nu_1)\lambda)_+^{\chi}}{(1-\nu_1)\lambda} \le 2 \frac{(t-(1-\nu_1)\lambda)_+^{\chi}}{\lambda} \,.
$$

Let $\xi_0\in C^\infty_0(B_{\tilde{\sigma} R})$ be a cut-off function with $0\le \xi_0\le 1$, $\xi_0\equiv 1$ on $B_{\tilde{\sigma} R/2}$ and $|D\xi_0|\le \frac{4}{\tilde{\sigma} R}$, 
and $\eta_0 \in C^\infty(\R)$ with $0\le \eta_0\le 1$, $\eta_0\equiv 0$ in $(-\infty,-\frac{\sigma R^2}{2\phi''(\lambda)}]$, $\eta_0\equiv 1$ in $[-\frac{\sigma R^2}{4\phi''(\lambda)}, \infty)$  and $0\le (\eta_0)_t \le \frac{8\phi''(\lambda)}{\sigma R^2}$.  
Set $\xi =\xi_0^{\frac{(n+2)\chi-n}{2}}$, $\eta =\eta_0^{(n+2)\chi-n}$. 
Then applying \eqref{Caccio estimate} with $\rho=\tilde \sigma R$, $\tau_1=t_1$ and $\tau_2=t'\in (-\frac{\sigma R^2}{2\phi''(\lambda)},0)$,  and using the fact that $\frac{1}{2}\lambda \le |D{\bf u}| \le \lambda$ when $|D{\bf u}|>(1-\nu_1)\lambda$, we have
$$
\begin{aligned}
& \frac{1}{\chi}\int_{B_{\tilde{\sigma} R}}   w(x,t')^{\chi}  \zeta(x,t')^{(n+2)\chi-n}  \, \d x  +  \phi''(\lambda) \int^{t'}_{t_1} \int_{B_{\tilde{\sigma} R}}  w^{\chi-2}\frac{|D(|D{\bf u}|^2)|^2}{|D{\bf u}|^2} \xi^2\eta \, \d x \d t\\
&\le  c  \int^{t'}_{t_1} \int_{B_{\tilde{\sigma} R}} \left[ \phi''(\lambda) w^{\chi}  |D\xi|^2\eta   + \frac{1}{\chi } w^{\chi} \xi^2\eta_t  \right] \, \d x \d t\\
&\le  c \chi   \int^{t'}_{t_1}  \int_{B_{\tilde{\sigma} R}} \left[ \phi''(\lambda) w^{\chi}  |D\xi_0|^2 \zeta^{(n+2)\chi-n-2}   + w^{\chi} (\eta_0)_t \zeta^{(n+2)\chi-n-1}   \right] \, \d x \d t\\
&\le  c \chi \frac{ \phi''(\lambda)}{R^2}    \int^{t'}_{t_1} \int_{B_{\tilde{\sigma} R}} w^{\chi} ( \zeta^{(n+2)\chi-n-2} +  \zeta^{(n+2)\chi-n-1})\, \d x \d t\,,
\end{aligned}
$$
where we denote
$$
w:=(|D{\bf u}|-(1-\nu_1)\lambda)_+
\quad \text{and} \quad 
\zeta:=\xi_0\eta_0.
$$
Moreover, since $D(w^{\frac{\chi}{2}}\xi) = \frac{\chi}{4} w^{\frac{\chi-2}{2}}\frac{D(|D{\bf u}|^2)}{|D{\bf u}|}\xi+ w^{\frac{\chi}{2}} D\xi$ and $0\le \zeta\le 1$, 
\begin{equation}\label{cacciolog}
\begin{aligned}
\sup_{-\frac{\sigma R^2}{2\phi''(\lambda)}< t< 0} \int_{B_{\tilde{\sigma} R}}   w^{\chi} \zeta^{(n+2)\chi-n}   \, \d x  +&  \phi''(\lambda)  \int^{0}_{-\frac{\sigma R^2}{2\phi''(\lambda)}} \int_{B_{\tilde{\sigma} R}}  |D(w^{\frac{\chi}{2}}\zeta^\frac{(n+2)\chi-n}{2})|^2  \, \d x \d t\\
&\le  c \chi^3 \frac{ \phi''(\lambda)}{R^2}   \int^{0}_{-\frac{\sigma R^2}{2\phi''(\lambda)}}  \int_{B_{\tilde{\sigma} R}} w^{\chi}  \zeta^{(n+2)\chi-n-2} \, \d x \d t \,.
\end{aligned}
\end{equation}
Setting 
$$
\bar w : = w \zeta^{n+2}\,,
$$
by H\"older's inequality and  Sobolev's inequality we have
$$
\begin{aligned}
&\int^{0}_{-\frac{\sigma R^2}{2\phi''(\lambda)}} \int_{B_{\tilde{\sigma} R}}  \bar w^{\chi(1+\frac{2}{n})}\zeta^{-n-2} \, \d x \d t\\
&\le \int^{0}_{-\frac{\sigma R^2}{2\phi''(\lambda)}} \left(\int_{B_{\tilde{\sigma} R}}  \bar w^{\chi}\zeta^{-n} \, \d x\right)^{\frac{2}{n}} \left( \int_{B_{\tilde{\sigma} R}}  \left( \bar w^{\frac \chi 2}\zeta^{\frac{-n}{2}}\right)^{\frac{2n}{n-2}} \, \d x \right)^{\frac{n-2}{n}}\d t \\
&\le c \sup_{-\frac{\sigma R^2}{2\phi''(\lambda)}< t< 0} \left(\int_{B_{\tilde{\sigma} R}}  \bar w^{\chi}\zeta^{-n} \, \d x\right)^{\frac{2}{n}}  \int^{0}_{-\frac{\sigma R^2}{2\phi''(\lambda)}}  \int_{B_{\tilde{\sigma} R}}  |D ( \bar w^{\frac \chi 2}\zeta^{\frac{-n}{2}})|^2 \, \d x \d t \\
& \le c   \phi''(\lambda)^{\frac{2}{n}} \left(  \frac{\chi^3}{R^2}  \int^{0}_{-\frac{\sigma R^2}{2\phi''(\lambda)}}  \int_{B_{\tilde{\sigma} R}} \bar w^{\chi}  \zeta^{-n-2} \, \d x \d t \right)^{1+\frac{2}{n}} .
\end{aligned}
$$
We further set $\tilde Q:= B_{\tilde{\sigma}R} \times (-\frac{\sigma R^2}{2\phi''(\lambda)},0]$. Then we have
$$
\fint_{\tilde Q}  \bar w^{\chi(1+\frac{2}{n})}\zeta^{-n-2} \, \d z \le c   \chi^{3(1+\frac{2}{n})}  \left(  \fint_{\tilde Q} \bar w^{\chi}  \zeta^{-n-2} \, \d z \right)^{1+\frac{2}{n}} , \quad \chi \ge 2\,.
$$
We now apply Moser's iteration. For $m=0,1,2,\dots$, we set
\[
\chi_{m} = 2\theta^m ,
\quad \text{where} \ \ \theta:=1+\frac{2}{n},
\]
and
$$
J_{m}:= \fint_{\tilde Q}  \bar w^{\chi_m}\zeta^{-n-2} \, \d z
$$
Then we have that for $m=2,3,\dots$,
\[\begin{aligned}
J_{m}   \le c \chi_{m-1}^\theta J_{m-1}^{3\theta} \le   c^{1+\theta} \theta^{3\theta} \chi_{m-2}^{3\theta+3\theta^2} J_{m-2}^{\theta^2}  \le \cdots &\le c^{\frac{\theta^m-1}{\theta-1}} \theta^{\frac{3\theta}{\theta-1} \left(\frac{\theta(\theta^{m-1}-1)}{\theta-1} -1\right)} 2^{\frac{\theta(\theta^m-1)}{\theta-1}} J_{0}^{\theta^m} \\
&\le \left( c J_0\right)^{\theta^m} \,,
\end{aligned}\]
which together with $\bar w = w \zeta^{n+2}$ yields 
$$
\| w\|_{L^\infty(\tilde Q)} = \|\bar w\zeta^{-n-2}\|_{L^\infty(\tilde Q)}
=\lim_{m\to \infty}\, J_m^{\frac{1}{\chi_m}} \le c \left( \fint_{\tilde Q}  \bar w^{2}\zeta^{-n-2} \, \d z\right)^{\frac{1}{2}} \le c \left( \fint_{\tilde Q}   w^{2} \, \d z\right)^{\frac{1}{2}}  \,,
$$
and by the definitions of $w$ and $\tilde Q =B_{\tilde \sigma R}  \times (-\frac{\sigma R^2}{2\phi''(\lambda)},0] \supset Q^\lambda_{\sigma R/2}$,
\begin{equation}\label{wsupbound}
\begin{aligned}
\| (|D{\bf u}| - (1-\nu_1) \lambda)_+\|_{L^\infty(Q^{\lambda}_{\sigma R/2})} & \le  c \left( \fint_{\tilde Q}  w^2 \, \d z\right)^{\frac{1}{2}} \\
&  \le c\nu_1 \lambda \left( \frac{|\{(|D{\bf u}| - (1-\nu_1) \lambda)_+>0\}\cap \tilde Q |}{|\tilde Q|}\right)^{\frac12}.
\end{aligned}\end{equation}

\textit{Step 2.} Now, we show that the ratio of measures on the right hand side of \eqref{wsupbound} is sufficiently small if  $\nu_1$  is small. We follow the argument in \cite[Lemma 6.4]{DiBeFried84}.
We start recalling  the  following well-known Poincar\'e-type inequality, see for instance \cite[Lemma 6.3]{DiBeFried84}: for $v\in W^{1,2}(B_r)$ and $k<l$,
\begin{equation}\label{new1}
(l-k)|\{v>l\}\cap B_r|^{1-\frac{1}{n}} \le c \frac{r^n}{|B_r\setminus \{v>k\}|} \int_{\{k<v\le l\}\cap B_r} |Dv| \,\d x\,.
\end{equation}
Let $m_2$ be a positive number determined later, which is larger than the constant $m_1$ determined in Lemma~\ref{Lem:density_timeslice2}, and $j\in\{m_1,m_1+1, \dots, m_2-1\}$. Putting $l=(1-\frac{\sigma}{2^{j+1}})\lambda$,  $k=(1-\frac{\sigma}{2^j})\lambda$, $v=|D\bfu(\cdot,t)|$ and $r=\tilde\sigma R$, and applying Lemma~\ref{Lem:density_timeslice2} we have that for $t\in [-\frac{\sigma R^2}{\phi''(\lambda)}, 0]$,
$$\begin{aligned}
&\frac{\sigma\lambda}{2^{j+1}}|\{|D\bfu(x,t)|>(1-\frac{\sigma}{2^{j+1}})\lambda\}\cap B_{\tilde{\sigma}R}|\\ 
&\le   c\frac{R^{n+1}}{|B_{\tilde{\sigma}R}\setminus \{|D\bfu(x,t)|>(1-\frac{\sigma}{2^{j}})\lambda\}|} \int_{\{k<|D\bfu|(x,t)\le l\}\cap B_{\tilde{\sigma}R}} |D[|D\bfu(x,t)|]| \,\d x\\
&\le c \frac{R}{1-(1-\frac{\sigma}{2})^{1/(n+2)}}  \int_{\{k<|D\bfu(x,t)|\le l\}\cap B_{\tilde{\sigma}R}} |D[|D\bfu(x,t)|]| \,\d x\\
& \le c R  \int_{\{k< |D\bfu(x,t)|\le l\}\cap B_{\tilde{\sigma} R}} |D[|D\bfu(x,t)|]| \,\d x.
\end{aligned}$$
In addtion, by H\"older's inequality,
$$\begin{aligned}
&\int_{\{k<|D\bfu(x,t)|\le l\}\cap B_{\tilde{\sigma}R}} |D[|D\bfu(x,t)|]| \,\d x\\
& \le \left( \int_{\{k<|D\bfu(x,t)|\le l\}\cap B_{\tilde{\sigma}R}} \left|D\left(|D\bfu(x,t)|-\left(1-\frac{\sigma}{2^j}\right)\lambda\right)_+\right|^2 \,\d x \right)^{\frac{1}{2}} |\{k<|D\bfu(x,t)|\le l\}\cap B_{\tilde{\sigma} R}|^{\frac{1}{2}}
\end{aligned}$$
Therefore inserting this  into the above estimate, integrating both the sides for $t$ from $-\frac{\sigma R^2}{2\phi''(\lambda)}$ to $0$ and using H\"older's inequality, we have
 $$\begin{aligned}
 &\frac{\sigma\lambda}{2^{j+1}}|\{|D\bfu|>(1-\frac{\sigma}{2^{j+1}})\lambda\}\cap \tilde Q| \\
& \le  c  R\left( \int_{\{k<|D\bfu|\le l\}\cap \tilde Q} \left|D\left(|D\bfu|-\left(1-\frac{\sigma}{2^j}\right)\lambda\right)_+\right|^2  \,\d z \right)^{\frac{1}{2}} |\{k<|D\bfu|\le l\}\cap \tilde Q|^{\frac{1}{2}}.
 \end{aligned}$$
Moreover, using almost the same analysis as the one employed to derive \eqref{cacciolog}, we also obtain  
$$\begin{aligned}
 \int_{\{k<|D\bfu|\le l\}\cap \tilde Q} \left|D\left(|D\bfu|-\left(1-\frac{\sigma}{2^j}\right)\lambda\right)_+\right|^2 \,\d z  & \le \int_{\tilde Q} \left|D\left(|D\bfu|-\left(1-\frac{\sigma}{2^j}\right)\lambda\right)_+\right|^2 \,\d z \\
&\le    \frac{c}{(1-\tilde\sigma)^2R^2}   \int_{Q^{\lambda}_{R}} \left(|D\bfu|-\left(1-\frac{\sigma}{2^j}\right)\lambda\right)_+^2 \,\d z  \,.
\end{aligned}$$
Combining the above results with the facts that $|D\bfu|\le \lambda$ in $Q^\lambda_{R}$ and $|Q^\lambda_R| \le \frac{c(n)}{\sigma^n}|\tilde Q|$, we have 
$$
\left(\frac{\left|\left\{|D\bfu| >(1-\frac{\sigma}{2^{j+1}})\lambda\right\}\cap \tilde  Q\right|}{|\tilde Q|}\right)^2 \le c \frac{ \left|\left\{(1-\frac{\sigma}{2^{j}})\lambda< |D\bfu| \le (1-\frac{\sigma}{2^{j+1}})\lambda\right\}\cap \tilde Q\right|}{|\tilde Q|}\,.
$$
Then summing over $j$ from $m_1$ to $m_2-1$,  
$$
(m_2-m_1) \left(\frac{|\{|D\bfu|>(1-\frac{\sigma}{2^{m_2}})\lambda\}\cap \tilde Q|}{|\tilde Q|}\right)^2  \le  c\,,
$$
which implies
$$
 \frac{|\{|D\bfu|>(1-v_1)\lambda\}\cap \tilde Q|}{|\tilde Q|}= \frac{|\{|D\bfu|>(1-\frac{\sigma}{2^{m_2}})\lambda\}\cap \tilde Q|}{|\tilde Q|}  \le  \frac{c}{\sqrt{m_2-m_1}}\,,
$$
where we choose
$$
\nu_1 = \frac{\sigma}{2^{m_2}}\,.
$$ 

\textit{Step 3.}
We insert the previous estimate into \eqref{wsupbound} to get
$$
\| (|D{\bf u}| - (1-\nu_1) \lambda)_+\|_{L^\infty(Q^{\lambda}_{\sigma R/2})}  \le  \frac{c_\sigma \nu_1 \lambda}{(m_2-m_1)^{1/4}}
$$
for some $c_\sigma>0$ depending on $n, N, p, q$ and $\sigma$. At this stage, we determine $m_2$ depending on $n, N, p, q$ and $\sigma$ such that $\frac{c_\sigma}{(m_2-m_1)^{1/4}}\le \frac{1}{2}$, hence $\nu_1=\frac{\sigma}{2^{m_2}}$ is also determined.
Then we obtain 
$$
\| (|D{\bf u}| - (1-\nu_1) \lambda)_+\|_{L^\infty(Q^{\lambda}_{\sigma R/2})}\le \frac{\nu_1}{2}\lambda\,,
$$ 
which implies \eqref{degenerate_decay} with $\nu := 1-\frac{\nu_1}{2}$ since $Q^\lambda_{\sigma R/2} \subset \tilde Q$.
%
%
%Finally 
%choosing $\nu_1>0$ sufficiently small such that
%$$
%\nu_1 \le 2^{-m_1} \sigma
%\quad\text{and}\quad 
%\nu_1 < \frac{2\tilde \sigma}{c}\,,
%$$ 
%where $m_1$ is from Lemma~\ref{Lem:density_timeslice2},  and applying Lemma~\ref{Lem:density_timeslice2},
%we obtain 
%$$
%\| (|D{\bf u}| - (1-\nu_1) \lambda)_+\|_{L^\infty(Q^{\lambda}_{\sigma R/2})}\le \frac{\nu_1}{2}\lambda\,.
%$$ 
%Therefore, we complete the proof of \eqref{degenerate_decay} by defining $\nu := 1-\frac{\nu_1}{2}$.
\end{proof}

\subsection{Proof of Theorem~\ref{thm:holdergrad_Lie}} We are now in position to prove the main result, Theorem~\ref{thm:holdergrad_Lie}. Arguing as in \cite[Corollary 1.2]{Lie06}, it will be a consequence of the following claim.
%We prove the next claim. Then the claim implies the theorem as in \textcolor{blue}{\cite[Corollary 1.2]{Lie06}}.

  \textit{Claim. Suppose that $|D{\bf u}| \le \lambda$ in some $Q^\lambda_R=Q^\lambda_R(z_0)\Subset\Omega_T$ and $\lambda>0$. Then, for every $r\in(0,R)$ it holds that
\begin{equation}
\underset{Q^{\lambda}_r(z_0)}{\mathrm{osc}} \, D{\bf u} \le c \left(\frac{r}{R}\right)^{\alpha} \lambda
\label{osc_holder1}\end{equation} 
for some $\alpha\in(0,1)$. }

We assume that $z_0=(0,0)$ for simplicity. Fix $\sigma\in(0,2^{-n-1})$ in Proposition~\ref{prop:nondegenerate}. With this $\sigma$, choose $\nu\in(0,1)$ as in Proposition~\ref{prop:degenerate}. If the assumption \eqref{prop:nondegenerate_ass}  holds, then \eqref{osc_holder1} follows from Proposition~\ref{prop:nondegenerate}. Hence, we assume that \eqref{prop:nondegenerate_ass} does not hold, which means that \eqref{prop:degenerate_ass} holds.

Choose $\theta\in(0,1)$ sufficiently small so that  
$$
\theta \le \frac{\sigma \nu^{\frac{q}{2}-1} }{2} \le \frac{\sigma}{2} 
\quad\text{and}\quad 
\theta \le \left(\frac{\nu}{C}\right)^{\frac{4}{3}}\,,
 $$
where $C\ge 1$ is from  Proposition~\ref{prop:nondegenerate}  and for $m\in\mathbb N_0:= \mathbb N \cup \{0\}$, set $R_m: =\theta^m R$ and $\lambda_m := \nu^m\lambda$. Then 
%we note that
%since $\frac{\sigma R_{j}}{2}=\frac{\sigma  R_{j+1}}{\theta2}\ge R_{j+1}$ and  
%$\frac{\sigma^2R_j^2}{4\phi''(\lambda_j)}\ge \frac{\sigma^2 \eta^{q-2} R_{j+1}^2}{4\theta^2\phi''(\lambda_{j+1})} \ge  \frac{R_{j+1}^2}{\phi''(\lambda_{j+1})} $ 
one  has $Q^{\lambda_{m+1}}_{R_{m+1}}\subset Q^{\lambda_{m}}_{\sigma R_{m}/2}$ for every $j$. This implies  $\{Q^{\lambda_{m+1}}_{R_{m+1}}\}_{m}$ is shrinking. Define
$$
\mathcal N := \{m\in \mathbb N_0: \text{\eqref{prop:nondegenerate_ass} holds with $R_m$ and $\lambda_m$ in place of $R$ and $\lambda$}\}\,,
$$
and
$$
m_0 := 
\begin{cases}
\min\mathcal N & \text{if }\ \mathcal  N \neq \emptyset\\
\infty & \text{if }\ \mathcal N = \emptyset\,.
\end{cases}
$$
Then $m_0\ge 1$. If $1\le m \le m_0$, then by Proposition~\ref{prop:degenerate_ass} with $R_{m-1}$ and $\lambda_{m-1}$ in place of $R$ and $\lambda$, we have $|D{\bf u}|\le \lambda_{m}=\nu^m \lambda$ in $ Q^{\lambda_{m}}_{R_{m}}$ and   
\begin{equation}\label{oscDum1}
\left\{\begin{aligned}
&\underset{Q^{\lambda_{m}}_{R_{m}}}{\mathrm{osc}}\, (D{\bf u}) \le 2 \|D{\bf u}\|_{L^\infty(Q^{\lambda_{m}}_{R_{m}},\R^{Nn})}\le  2\nu^m \lambda\\
&\text{for all} \ \   m=0,1,2,\dots,m_0 \ \ \text{when}\ \ m_0<\infty\,, \ \ \text{or}\ \   m\in \mathbb N_0 \ \ \text{when} \ \ m_0=\infty\,.
\end{aligned}\right.\end{equation}
Furthermore, if $m_0<\infty$, by Proposition~\ref{prop:nondegenerate_ass} with $R_{m_0}$ and $\lambda_{m_0}$ in place of $R$ and $\lambda$, the second condition of $\theta$ in above and \eqref{oscDum1} with $m=m_0$, we have
\begin{equation}\label{oscDum2}
\underset{Q^{\lambda_{m_0}}_{R_{m}}}{\mathrm{osc}}\, (D{\bf u}) \le  C \theta^{(m-m_0)3/4}\underset{Q^{\lambda_{m_0}}_{R_{m_0}}}{\mathrm{osc}}\, (D{\bf u}) \le  2\nu^{m}\lambda \quad \text{for all} \ \ m>m_0 \ \ \text{when}\ \ m_0<\infty\,.
\end{equation}

Fix any $r \in (0,R/2]$. Let $t_1\in (-\frac{r^2}{4\phi''(\lambda)},0]$. Note that $Q^\lambda_{R/2}(0,t_1) \subset Q^\lambda_{R}$ and $\theta^{m+1}R/2\le r < \theta^{m}R/2$ for some $m\in \mathbb N_0$. Then applying \eqref{oscDum1} and \eqref{oscDum2} for $Q^\lambda_{R/2}(0,t_1)$ instead of  $Q^\lambda_R$, we see that 
\begin{equation}\label{oscDuspace}
|D{\bf u}(x,t_1)-D{\bf u}(0,t_1)| \le 2\nu^m \lambda \le c \left(\frac{r}{R}\right)^{\alpha_1} \lambda \,.
\end{equation}
where $\alpha_1 =\log_\theta \nu$, for all $(x,t_1) \in  Q^\lambda_r$. 

Let $\xi_0\in C^\infty_0(B_r)$ with $0\le \xi_0\le 1$, $\eta\equiv 1$ on $B_{r/2}$ and $|D^2\xi|+ |D\xi|^2 \le \frac{c}{r^2}$, $\xi =\|\xi_0\|_{L^1(B_r)}^{-1}\xi_0$, and 
$$
{\bf W}(t) := \int_{B_r} D{\bf u} (x,t) \xi(x)\, \d x \,.
$$
Then by testing \eqref{eq:nondegenerate systemeq} with $\zeta= (\xi_{x_i},\dots,\xi_{x_i})\in C^\infty_0(B_r,\R^N)$, $i=1,2,\dots, n$ and applying the analysis in \eqref{estimateW} along with the inequality $|D^2\xi| \le \frac{c}{r^{n+2}}$,  we have 
$$
|{\bf W}(t)-{\bf W}(0)| \le \frac{c\phi'(\lambda)}{\lambda^{p-1} r^{n+2}} \iint_{Q^\lambda_r}|D{\bf u}(y,s)-{\bf W}(s)|^{p-1}\, \d y \d s\,, \quad t\in \left(-\frac{r^2}{\phi''(\lambda)},0\right]\,.
$$
Moreover, by \eqref{oscDuspace} it follows that
$$
|D{\bf u}(x,t)-{\bf W}(t)| \le \int_{B_r}|D{\bf u}(x,t)-D{\bf u}(y,t)|\xi(y)\, \d y  \le  c \left(\frac{r}{R}\right)^{\alpha_1}\lambda \quad \text{for every } (x,t)\in Q^{\lambda}_r\,.
$$
Therefore, from the preceding two estimates we have that for every  $(x,t)\in {Q^\lambda_r}$
$$\begin{aligned}
|D{\bf u}(x,t)-D{\bf u}(0,0)| &\le |D{\bf u}(x,t)-{\bf W}(t)| + |D{\bf u}(0,0)-{\bf W}(0)| + |{\bf W}(t)-{\bf W}(0)|\\
&\le  c \left(\frac{r}{R}\right)^{\alpha_1(p-1)}\lambda \,.
\end{aligned}$$
This implies \eqref{osc_holder1}.

\section*{Acknowledgments} 
The authors gratefully thank the referees for the thorough and constructive comments.
J. Ok was supported by the National Research Foundation of Korea by the Korean Government (NRF-2022R1C1C1004523). G. Scilla has been supported by the Italian Ministry of Education, University and Research through the MIUR – PRIN project 2017BTM7SN “Variational methods for stationary and evolution problems with singularities and interfaces”. The research of B. Stroffolini was supported by PRIN Project 2017TEXA3H “Gradient flows, Optimal Transport and Metric Measure Structures”.

\bibliographystyle{amsplain}

\end{document}